\documentclass[12pt,a4paper]{amsart}

\usepackage[a4paper,top=2cm,bottom=2cm,left=2.5cm,right=2.5cm,marginparwidth=1.75cm]{geometry}

\usepackage{bm,braket,ascmac,extarrows,multicol}
\usepackage{amsmath,amssymb,amsthm,mathrsfs}
\usepackage{amsfonts,graphics,fancyhdr,ulem,stmaryrd}
\usepackage{enumitem, mathtools}

\usepackage{graphicx}
\usepackage[colorlinks=true, allcolors=blue]{hyperref}
\usepackage{hyperref}
\usepackage[title]{appendix}
\usepackage{amsfonts}
\usepackage{booktabs} % For \toprule, \midrule, \botrule
\usepackage{caption}  % For \caption
\usepackage{threeparttable} % For table footnotes
\usepackage{algorithm}
\usepackage{algorithmicx}
\usepackage{algpseudocode}
\usepackage{listings}
\usepackage{chngcntr}
\usepackage{booktabs}
\usepackage{lipsum}
\usepackage{subcaption}
\usepackage[T1]{fontenc}    % Font encoding
\usepackage{csquotes}       % Include csquotes
\usepackage{diagbox}
\usepackage{comment}

\usepackage{caption}
\usepackage{helvet}  % Load Helvetica (Arial-like)
\usepackage{lmodern}  % Ensures default LaTeX font is available

\usepackage{setspace}
\usepackage{titlesec}
\titleformat{\section} % Redefine section numbering format
  {\normalfont\Large\bfseries}{\thesection.}{1em}{}

\usepackage{float}   % for customizing caption position
\usepackage{caption} % for customizing caption format


\makeatletter
\def\subsection{\@startsection{subsection}{2}%
  \z@{.5\linespacing\@plus.7\linespacing}
{.5\baselineskip}%
  {\normalfont\centering\scshape}%
}
\makeatother

\theoremstyle{definition}
\newtheorem{thm}{Theorem}
\newtheorem{thms}{Theorem}[section]

\newtheorem{lems}[thms]{Lemma}
\newtheorem{defns}[thms]{Definition}
\newtheorem{props}[thms]{Proposition}

\newtheorem*{Cor}{Corollary}
\newtheorem{cor}{Corollary}
\newtheorem*{Rem}{Remark}
\newtheorem{defprops}[thms]{Definition and Proposition}
\newtheorem{conj}[thm]{Conjecture}

\newtheorem*{conj*}{Conjecture}
\newtheorem*{thm*}{Theorem}
\newtheorem*{prop*}{Proposition}

\newtheorem*{mainthm*}{Main Theorem}
\newcommand{\disp}{\displaystyle}

\newcommand{\ben}{\begin{enumerate}}
\newcommand{\een}{\end{enumerate}}
\newcommand{\beqn}{\begin{eqnarray*}}
\newcommand{\eeqn}{\end{eqnarray*}}
\newcommand{\bcas}{\begin{cases}}
\newcommand{\ecas}{\end{cases}}

\newcommand{\mb}{\mathbb}
\newcommand{\mr}{\mathrm}
\newcommand{\maf}{\mathfrak}

\newcommand{\hana}{\mathcal} 
\newcommand{\mt}{\mathtt}

\newcommand{\ord}{\mr{ord}}
\newcommand{\SL}{\mr{SL}}

\newcommand{\A}{\mb{A}}

\newcommand{\I}{\mr{I}}
\newcommand{\lsla}{\backslash}
\newcommand{\GL}{\mr{GL}}

\newcommand{\tr}{\mr{tr}}
\newcommand{\Her}{\mr{Her}}
\newcommand{\Gal}{\mr{Gal}}
\newcommand{\dHer}{\widehat{\Her}}
\newcommand{\wHer}{\widetilde{\Her}}
\newcommand{\perd}[1]{\langle{#1}\rangle}

\newcommand{\wG}{\widetilde{G}}
\newcommand{\Ind}{\mr{Ind}}
\newcommand{\Ge}{\hana{G}}
\newcommand{\wGe}{\widetilde{\hana{G}}}
\newcommand{\wPe}{\widetilde{\hana{P}}}
\newcommand{\Pe}{\hana{P}}
\newcommand{\m}{\bm{m}}
\newcommand{\n}{\bm{n}}
\newcommand{\Hom}{\mr{Hom}}
\newcommand{\wF}{\widetilde{F}}
\renewcommand{\bar}{\overline}
\newcommand{\wP}{\widetilde{P}}

\newcommand{\K}{\hana{K}}

\newcommand{\D}{\hana{D}}

\newcommand{\wK}{\widetilde{\K}}
\newcommand{\prm}{\mr{prm}}
\newcommand{\T}{\hana{T}}
\newcommand{\Be}{\hana{B}}
\newcommand{\wT}{\widetilde{\T}}

\newcommand{\M}{\hana{M}}

\newcommand{\Z}{\hana{Z}}
\newcommand{\Slk}{\maf{S}}
\newcommand{\Res}{\mr{Res}}

\newcommand{\ind}{\mr{ind}}
\newcommand{\Lmd}{\Lambda}
\newcommand{\wLmd}{\widetilde{\Lmd}}

\newcommand{\Ad}{\mr{Ad}}
\newcommand{\vol}{\mr{vol}}

\newcommand{\bmf}[1]{\bm{\mr{#1}}}
\newcommand{\wM}{\widetilde{\hana{M}}}
\newcommand{\Mat}{\mr{Mat}}
\newcommand{\W}{\hana{W}}
\newcommand{\R}{\hana{R}}

\title{On the Periods of Ikeda-Yamana Lift for the Unitary Group I }
\author{Jin Higashitani}
\date{\today}

\begin{document}

\maketitle
\begin{abstract}
Let $F$ be a totally real field and $E$ be a quadratic CM extension field of $F$.
Let $n$ be an odd positive integer. Yamana constructed a lift from Hermitian modular forms to automorphic forms on the unitary group. We denote by $\I_n(f)$ the form obtained by applying this lift to the Hermitian modular form $f$ of weight $(w_\lambda)_\lambda$ and level 1.
We then express the period $\perd{\I_n(f), \I_n(f)}$ of $\I_n(f)$ for Hecke eigenforms $f$ in terms of special values of certain $L$-functions attached to $f$.
This is an extension of Katsurada's result concerning Ikeda's conjecture.
\end{abstract}

\textbf{Keywords}: automorphic forms, L-functions, periods, lifting.
%-------------------------------------------
% Paper Body
%-------------------------------------------

%--- Section ---%
\section{Introduction}\label{sec1}
A major problem in the theory of modular forms is the explicit determination of their periods.
Such periods are often related to special values of $L$-functions through explicit lifting constructions, 
providing valuable information on Satake parameters and on the transfer of automorphic representations 
via the $L$-group formalism.
Periods attached to various lifts have been examined from an arithmetic perspective: 
Zagier \cite{Z} treated the Doi-Nakamura lift, 
Murase and Sugano \cite{MS} studied a Kudla lift, 
and Ibukiyama, Katsurada, and Kojima \cite{IKK} investigated the Miyawaki-Ikeda lift.
For symplectic groups over $\mb{Q}$, Ikeda \cite{Ike2} established the Duke-Imamoglu-Ikeda lift, 
settling the Duke--Imamoglu conjecture.
The unitary analogue was later constructed in \cite{Ike1}, where Ikeda conjectured an explicit period formula 
(cf.~\cite[Conj.17.5]{Ike1}).
Let $F$ be a totally real field and $E$ a quadratic CM extension of $F$.
The unitary similitude group $\wGe_n$ and its subgroup $\Ge_n$ are defined by
\[
\wGe_n = \{ g \in \mathrm{Res}_{E/F} \GL_{2n} \mid {}^t g^{\tau} J_n g = \nu_n(g) J_n, \ \nu_n(g) \in\mb{G}_m \}, 
\]
\[
\Ge_n = \{ g \in \wGe_n \mid \nu_n(g) = 1 \},
\]
where $\tau$ is the non-trivial automorphism in $\mathrm{Gal}(E/F)$.
When $F = \mb{Q}$, Katsurada~\cite{K1, K2, K3} obtained an explicit period formula for the Duke–Imamoglu–Ikeda lift on $\wGe_n(\mb{Q})$, sharpening Ikeda’s conjecture.
\begin{thm*}[{\cite[Theorem~2.2]{K3}}]
Let $n$ be odd, $F = \mb{Q}$, and $f$ a normalized Hecke eigenform of weight $k$ and full level.
Let $D_E$ be the discriminant of $E/\mb{Q}$ and $\epsilon_{E/\mb{Q}}$ the quadratic Hecke character attached to $E/\mb{Q}$.
Then
\[
\langle I_n(f), I_n(f) \rangle = 2^{(n-1)k} D_E^{\beta} \, \widetilde{\Lambda}(1, f, \mathrm{Ad}) 
\prod_{i=2}^n \widetilde{\Lambda}(i, \epsilon_{E/\mb{Q}}^i) \widetilde{\Lambda}(i, f, \mathrm{Ad}, \epsilon_{E/\mb{Q}}^{i-1}),
\]
where $\beta$ is a certain half-integer.
\end{thm*}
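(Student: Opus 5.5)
The plan follows the Rankin--Selberg / Ikeda--Katsurada strategy: express the period through Fourier--Jacobi or Hermitian Fourier coefficients and compare with a Dirichlet series built from the Siegel series. I would write the Petersson inner product $\langle I_n(f), I_n(f)\rangle$ as a residue of a Rankin--Selberg type integral against a Klingen--Eisenstein series on $\wGe_n(\mb{Q})$. Unfolding this integral gives a Dirichlet series
\[
R(s, I_n(f)) = \sum_{T} \frac{|a(T)|^2}{e(T)\,(\det T)^{s}},
\]
where $T$ runs over $\SL_n(\mathcal{O}_E)$-classes of positive definite Hermitian matrices and $e(T)$ is the order of the unit group of $T$. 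Up to explicit Gamma factors and $\zeta$/$L(\cdot,\epsilon_{E/\mb{Q}})$ normalizations, the residue of $R(s,I_n(f))$ at the rightmost pole equals the period.

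Next I would substitute Ikeda's formula for the coefficients $a(T) = c(|D_E|^{\cdots}\det T)\, f_{D_E}^{\cdots}\prod_p \widetilde F_p(T, \alpha_p)$, where $\widetilde F_p$ is the Laurent polynomial obtained from the Siegel series and $\alpha_p$ is the Satake parameter of $f$. Grouping the $T$ by determinant and by local genus rewrites $R(s,I_n(f))$ as a sum over $d$ of $|c(d)|^2 d^{-s}$ times local Euler-type sums
\[
\sum_{T \in \mathrm{Her}_n(\mathcal{O}_{E_p})^{\mathrm{nd}}/\GL_n} \frac{\widetilde F_p(T,\alpha_p)\widetilde F_p(T,\alpha_p^{-1})}{\alpha_p(T)}\,(\det T)^{-s}.
\]
The local densities $\alpha_p(T)$ come from the mass formula for Hermitian lattices, via the Siegel--Weil formula. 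The key step is to compute these local power series explicitly. Following Katsurada, I would use the induction formula for $\widetilde F_p$, expressing it through the Gross--Keating / Jordan-splitting invariants, together with the recursion relating $\widetilde F_p$ for $T$ and $T\perp p^a$. The target is to show that the local series equals a rational function whose denominator reproduces the local factors of $\prod_{i=2}^n L(s+i-1, f,\mathrm{Ad},\epsilon_{E/\mb{Q}}^{i-1})$ times the $L(\cdot,\epsilon^i)$ factors. Taking the residue at $s$ corresponding to the edge then yields $L(1,f,\mathrm{Ad})$ from the sum over $d$ of $|c(d)|^2 d^{-s}$, by Rankin's classical method for $f$.

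Finally I would track the archimedean and discriminant constants. The Gamma factors from the unfolding and from the Klingen--Eisenstein residue should combine with the completed $\widetilde\Lambda$'s. The powers of $2$ come from the normalization of $J_n$ and the Petersson measure, giving $2^{(n-1)k}$. The powers of $D_E$ come from the ramified local computation and the definition of $f_{D_E}$, giving $D_E^\beta$.

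I expect the explicit evaluation of the local power series at $p$ to be the main obstacle, especially at ramified $p$ and at inert $p$. The Hermitian local densities there have no simple diagonal form, and the $\widetilde F_p$ functional equation only determines the series up to symmetry. My first attempt would be to establish a recursion in $n$ (stripping off a rank-one Jordan component) for the generating function. I would then verify that the claimed product of $L$-factors satisfies the same recursion and initial value at $n=1$.
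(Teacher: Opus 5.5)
Your global skeleton matches the one the paper uses for its generalization, Theorem~\ref{MT}: take the residue at $s=n$ of the integral of $|\I_n(f)|^2$ against an Eisenstein series, unfold to $\sum_T|a(T)|^2/(e(T)(\det T)^s)$, and use a Tamagawa-number (mass formula) argument to produce local series weighted by $1/\alpha_p(T)$. The Eisenstein series has to be the Siegel one, $E_{\wPe}(g,s)$ for $\wPe_n$; a Klingen series would unfold to Fourier--Jacobi coefficients, not to your sum over $T$.

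The genuine error is the shape of the Fourier coefficients. You have transplanted the symplectic Duke--Imamoglu--Ikeda formula $c(|D_T|)f_T^{k-1/2}\prod_p\wF_p$. For the unitary lift with $n$ odd there is no half-integral weight form, no $c(\cdot)$ and no $f_{D_E}$. The coefficient is a power of $\det T$ times $\prod_p\wF_p(T,\alpha_p)$, i.e.\ purely local data. In the paper, $a_{\I_n(f)}(g;B)$ is $C_n(f,B)$ times local Whittaker functions built from $\wF_v(B;q_v^{-\maf{s}_v})$, with $|C_n(f,B)|=1$.

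Consequently there is no global series $\sum_d|c(d)|^2d^{-s}$. Since $n$ is odd, $\Her_n(F)^+$ is a single $\wM^+(F)$-orbit (Hasse principle, Proposition~\ref{prop:pre2}), and $\Z\backslash\wM_B$ has Tamagawa number $1$. So the Rankin--Selberg series is an honest Euler product of integrals over all of $\Her_n(F_p)^{nd}$. At ramified $p$ both norm classes of the determinant occur, which naive grouping by $\det T$ does not decouple. The powers of $D_E$ come from the factor $t^{\maf{f}_v(n-1)/2}$ in the ramified local series, the residue $R_n$, and the measure comparison, not from any $f_{D_E}$.

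Moreover, $L(1,f,\mathrm{Ad})$ and the pole at $s=n$ both come out of the local factors themselves. At a split $p$, with $X=\alpha_p$, $Y=\alpha_p^{-1}$, $t=p^{n-s}$, the $i=n$ term of the denominator in Theorem~\ref{thm:RS2} is $[\zeta_p(s-n+1)L_p(s-n+1,f,\mathrm{Ad})]^{-1}$. Globally one gets $\prod_{i=1}^nL(s-n+i,f,\mathrm{Ad},\varepsilon_{E/F}^{i-1})L(s-n+i,\varepsilon_{E/F}^{i-1})$ divided by $L(2s-n+i,\cdot)$'s (Theorem~\ref{thms:period1}). Its $i=1$ term is $\zeta(s-n+1)L(s-n+1,f,\mathrm{Ad})$. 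So your local target, which keeps only the factors with $i\ge2$, is false. Already for $n=1$ the local series is the classical local Rankin--Selberg factor of $f$ and contains $\zeta_p(s)L_p(s,f,\mathrm{Ad})$. Extracting $L(1,f,\mathrm{Ad})$ again by Rankin's method would count it twice.

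Second, the local evaluation you flag as the main obstacle is where essentially all the work lies, and your proposal does not supply the idea that makes it tractable. The paper uses neither Gross--Keating invariants nor a recursion in $n$ checked against a guessed product. Its steps are as follows.
\begin{enumerate}
\item It expands one factor $\wF(Z;Y)$ through the primitive polynomial $G$ (Proposition~\ref{prop:locden3}) and substitutes $Z=Z_1[h]$.
\item It evaluates the resulting Hecke-type sum by the identity $\sum_W\wF(A[W];X)\,t^{\ord(N_{E/F}(\det W))}=\maf{B}\cdot\widetilde{G}\cdot\maf{L}$ of Proposition~\ref{prop:locden4}. This identity comes from the Hecke eigenvalue of the spherical section of the degenerate principal series and Shimura's double coset decomposition. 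It reduces $\hana{R}$ to $\maf{L}$ times a series $\hana{R}^0$ involving only $G$, $\maf{B}$ and $\widetilde{G}$.
\item It expresses $\hana{R}^0$ as a finite combination of one-variable local Koecher--Maass series $P_r$, $r\le n$ (Theorem~\ref{thm:RS1} and its corollary). The inputs are the explicit values of $G$ on $\Theta_{n-r}\perp B$ with $B\in\Her_{r,*}$ (Proposition~\ref{prop:locden2}, Lemma~\ref{lem:RS1}), the inversion in Proposition~\ref{prop:RS2}, and the $q$-identity \cite[Lemma 3.4]{IK}.
\item It determines each $P_r$ (Theorem~\ref{thm:KS}). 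Shimura's local zeta functions (Lemma~\ref{lem:KS1}) fix the denominator. The functional equation of $\wF$, twisted by $\varepsilon_{E/F}(\gamma_0(B))$ in even degree, then forces the numerator to be a constant.
\end{enumerate}
So the functional equation is not a symmetry that leaves the answer undetermined; together with control of the denominator it is exactly what pins the series down. A recursion that strips one Jordan component at a time would have to track the joint dependence of $\wF_p(T;X)\wF_p(T;Y)$ on the full Jordan type at inert and ramified $p$. Nothing in your proposal indicates how that would close.
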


The construction of the Duke–Imamoglu–Ikeda lift for unitary groups over totally real fields was given by Yamana~\cite{Y}.
We call this the \emph{Ikeda–Yamana lift}.

Let $\pi$ be an irreducible cuspidal automorphic representation of $\GL_2(\A_F)$ generated by a Hilbert cusp form $f$ of weight $\kappa$ and central character $\omega$.
Write $\pi \simeq \otimes'_{v} \pi_v$ and $\pi_f \simeq \otimes'_{v\in\bmf{f}} \pi_v$.
Let $\chi$ be a Hecke character of $\mb{A}^\times_E$ such that $\chi|_{\mb{A}^\times_F} = \omega$.
Let $\pi^E$ be the base change of $\pi$ to $\mathrm{Res}_{E/F} \GL_2$, and set $\pi[\chi] = \chi \otimes \pi^E$.
Via the isomorphism
\[
\wGe_1 \simeq (\mathrm{Res}_{E/F} \GL_1 \times \GL_2)/\{\text{diag}(\GL_1)\},
\]
we view $\chi^{-1} \boxtimes \pi$ as a representation of $\wGe_1$.
Let $\Pe_2$ be the parabolic subgroup of $\wGe_n$ whose Levi factor is $\mathrm{Res}_{E/F} \GL_2^{\,(n-1)/2} \times \wGe_1$.
For a finite place $v$, define
\[
J_n^{\chi_v}(\pi_v) = \Ind_{\Pe_2}^{\wGe_n} \big( \W_b(\pi_v[\chi_v])^{\boxtimes (n-1)/2} \boxtimes \W_b(\pi_v \boxtimes \chi_v^{-1}) \big),
\]
where $\W$ denotes the Whittaker model.
Let $A_n^{\chi_v}(\pi_v)$ be its irreducible subrepresentation. 
Yamana constructed Shalika functionals $\{\mathfrak{S}_B^{\hat{\chi}}\}$ on $A_n^{\hat{\chi}}(\pi_{\bmf{f}})$; see~\cite{Y} for further details.
Put $\varepsilon_{\infty}^{l}(a_\infty)=\prod_{v\in \bmf{a}}(\bar{a_v}/a_v)^{l_v/2}$ for $a=(a_v)_{v\in \bmf{a}}\in E\otimes \mb{R}$ and $l=(l_v)_{v\in \bmf{a}}$. Let $l(\chi)=(l_v)_{v\in \bmf{a}}\in\mb{Z}^{\bmf{a}}$ so that $\varepsilon_{\infty}^{l(\chi)}=\omega|_{E\otimes\mb{R}}$.
We define the archimedean Whittaker functions $W_B^{l,l'}$ by
\begin{equation*}
    W_B^{l,l'}(g)=(\det B)^{l/2}\exp(2\pi\sqrt{-1}\cdot\tr(B(g(\sqrt{-1}\cdot1_n))))\varepsilon^{l'}(\det g)J_n(g,\sqrt{-1}1_n)
\end{equation*}
where $J_n(g,Z)$ is the automorphy factor.
\begin{props}\cite[Theorem 1.1]{Y}
Let $\pi$, $\chi$, and $A_n^{\hat{\chi}}(\pi_{\bmf{f}})$ be as above. 
For $\phi = \otimes_v \phi_v \in A_n^{\hat{\chi}}(\pi_{\bmf{f}})$, set
\[
\I_n(g;\phi) = \sum_{B \in \mathrm{Her}_n^+(F)} \mathfrak{S}_B^{\hat{\chi}}(\Pi(g_{\bmf{f}})\phi) \, W_B^{l,l'}(g_\infty),\,\,w=\frac{1}{2}(\kappa+n-1+l(\chi))
\]
with $g = g_\infty g_{\bmf{f}} \in \wGe_n(\A)$ where we denote by $\Pi$ the action on $A_n^{\chi}(\pi_{\bmf{f}})$.
Here $\mathfrak{S}_B^{\hat{\chi}}$ is the Shalika functional from~\cite{Y}.

Then $\I_n$ is an automorphic form on $\wGe_n(\A)$, called the \emph{Ikeda–Yamana lift} of $\pi$ with respect to $\chi$.
\end{props}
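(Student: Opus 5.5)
To prove that $\I_n(\,\cdot\,;\phi)$ is an automorphic form on $\wGe_n(\A)$, I would follow the standard strategy for Fourier-expansion lifts. First I would check that the series converges absolutely and locally uniformly, and that it defines a smooth function of moderate growth. Then I would verify invariance under a generating set of $\wGe_n(F)$, namely the Siegel parabolic $\wPe(F)$ together with the Weyl element $J_n$. Invariance under $\wPe(F)$ should be formal. For $N(X)$ with $X\in\Her_n(F)$, each summand transforms by $\psi(\tr(BX))$, split between the finite Shalika functional and the archimedean factor $\exp(2\pi\sqrt{-1}\tr(BZ))$, and the two characters cancel because $\psi$ is trivial on $F\backslash\A_F$. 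For a Levi element $m(A)$ with $A\in\GL_n(E)$, and for the similitude scalars, the equivariance $\mathfrak{S}_B^{\hat\chi}(\Pi(m(A))\phi)=\hat\chi(\det A)^{\ast}\,\mathfrak{S}_{{}^tA^\tau BA}^{\hat\chi}(\phi)$ established in \cite{Y} matches the transformation of $W_B^{l,l'}$. Reindexing $B\mapsto {}^tA^\tau BA$ then permutes $\Her_n^+(F)$, which proves invariance.

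The real content is invariance under $J_n$, and this is where I expect the main obstacle. My plan is to realize $\I_n$ as a residue or special value of a Siegel Eisenstein series, twisted by a Hecke-eigen packaging, as Ikeda does, rather than by direct manipulation. Concretely, I would consider the Siegel Eisenstein series $E(g,s)$ on $\wGe_n$ attached to a section of $\Ind(\chi\,|\det|^{s})$ and compute its $B$-th Fourier coefficients. These factor as $\prod_v W_{B,v}(s)$, where the finite factors are Siegel series $b_v(B,s)$ expressed through Laurent polynomials $\tilde F_v(B;X)$ satisfying a functional equation $X\mapsto X^{-1}$. The key local input from \cite{Y} is that $\mathfrak{S}_B^{\hat\chi}(\phi)$ equals, up to normalization, $\prod_v \tilde F_v(B;\alpha_v)$ with $\alpha_v$ the Satake parameter of $\pi_v$, multiplied by a power of $\det B$. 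Replacing $X_v^{\pm1}$ by $\alpha_v^{\pm1}$ in the Eisenstein coefficients then turns the Eisenstein Fourier expansion into that of $\I_n$.

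To transfer automorphy through this substitution, I would use the converse-theorem-free argument of Ikeda. The Eisenstein series is automorphic for all $s$, and its Fourier coefficients are polynomial in $X_v^{\pm1}$ at finitely many places at a time. From this one shows that the space of formal $q$-expansions $\sum_B c(B)W_B$, with $c(B)$ a finite product of $\tilde F_v(B;\cdot)$ and the symmetry $X\leftrightarrow X^{-1}$, is spanned by automorphic forms. Specializing the variables at the places dividing $\det B$ recovers $\I_n$. Alternatively, I would follow Yamana's direct route: prove that the adelic function is left $J_n$-invariant by showing that it is orthogonal to nothing and equals its own theta-type transform, using the local functional equations of $\mathfrak{S}_B$ on $A_n^{\hat\chi}(\pi_v)$ as the induced representation's intertwining compatibility.

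Finally, I would handle cuspidality and $K$-finiteness. $K_\infty$-finiteness follows from the automorphy factor $J_n(g,\sqrt{-1}1_n)$ and the holomorphy in $Z$. Moderate growth follows from the polynomial bounds on $\mathfrak{S}_B$ in $\det B$, which come from the bounds on Siegel series coefficients, combined with exponential decay of the $W_B^{l,l'}$. The weight condition $w=\tfrac12(\kappa+n-1+l(\chi))$ is exactly what makes the archimedean factors match the holomorphic Eisenstein series of that weight, so I would check this compatibility at the start of the $J_n$-invariance step.
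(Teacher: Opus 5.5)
The paper gives no proof of this statement; it is quoted from \cite[Theorem 1.1]{Y}. Taken on its own, your outline has a genuine gap at the one step that carries all the content: invariance under $J_n$.

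\textbf{Why the Eisenstein specialization fails.} Take the Siegel Eisenstein series induced from $\alpha_{\wPe}^{s}$, or a $\chi$-twist of it. Its $B$-th coefficient is proportional to $\prod_v b_v(B;2s)$, that is, to a power of $|\det B|$ times $\prod_v \wF_v(B;q_v^{-s+n/2})$. So the variable at \emph{every} finite place is controlled by the same complex number $s$. Eisenstein series therefore realize only the specializations $X_v=q_v^{-s+n/2}$, for all $v$ simultaneously. These are the Satake parameters of an Eisenstein representation of $\GL_2$: the Siegel Eisenstein series is just the ``lift'' of a $\GL_2$ Eisenstein series. For cuspidal $\pi$ the parameters $q_v^{-\maf{s}_v}$ are unrelated from place to place. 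Meanwhile $J_n$-invariance is a condition linking all $B$, hence all places, at once. An identity that holds along a one-parameter family does not carry over to independent values at each place. In addition, for general $s$ the archimedean coefficients of $E(g,s)$ are confluent hypergeometric functions, not $W_B^{l,l'}$. So even this one-parameter family meets holomorphic expansions of the required weight only at isolated points, with the weight moving with $s$.

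\textbf{The bridging claim is false.} You claim that formal series $\sum_B\prod_v\wF_v(B;X_v)\,W_B^{l,l'}$ with independent $X_v$ are spanned by automorphic forms. With weight and level fixed, holomorphic automorphic forms form a finite-dimensional space. But varying $X_{v_0}$ at a single place gives coefficients that are Laurent polynomials in $X_{v_0}$ of unbounded degree, since the degree of $\wF_{v_0}(B;X)$ grows with $\ord_{v_0}\det B$. This yields an infinite-dimensional span, a contradiction. Put differently, your argument never uses that $\pi$ is automorphic, yet the conclusion is false for parameters that do not come from an automorphic $\pi$. A correct proof must feed the global automorphy of $\pi$ into the extra invariance. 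In Ikeda's original proof this is done through the Fourier--Jacobi expansion, not by specializing Eisenstein series.

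\textbf{Further points.} Your fallback, ``orthogonal to nothing and equals its own theta-type transform,'' is not an argument. The statement also concerns arbitrary $\phi=\otimes_v\phi_v$. At places where $\pi_v$ is ramified, $\Slk_B^{\hat{\chi}}$ is not a specialization of $\wF_v$ at all, so your plan would not cover those vectors even if the unramified case worked. The $\wPe(F)$-invariance, convergence and moderate-growth steps are fine in outline.
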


 We now turn to our main theorem. 
Let $\perd{\cdot,\cdot}$ denote the Petersson inner product on $\wGe_n(\A)$.
For a Hecke eigenform $f$ with full level and weight $(\kappa_v)$, let $\pi_f$ be the irreducible cuspidal automorphic representation generated by $f$.
Let $\phi_f$ be the unramified vector in $A_n^{\chi}(\pi_{\bmf{f}})$ and put $\I_n(f)(g)=\I_n(g;f)=\I_n(g,\phi_f)$.
Our goal is to give an explicit formula for 
\[
\perd{\I_n(f),\I_n(f)}
\]
in terms of special values of $L$-functions.
\begin{mainthm*}\label{thm:main_Intro}
Let $F$ be a totally real number field and $E/F$ a quadratic CM extension. Assume that $n$ is odd.
Let $\pi$ be an irreducible cuspidal automorphic representation of $\GL_2(\A_F)$ generated by a Hilbert cusp form $f$ of weight $\kappa$, with trivial central character $\omega_\pi = 1$.
Let $\chi$ be a Hecke character of $\A_E^\times$ such that $\chi|_{\mb{A}^\times_F} = \omega_\pi$.
Let $\I_n(f)$ be the Ikeda--Yamana lift defined in Proposition~\ref{YamanaMT}, and let $\perd{\cdot,\cdot}$ denote the Petersson inner product on $\wGe_n(\A)$.
Then the Petersson norm of $\I_n(f)$ is given by
\begin{equation*}
    \perd{\I_n(f),\I_n(f)}=C_{F,E,n,\kappa} \cdot
\prod_{i=1}^{n}\Lmd(i,\mr{Ad},f,\varepsilon_{E/F}^{i-1})
\end{equation*}
where $C_{F,E,n,\kappa}$ is an explicit constant depending only on $F, E, n, \kappa$, $\varepsilon_{E/F}$ is the quadratic Hecke character associated with $E/F$.
\end{mainthm*}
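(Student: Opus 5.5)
We follow the strategy of Katsurada \cite{K1,K2,K3}, which combines the Rankin--Selberg method with an analysis of Siegel series, and we adapt it to a totally real base field $F$ using Yamana's local data. The first step is to consider the Rankin--Selberg (Kohnen--Skoruppa type) Dirichlet series
\[
R(s,\I_n(f))=\sum_{B\in \Her_{n-1}^+(F)/\sim} \frac{|a(B;\I_n(f)|_{\wGe_{n-1}})|^2\text{-type Fourier--Jacobi coefficients}}{\epsilon(B)\, N(\det B)^{s}} .
\]
It is obtained by unfolding $\I_n(f)$ against a Klingen-type Eisenstein series on $\wGe_n$, which amounts to integrating over the Fourier--Jacobi expansion along the maximal parabolic whose Levi contains $\wGe_{n-1}$. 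Its residue at the rightmost pole equals, up to an explicit archimedean Gamma factor and volume constant, the Petersson norm $\perd{\I_n(f),\I_n(f)}$. This is standard once we know the analytic properties of the Klingen Eisenstein series on $\wGe_n(\A)$, which follow from Langlands' theory together with the Piatetski-Shapiro--Rallis doubling integrals. The archimedean factor comes from the explicit $W_B^{l,l'}$ in Proposition~\ref{YamanaMT}.

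The second step is to express $R(s,\I_n(f))$ as an Euler product. By Proposition~\ref{YamanaMT}, the Fourier coefficient at $B$ is the Shalika functional $\mathfrak{S}_B^{\hat\chi}(\phi_f)$. In the unramified case this is, by Yamana's formula, a product over finite places of Laurent polynomials $\tilde F_v(B,X_v)$ evaluated at the Satake parameters $\alpha_v$ of $\pi_v$, multiplied by a power of $N(\det B)$ and by $a(\,\cdot\,;f)$ at the discriminant-type invariant. Summing over genera, we then rewrite
\[
R(s)=\sum_{\text{genera}}\ \prod_v \Big(\text{local Siegel series}\Big)\cdot \text{mass},
\]
using Siegel's mass formula for Hermitian lattices over $\mathcal O_E$. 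This reduces the problem to computing, for each $v$, a local power series
\[
P_v(X,Y)=\sum_{B_v} \frac{\tilde F_v(B_v,X)\tilde F_v(B_v,X^{-1})}{\alpha_v(B_v)}\,Y^{\ord(\det B_v)},
\]
where the sum runs over $\mathcal O_{E_v}$-Hermitian lattices up to equivalence and $\alpha_v$ is the local density. The aim is to prove a rational-function identity of the form
\[
P_v(\alpha_v,q_v^{-s})= \frac{\prod_{i}(\text{local factors of } L(s+i,\mr{Ad},\pi_v,\varepsilon^{i-1}))}{\prod(\text{Dedekind/Hecke zeta factors})}.
\]
We treat inert, split, and ramified $v$ separately: at split $v$ the unitary group is $\GL_{2n}$, while the inert and ramified cases are handled through induction on $n$ via recursion formulas for $\tilde F_v$.

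The third step is to take the global residue. Multiplying the local identities gives $R(s)$ as a product $\prod_{i}L(s+i-1,\mr{Ad},f,\varepsilon_{E/F}^{i-1})$ times ratios of zeta functions. Taking the residue, the $i=1$ factor contributes $\Lmd(1,\mr{Ad},f)$ and the remaining factors contribute $\Lmd(i,\mr{Ad},f,\varepsilon_{E/F}^{i-1})$ for $2\le i\le n$. All the zeta values, the discriminants $D_E$ and $D_F$, the powers of $2$, and the archimedean integrals that depend on $\kappa$ and $n$ are absorbed into $C_{F,E,n,\kappa}$; this is possible because $\omega_\pi=1$ makes the Hecke $L$-value terms independent of $f$.

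The main obstacle is the second step, namely the explicit evaluation of the local power series $P_v$, especially at ramified places of $E/F$, where the classification of Hermitian lattices and of the Siegel series is more delicate. Our first attempt will be to adapt Katsurada's induction formula for $\tilde F_v$ to the general local field $E_v/F_v$ by replacing $p$ with $q_v$ and the uniformizer conventions accordingly, and then to verify the resulting functional equation in $X\mapsto X^{-1}$.
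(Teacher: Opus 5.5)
\textbf{Step 1 does not produce the series that Steps 2 and 3 compute.} Unfolding against a Klingen-type Eisenstein series for the parabolic whose Levi contains $\wGe_{n-1}$ gives a Dirichlet series in the Petersson norms $\perd{\phi_T,\phi_T}$ of the Fourier--Jacobi coefficients, with $T\in\Her_{n-1}^+$. These are periods of Jacobi forms. They are not finite expressions in the Fourier coefficients $\Slk_B(\phi_f)$, so Proposition~\ref{YamanaMT} gives you no access to them. In particular nothing links them to your local series $P_v$, which sums over rank-$n$ Hermitian lattices. Even in the Kohnen--Skoruppa case, such a series is evaluated by Hecke theory on Jacobi forms, not by Siegel series.

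The missing idea, which the paper uses, is the Siegel-parabolic Eisenstein series $E_{\wPe}(g,s)=\sum_{\gamma\in\wPe(F)\lsla\wGe_n(F)}\varphi^{(s)}(\gamma g)$. Its residue at $s=n$ is Shimura's constant $R_n$, so $\Res_{s=n}\R(s;\I_n(f))=R_n\perd{\I_n(f),\I_n(f)}$; no doubling-integral input is needed. Unfolding along $U_{\Pe}$ with the Fourier expansion over $\Her_n(F)^+$ gives, up to the Iwasawa measure constant $c_0$, a sum over $B\in\Her_n(F)^+/\wM^+(F)$ of $\vol(\Z(\A)\wM_B^+(F)\lsla\wM_B^+(\A))\int_{\wM_B^+(\A)\lsla\wM^+(\A)}|a(m;B)|^2\alpha_{\A}^{s-n}(\det B[m])\,dm$. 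This is the object your Step 2 actually works with.

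\textbf{Inputs needed to factor this into $\prod_v\hat{\R}_v$.} Your sketch omits the following, and they are where $n$ odd enters.
\begin{enumerate}
\item $\tau(\Z\lsla\wM_B)=1$. This comes from the exact sequence $1\to Z_B\to SU_B\to\Z\lsla\wM_B\to1$, which requires $n$ odd.
\item For $n$ odd, $\wM(F_v)$ acts transitively on $\Her_n(F_v)^{nd}$. By the Hasse principle (Proposition~\ref{prop:pre2}), every $B\in\Her_n(F)^+$ is similar to $1_n$. The global sum is therefore a single orbital integral that factors over places, so no genus-by-genus mass-formula bookkeeping is needed.
\item The Ramanujan bound of Blasius. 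It gives $|C_{n,v}(f,B)|=1$ and $\bar{\alpha}_v=\alpha_v^{-1}$, hence $|\wF_v(B,\alpha_v)|^2=\wF_v(B,\alpha_v)\wF_v(B,\alpha_v^{-1})$, as your $P_v$ tacitly assumes.
\end{enumerate}
Also, unlike the symplectic lift, the coefficients here contain no factor $a(\,\cdot\,;f)$; if they did, the series would not factor over places.

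With these corrections, your Steps 2 and 3 agree in outline with the paper. The paper evaluates $\hat{\R}_v$ through primitive local densities, the polynomial $G$ and local Koecher--Maass zeta functions, rather than through an induction formula for $\wF_v$. It then takes the residue at $s=n$, which comes from the factor $\zeta_F(s-n+1)$.
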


The Main Theorem expresses the Petersson norm of the Ikeda-Yamana lift in terms of special values of $L$-functions.
In subsequent sections, we develop the local and global theory necessary to prove this result.
The strategy relies on the explicit construction of the lift given by Yamana~\cite{Y}, and the Rankin-Selberg integral defined as in \cite{K2}, together with the residue method of Eisenstein series.
We also determine the constant $C_{F,E,n,\kappa}$ explicitly, extending earlier methods from the rational case~\cite{K3} to the general setting.
In Section 2, we recall some basic facts to compute the Ikeda-Yamana lift and its period.
In Section 3, we explain the Ikeda-Yamana lift, particularly in the classical case. We then recall some results about the period of the Duke-Imamoglu-Ikeda lift prior to this work and state our main theorem.
In Section 4, we introduce the Rankin-Selberg integral and compute the period-side term by using the residue of the Eisenstein series.
Moreover, we decompose the Rankin-Selberg integral into Euler products.
In Section 5, we compute the Euler factor by the relations and special values of the Siegel series and local density, and reduce them to the Koecher-Maass integral and the zeta integral.
In Section 6, we prove the main theorem by synthesizing the previous calculations.
I would like to express my gratitude to Professor Tamotsu Ikeda and Dr. Nobuki Takeda.
\\
\\
Notation.\\

Fix a totally real number field $F$ and a quadratic CM extension $E$ of $F$.
Let $d_F=[F:\mb{Q}]$.
We denote by $\bmf{v}$ the set of all places of $F$, by $\bmf{f}$ the set of finite places of $F$, and by $\bmf{a}$ the set of infinite places of $F$.
For $v\in\bmf{v}$, we put $E_v=F_v\otimes E$.
For $v\in\bmf{f}$, we denote by $\hana{O}_v$ the ring of integer of $F_v$, by $\maf{p}_v$ the maximal ideal of $\hana{O}_v$.
We denote by $\hana{O}_{E_v}$ the integral closure of $\hana{O}_v$ in $E_v$ and if $v$ is split, we identify $\hana{O}_{E_v}=\hana{O}_{F_v}\oplus\hana{O}_{F_v}$.
We denote by $\maf{q}_v$ the maximal ideal of $\hana{O}_{E_v}$ if $v$ is not split, and we put $\maf{q}_v=\maf{p}_v\oplus\maf{p}_v$ where we consider $\hana{O}_{E_v}$ as $\hana{O}\oplus\hana{O}$ if $v$ is split.
We denote by $\A$ (resp.$\A_E$) the adele of $F$ (resp.$E$) and by $\A$ the idele of $F$.
We denote by $\mb{A}_{\bmf{f}}$ the finite part of $\A$.

For $v\in\bmf{f}$, we put $q_v=\#\hana{O}_v/\maf{p}_v$ and $q_{E_v}=\#\hana{O}_{E_v}/\maf{q}_v$.
We denote by $\hana{O}_{v}^\times$ the unit group of $\hana{O}_v$ and by $\hana{O}_{E_v}^\times$ the unit group of $\hana{O}_{E_v}$.
We denote by $\ord_v$ the additive valuation of $F_v$ so that $\ord_v(x)=1$ when $x$ is a prime element. 
We denote by $N_{E/F}$ the norm from $E$ to $F$ and by $\tr_{E/F}$ the trace map from $E$ to $F$.
For an $F$-algebra $R$, we also denote by $N_{E/F}$ the norm form $R\otimes_F E$ to $R$.
Put $\alpha_{F_v}(x)=q_v^{-\ord_v(x)}$ and $\alpha_{E_v}=\alpha_{F_v}\circ N_{E/F}$.
We choose a prime element $\varpi_v$ in $F_v$ and an element $\varpi_{E_v}$ of $E_v$ that satisfies
\begin{equation*}
\begin{cases}
\varpi_v=\varpi_{E_v}&\text{if $E_v$ is inert over $F_v$},\\
\varpi_{E_v}=(\varpi_v,\varpi_v)&\text{if $E_v$ is split over $F_v$},\\
\varpi_v=N_{E/F}(\varpi_{E_v})&\text{if $E_v$ is ramified over $F_v$}.
\end{cases}
\end{equation*}
If $v$ is not split, we denote by $\maf{D}_{E/F,v}$ the discriminant ideal of $F_v$ relative to $E_v$, by $\maf{d}_{E/F,v}$ the discriminant ideal of $E_v$ relative to $F_v$ and by $\maf{D}_{F/\mb{Q},v}$ the discriminant ideal of $F_v$ relative to $\mb{Q}_p$, where $v|p$.
We put $\maf{f}_v=\ord_v(\maf{D}_{E/F,v})=\min\{\ord_v(x)|x\in\maf{D}_{E/F,v}\}$. 
If $v$ is split, we put $\maf{f}_v=0$.\\

For $v\in\bmf{a}$, we let $\alpha_{F_v}$ be the usual absolute value under $F_v\cong\mb{R}$ and put $\alpha_{E_v}=\alpha_{F_v}\circ N_{E/F}$.

Put $\alpha_{\A}((x_v)_{v\in\bmf{v}})=\prod_{v\in\bmf{v}}\alpha_{F_v}(x_v)$ for $(x_v)_{v\in\bmf{v}}\in\A$ and put $\alpha_{\A_E}=\alpha_{\A}\circ N_{E/F}$.

We denote by $\alpha_R^s$ the $s$-power of $\alpha_R$ for $R=F_v$ and $\A$.
 
We denote the algebraic group of $m \times n$ matrices by $\Mat_{m,n}$ and put $\Mat_n=\Mat_{n,n}$.
For a commutative ring $R$ and a subset $S$ of $R$, we denote by $\Mat_{m,n}(S)$ the set of ($m,n$)-matrices with entries in $S$. 
For the subset $X$ of $\Mat_n(R)$, we put $X^{nd}=\{x\in X|\det x\ne0\}$. 

We denote the nontrivial automorphism of $E$ over $F$ by $\rho_{E/F}$. 
For an $F$-algebra $R$, $a\in R$ and $x\in E$, we write $\overline{a\otimes x}=a\otimes\rho_{E/F}(x)$. 
We put $X^*=(\bar{X}_{ji})_{1\leqq i\leqq n,1\leqq j\leqq m}$ for $X=(X_{ij})_{1\leqq i\leqq m,1\leqq j\leqq n}\in \Res_{E/F}\Mat_{m,n}$ where $\Res_{E/F}$ means Weil restriction. 
We write $A[X]=X^*AX$ for $A\in \Res_{E/F}\Mat_m$ and $X\in \Res_{E/F}\Mat_{m,n}$. 

We put $\Her_n=\{X\in \Res_{E/F}\Mat_n|X^*=X\}$. 
For a $F$-algebra $R$ and a $\Gal(E/F)$-stable subset $S$ in $R\otimes E$, we denote by $\Her_n(R;S)$ the subset of $\Her_n(R)$ with entries in $S$.
Then for a subring $R'$ of $R\otimes E$, a Hermitian matrix $A$ of degree $n$ with entries in $R\otimes E$ is said to be semi-integral over $(R,R')$ if $\tr(AB)\in R\cap R'$ for any $B\in\Her_n(R;R')$.  
We denote by $\dHer_n(R; R')$ the set of semi-integral matrices of degree $n$ over $(R,R')$. 
We put $\dHer_{n,v}=\dHer_n(F_v;\hana{O}_{E_v})$.

Unless otherwise stated, we normalize the Haar measure on the adelic algebraic group to be the Tamagawa measure.

%--- Section ---%
\section{Preliminaries}\label{sec2}
We recall some basic facts about algebraic groups, hermitian modular forms, and automorphic forms.
\subsection{Algebraic Group $U(n,n)$ and Hermitian Matrices $\Her_n$}
Put \begin{equation*}
    J_n=
\begin{pmatrix}
0&-1_n\\
1_n&0
\end{pmatrix}. 
\end{equation*}
We denote by $\Ge_n$ the unitary group $U(n,n)$ over $F$, that is, 
\begin{equation*}
\Ge_n=\{g\in \Res_{E/F}\GL_{2n}|J_n[g]=J_n\}.
\end{equation*}
Let $\wGe_n$ be the similitude unitary group, that is,  
\begin{equation*}
\wGe_n=\{g\in \Res_{E/F}\GL_{2n}|J_n[g]=\nu_n(g)J_n,\nu_n(g)\in \mb{G}_m\}.
\end{equation*}
We call $\nu_n$ the similitude character of $\wGe_n$.
We define 
\begin{equation*}
\Ge_n^1=\{g\in \Ge_n|\det g=1\}.
\end{equation*}
For $A\in\Res_{E/F}\GL_n$, $z\in\Her_n$, and $t\in \mb{G}_m$, we put
\begin{equation*}
\m_n(A)=\begin{pmatrix}
A&0\\
0&(A^*)^{-1}
\end{pmatrix}\text{, }
\n_n(z)=\begin{pmatrix}
1_n&z\\
0&1_n
\end{pmatrix}\text{, }
\bm{d}_n(t)=\begin{pmatrix}
1_n&0\\
0&t\cdot1_n
\end{pmatrix}.
\end{equation*}
Define the maximal parabolic subgroup $\wPe_n$ of $\wGe_n$ by
\begin{equation*}
\wPe_n=\{\m_n(A)\n_n(z)\bm{d}_n(t)|A\in\Res_{E/F}\GL_n,z\in\Her_n,t\in \mb{G}_m\}.
\end{equation*}
Put $\Pe_n=\wPe_n\cap\Ge_n$. 
Let $\wM_n$ be the Levi part of $\wPe_n$ such that 
\begin{equation*}
    \wM_n=\{\m_n(A)\bm{d}_n(t)|A\in\Res_{E/F}\GL_n,t\in\mb{G}_m\}
\end{equation*}
and $U_{\Pe_n}$ be the unipotent radical of $\Pe_n$. 
Put $\M_n=\Pe_n\cap\wM_n$.
We denote by $\Z_n$ the center of $\wGe_n$.
Fix a maximal torus $\wT_n$ of $\wGe_n$ by 
\begin{equation*}
\wT_n=\left\{
\m_n(\bm{t}_n(t_1,\cdots,t_n))\bm{d}_n(t)
\middle|t_i\in \Res_{E/F}\mb{G}_m\text{ for $1\leqq i\leqq n$ and $t\in \mb{G}_m$}\right\}
\end{equation*}
where $\bm{t}_n$ is the diagonal embedding $(\Res_{E/F}\mb{G}_m)^n\to\Res_{E/F}\GL_n$. 
Define the Borel subgroup $\Be_{\wT_n}$ of $\wGe_n$ by
\begin{equation*}
\Be_{\wT_n}=\left\{\m_n(b)\n_n(u)\bm{d}_n(t)
\middle|\begin{matrix}\text{$b$ is an upper triangular matrix of $\Res_{E/F}\GL_{n}$,}\\
\text{$u\in\Her_n$, and $t\in \mb{G}_m$}\end{matrix}\right\}.
\end{equation*}
We put $\T_n=\wT_n\cap\Ge_n$ and $\Be_{\T_n}=\Be_{\wT_n}\cap \Ge_n$.
The maximal split torus $\wT_n^s$ of $\wT_n$ is given by \begin{equation*}
    \wT^s=\{\m_n(\bm{t}_n(t_1,\cdots,t_n))\bm{d}_n(t)|t_i,t\in \mb{G}_m\text{ for $1\leqq i\leqq n$}\}.
\end{equation*}

Let $\Phi_{\wGe_n}$ be the set of roots of $(\wGe_n,\wT_n^s)$, $\Phi_{\wGe_n}^+$ the set of positive roots of $\Phi_{\wGe_n}$ determined by $\Be_{\wT_n}$. 
We denote by $\Phi_{\wM_n}$ the set of roots of $(\wM_n,\wT_n^s)$ in $\wM_n$. We write $\Phi_{\wM_n}^+=\Phi_{\wM_n}\cap\Phi_{\wGe_n}^+$.\\
We denote by $x_i$ the character on $\wT_n^s$ such that $x_i(\m_n(\bm{t}_{n}(t_1,\cdots,t_n))\bm{d}_n(t))=t_i$ for $1\leqq i\leqq n$ and $t_i,t\in F^\times$. 
Then we have
\begin{align*}
\Phi_{\wGe_n}=&\{\pm(x_i\pm x_j)|1\leqq i<j\leqq n\}\cup\{\pm2x_i|1\leqq i\leqq n\},\\
\Phi_{\wGe_n}^+=&\{(x_i\pm x_j)|1\leqq i<j\leqq n\}\cup\{2x_i|1\leqq i\leqq n\}.\\
\Phi_{\wM_n}=&\{\pm(x_i- x_j)|1\leqq i<j\leqq n\},\\
\Phi_{\wM_n}^+=&\{x_i- x_j|1\leqq i<j\leqq n\}.
\end{align*}
Let $W_{\wGe_n}$ be the Weyl group of $\wT_n^s$ in $\wGe_n$ and let $W_{\wM_n}$ be the Weyl group of $\wT_n^s$ in $\wM_n$. 
Define the representative set $\Omega$ of $W_{\wM_n}\lsla W_{\wGe_n}$ whose elements are of minimal length in each coset. 
It is well-known that $w\in\Omega$ if and only if
\begin{equation*}
w^{-1}\Phi_{\wM_n}^+\subset\Phi_{\wGe_n}^+.
\end{equation*} 

\begin{lems}\label{lem:pre1}
Let $I$ be the subset of $\{1,\cdots, n\}$ with cardinality $r$. 
Define a bijection $i_I:\{1,\cdots, n\}\to\{1,\cdots, n\}$ satisfying:
\begin{itemize}
    \item $i_I(k)\in I$ for any $1\leqq k\leqq r$,
    \item $i_I$ is strictly increasing on $\{1,\cdots, r\}$,
    \item$i_I$ is strictly decreasing on $\{r+1,\cdots, n\}$. 
\end{itemize}
We define $w_I\in W_{\wGe_n}$ by
\begin{align*}
w_I^{-1}x_k=\begin{cases}
x_{i_I(k)}&\text{for $1\leqq k\leqq r$},\\
-x_{i_I(k)}&\text{otherwise}.
\end{cases}
\end{align*}
Then we have $\Omega=\{w_I|\hspace{1mm}I\subset \{1,\cdots,n\}\}$. 
\end{lems}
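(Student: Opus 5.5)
We identify $W_{\wGe_n}$ with the hyperoctahedral group of signed permutations acting on $\{\pm x_1,\dots,\pm x_n\}$. Each $w$ is determined by $w^{-1}$, which sends $x_k\mapsto \epsilon_k x_{\sigma(k)}$ for a permutation $\sigma$ and signs $\epsilon_k\in\{\pm1\}$. The subgroup $W_{\wM_n}\simeq \mathfrak{S}_n$ consists of the unsigned permutations. The proof then rests on the characterization already recalled: $w\in\Omega$ if and only if $w^{-1}\Phi_{\wM_n}^+\subset\Phi_{\wGe_n}^+$, i.e. if and only if $w^{-1}(x_k-x_l)$ is a positive root for all $1\leqq k<l\leqq n$.

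The first step is to check that each $w_I$ satisfies this criterion. Take $k<l$ and split into three cases.
\begin{itemize}
\item If $k<l\leqq r$, then $w_I^{-1}(x_k-x_l)=x_{i_I(k)}-x_{i_I(l)}$. This is positive because $i_I$ is increasing on $\{1,\dots,r\}$.
\item If $k\leqq r<l$, then $w_I^{-1}(x_k-x_l)=x_{i_I(k)}+x_{i_I(l)}$, which is always positive.
\item If $r<k<l$, then $w_I^{-1}(x_k-x_l)=-x_{i_I(k)}+x_{i_I(l)}$. This is positive because $i_I(l)<i_I(k)$, as $i_I$ is decreasing on $\{r+1,\dots,n\}$.
\end{itemize}
Hence $w_I\in\Omega$. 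Note also that $w_I$ is well defined: the three conditions determine $i_I$ uniquely, since it enumerates $I$ increasingly and then the complement decreasingly.

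The second step is the converse. Let $w\in\Omega$ have data $(\sigma,\epsilon)$ and set $r=\#\{k:\epsilon_k=+1\}$. I claim the positive signs occupy exactly the indices $1,\dots,r$. Suppose instead that $\epsilon_k=-1$ and $\epsilon_l=+1$ for some $k<l$. Then $w^{-1}(x_k-x_l)=-x_{\sigma(k)}-x_{\sigma(l)}$ is negative, a contradiction. Next, for $k<l\leqq r$ we need $x_{\sigma(k)}-x_{\sigma(l)}>0$, which forces $\sigma(k)<\sigma(l)$; so $\sigma$ is increasing on $\{1,\dots,r\}$. For $r<k<l$ we need $x_{\sigma(l)}-x_{\sigma(k)}>0$, which forces $\sigma(l)<\sigma(k)$; so $\sigma$ is decreasing on $\{r+1,\dots,n\}$. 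Setting $I=\sigma(\{1,\dots,r\})$, we get $\sigma=i_I$ and therefore $w=w_I$.

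As a consistency check, $|\Omega|=|W_{\wGe_n}|/|W_{\wM_n}|=2^n n!/n!=2^n$, which is the number of subsets $I$. The assignment $I\mapsto w_I$ is injective, since $I$ is recovered as the set of indices whose image under $w_I^{-1}$ carries a positive sign. So bijectivity is consistent with the two inclusions above.

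I expect no serious obstacle. The only point needing care is the convention: the condition is stated for $w^{-1}$ rather than $w$, and the definition of $w_I$ is likewise phrased through $w_I^{-1}$. We must also confirm that the Weyl group of the split torus $\wT^s_n$ in $\wGe_n$ is the full signed permutation group. This holds because the relative root system $\{\pm x_i\pm x_j,\pm 2x_i\}$ is of type $C_n$, and $W_{\wM_n}$ is generated by the reflections in the roots $x_i-x_j$.
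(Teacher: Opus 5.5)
Your proof is correct and essentially the one the paper intends: the paper's proof only says the lemma is proved as in \cite{PS}, and your case check against the criterion $w^{-1}\Phi_{\wM_n}^+\subset\Phi_{\wGe_n}^+$ (stated just before the lemma), together with the sign-ordering argument for the converse, is the standard verification that citation refers to. One minor point: in the similitude group the reflection in the long root $2x_i$ actually sends $x_i$ to $\nu_n-x_i$, so your computations should be read modulo the similitude character, as in the paper's own root list. This leaves every one of your positivity checks unchanged.
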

\begin{proof}
We can prove in the same way in \cite{PS}. 
\end{proof}

For $v\in\bmf{f}$ and fractional ideals $\maf{a},\maf{b}$ of $E_v$, we define $\Gamma_v[\maf{a},\maf{b}]$ by
\begin{equation*}
    \Gamma_{n,v}[\maf{a},\maf{b}]=\left\{\begin{pmatrix}
        A_{11}&A_{12}\\
        A_{21}&A_{22}
    \end{pmatrix}\in \wGe_n(F_v)\middle|A_{12}\in M_{n}(\maf{a}),A_{21}\in M_n(\maf{b}),A_{11},A_{22}\in M_n(\hana{O}_{E_v})\right\}.
\end{equation*}
 We define the maximal compact group of $\wGe_n(F_v)$ by $\wK_{n,v}=\Gamma_v[\maf{d}_{F_v}^{-1},\maf{d}_{F_v}]$. We put $\K_{n,v}^1=\Ge_n^1(F_v)\cap\K_{n,v}$ and $\K_{n,v}=\Ge_n(F_v)\cap\wK_{n,v}$. 
 We put $\K_{n,\bmf{f}}=\prod_{v\in\bmf{f}}\K_{n,v}$, $\wK_{n,\bmf{f}}=\prod_{v\in\bmf{f}}\wK_{n,v}$ and $\K_{n,\bmf{f}}^1=\prod_{v\in\bmf{f}}\K_{n,v}^1$. We write $\K_{n,v}^0=\GL_n(\hana{O}_{E_v})$ and $\K_{n,\bmf{f}}^0=\prod_{v\in\bmf{f}}\K_{n,v}^0$. 

The Haar measure $dg_v$ on $G(F_v)$ is normalized such that $\mathrm{vol}(K_{n,v}, dg_v) = 1$ for the cases $(G, K_v) = (\wGe_n, \wK_{n,v})$, $(\Ge_n, \K_{n,v})$, $(\Ge_n^1, \K_{n,v}^1)$ and $(\Res_{E/F}\GL_n,\K_{n,v}^0)$ for $v\in\bmf{f}$. 

Then we have $\Ge_n^1(\A)=\Ge_n^1(F)\K_n^1\Ge_n^1(\mb{R}\otimes F)$ by the strong approximation theorem. 
Note that the determinant and the similitude character induce a homomorphism
\begin{equation*}
\wGe_n(\A) \longrightarrow \{(x,t)\in \A_E^{\times}\times\A^{\times}|N_{E/F}(x)t^{-2n}=1\},
\end{equation*}
given by $g \mapsto (\det(g), \nu(g))$, whose kernel is precisely $\Ge_n^1(\A)$. Since the order of the associated adelic class group coincides with $\maf{h}_E$, we can choose a complete set of representatives $\{\gamma_1,\cdots,\gamma_{\maf{h}_{E}}\}\subset\wGe_n(\A_{\bmf{f}})$ for this class group to obtain the double coset decomposition:
\begin{equation*}
\wGe_n(\A)=\bigsqcup_{i=1}^{\maf{h}_{E}}\wGe_n(F)\gamma_i\wGe_n(\mb{R}\otimes F)\wK_n.
\end{equation*}

For $v\in\bmf{a}$, let $\wGe_n^+(F_v)$ be the connected component of $1_{2n}$ in $\wGe_n(F_v)$. Put $\wGe_n^+(\mb{R}\otimes F)=\prod_{v\in\bmf{a}}\wGe_n^+(F_v)$, $\wGe_n^+(\A)=\wGe_n(\A_{\bmf{f}})\times\wGe_n^+(\mb{R}\otimes F)$ and $\wGe_n^+(F)=\wGe_n^+(\A)\cap\wGe_n(F)$.

For $R=F$, $F_v$ ($v\in\bmf{a}$) and $\A$, we put $\wPe_n^+(R)=\wGe_n^+(R)\cap\wPe_n(R)$, $\wM_n^+(R)=\wGe_n^+(R)\cap\wM_n(R)$. We have $\Z_{n}(R)\subset\wGe_n^+(R)$.

We denote by $\Her_n(F)^+$ the subset of $\Her_n(F)$ consisting of positive definite matrices at any archimedean place.  
Then we have the following proposition;
\begin{props}\label{prop:pre2}
Let $\varepsilon_v:\Her_n^{nd}(F_v)\to\{-1,1\}$ be the function defined by
\begin{equation*}
\varepsilon_v(A)=\begin{cases}
1&\text{$\det A\in N_{E/F}(E_v^\times)$}\\
-1& \text{otherwise}
\end{cases}
\end{equation*}
for $A\in\Her_n^{nd}(F_v)$. Then the natural map
\begin{equation*}
\Her_n(F)^+/GL_n(E)\to\left\{(A_v)_{v\in\bmf{f}}\in\prod_{v\in\bmf{f}}\Her_n(F_v)/\GL_n(E_v)\middle|\begin{matrix}\varepsilon_v(A_v)=1\text{ for almost all $v$}\\
\prod_{v\in\bmf{f}}\varepsilon_v(A_v)=1
\end{matrix}\right\}
\end{equation*}
induces one-to-one correspondence.
\end{props}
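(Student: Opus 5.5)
We derive this from the Hasse principle for Hermitian forms over $E/F$, due to Landherr. The underlying fact is that a nondegenerate Hermitian form over $E$ is classified up to $\GL_n(E)$-equivalence by its local invariants. At a finite place $v$ the invariant is the class of $\det A_v$ in $F_v^\times/N_{E/F}(E_v^\times)$, i.e.\ $\varepsilon_v(A_v)$. At a real place it is the signature. Since every element of $\Her_n(F)^+$ is positive definite at all $v\in\bmf{a}$, the archimedean invariants are fixed. We also use that at a finite place $v$ which is inert or ramified, two nondegenerate Hermitian forms of rank $n$ over $E_v$ are equivalent if and only if their determinants agree modulo norms; at split $v$ there is only one class. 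Accordingly, the target set is identified with the tuples $(A_v)_v$, and each $A_v$ is determined up to equivalence by $\varepsilon_v(A_v)$. The map is then well defined: $\varepsilon_v(A)=1$ for almost all $v$ because $\det A\in F^\times$ is a local unit, and hence a local norm, at almost all unramified $v$.

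For the product condition on the image, take $A\in\Her_n(F)^+$ and put $d=\det A\in F^\times$. Hilbert reciprocity for the quadratic character $\varepsilon_{E/F}$ gives $\prod_{v\in\bmf{v}}\varepsilon_{E/F,v}(d)=1$. At each real place $v$, $E_v\cong\mb{C}$ and $d$ is totally positive, so $\varepsilon_{E/F,v}(d)=1$. Since $\varepsilon_v(A)=\varepsilon_{E/F,v}(\det A)$, this yields $\prod_{v\in\bmf{f}}\varepsilon_v(A)=1$.

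Injectivity is Landherr's theorem. If $A,B\in\Her_n(F)^+$ are locally equivalent at every finite place, they are also equivalent at every real place, since both are positive definite there. They are therefore globally equivalent. One way to see this is to reduce by induction on $n$ through Witt cancellation, using that $A$ and $B$ represent a common element of $F^\times$ locally everywhere and then applying the local–global principle for representations. A quicker route is the vanishing of $\ker\bigl(H^1(F,U(A))\to\prod_v H^1(F_v,U(A))\bigr)$ for unitary groups.

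Surjectivity is the main step. Let $(A_v)$ be given, with $\varepsilon_v(A_v)=1$ for almost all $v$ and $\prod_{v\in\bmf{f}}\varepsilon_v(A_v)=1$. Set the real-place signs to $+1$; the full family of signs $(\varepsilon_v)_{v\in\bmf{v}}$ then has product $1$, so it lies in the kernel of the reciprocity map. By global class field theory for the quadratic extension $E/F$, equivalently the exact sequence $F^\times/N(E^\times)\to\bigoplus_v F_v^\times/N(E_v^\times)\to\{\pm1\}\to 1$, there is $d\in F^\times$ with $\varepsilon_{E/F,v}(d)=\varepsilon_v(A_v)$ for all $v$, including $d\gg0$. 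Take $A=\mathrm{diag}(1,\dots,1,d)\in\Her_n(F)^+$. Its local determinant classes agree with those of $A_v$ at every finite place, so $A$ is locally equivalent to $A_v$ at each $v$. This gives surjectivity.

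The key inputs are the local classification by determinant class and Landherr's Hasse principle. The step most likely to need care is the existence of $d$ with prescribed local norm classes, which requires the exactness statement of class field theory together with the positivity at the real places.
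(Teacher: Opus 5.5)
Your proposal is correct and follows the same route as the paper, whose proof is just a citation of Scharlau (Chapter 10, Theorem 6.1, Corollary 6.6, Theorem 6.9) for three facts: local forms are classified by their determinant class, Landherr's Hasse principle gives injectivity, and a global form with prescribed local invariants exists, giving surjectivity. You have unpacked those citations: reciprocity gives the product condition on the image, and the class-field-theory exact sequence together with $\mathrm{diag}(1,\dots,1,d)$, $d$ totally positive, gives surjectivity.
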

\begin{proof}    
We can check injectivity and surjectivity by \cite[Theorem 6.1, Corollary 6.6, Theorem 6.9 in Chapter 10]{WS}.
\end{proof}

To simplify the notation, we shall omit the subscript $n$ whenever no confusion can arise.

\subsection{Hilbert Hermitian Modular Forms and Automorphic Forms}
We now recall the definitions of Hilbert hermitian modular forms and automorphic forms on $\wGe_n^+(\A)$ to compare our results with prior work. \\
First, let $\hana{H}_n$ be the Hermitian upper half-plane
\begin{equation*}
\left\{Z\in \Mat_n(\mb{C})\middle|\dfrac{1}{2\sqrt{-1}}(Z-Z^*)>0\right\}.
\end{equation*}
Let $v\in\bmf{a}$.
Then, we have the action of $\wGe^+(F_v)$ on $\hana{H}_n$ defined by
\begin{align*}
hZ=(a(h)Z+b(h))(c(h)Z+d(h))^{-1}
\end{align*}
for $h=\begin{pmatrix}
a(h)&b(h)\\
c(h)&d(h)
\end{pmatrix}\in\wGe(F_v)$ and $Z\in\hana{H}_n$.

We call $\Gamma$ a congruence subgroup of $\wGe^+(F)$ if there exists a compact open subgroup $K_{\bmf{f}}$ of $\wGe(\A_{\bmf{f}})$ such that $\Gamma=\iota_{\bmf{f}}^{-1}(K_{\bmf{f}})\cap\iota_{\bmf{a}}^{-1}(\wGe^+(\mb{R}\otimes F))$ where $\iota_{\bmf{f}}:\wGe(F)\to\wGe(\A_{\bmf{f}})$, $\iota_{\bmf{a}}:\wGe(F)\to\wGe(\mb{R}\otimes F)$ is the natural embedding. Put $\Gamma[\maf{d}^{-1},\maf{d}]=\iota_{\bmf{f}}^{-1}(\wK_{\bmf{f}})$.\\
We define the automorphy factors of $\wGe(F_v)$. For $h=\begin{pmatrix}
a(h)&b(h)\\
c(h)&d(h)
\end{pmatrix}\in\wGe(F_v)$ and $Z\in\hana{H}_n$, we put 
\begin{align*}
\mu(h,Z)=c(h)Z+d(h),&\hspace{5mm}
j(h,Z)=\nu(h)^{-n/2}\det(\mu(h,Z)).
\end{align*}
Then we define the action of $\wGe^+(\mb{R}\otimes F)$ of weight $\kappa=(\kappa_v)_{v\in\bmf{a}}\in\mb{Z}^{\bmf{a}}$ on the spaces of holomorphic functions $f$ on $\hana{H}_n^{\bmf{a}}$ by
\begin{equation*}
(f||_\kappa g)(z)=\prod_{v\in \bmf{a}}j(g_v,z_v)^{-{\kappa_v}}f(gz)\text{ for $z=(z_v)_v\in\hana{H}_n^{\bmf{a}}$ and $g=(g_v)\in\wGe^+(\mb{R}\otimes F)$}.
\end{equation*}

Then we can define Hilbert hermitian modular forms as follows:

\begin{defns}\label{def: Hilbert_Hermitian_modular_form}
Let $\kappa=(\kappa_v)_{v\in \bmf{a}}\in\mb{Z}^{\bmf{a}}$ be a tuple of integers and $\Gamma$ be a congruence subgroup of $\wGe^+(F)$. A function $f:\hana{H}_n^{\bmf{a}}\to\mb{C}$ is called a Hilbert hermitian modular form of a weight $\kappa$ with respect to a congruence subgroup $\Gamma$ if it satisfies the following condition:
\begin{itemize}
\item[(1)] $f$ is holomorphic on $\hana{H}_n^{\bmf{a}}$ and $f$ has the Fourier expansion of the form 
\begin{equation*}
f(Z)=\sum_{B\in L,B\geqq0}a(f,B)\exp\left(2\pi\sqrt{-1}\sum_{v\in \bmf{a}}\tr(BZ)\right).    
\end{equation*}
for some lattice $L$ of $\Her(F)$  where $B \geqq 0$ means that $B$ is a positive semidefinite Hermitian matrix at all infinite places.
\item[(2)] $f||_{\kappa}\gamma=f$ for any $\gamma\in\Gamma$.
\item[(3)] If $n=1$ and $F=\mb{Q}$, $f$ is holomorphic at cusps.
\end{itemize}
We denote by $M_{\kappa}(\Gamma)$ the set of Hilbert hermitian modular forms of a weight $\kappa$ with respect to a congruence subgroup $\Gamma$.
We call $f\in M_{\kappa}(\Gamma)$ a Hilbert cusp form if $a(f,B)=0$ unless $B$ is positive definite and we denote by $S_\kappa(\Gamma)$ the subset of cusp forms of $M_\kappa(\Gamma)$.
\end{defns}

\begin{defns}[adelic automorphic forms]
    A function $\phi:\wGe(F)\lsla\wGe(\A)\to \mb{C}$ is called an automorphic form on $\wGe(F)\lsla\wGe(\A)$ if it satisfies the following condition:
    \begin{itemize}
        \item[(1)]$\phi$ is smooth and of moderate growth.
        \item[(2)]$\phi$ is right $\wK$-finite and $Z(\mr{Lie}\hspace{1mm}\wGe)(\mb{R}\otimes F)$-finite
    \end{itemize}
    where $Z(\mr{Lie}\hspace{1mm}\wGe)(\mb{R}\otimes F)$ is the center of universal enveloping algebra of $\mr{Lie}\hspace{1mm}\wGe(\mb{R}\otimes F)$. See \cite{GJH} for details.
\end{defns}

\begin{Rem}[classical-adelic correspondence]\hspace{5mm}\\
Let $\phi$ be an automorphic form of weight $\kappa=(\kappa_v)_{v\in\bmf{a}}$ and let $K_{\bmf{f}}$ be the stabilizer of $\phi$ in $\wGe(\A_{\bmf{f}})$. Let $\{\gamma_i\in\wGe_n(\A_{\bmf{f}})\}_{i=1}^{h}$ be the representative set of $\wGe(F)\lsla\wGe(\A)/K_{\bmf{f}}\wGe(\mb{R}\otimes F)$. Put $\Gamma_i=\iota_{\bmf{f}}^{-1}(\gamma_i^{-1}K_{\bmf{f}}\gamma_i)$, $\bm{i}_n=\sqrt{-1}\cdot1_n$.

We call $\phi$ a holomorphic automorphic form if there exists a family of holomorphic functions $\{f_i:\hana{H}_n^{\bmf{a}}\to\mb{C}\}_{i=1}^h$ such that $f_i||_{\kappa}g_{\infty}((\bm{i}_n)_{v\in\bmf{a}})=\phi(\gamma_i,g_{\infty})$ for $g_{\infty}\in\wGe^+(\mb{R}\otimes F)$.
Then if $\phi$ is a holomorphic automorphic form, $f_i$ is a Hermitian modular form of weight $(\kappa_v)_{v\in\bmf{a}}$ with respect to the congruence subgroup $\Gamma_i$ for each $i$.

Since $\wGe_n(F)\wGe_n^+(\A)=\wGe_n(\A)$, this gives a correspondence the holomorphic automorphic forms and a tuple of the Hilbert hermitian modular forms.
\end{Rem}

We put $\K_{n,v}=\{g\in \Ge_n(F_v)|g\bm{i}_n=\bm{i}_n\}$. Then, this is a compact subgroup. We denote by $d_{\K_{n,v}}k_v$ the Haar measure on $\K_{n,v}$ such that $\mr{vol}(\K_{n,v})=1$.
Put $\bmf{H}_n=\{X\in \Mat_n(\mb{C})|X-X^*=0\}$. Then, $\hana{H}_n=\{X+\sqrt{-1}Y|\text{$X,Y\in\bmf{H}_n$ $Y>0$}\}$.
The Haar measure $dg_{v}$ on $\Ge(F_v)$ is normalized such that $dg_{v}/d_{K_{n,v}}k_v=|\det Y|^{-2n}dXdY$ where $dX$, $dY$ are the Haar measures on $\bmf{H}_n$ such that $\vol(\bmf{H}_n/\bmf{H}_n\cap\Mat_n(\mb{Z}[\sqrt{-1}])=1$.

We put $\wK_{n,v}=\{g\in \wGe_n^+( F_v)|g\bm{i}_n=\bm{i}_n\}$. Then, we have the exact sequence $1\to\K_{n,v}\to\wK_{n,v}^+\xrightarrow{\nu_n}\mb{R}_{>0}\to1$. We normaloze the Haar measure $d_{\wK_{n,v}}k$ on $\wK_{n,v}$ so that it induces the quotient measure $\int_{1}^2d_{\K_{n,v}}k\lsla d_{\wK_{n,v}}k=\log2$.

To explain our main theorem, we define the scalar Petersson product $\perd{F,G}$ by 
\begin{equation*}
\perd{F,G}=\int_{Z(\A)\wGe^+(F)\lsla\wGe^+(\A)}F(g)\bar{G(g)}dg
\end{equation*}
for automorphic forms $F,G$, whose central character is unitary and the same, where we denote by $\dot{d}g=dz\lsla dg$ the quotient measure on $Z(\mathbb{A})\wGe(F) \backslash \wGe(\mathbb{A})$ for the Tamagawa measures $dg$ on $\wGe$ and the Tamagawa measure $dz$ on $Z \cong \Res_{E/F}\mb{G}_m$.

\subsection{Siegel Series and The Degenerate Whitakker Functions}
The Siegel series plays a crucial role in computing the Ikeda-Yamana lift and its period.
We recall the definition of the Siegel series. Let $v\in\bmf{f}$.
\begin{defns}
We define the Siegel series $b_v(B;s)$ for $B\in\dHer_v$ by
\begin{equation}
\sum_{z\in\Her(F_v)/\Her_v}\psi_v(\mr{tr}(Bz))\mu_v(z)^{-s}\nonumber
\end{equation}
where $\mu_v(z)=[z\hana{O}_{E_v}^n+\hana{O}_{E_v}^n:\hana{O}_{E_v}^n]^{1/2}$ for $z\in\Her(F_v)$.
If $B\notin\dHer_v$, we put $b_v(B;s)=0$.
\end{defns}
Put
\begin{equation}
\xi_v=\begin{cases}
1&\text{if $E_v=F_v\oplus F_v$,}\\
-1&\text{if $E_v$ is an inert extension of $F_v$,}\\
0 &\text{if $E_v$ is a ramified extension of $F_v$.}
\end{cases}\label{xi}
\end{equation}
Then by \cite{S1}, there exists a unique polynomial $F_v(B;X)$ in $X$ such that
\begin{equation}
b_v(B;s)=F_v(B;q_v^{-s})\prod_{i=0}^{[(n-1)/2]}(1-q_v^{2i-s})^{-1}\prod_{i=1}^{[n/2]}(1-\xi_vq_v^{2i-1-s})^{-1}\nonumber
\end{equation}
for $B\in\dHer_v$. Put $F_v(B;X)=0$ for $B\notin\dHer_v$.
We then define a Laurent polynomial $\wF_v(B;X)$ as
\begin{equation*}
\wF_v(B;X)=X^{-e_v(B)}F_v(B;q_v^{-n}X^2)
\end{equation*}
where $e_v(B)=\ord_v(\maf{D}_{E_v/F_v}^{[n/2]}\det B)$ for $B\in\Her(F_v)$. 
Put $\gamma_0(B)=(-1)^{[n/2]}\det B$.
The following equation is well known (see \cite[Corollary 3.2.]{Ike1}):
\begin{align*}
F_v(B;q_v^{-2n}X^{-1})=&\begin{cases}
\varepsilon_{E_v/F_v}(\gamma_0(B))(q_v^nX)^{-e_v(B)}F_v(B;X)&(\text{if $n$ is even})\\
(q_v^nX)^{-e_v(B)}F_v(B;X)&(\text{if $n$ is odd})
\end{cases}
\end{align*}
for $B\in\dHer_v^{nd}$.
Therefore, we have
\begin{align*}
\wF_v(B;X^{-1})=&\begin{cases}
\varepsilon_{E_v/F_v}(\gamma_0(B))\wF_v(B;X)&(\text{if $n$ is even})\\
\wF_v(B;X)&(\text{if $n$ is odd}).
\end{cases}
\end{align*}
This equation follows from the functional equation of Whittaker functions.
\\
\\

To reduce the global period problem to local computations, we need an explicit representation of the degenerate Whittaker functions.
We recall some facts about Whittaker functions.

Let $\psi_v$ be an unramified additive character on $F_v$.
We denote by $\psi_{v,B}$ the character on $\Her(F_v)$ such that $\psi_{v,B}(u)=\psi_v(\tr_{E/F}(\tr(Bu)))$ for $B\in\Her(F_v)$, $u\in\Her(F_v)$.
Then we denote by $\W_v(\psi_B)$ the vector space of the functions on $\wGe(F_v)$ satisfying
\begin{itemize}
\item $W(\n(u)g)=\psi_{v,B}(u)W(g)$ for $g\in\wGe(F_v)$ and $u\in\Her(F_v)$,
\item There exists some compact open subgroup $K$ of $\wGe(F_v)$ such that
 \begin{equation*}
     W(gk)=W(g)\text{ for $g\in\wGe(F_v)$ and for $k\in K$}.
 \end{equation*}
\end{itemize}
 
When $\Pi_v$ is a degenerate principal series representation, it is well-known that 
\begin{equation*}
\dim\mr{Hom}_{\wGe}(\Pi_v,\W(\psi_B))\leqq1.
\end{equation*}
Therefore, it suffices to understand Whittaker functions to find one of the non-zero degenerate Whittaker functions.
Let $\alpha_{\wPe}^s$ be the character on $\wPe$ such that $\alpha_{\wPe}^s(\n(u)\m(h)\bm{d}(t_v))=\alpha_{E_v}^s(\det h)\alpha_{F_v}^{-ns}(t)$ for $h\in\GL_n(E_v)$, $u\in\Her(F_v)$ and $t_v\in F_v^\times$.
For $\varphi\in\ind_{\wPe(F_v)}^{\wGe(F_v)}(\alpha_{\wPe}^s)$ where $\ind_{\wPe(F_v)}^{\wGe(F_v)}(\alpha_{\wPe}^s)$ is the parabolic induced representation 
\begin{equation*}
\left\{\text{smooth function }f:\wGe_n(F_v)\to \mb{C}\middle|\begin{matrix}f(\n(X)\m(A)\bm{d}(t)g)=\alpha_{E_v}^s(\det A)\alpha_{F_v}^{-ns}(t)f(g)\end{matrix}\right\},
\end{equation*} we put 
\begin{equation*}
    W(g_v; B, \varphi)=\int_{\Her(F_v)}\varphi(w_n\n(Z_v)g_v)\psi^{-1}(\tr(BZ_v))dZ_v
\end{equation*}
where $dZ_v$ is a Haar measure on $\Her(F_v)$ determined by $\vol(\Her(F_v;\hana{O}_{E_v}),dZ_v)=1$.
For $\Re(s)>n$, the above integral is absolutely convergent and defines a Whittaker functional on $\ind_{\wPe}^{\wGe}(\alpha_{\wPe}^s)$.
We put $\varphi^{(s)}(g)=\alpha_{\wPe}^s(p)$ for $g=pk\in \wGe$ with the Iwasawa decomposition $g=pk$.
Then, we can give an explicit formula for its Fourier coefficients by the following lemma.
\begin{lems}\label{lem:whfunc1}
Let $B\in\Her(F_v)^{nd}$.
Then, we have
\begin{equation*}
W(g_v; B, \varphi^{(s)})=\alpha_{E_v}(\det h_v)^{-s+n}\alpha_{F_v}^{-n^2}(t_v)b_v(t_v^{-1}B[h_v];2s)\psi_{B,v}(u_v)
\end{equation*}
where $g_v\in\n(u_v)\bm{d}(t_v)\m(h_v)\wK_v$ for $h_v\in\GL_n(E_v)$, $t_v\in F_v^\times$ and $u_v\in\Her(F_v)$.
\end{lems}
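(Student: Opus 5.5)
We prove Lemma~\ref{lem:whfunc1} by reducing to $g_v=1$ through the equivariance of $W(\,\cdot\,;B,\varphi^{(s)})$ under $\wPe(F_v)$, and then identifying the remaining integral with the Siegel series.

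\emph{Reduction to $g_v=1$.} Write $g_v=\n(u_v)\bm{d}(t_v)\m(h_v)k$ with $k\in\wK_v$. Since $\varphi^{(s)}$ is right $\wK_v$-invariant, so is $W(\,\cdot\,;B,\varphi^{(s)})$, and we may drop $k$. For the unipotent element, substitute $Z_v\mapsto Z_v-u_v$ in the defining integral; this pulls out the factor $\psi_{B,v}(u_v)$.

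For the Levi part we use the conjugation rule $w_n\m(h)=\m((h^*)^{-1})w_n$ together with $\m(h)^{-1}\n(Z)\m(h)=\n(h^{-1}Z(h^*)^{-1})$. After the substitution $Z\mapsto hZh^*$, whose Jacobian on $\Her_n(F_v)$ is $\alpha_{E_v}(\det h)^{n}$, we get
\[
W(\m(h);B,\varphi^{(s)})=\alpha_{E_v}(\det h)^{-s+n}\,W(1;B[h],\varphi^{(s)}).
\]
Two things need checking here: the exact power $-s+n$, and that $\tr(B\,hZh^*)=\tr(B[h]Z)$ under the convention $B[h]=h^*Bh$.

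The similitude element is treated the same way. Conjugating $\bm{d}(t)$ past $w_n$ gives a scalar factor from $\alpha_{\wPe}^s$ applied to $\bm{d}$ on the left. Rescaling $Z\mapsto tZ$ has Jacobian $\alpha_{F_v}(t)^{n^2}$, since $\dim_F\Her_n=n^2$, and it replaces $B$ by $t^{-1}B$. I expect the $s$-dependent parts to cancel, using $\alpha_{\wPe}^s(\bm{d}(t))=\alpha_{F_v}^{-ns}(t)$, leaving the factor $\alpha_{F_v}^{-n^2}(t_v)$. Getting the sign of this exponent right is the step I expect to be most error-prone, so I will verify it directly against the stated normalization $\alpha_{\wPe}^s(\n(u)\m(h)\bm{d}(t))=\alpha_{E_v}^s(\det h)\alpha_{F_v}^{-ns}(t)$.

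\emph{The case $g_v=1$.} We must show $W(1;B,\varphi^{(s)})=b_v(B;2s)$. For $Z\in\Her_n(F_v)$, use the Iwasawa decomposition of $w_n\n(Z)$: reducing via elementary divisors $Z=U\,\mathrm{diag}(\varpi^{-e_i})U^*$ with $U\in\GL_n(\hana{O}_{E_v})$, one finds
\[
\varphi^{(s)}(w_n\n(Z))=\mu_v(Z)^{-2s}.
\]
This holds because the $\GL_n$-component of the Iwasawa decomposition has determinant whose $\alpha_{E_v}$ equals $\mu_v(Z)^{-2}$. The function $\varphi^{(s)}(w_n\n(Z))$ is invariant under $Z\mapsto Z+\Her_n(\hana{O}_{E_v})$, because $\n(\Her_n(\hana{O}_{E_v}))\subset\wK_v$ (modulo the different twist, which is absorbed since $\psi_v$ is unramified and $\wK_v$ is defined with $\maf{d}_{F_v}$). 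Since $\vol(\Her_n(\hana{O}_{E_v}))=1$, we can fold the integral into a sum over $\Her(F_v)/\Her_v$. The character integral over each coset vanishes unless $B\in\dHer_v$, and in that case it equals $1$. This gives
\[
\sum_{z\in \Her(F_v)/\Her_v}\psi_v(\tr(Bz))\,\mu_v(z)^{-2s}=b_v(B;2s),
\]
which matches the definition of $b_v$, including the convention $b_v=0$ when $B\notin\dHer_v$. Absolute convergence for $\Re(s)>n$ justifies all the rearrangements.

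The main obstacle is this Iwasawa computation, namely $\varphi^{(s)}(w_n\n(Z))=\mu_v(Z)^{-2s}$, which needs care in the ramified case. I will handle it by treating $n=1$ explicitly and then using the block-diagonal reduction to pass to general $n$.
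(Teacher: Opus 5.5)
Your architecture matches the paper's. You strip off $\n(u_v)$ and $k$, push the Levi part through $J_n$ (your $w_n$) and change variables, then identify $\varphi^{(s)}(J_n\n(z))=\mu_v(z)^{-2s}$ and fold the integral over $\Her(F_v)/\Her_v$. The $\m(h)$ step (factor $\alpha_{E_v}(\det h)^{n-s}$ and $B\mapsto B[h]$) and the folding are fine.

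\textbf{The similitude step fails.} This is exactly the step you flagged: the $s$-dependence does not cancel. Since $\n(Z)\bm{d}(t)=\bm{d}(t)\n(tZ)$ and $J_n\bm{d}(t)=\m(t1_n)\bm{d}(t)J_n$, the stated normalization gives
\[
\varphi^{(s)}(J_n\n(Z)\bm{d}(t)g)=\alpha_{E_v}^{s}(t^n)\,\alpha_{F_v}^{-ns}(t)\,\varphi^{(s)}(J_n\n(tZ)g)=\alpha_{F_v}(t)^{ns}\,\varphi^{(s)}(J_n\n(tZ)g).
\]
The substitution $Z\mapsto t^{-1}Z$ has Jacobian $\alpha_{F_v}(t)^{-n^2}$, so $W(\bm{d}(t)g;B,\varphi^{(s)})=\alpha_{F_v}(t)^{ns-n^2}\,W(g;t^{-1}B,\varphi^{(s)})$. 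Nothing is left to absorb the factor $\alpha_{F_v}(t)^{ns}$.

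The paper's own proof reaches $\alpha_{F_v}^{-n^2}(t_v)$ only by evaluating $\alpha_{\wPe}^s$ on $\bm{d}(t_v)$ as $\alpha_{F_v}^{-2ns}(t_v)$. That contradicts its own definition. The $-ns$ normalization is the one that makes $\alpha_{\wPe}^s$ trivial on $\Z_n$, which $E_{\wPe}$ on $\Z(\A)\wGe(F)\lsla\wGe(\A)$ requires.

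A central-element check confirms the discrepancy. For $z\in E_v^\times$, the element $z1_{2n}=\m(z1_n)\bm{d}(N_{E/F}(z))$ is central, so $W(z1_{2n};B,\varphi^{(s)})=b_v(B;2s)$. The displayed formula instead gives $\alpha_{E_v}(z)^{-ns}b_v(B;2s)$.

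So the stated identity can only be proved for $t_v\in\hana{O}_v^\times$. For general $t_v$, the correct statement, which your method proves, is
\begin{align*}
W(g_v;B,\varphi^{(s)})&=\alpha_{E_v}(\det h_v)^{n-s}\,\alpha_{F_v}(t_v)^{ns-n^2}\,b_v(t_v^{-1}B[h_v];2s)\,\psi_{B,v}(u_v)\\
&=|\det B|_v^{s-n}\,|\det(t_v^{-1}B[h_v])|_v^{n-s}\,b_v(t_v^{-1}B[h_v];2s)\,\psi_{B,v}(u_v).
\end{align*}
This agrees with the later $W_{v,s}$, which depends on $(t_v,h_v)$ only through $t_v^{-1}B[h_v]$. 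The paper's later uses of the lemma only need $t_v=1$.

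\textbf{The diagonalization step also has a gap.} Your route to $\varphi^{(s)}(J_n\n(Z))=\mu_v(Z)^{-2s}$ relies on writing $Z=U\,\mathrm{diag}(\varpi^{-e_i})\,U^*$ with $U\in\GL_n(\hana{O}_{E_v})$. This is not available in the ramified case. Take tame ramification with $\bar{\varpi}_{E_v}=-\varpi_{E_v}$ and $Z=\begin{pmatrix}0&\varpi_{E_v}^{-1}\\ \bar{\varpi}_{E_v}^{-1}&0\end{pmatrix}$. Then $x^*Zx=\tr_{E/F}(\bar{x}_1x_2\varpi_{E_v}^{-1})\in\hana{O}_v$ for all $x\in\hana{O}_{E_v}^2$, while $\ord_v\det Z=-1$. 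So $Z$ is not integrally congruent to a diagonal matrix, and ``$n=1$ plus block reduction'' misses these $2\times2$ blocks.

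No congruence normal form is needed. The paper writes $J_n\n(z)\in\m(a)\n(u)\wK_v$ and proves $z\hana{O}_{E_v}^n+\hana{O}_{E_v}^n=(a^*)^{-1}\hana{O}_{E_v}^n$ by two inclusions. This gives $\mu_v(z)^2=\alpha_{E_v}(\det a)^{-1}$ directly. Equivalently, you can apply a Smith form $z=UDV$ with independent $U,V\in\GL_n(\hana{O}_{E_v})$ to the row block $(1_n\ z)$.
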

\begin{proof}
Since the function $\varphi_{\chi_{\wPe}}$ is equipped with a trivial left $U_\Pe$-action and is invariant under the right $\wK_v$-invariant, it suffices to consider the case where $g_v=\bm{d}(t_v)\m(h_v)$ for $h\in\GL_n(E_v)$, $t_v\in F_v^\times$.
Then, we have
\begin{align*}
\int_{\Her(F_v)}&\varphi^{(s)}(J\n(z_v)\bm{d}(t_v)\m(h_v))\bar{\psi_B(z_v)}dz_v\\
=&\int_{\Her(F_v)}\varphi^{(s)}(\n^{-}(-z_v)\bm{d}(t_v)\m(t_v\cdot1_n)\m((h_v^*)^{-1}))\bar{\psi_{v,B}(z_v)}dz_v\\
=&\alpha_{E_v}^s(\det(t_v(h_v^*)^{-1}))\alpha_{F_v}^{-2ns}(t_v)\int_{\Her(F_v)}\varphi^{(s)}(\n^{-}(-t_vz_v[(h_v^*)^{-1}]))\bar{\psi_{v,B}(z_v)}dz_v\\
=&\alpha_{E_v}^{-s}(\det(h_v))\int_{\Her(F_v)}\varphi^{(s)}(\n^{-}(-z_v))\bar{\psi_{v,B}(t_v^{-1}z[h_v^*])}\alpha_{F_v}^{-n^2}(t)\alpha_{E_v}^{-n}(\det(h_v^{-1}))dz_v\\
=&\alpha_{E_v}^{-s+n}(\det(h_v))\alpha_{F_v}^{-n^2}(t_v)\int_{\Her(F_v)}\varphi^{(s)}(\n^{-}(-z_v))\bar{\psi_{v,t_v^{-1}B[h_v]}(z_v)}dz_v.
\end{align*}
By Iwasawa decomposition, there exists $a_v\in\GL_n(E_v)$ and $u_v\in\Her(F_v)$ for $z_v\in\Her(F_v)$ such that $\n^-(-z_v)\in\m(a_v)\n(u_v)\K_v^0$.
This means that $a_v^{-1}+u_va_v^*z_v$, $u_va_v^*$, $a_v^*z_v$ and $a_v^*$ belong to $\Mat_n(\hana{O}_{E_v})$. 
Then, we have $\varphi^{(s)}(\n^-(-z_v))=\alpha_{E_v}^s(\det a_v)$.
On the other hand, $\mu_v$ can be compared with $\varphi^{(s)}$ by integral representation.
Since $a_v^*z_v$, $a_v^*\in \Mat_n(\hana{O}_{E_v})$, we have
\begin{align*}
\mu_v(z_v)^2=&\int_{E_v^{n}}1_{z_v\hana{O}_{E_v}^n+\hana{O}_{E_v}^n}(r)dr\\
=&\int_{E_v^{n}}1_{z_v\hana{O}_{E_v}^n+\hana{O}_{E_v}^n}((a_v^*)^{-1}r')dr'|\det(a_v^*)^{-1}|_{E_v}\\
=&\int_{E_v^{n}}1_{a_v^*z_v\hana{O}_{E_v}^n+a_v^*\hana{O}_{E_v}^n}(r')dr'|\det(a_v^*)|_{E_v}^{-1}\\
\leqq&\int_{E_v^{n}}1_{\hana{O}_{E_v}^n}(r')dr'|\det(a_v^*)|_{E_v}^{-1}.
\intertext{On the other hand, since $(a_v^*)^{-1}+z_va_vu_v$ and $a_vu_v$ belong to $\Mat_n(\hana{O}_{E_v})$, we have
\begin{equation*}
(a_v^*)^{-1}\hana{O}_{E_v}^n\subset z_va_vu_v\hana{O}_{E_v}^n+\hana{O}_{E_v}^n\subset z_v\hana{O}_{E_v}^n+\hana{O}_{E_v}^n
\end{equation*}
and}
\mu_v(z_v)^2\geqq&\int_{E_v^{n}}1_{(a_v^{*})^{-1}\hana{O}_{E_v}^n}(r)dr\\
=&\int_{E_v^{n}}1_{\hana{O}_{E_v}^n}(a_v^*r)dr\\
=&\int_{E_v^{n}}1_{\hana{O}_{E_v}^n}(r')dr'|\det(a_v^*)|_{E_v}^{-1}
\end{align*}
Therefore, $\mu_v(z_v)^{-2s}=\alpha_{E_v}^{s}(\det a_v^*)=\varphi^{(s)}(\n^-(-z_v))$ and we obtain 
\begin{equation*}
W(g_v; B, \varphi^{(s)})=\psi_{v,B}(u_v)|\det h_v|_{E_v}^{-s+n}|t_v|_v^{-n^2}b_v(t_v^{-1}B[h_v],2s).
\end{equation*}
\end{proof}
For $v\in\bmf{f}$, we put  
\begin{equation*}
W_{v,s}(\n(u)\bm{d}(t_v)\m(h)k;B)=\psi_{v,B}(u_v)|\det t^{-1}B[h_v]|_v^{-s+n/2}\wF_v(t_v^{-1}B[h_v],q_v^{-s})
\end{equation*}
for $u_v\in\Her(F_v)$, $h_v\in\GL_n(E_v)$, and $t_v\in F_v^\times$.
Then, this function is the Whittaker function and belongs to the Whittaker model of degenerate principal series $\Ind_{\wPe}^{\wGe}(\alpha_{\wPe}^s)$.

For $B\in\Her(F)^+$ $l,\kappa\in\mb{Z}^{\bmf{a}}$ and $g\in\wGe^+(\mb{R}\otimes F)$, we define the archimedean Whittaker function $W_\infty^{l,\kappa}$ by
\begin{align*}
W_{\infty}^{l,\kappa}(g;B)=&\prod_{v\in \bmf{a}}(\det B)^{l_v/2}\exp(2\pi\sqrt{-1}\tr(B\cdot g_v\bm{i}_n))\left(\det g_v(\bar{\det g_v})^{-1}\right)^{\kappa_v/2}\prod_{v\in\bmf{a}}j_v(g_v,\bm{i}_n)^{-l_v}.
\end{align*}
We define the Whittaker function $W_{\maf{s}}^{l,\kappa}$ by 
\begin{align*}
W_{\maf{s}}^{l,\kappa}(g;B)=&\prod_{v\in\bmf{f}}W_{v,\maf{s}_v}(g_v;B)W_{\infty}^{l,\kappa}(g_{\infty};B)
\end{align*}
for $g=((g_v)_{v\in\bmf{f}},g_{\infty})\in\wGe^+(\A)=\wGe(\A_{\bmf{f}})\times\wGe^+(\mb{R}\otimes F)$, $\maf{s}=(\maf{s}_v)_{v\in\bmf{f}}$ and $l,\kappa\in\mb{Z}^{\bmf{a}}$.

%--- Section ---%
\section{Ikeda-Yamana Lift and Main Theorem}\label{sec3}

\subsection{Ikeda-Yamana Lift}\label{subsec3-1}
First, we recall the definition of the Ikeda-Yamana lift defined by Yamana in \cite{Y} and the explicit formula of the classical type.
Assume $n$ is odd in this section.

Let $\pi \simeq\otimes'_v\pi_v$ be an irreducible automorphic representation of $\GL_2(\A)$ generated by Hecke eigenforms with central character $\omega_\pi$.
Fix a Hecke character $\chi_\pi=\otimes_v\chi_v$ of $\A_E^\times$ whose restriction to $\mb{A}^\times_F$ equals $\omega_\pi$.

Let $V$ denote a $2n$-dimensional vector space over $E$, equipped with a split skew-Hermitian structure given by $\langle x, y \rangle_V = x^* J_n y$ for $x,y\in V\cong E^{2n}$.
Fix an $E$-basis $e_i,f_i$ consisting of isotropic vectors such that $\langle{e_i,f_j}\rangle_V=\delta_{i,j}$.
Let $V^{(i)}$ be the isotropic subspace of $V$ spanned by $e_1,f_2,e_3,f_4,\dots,e_{2i-1},f_{2i}$ and let $V_{(i)}$ be the isotropic subspace of $V$ spanned by $f_1,e_2,f_3,e_4,\cdots,f_{2i-1},e_{2i}$.
We define the parabolic subgroup $\wPe_{2}$ of $\wGe_n$ which stabilizes the flag of isotropic subspaces $V^{(1)}\subset V^{(2)}\subset \dots\subset V^{((n-1)/2)}\subset V$ and the flag of isotropic subspaces $V_{(1)}\subset V_{(2)}\subset\dots\subset V_{((n-1)/2)}\subset V$.
Then, the standard Levi group of $\wPe_2$ is isomorphic to $\Res_{E/F}\GL_2^{(n-1)/2}\times\wGe_1$.
Put 
\begin{align*}
&J_n^{\chi_v}(\pi_v)=\Ind_{\wPe_{2}}^{\wGe_n}\delta_{\wPe_2}^{-1/4}\otimes \left(W(\pi_v[\hat{\chi_v}])^{\boxtimes(n-1)/2}\boxtimes(\hat{\chi}_v^{-1}\boxtimes W(\pi_v))\right)\\
=&\left\{\phi:\wGe_n(F_v)\to W(\pi_v[\hat{\chi_v}])^{\boxtimes(n-1)/2}\boxtimes(\hat{\chi}_v^{-1}\boxtimes W(\pi_v))\middle|\begin{matrix}
\phi(pg)=\delta_{\wPe_2}^{1/4}(p)\rho(p)\phi(g)\\
\text{$\phi$ is $\wK$-finite and $Z(\mr{Lie}(\wGe_n))$-finite} 
\end{matrix} \right\}.
\end{align*}
and let $A_n^{\hat{\chi}}(\pi_v)$ be the unique irreducible subrepresentation of $J_n^{\hat{\chi}}(\pi_v)$.
We define the $B$-th Shalika functional with respect to a character $\chi$ on $E_v^\times$.
\begin{defns}
    Let $\Pi$ be an admissible representation of $\wGe_n(F_v)$ and let $\chi$ be a character on $E_v^\times$.
    Let $B\in\Her_n^{nd}$ and $w_B$ be a Whittaker functional. We say that $w_B$ is a $B$-th Shalika functional with respect to $\chi$ if it satisfies the following equivariance property:
\begin{equation*}
(w_B \circ \Pi)(m(A) d_n(\lambda_B(A))) = \chi(\lambda_B(A)^{-[n/2]} \det A) w_B
\end{equation*}
for all $A \in \mathrm{GL}_n(E_v)$ such that $B[A] = \lambda_B(A) B$ for some $\lambda_B(A) \in F_v^\times$.
For a Whittaker functional $w_B$ of $\Pi$, we call $w_B$ a $B$-th Shalika functional with respect to $\chi$ if 
    \begin{equation*}
        (w_B\circ\Pi)(\m(A)\bm{d}_n(\lambda_B(A)))=\chi(\lambda_B(A)^{-[n/2]}\det A)w_B
    \end{equation*}
    for $(A,t)\in\GL_n(E_v)\times F_v^{\times}$.\\
\end{defns}
We can also define the $B$-th Shalika functional on an admissible representation on $\wGe_n(\mb{A}_{\bmf{f}})$.
Then Yamana constructed a family of Shalika functionals $\{\Slk_B\}_{B\in\Her_n(F)^+}$ on $A_n^{\hat{\chi}_{\bmf{f}}}(\pi_{\bmf{f}})=\otimes'_vA_n^{\hat{\chi}_v}(\pi_v)$ and he proved the following theorem:
\begin{props}\cite[Theorem 1.1]{Y}\label{YamanaMT}
The series
\begin{equation*}
\I_n(g;\phi)=\sum_{B\in \Her_n^+(F)}W_B^{\kappa+n-1,w}(g_\infty)\Slk_B(\Pi_{\bmf{f}}(g_{\bmf{f}})\phi),\hspace{2mm  }w=\dfrac{1}{2}(\kappa+n-1+l(\hat{\chi}))
\end{equation*}
defines a $\wGe_n(\A_{\bmf{f}})$-intertwining embedding from $A_n^{\hat{\chi}_{\bmf{f}}}(\pi_{\bmf{f}})$ to the spaces of the Hilbert hermitian cusp forms on $\Ge_n(\A_F)$ of weight $\kappa+n-1$ with the character $\varepsilon_{E/F}^{w}$.
\end{props}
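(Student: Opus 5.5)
The plan is to follow Ikeda's strategy for the unitary Duke–Imamoglu–Ikeda lift (\cite{Ike1}), with the Shalika functionals $\Slk_B$ playing the role of the products $\prod_v \wF_v(B;\alpha_v)$ of Laurent polynomials specialised at Satake parameters. The proof splits into a formal part and a substantive part.

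\textbf{Formal part.}
\begin{enumerate}
\item \emph{Convergence and holomorphy.} I will bound $|\Slk_B(\Pi_{\bmf{f}}(g_{\bmf{f}})\phi)|$ by $C\,|\det B|^{c}$ for some constant $c$. At unramified places I use the explicit formula in terms of $\wF_v(B;X)$ together with the Ramanujan-type bound on the Satake parameters of $\pi_v$; the finitely many ramified places contribute only a bounded factor. Since $W_B^{\kappa+n-1,w}$ decays exponentially in $\tr(BY)$, the series converges absolutely and locally uniformly, and defines a holomorphic function of the weight-$(\kappa+n-1)$ type.
\item \emph{Intertwining property.} This holds by construction, because $g_{\bmf{f}}$ enters only through $\Pi_{\bmf{f}}(g_{\bmf{f}})\phi$.
\item \emph{Cuspidality.} Only positive definite $B$ occur in the sum, so the form is cuspidal once it is automorphic.
\item \emph{Invariance under $\wPe_n(F)$.} Invariance under $\n(u)$ with $u\in\Her_n(F)$ follows from the $\psi_B$-equivariance of the Whittaker functionals. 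For $\m(A)\bm{d}(t)$ with $A\in\GL_n(E)$ and $t \in F^\times$, I will reindex the sum by $B\mapsto t^{-1}B[A]$. The transformation law of $\Slk_B$ under the maps $B\mapsto B[A]$ (including the Shalika equivariance for $A$ with $B[A]=\lambda_B(A)B$) then shows that the factors cancel against the automorphy factor of $W_B$. This uses that Yamana's family $\{\Slk_B\}$ is normalised compatibly across the rational classes described in Proposition~\ref{prop:pre2}.
\end{enumerate}

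\textbf{Substantive part: invariance under $J_n$.} Since $\wGe_n(F)$ is generated by $\wPe_n(F)$ and $J_n$, it remains to prove invariance under $J_n$. I would do this by induction on odd $n$. For $n=1$, $A_1^{\hat\chi}(\pi)$ is $\hat\chi^{-1}\boxtimes\pi$ on $\wGe_1$, and the series is just the Fourier expansion of $f$ twisted by $\chi$, which is automorphic.

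For the inductive step I would use the Fourier–Jacobi expansion with respect to the maximal parabolic whose Levi factor contains $\Res_{E/F}\GL_2\times\wGe_{n-2}$. This is the parabolic compatible with the inducing data of $J_n^{\chi_v}(\pi_v)$.
\begin{itemize}
\item The local inputs are: the factorisation of $J_n^{\chi_v}$ by induction in stages from $J_{n-2}^{\chi_v}$, and the induction formula for Siegel series (the "$\wF$ recursion" relating $\wF_v(B;X)$ to $\wF_v(B';X)$ for $B=B'\perp(\text{rank }2)$).
\item From these I would show that each Fourier–Jacobi coefficient of $\I_n(\phi)$ equals a theta-type (Weil representation) pairing of $\I_{n-2}(\phi')$ with a theta series attached to $\pi[\hat\chi]$.
\item Such pairings are automorphic on the Jacobi group, and the inductive hypothesis gives automorphy of $\I_{n-2}(\phi')$. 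Hence every Fourier–Jacobi coefficient is automorphic.
\item Automorphy of all Fourier–Jacobi coefficients gives invariance under the Jacobi parabolic subgroup. Together with invariance under $\wPe_n(F)$, this generates $\wGe_n(F)$, since the two maximal parabolics generate.
\end{itemize}

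\textbf{Alternative route for the $J_n$ step.} One could instead realise the lift as a residue or a constant-term (Klingen-type) component of an Eisenstein series induced from $\wPe_2$ with data $\pi[\hat\chi]^{\boxtimes(n-1)/2}\boxtimes(\hat\chi^{-1}\boxtimes\pi)$. One then shows that its $B$-th Fourier coefficients factor as $\Slk_B\cdot W_B$ by uniqueness of Shalika functionals, i.e.\ $\dim\Hom\le1$. Automorphy is then automatic; the burden shifts to showing that the relevant residue is holomorphic and nonzero at the lifted point, which would require the normalising $L$-functions to have no extra zeros there.

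\textbf{Main obstacle.} The decisive step is the $J_n$-invariance, specifically matching the Fourier–Jacobi coefficients exactly with the lower-rank lift. This requires the local identity expressing $\Slk_B$ for $B=B'\perp B''$ with $B''$ of rank two in terms of $\Slk_{B'}$ and Whittaker functions of $\pi_v[\hat\chi_v]$, uniformly at ramified, inert and split places. I would try to prove this first at unramified places via the explicit $\wF_v$ recursion. I would then extend it to all places using the uniqueness of Shalika functionals on $A_n^{\hat\chi_v}(\pi_v)$ and the corresponding uniqueness of Fourier–Jacobi models, so that only one constant per place needs to be checked.
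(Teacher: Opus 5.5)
The paper gives no proof of this statement. It is quoted from \cite[Theorem 1.1]{Y} and used as a black box, so your proposal has to be judged on its own.

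Your formal part is essentially fine. You never check that the map is nonzero, but by irreducibility of $A_n^{\hat\chi_{\bmf{f}}}(\pi_{\bmf{f}})$ injectivity only needs one $B$ and one $\phi$ with $\Slk_B(\phi)\neq0$. The genuine gap is the $J_n$-invariance, which is the entire content of the theorem. Your plan does not reduce it to anything you can prove, and the local identity you single out is the wrong one.

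Let $Q$ be your parabolic with Levi $\Res_{E/F}\GL_2\times\wGe_{n-2}$, and let $T\in\Her_2^+(F)$ be the index. The theta components of the $T$-th Fourier--Jacobi coefficient have as $N$-th Fourier coefficients the values $\Slk_B$, where
\[
B=\begin{pmatrix}N+T^{-1}[\mu^*]&\mu\\ \mu^*&T\end{pmatrix}=(N\perp T)[\gamma_\mu],\qquad \gamma_\mu=\begin{pmatrix}1_{n-2}&0\\ T^{-1}\mu^*&1_2\end{pmatrix},
\]
and $\mu$ runs over a discriminant group of $T$. For the nontrivial classes, $\gamma_\mu$ is not integral. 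By the same equivariance you use for $\wPe_n(F)$-invariance, $\Slk_B(\phi)$ is, up to a scalar, $\Slk_{N\perp T}(\Pi_{\bmf{f}}(\m(\gamma_\mu))\phi)$. That is a value on a \emph{non-spherical} vector. So no formula for the spherical values $\wF_v(N\perp T;\alpha_v)$ can decide whether the theta components are lifts of degree $n-2$.

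In any case, no product formula of the kind you want exists even for spherical values. Siegel series and local densities are not multiplicative over orthogonal sums. The only clean case is a unimodular summand (Lemma~\ref{lem:locden3}, Lemma~\ref{lem:RS1}), and the general recursions are sums over lattices, i.e.\ Hecke translates.

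What is missing is a representation-theoretic input: a $\wGe_{n-2}(F_v)$-intertwining map from $A_n^{\hat\chi_v}(\pi_v)$ to $A_{n-2}^{\hat\chi_v}(\pi_v)\otimes\omega_{T,v}$, with $\omega_{T,v}$ the Weil representation of index $T$, through which the Whittaker functionals of $A_n$ factor. Obtaining it would require three things:
\begin{enumerate}
\item computing the Fourier--Jacobi module of $A_n^{\hat\chi_v}(\pi_v)\subset\Ind_Q(\pi_v[\hat\chi_v]\boxtimes A_{n-2}^{\hat\chi_v}(\pi_v))$ orbit by orbit on $Q\lsla\wGe_n/Q$, and showing the non-open orbits contribute nothing;
\item proving the relevant Hom space is one-dimensional;
\item matching Yamana's global normalisations so that the local constants multiply to $1$.
\end{enumerate}
Appealing to uniqueness cannot replace this. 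It only makes two functionals in a one-dimensional space proportional. It neither constructs the map nor shows that the Fourier--Jacobi coefficient lies in the span of lower-degree lifts.

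Your alternative route fails outright. A nonzero residue of an Eisenstein series induced from cuspidal data on the proper parabolic $\wPe_2$ is a non-cuspidal automorphic form. It is orthogonal to every cusp form, by unfolding against the vanishing constant term of the cusp form along $\wPe_2$. So it can never equal the nonzero cusp form $\I_n(\phi)$. The lift is a CAP form: it is nearly equivalent to representations induced from that data, but it is cuspidal. Likewise, a constant term along a parabolic is a form on the Levi, so the ``Klingen-type component'' variant does not produce a form on $\wGe_n$ at all.
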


%adelic-classical

To compute the period of $\I_n(g,\phi)$, we need the explicit formula of the Shalika functional on $A_n^{\chi}(\pi_{\bmf{f}})$.
In this paper, we will compute the Ikeda-Yamana lift corresponding to the classical case introduced in \cite[Section 11]{Y}.\\

Let $\pi=\otimes_{v}\pi_{v}$ be an irreducible cuspidal automorphic representation of $\GL_2(\A_{F})$ such that $\pi_v$ is isomorphic to the irreducible subrepresentation of $\Ind_{B_{\GL_2}(F)}^{\GL_2(F)}(\alpha_F^{\maf{s}_v}\otimes\alpha_F^{-\maf{s}_v})$ for some $\maf{s}_v\in\mb{C}$ for $v\in\bmf{f}$ and $\pi_v$ is the discrete series with weight $\kappa_v$ for $v\in\bmf{a}$.
Then, $\omega_\pi=1$ and there exists a Hilbert cusp form $f$ whose Satake parameter at $v\in\bmf{f}$ is $q_v^{\pm\maf{s}_v}$ if $v$ is unramified and of weight is $\{\kappa_v\}_v$.
Put $S(f)=\{v\in\bmf{f}|\Re\maf{s}_v\ne0\}$.

We denote by $J_n(\chi_v,\mu_v)$ the normalized induced representation $\Ind_{\wPe}^{\wGe_n}(\chi_v\circ\det\boxtimes\mu_v\circ\nu_n)$ for a character $\chi_v$ on $E_v^{\times}$ and for a character $\mu_v$ on $F_v^{\times}$, where $\det$ is a character $\Pe\to\mb{G}_m$ such that $\det\m(A)=\det A$.
We denote by $A_n(\chi_v,\mu_v)$ the unique irreducible subrepresentation of $J_n(\chi_v,\mu_v)$.
For a character $\chi_v$ on $E_v^{\times}$, we denote by $\chi_{v,F}$ the restriction to $F_v^\times$.
Then Yamana constructed the intertwining operator $M_{\mr{Yam}}$ from $J_n((\mu_v\circ N_{E/F})\bar{\chi}_v^{-1},\mu_v^{-n}\chi_{v,F}^{(n+1)/2})$ to $J_n^{\hat{\chi}_v}(\Ind_{B_{GL_2}}^{\GL_2}(\mu_v, \mu_v^{-1}\chi_{v,F}))$ such that $M_{\mr{Yam}}(h)(1_{2n})=h(1_{2n})$ in \cite[Proposition 6.2.]{Y}.
In our case, we put $\chi_\pi=1$. Then, we have 
\begin{equation*}
h_v(g)=\begin{cases}
    \alpha_{\wPe}^{\maf{s}_v+n/2}(p)&\text{if $g=pk\in\wPe_n(F_v)\wK_{n,v}$}\\
    0&\text{otherwise}
\end{cases}.
\end{equation*}
Then, let $\phi_{f,v}=M_{\mr{Yam}}(h_v)$ and $\phi_f=\otimes'_{v\in\bmf{f}}\phi_{f,v}$.
We can define the Ikeda-Yamana lift $\I_n(f)$ which corresponds to the classical case by $\I_n(f)(g)=\I_n(g;\phi_f)$.
Put 
\begin{equation*}
\maf{w}_B^{\chi}(h)=|\det B|_F^{n/2}\prod_{j=1}^nL(j,\chi|_F\cdot\varepsilon_{E/F}^{n+j})\int_{\Her_n(F_v)}h(J_n\n_n(z))\bar{\psi_B(z)}dz
\end{equation*}
for $h\in I_n(\chi)$ where $I_n(\chi)=\{f|_{\Ge_n}|f\in J_n(\chi,\mu)\}$.
By \cite[Lemma 9.2]{Y}, we have
\begin{align*}
\maf{w}_B^{|\cdot|_E^{\maf{s}_v}}(h_v)=&N_{E/F}(\maf{d}_{F/\mb{Q},v})^{-2n\maf{s}_v}|\det B|_{F_v}^{n/2}F_v(B,q_v^{-2\maf{s}_v-n})\\
=&N_{E/F}(\maf{d}_{F/\mb{Q},v})^{-2n\maf{s}_v}|\det B|_{F_v}^{n/2}\wF_v(B,q_v^{-\maf{s}_v})(q_v^{-\maf{s}_v})^{-(n-1)\maf{f}_v/2+\ord_v(\det B)}\\
=&N_{E/F}(\maf{d}_{F/\mb{Q},v})^{-2n\maf{s}_v}q_v^{(n-1)\maf{s}_v\maf{f}_v/2}|\det B|_{F_v}^{n/2+\maf{s}_v}\wF_v(B,q_v^{-\maf{s}_v}).
\end{align*}

By \cite[Corollary 6.4]{Y}, we have
\begin{align*}
\Slk_{v,B}(\phi_{f,v})=&|(-1)^{(n-1)/2}\det B|_{F_v}^{-\maf{s}_v}\maf{w}_B^{|\cdot|_E^{\maf{s}_v}}(h_v)\\
=&N_{F/\mb{Q}}(\maf{d}_{F/\mb{Q},v})^{-2n\maf{s}_v}q_v^{(n-1)\maf{s}_v\maf{f}_v/2}|\det B|_E^{n/2}\wF_v(B,q_v^{-\maf{s}_v}).
\end{align*}
We put $C_{n,v}(f,B)=N_{F/\mb{Q}}(\maf{d}_{F/\mb{Q},v})^{-2n\maf{s}_v}q_v^{(n-1)\maf{s}_v\maf{f}_v/2}|\det B|_v^{\maf{s}_v}$.
Then $|C_{n,v}(f,B)|=1$ if $v\notin S(f)$ and we have
\begin{align*}
\Slk_{v,B}(f_v)=C_{n,v}(f,B)W_{v,\maf{s}_v}(1_{2n};B).
\end{align*}

Since $f$ is of level 1, all $\kappa_v$ are even. Then $f$ satisfies the condition of the main theorem in \cite{BlD} and we have $S(f)=\emptyset$.
%Ramanujan conjecture for Hilbert modular forms. 
We put $C_n(f,B)=\prod_{v\in\bmf{f}}C_{n,v}(f,B)$ for $B\in\Her_n(F)^{nd}$.
Then the Duke-Imamoglu-Ikeda lift $\I_n(f)$ of unramified automorphic forms $f$ satisfies
\begin{align*}
\I_n(f)(g)=&\sum_{B\in\Her_n(F)^+}C_n(f,B)W_{\maf{s}}^{n+\kappa-1,w}(g; B)
\end{align*}
and $\I_n(f)$ belongs to the space of the Hilbert hermitian cusp forms 
of weight $\kappa+n-1$ with character $\epsilon^{(\kappa+n-1)/2}$ defined in \cite{Y}.
\begin{Rem}
    The Ikeda–Yamana lift and the Duke–Imamoglu–Ikeda lift differ by a scalar multiple.
    In \cite[Section 11]{Y}, the author states the difference between the definition of the Ikeda-Yamana lift and the Duke-Imamoglu-Ikeda lift.
    \\
    When $F=\mb{Q}$, the Duke-Imamoglu-Ikeda lift of $f$ equals to 
    \begin{equation*}
    \I_n(f)\prod_{p\in S_f}D_{E_p}^{n^2}e_n(\alpha_E^{\maf{s}_p})\prod_{p\notin S_f}D_{E_p}^{-2ns_p}D_{E_p}^{-(n-1)\maf{s}_p}.
    \end{equation*}
    
\end{Rem}

\subsection{Main Theorem}\label{subsec3-2}
In \cite[Conjecture 17.5]{Ike1}, Ikeda conjectured the explicit formula of the Petersson norm of the Duke-Imamoglu-Ikeda lift.
This is proved in \cite{K1, K2, K3} with a few modifications. 
 
For $f\in S_{2k}(SL_2(\mb{Z}))$, put 
\begin{align*}
\Gamma_{\mb{C}}(s)=&2(2\pi)^{-s}\Gamma(s)\\
\wLmd(s,\varepsilon_{E/F}^i)=&\Gamma_{\mb{C}}(s)L(s,\varepsilon_{E/F}^i)\\
\wLmd(s,f,\Ad,\varepsilon_{E/F}^i)=&\Gamma_{\mb{C}}(s)\Gamma_{\mb{C}}(s+2k-n)L(s,f,\Ad,\varepsilon_{E/F}^i)\\
L(s,f,\Ad,\varepsilon_{E/F}^i)=&\prod_{v\in \bmf{f}}(1-\varepsilon_{E/F}^i(\varpi_v)q^{-2\maf{s}_v-s})(1-\varepsilon_{E/F}(\varpi_v)q^{-s})(1-\varepsilon_{E/F}^i(\varpi_v)q^{-2\bar{\maf{s}}_v-s})
\end{align*}

Assume $n$ is odd.
\begin{thms}\cite[Theorem2.2(2)]{K3}
Let $F=\mb{Q}$ and $f\in S_{2k}(SL_2(\mb{Z}))$. Let $F_f$ be the Duke-Imamoglu-Ikeda lift defined by \cite{Ike2}.
Then, we have
\begin{equation*}
\perd{F_f,F_f}=2^{-2nk-(n-1)(n+2)}D_E^{(n-1)k+5(n-1)n/4}\prod_{i=1}^n\wLmd(i,f,Ad,\varepsilon_{E/F}^{i-1})\prod_{i=2}^n\wLmd(i,\varepsilon_{E/F}^{i})
\end{equation*}
where this Petersson norm is defined on the Hermitian upper half planes defined in \cite{K3}
\end{thms}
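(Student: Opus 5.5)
The statement just displayed is Katsurada's theorem \cite[Theorem~2.2(2)]{K3}, quoted from the literature. In the present paper it serves as the model for the Main Theorem, so what I would propose is how to re-derive it inside our framework. The same argument, run over a general totally real $F$, should give the Main Theorem. The strategy is the Rankin--Selberg method of \cite{K2}.

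\textbf{Step 1: set up the integral.} Consider the Rankin--Selberg integral
\[
\int |\I_n(f)(g)|^2\, E(g,s)\,dg,
\]
where $E(g,s)$ is the Siegel Eisenstein series on $\wGe_n$ induced from $\alpha_{\wPe}^s$ (the Klingen-type family of \cite{K2}). Take the residue at the rightmost pole of $E(g,s)$. That residue is a constant, namely a ratio of completed Dedekind and Hecke $L$-values $\prod \wLmd(i,\varepsilon_{E/F}^i)$ times volumes. This turns the integral's residue into $\perd{\I_n(f),\I_n(f)}$ times an explicit factor.

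\textbf{Step 2: unfold.} Unfold the integral and insert the Fourier expansion
\[
\I_n(f)=\sum_B C_n(f,B)\,W^{n+\kappa-1,w}_{\maf{s}}(g;B).
\]
Integrating over the unipotent radical and the archimedean Levi gives a Koecher--Maass-type Dirichlet series:
\[
\Gamma\text{-factor}\times \sum_{B\in \Her_n(F)^+/\GL_n(\hana{O}_E)} \frac{|C_n(f,B)|^2\,\prod_v|\wF_v(B,q_v^{-\maf{s}_v})|^2}{\#\mr{Aut}(B)\,(\det B)^{s+\cdots}}.
\]
Here Ramanujan gives $|C_n(f,B)|=1$. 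Using Proposition~\ref{prop:pre2} and the mass formula (local densities), rewrite this sum over genera as an Euler product. Each local factor is a power series in $q_v^{-s}$ whose coefficients are averages of $\wF_v(B,X)\wF_v(B,Y)$, with $X=q_v^{-\maf{s}_v}$ and $Y$ specialized to the conjugate parameter, weighted by $1/\alpha_v(B,B)$.

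\textbf{Step 3: evaluate the local factors (main obstacle).} At each finite $v$, split into the split, inert and ramified cases, and compute
\[
\sum_{B\in \dHer_{v}^{nd}/\GL_n(\hana{O}_{E_v})} \frac{\wF_v(B,X)\,\wF_v(B,Y)}{\alpha_v(B,B)}\,|\det B|_v^{s}.
\]
My plan is to use the induction formula for $\wF_v$ (the recursion relating degree $n$ and degree $n-1$ Siegel series) together with the functional equation $\wF_v(B;X^{-1})=\wF_v(B;X)$ for odd $n$. These should show that the local sum equals
\[
\prod_{i=1}^n L_v(s+i,\mr{Ad},f,\varepsilon_{E/F}^{i-1})\times(\text{explicit } \zeta\text{-factors}).
\]
The ramified places are the delicate case: they contribute the powers $D_E^{\cdots}$ and the $\maf{f}_v$-dependent normalizations in $C_{n,v}$. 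I would handle them by directly computing the Gauss-sum-type contributions, following Katsurada's local computation.

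\textbf{Step 4: assemble.} The archimedean integral is a product of Gamma functions; it produces the $\Gamma_{\mb{C}}$ factors and the power $2^{-2nk-\cdots}$. Equating residues at the relevant $s$ then yields the identity. The $\prod_{i=2}^n\wLmd(i,\varepsilon^i)$ factors arise from the Eisenstein residue cancelling against the zeta factors left over from Step~3, up to the explicit constant.
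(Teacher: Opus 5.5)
\textbf{Context.} The paper does not prove this statement; it quotes it from \cite{K3}. The relevant comparison is therefore with Sections \ref{sec4}--\ref{sec6}, which carry Katsurada's argument over to general $F$.

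\textbf{The gap.} Your Steps 1, 2 and 4 match that skeleton. The Eisenstein series is the Siegel series $E_{\wPe}(g,s)$ attached to $\wPe$, not a Klingen-type series. Its residue at $s=n$ is the constant of Proposition \ref{prop:period1}, and unfolding leaves Euler factors $\hat{\R}_v$, essentially $\sum_{Z\in\Her_n(F_v)^{nd}/\K_n^0}\wF_v(Z;X)\wF_v(Z;Y)\alpha_v(Z)^{-1}|\det Z|_v^{s-n}$.

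The gap is Step 3, which carries all of the content, and as written it is not an argument. A degree recursion for $\wF_v$ plus the functional equation does not say how to sum a \emph{product} of two Siegel series over all $\K_n^0$-classes against $\alpha_v(Z)^{-1}$. You would need an explicit formula for $\wF_v$ on every class, an explicit $\alpha_v$, and a summation identity for the product, and you supply none of these. The functional equation only constrains each factor separately.

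The missing idea is the decoupling in Section \ref{sec5}:
\begin{enumerate}
\item Expand one factor through the primitive polynomial $G$ (Proposition \ref{prop:locden3}), substitute $Z=Z_1[h]$, and evaluate the resulting Hecke sum
\[
\sum_{W\in\Mat_n(\hana{O}_E)^{nd}/\K_n^0}\wF(Z_1[W];X)\,t^{\ord(N_{E/F}(\det W))}=\maf{B}(Z_1,q^{-n/2}t)\,\wG(Z_1;X,t)\,\maf{L}(X,q^{n/2-1/2}t)
\]
by Hecke theory on the degenerate principal series (Lemma \ref{lem:locden12}, Proposition \ref{prop:locden4}). This separates the two Siegel series and splits off the factor $\maf{L}(X,q^{-n/2-1/2}Yt)$.
\item $G(\Theta_{n-r}\perp B')$ does not depend on $B'\in\Her_{r,*}$ (Proposition \ref{prop:locden2}, Lemma \ref{lem:locden11}). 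Together with Lemma \ref{lem:RS1}, this reduces $\hana{R}^0$ to the one-variable local Koecher--Maass series $P_r(d;X,t)$ (Theorem \ref{thm:RS1} and its corollary).
\item The denominators of $P_r$ are read off from the local zeta functions $\zeta_m$ of Lemma \ref{lem:KS1}. Only at this last stage does the functional equation, applied to a single $\wF$, pin $P_r$ down (Theorem \ref{thm:KS}).
\end{enumerate}
Ramified places are handled inside this same machinery, via the $\Theta$-blocks and the determinant classes $d$, not by a separate Gauss-sum computation.

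\textbf{Smaller points.}
\begin{enumerate}
\item Since $|C_{n,v}(f,B)|=1$, the $\maf{f}_v$-dependent normalizations in $C_{n,v}$ contribute nothing. The powers of $D_E$ come from three places: the factor $t^{\maf{f}_v(n-1)/2}$ of $\hat{\R}_v$ at ramified $v$ (Theorem \ref{thm:RS2}), the $|D_E|^{n(n-1)/4}$ in the Eisenstein residue, and the measure constant of Appendix \ref{ap:Tamagawa}.
\item In Step 2, Proposition \ref{prop:pre2} imposes $\prod_v\varepsilon_v(B_v)=1$ on the local classes, and you must remove this before you have an Euler product. Over $\mb{Q}$ it can be done because some prime ramifies in $E$. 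At such $v$, multiplication by a non-norm unit is an involution of $\dHer_{n,v}^{nd}$ that flips $\varepsilon_v$ ($n$ odd) while preserving $\wF_v$, $\alpha_v$ and $|\det|_v$, so the twisted product vanishes. The paper avoids the issue: on $\wGe_n$ with $n$ odd, $\wM(F_v)$ is transitive on $\Her_n(F_v)^{nd}$ and $\tau(\Z\lsla\wM_B)=1$.
\item To reach Katsurada's normalization, you still have to compare the Tamagawa-normalized norm of $\I_n(f)$ on $\wGe_n$ with the classical norm of $F_f$ in \cite{K3}, including the scalar relating $\I_n(f)$ to $F_f$. Step 4 omits this, and it changes the powers of $2$ and $D_E$ in the constant.
\end{enumerate}
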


For $\kappa=(\kappa_v)_{v\in \bmf{a}}$, $f\in S_{\kappa}(\Gamma[\maf{d}^{-1},\maf{d}])$, put 
\begin{align*}
\Gamma_{\mb{R}}(\sigma)=&\pi^{-\sigma/2}\Gamma(\sigma/2),\\
\Lmd(s,\mr{1})=&\Gamma_{\mb{R}}(s)^{d_F}L(s,1),\\
\Lmd(s,\varepsilon_{E/F})=&\Gamma_{\mb{R}}(s+1)^{d_F}L(s,\varepsilon_{E/F}),\\
\wLmd(s,\varepsilon_{E/F}^{i})=&L(s,\varepsilon_{E/F}^i)\Gamma_{\mb{C}}(s+i-1)^{d_F},\\
\wLmd(s,f,\Ad,\varepsilon_{E/F}^i)=&\Gamma_{\mb{C}}(s)^{d_F}\prod_{v\in \bmf{a}}\Gamma_{\mb{C}}(s+\kappa_v-n)L(s,f,\Ad,\varepsilon_{E/F}^i),\\
L(s,f,\Ad,\varepsilon_{E/F}^i)=&\prod_{v\in
\bmf{f}}(1-\varepsilon_{E/F}^i(\varpi_v)q^{-2\maf{s}_v-s})(1-\varepsilon_{E/F}(\varpi_v)q^{-s})(1-\varepsilon_{E/F}^i(\varpi_v)q^{-2\bar{\maf{s}}_v-s}).
\end{align*}

We put
\begin{equation*}
    \Gamma_{\mb{R}}(\sigma,\varepsilon_{E/F}^i)=\begin{cases}
        \Gamma_{\mb{R}}(\sigma)&(\text{if $\varepsilon_{E/F}^i=1$}),\\
        \Gamma_{\mb{R}}(\sigma+1)&(\text{if $\varepsilon_{E/F}^i\ne1$})
    \end{cases}
\end{equation*}

If $\varepsilon_{E/F}^i=1$, we write
\begin{align*}
    \wLmd(s,f,\Ad)=&\wLmd(s,f,\Ad,\varepsilon_{E/F}^i)\\
L(s,f,\Ad,\varepsilon_{E/F}^i)=&L(s,f,\Ad).
\end{align*}

As in \cite[Conjecture 17.5]{Ike1}, we state the conjecture as follows:
\begin{conj}
Let $f$ be a Hecke eigenform on $\GL_2(\A_F)$ of weight $\kappa=(\kappa_v)_{v\in \bmf{a}}$ with full level.
\begin{equation*}
\dfrac{\perd{I_n(f),I_n(f)}}{\prod_{i=1}^n\Lmd(i,f,\Ad,\varepsilon_{E/F}^{i-1})}=2^\alpha D_E^\beta D_F^\gamma\times \text{product of some L-functions of Hecke characters}
\end{equation*}
for some half-integers $\alpha,\beta,\gamma$ depending on $\kappa,n$.
\end{conj}
Our main theorem is as follows:
\begin{thms}\label{MT}
For a Hilbert eigenform $f$ of weight $\{\kappa_v\}$ and with full level, the periods of the Ikeda–Yamana lift $\mathcal{I}_n(f)$ are given by:  
\begin{align*}
\perd{\I_n(f),\I_n(f)}
=&2^{-n\kappa-d_F(n+5)n/2}|D_E|^{n(n-1)/4}|D_F|^{-n(n-1)/2}\dfrac{\wLmd(1,\mr{Ad},f)\Res_{\sigma=1}\Lmd(\sigma,1)}{\zeta_F(n+1)}\\
&\times\prod_{i=2}^{n}\dfrac{\wLmd(i,\mr{Ad},f,\varepsilon_{E/F}^{i-1})\Lmd(i,\varepsilon_{E/F}^{i-1})}{\Lmd(n+i,\varepsilon_{E/F}^{i-1})}\prod_{i=1}^n\dfrac{\Gamma_{\mb{R}}(n+i,\varepsilon_{E/F}^{i-1})^{2d_F}}{\Gamma_{\mb{R}}(i,\varepsilon_{E/F}^{i-1})^{2d_F}\Gamma_{\mb{C}}(i)^{2d_F}}.\end{align*}
where the Petersson norm is defined with respect to the Tamagawa measure as the above and $\kappa=\sum_{v\in\bmf{a}}\kappa_v$.
\end{thms}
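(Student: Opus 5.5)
The plan follows Katsurada's strategy for $F=\mb{Q}$ (\cite{K2,K3}): a Rankin--Selberg integral of $|\I_n(f)|^2$ against a Siegel-type Eisenstein series on $\wGe_n$, with the period isolated by taking a residue. Let $E(g,s)$ be the Eisenstein series attached to the section $\varphi^{(s)}$ of $\ind_{\wPe}^{\wGe}(\alpha_{\wPe}^s)$, normalized at the archimedean places to match weight $\kappa+n-1$. Consider
\[
R(s)=\int_{Z(\A)\wGe^+(F)\lsla\wGe^+(\A)}|\I_n(f)(g)|^2E(g,s)\,dg .
\]
Its rightmost pole has constant residue, computed from the constant term of $E$ along $\wPe_n$ via the Weyl representatives $\Omega=\{w_I\}$ of Lemma \ref{lem:pre1} and the Gindikin--Karpelevich formula. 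Only the longest term contributes at the pole, and the residue of the normalizing $L$-factors there accounts for $\Res_{\sigma=1}\Lmd(\sigma,1)$ and $\zeta_F(n+1)^{-1}$. This gives
\[
\Res R(s)=c\,\perd{\I_n(f),\I_n(f)},
\]
with $c$ an explicit product of archimedean gamma factors and volumes of $\wK$ for the Tamagawa measure. The volumes bring in the powers of $|D_E|$ and $|D_F|$.

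On the other side, unfold $E$ to $\wPe^+(F)\lsla\wGe^+(\A)$ and insert the Fourier expansion
\[
\I_n(f)=\sum_B C_n(f,B)W^{n+\kappa-1,w}_{\maf{s}}(g;B)
\]
from Section \ref{subsec3-1}. Integrating over $\n(u)$ picks out the diagonal $B=B'$. Using the Iwasawa decomposition $g=\n(u)\bm d(t)\m(h)k$, the class-number decomposition $\wGe_n(\A)=\bigsqcup\wGe_n(F)\gamma_i\wGe_n(\mb{R}\otimes F)\wK_n$, and Proposition \ref{prop:pre2} to parametrize $\Her_n(F)^+/\GL_n(E)$ by local genera, $R(s)$ becomes a Koecher--Maass-type Dirichlet series
\[
\sum_{B\in\Her_n(F)^+/\GL_n(E)}\frac{|C_n(f,B)|^2}{\#\mr{Aut}(B)}\prod_{v\in\bmf{f}}|\wF_v(B;q_v^{-\maf{s}_v})|^2\,|\det B|^{-s'}
\]
times an archimedean integral of $|W_\infty^{l,\kappa}|^2$, which is a Siegel-type gamma integral. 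By \cite{BlD} we have $S(f)=\emptyset$, so $|C_n(f,B)|=1$.

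Next, write $\sum_B\#\mr{Aut}(B)^{-1}(\cdots)$ via the Siegel--Weil mass formula as a product over $v$ of local sums
\[
\sum_{B\in\Her_n(F_v)/\GL_n(\hana{O}_{E_v})}\frac{|\wF_v(B;X)|^2|\det B|_v^{s'}}{\alpha_v(B,B)},
\]
with $\alpha_v$ the local density. The global product formula of Proposition \ref{prop:pre2} is handled by splitting into the two characters $1$ and $\prod_v\varepsilon_v$, and only the first survives in the residue. Each local series is then evaluated in closed form, following Katsurada's computation of the power series $\sum_B\wF_v(B;X)\wF_v(B;Y)\alpha_v(B,B)^{-1}t^{\ord\det B}$. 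This uses the functional equation $\wF_v(B;X^{-1})=\wF_v(B;X)$ for odd $n$ stated above and the induction formulas for the Siegel series. The result should be a rational function whose specialization at $X=q_v^{-\maf{s}_v}$, $Y=q_v^{-\bar{\maf{s}}_v}$ is
\[
\prod_{i=1}^nL_v(i+s'',f,\Ad,\varepsilon_{E/F}^{i-1})
\]
times ratios of abelian $L$-factors $L_v(i,\varepsilon^{i-1})/L_v(n+i,\varepsilon^{i-1})$, with separate treatment of split, inert and ramified $v$. Ramified places give the $\maf{f}_v$-dependent powers that combine into $|D_E|^{n(n-1)/4}$.

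I expect this local computation to be the main obstacle. The ramified places and the semi-integrality normalizations with $\maf{d}_{F_v}$ are new compared with \cite{K3}. I would first do the unramified case through the explicit Hecke-operator recursion for $\wF_v$, then adapt Katsurada's ramified computation by replacing $\mb{Z}_p$ with $\hana{O}_v$.

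Finally, take the residue on both sides and match the archimedean gamma integrals against $\Gamma_{\mb{C}}(i)^{d_F}\prod_v\Gamma_{\mb{C}}(i+\kappa_v-n)$. This yields the $\wLmd$ factors and the quotient $\Gamma_{\mb{R}}(n+i,\cdot)^{2d_F}/(\Gamma_{\mb{R}}(i,\cdot)\Gamma_{\mb{C}}(i))^{2d_F}$ that completes the finite $L$-values. The powers of $2$ are collected from the normalizations of $dX\,dY$ and $\Gamma_{\mb{C}}$.
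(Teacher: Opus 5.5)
Your plan is the paper's: a Rankin--Selberg integral $\R(s;\I_n(f))$ of $|\I_n(f)|^2$ against the Siegel Eisenstein series $E_{\wPe}(g,s)$, with residue at $s=n$ equal to $R_n\perd{\I_n(f),\I_n(f)}$. The integral is unfolded via the Fourier expansion into $c\prod_{v\in\bmf{f}}\hat{\R}_v(q_v^{-\maf{s}_v},q_v^{-\bar{\maf{s}}_v},q_v^{n-s})$ times archimedean Gamma integrals, and the local factors are evaluated by Katsurada's local-density machinery ($G$, $\wG$, $P_m$, $Q_r$ and the functional equation of $\wF_v$). On the Eisenstein side the paper quotes $R_n$ from \cite[Theorem 19.7]{S1} instead of recomputing it from the constant term; that is only a difference of presentation.

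Where you genuinely differ is the global step, and the paper's version is lighter. Instead of isometry classes, a mass formula and a splitting into the characters $1$ and $\prod_v\varepsilon_v$, it uses that $n$ is odd:
\begin{itemize}
\item every $B\in\Her_n(F)^+$ satisfies $B[h]=t\cdot 1_n$ with $t$ totally positive (take $t=\det B$; then $\det(t^{-1}B)=(\det B)^{1-n}$ is a square), so $\Her_n(F)^+$ is a single $\wM^+(F)$-orbit;
\item $\wM(F_v)$ acts transitively on $\Her_n(F_v)^{nd}$ for $v\in\bmf{f}$;
\item $\tau(\Z\lsla\wM_B)=1$, from $1\to Z_B\to SU_B\to\Z\lsla\wM_B\to 1$.
\end{itemize}
The unfolded expression is therefore one adelic orbital integral over $\wM_B^+(\A)\lsla\wM^+(\A)$. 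It factors at once into unconstrained local integrals over all of $\Her_n(F_v)^{nd}$, using the measure comparison $\prod_v c_v^B=2^{nd_F}$ from the appendix.

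This is the adelic form of your mass-formula step, but it removes three things. It removes the $\maf{h}_E$ components. It removes the lattice-class sums: your sum over $\Her_n(F)^+/\GL_n(E)$ weighted by $\#\mathrm{Aut}(B)$ only makes sense for integral lattices. And it removes the constraint $\prod_v\varepsilon_v(B_v)=1$, so no twisted Euler product appears whose regularity at $s=n$ you would have to prove.

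Two details in your sketch need correcting.
\begin{itemize}
\item The Eisenstein section must be the $\wK$-spherical one, $\varphi^{(s)}(pk)=\alpha_{\wPe}^s(p)$ with $\wK=\wK_{n,\bmf{f}}\times\K_{n,\infty}$. Since $|\I_n(f)|^2$ is right $\K_{n,\infty}$-invariant, a section of weight $\kappa+n-1$ would make the integral vanish identically. The weight enters only through $|W_\infty|^2$, which produces $\prod_{v\in\bmf{a}}\prod_{i=1}^n\Gamma_{\mb{C}}(s+\kappa_v-i)$.
\item The $\maf{f}_v$-powers from ramified places assemble into $|D_E/D_F^2|^{(n-1)(n-s)/2}$, which equals $1$ at $s=n$. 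The factor $|D_E|^{n(n-1)/4}$ in the theorem comes from the denominator of $R_n$. The factor $|D_F|^{-n(n-1)/2}$ is the $|D_F|^{-n^2/2}$ in $c$ divided by the $|D_F|^{-n/2}$ in $R_n$.
\end{itemize}
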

This is different from Conjecture 1.
To prove the main theorem, we recall how to show \cite[Theorem2.2(2)]{K3}, and we define the Rankin-Selberg series in the next section to prove the main theorem.

\section{Rankin-Selberg integrals}\label{sec4}
%\subsection{Rankin-Selberg series over $\mb{Q}$}
%\input{Section4-1}

%\subsection{Rankin-Selberg integrals}
In this section, we define the Rankin-Selberg integral and prove a proposition giving an explicit formula for the period, which is used to prove the main theorem.
\begin{defns}
Let $\phi_1$ and $\phi_2$ be automorphic cusp forms on $\wGe_n(F)\lsla\wGe_n(\A)$ with the same central unitary character.
We define the Rankin-Selberg integrals for $\phi_1$ and $\phi_2$ by
\begin{equation}
\R(s;\phi_1,\,\phi_2)=\int_{\Z(\A)\wGe_n(F)\lsla \wGe_n(\A)}\phi_1(g)\overline{\phi_2(g)}E_{\wPe}(g, s)dg,
\end{equation}
where
\begin{equation}
E_{\wPe}(g, s):=\sum_{\gamma\in \wPe(F)\lsla \wGe_n(F)}\varphi^{(s)}(\gamma g).\label{Eisenstein}
\end{equation}
We write $\R(s;\phi)=\R(s,\phi,\phi)$ for short.
\end{defns}

Let $D_F$ (resp.$D_E$) be the discriminant of $F$ (resp.$E$) over $\mb{Q}$. Let $\varepsilon_{E/F}$ be the Hecke character of $\A^{\times}$ attached to $E/F$. We denote by $\Lmd(s,\varepsilon_{E/F})$ the complete L-function and by $\Lmd(s,id)=\zeta_{F}(s)$ the complete Dedekind zeta function of $F$.

Put 
\begin{equation}
R_n=\dfrac{\Res_{\sigma=1}\zeta_{F}(\sigma)\prod_{k=2}^{n}\Lmd(k,\varepsilon_{E/F}^{k+1})}{|D_F|^{n/2}|D_E|^{n(n-1)/4}\prod_{l=1}^{n}\Lmd(2n-l+1,\varepsilon_{E/F}^{l-1})}.\label{residue}
\end{equation}

\begin{props}\label{prop:period1}
Let $E(g,s)$ be as \eqref{Eisenstein}. Then, we have
\begin{equation*}
\Res_{s=n}E_{\wPe}(g,s)=R_n.
\end{equation*}
\end{props}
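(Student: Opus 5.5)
The residue will come from the constant term of $E_{\wPe}(g,s)$ along the Borel subgroup $\Be_{\wT_n}$, computed with the Bruhat decomposition over the set $\Omega$ of Lemma~\ref{lem:pre1}. Since $\varphi^{(s)}$ is the normalized spherical section at every place (with the archimedean component taken $\wK_\infty$-invariant), the residue at the rightmost pole $s=n$ is a constant function of $g$. This is the usual square-integrability argument: the residue is a square-integrable automorphic form in the residual spectrum, spanned by the trivial character. To get the value we therefore only need the leading coefficient of one term of the constant term.

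\textbf{Step 1: constant term.} Unfold the constant term along the unipotent radical of $\Be_{\wT_n}$:
\[
E_{\wPe}^{\Be}(g,s)=\sum_{w\in\Omega} M(w,s)\varphi^{(s)}(g).
\]
By the Gindikin--Karpelevich formula, each intertwining operator $M(w,s)$ acts on the spherical vector by a product over the roots $\alpha\in\Phi^+_{\wGe_n}$ with $w\alpha<0$ that are not in $\Phi_{\wM_n}$. The factors are ratios of completed $L$-functions $\xi(\langle\lambda_s,\alpha^\vee\rangle)/\xi(\langle\lambda_s,\alpha^\vee\rangle+1)$, where
\begin{itemize}
\item for the long roots $2x_i$ (rank-one subgroup $U(1,1)$), the factor is $\Lmd(\cdot,\varepsilon_{E/F})$ or $\zeta_F$ with the parity shift;
\item for the short roots $x_i+x_j$ (rank-one subgroup $\Res_{E/F}\GL_2$), the factor is $\zeta_E=\zeta_F\,L(\cdot,\varepsilon_{E/F})$.
\end{itemize}
The only term with a pole at $s=n$ is the one for $w=w_\emptyset$, which sends all $x_i\mapsto -x_i$. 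For that term the exponent $\lambda_s-\rho$ becomes $-\rho_B$ in the Borel sense, so $M(w_\emptyset,s)\varphi^{(s)}$ becomes constant at the pole. All other $w_I$ stay holomorphic there.

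\textbf{Step 2: evaluate the product.} Write $c(s)=\prod_{\alpha}\xi_\alpha(\cdot)/\xi_\alpha(\cdot+1)$ for $w_\emptyset$. Evaluated at $s=n$ it telescopes, exactly as in Piatetski-Shapiro--Rallis / Kudla--Rallis for Siegel Eisenstein series on $U(n,n)$, to
\[
\frac{\prod_{k=1}^{n}\Lmd(k,\varepsilon_{E/F}^{k+1})}{\prod_{l=1}^{n}\Lmd(2n-l+1,\varepsilon_{E/F}^{l-1})},
\]
with the $k=1$ numerator factor $\zeta_F(s-n+1)$ contributing $\Res_{\sigma=1}\zeta_F(\sigma)$. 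Before taking the residue, I will check that $\Lmd(k,\varepsilon^{k+1})$ really has this parity pattern.

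\textbf{Step 3: normalization constants.} I expect this to be the main obstacle. With Tamagawa measures and $\psi$ unramified only up to the different, each local intertwining integral at $v\in\bmf{f}$ carries a volume factor. The self-dual measure on $\Her_n(F_v)$ differs from the one with $\vol(\Her_n(\hana{O}))=1$ by $|\maf{d}_{F_v}|^{n^2/2}$ times a power of $|\maf{D}_{E/F,v}|$, from the off-diagonal $E$-entries. Multiplied over all places, these give the factor $|D_F|^{-n/2}|D_E|^{-n(n-1)/4}$. I will fix them by computing $M(w_\emptyset,s)$ as the global integral $\int_{\Her_n(\A)}\varphi^{(s)}(J_n\n(z))\,dz$ place by place, using the case $B=0$ of the Siegel series computation in Lemma~\ref{lem:whfunc1}; its local value is $b_v(0;2s)$ times these volume factors. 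I will also match the archimedean Gamma factors so that the local integrals combine into the completed $\Lmd$'s. Taking $\Res_{s=n}$ then gives $R_n$.
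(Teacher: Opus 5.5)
Your route is genuinely different from the paper's. The paper does no computation: it restricts $E_{\wPe}$ to $\Ge_n$ via $g\mapsto g\,\bm{d}_n(\nu_n(g)^{-1})$ and quotes the residue from \cite[Theorem~19.7]{S1}. A self-contained constant-term computation is a reasonable substitute, and your qualitative claims survive.
\begin{itemize}
\item At $s=n$, the only root of the unipotent radical whose Gindikin--Karpelevich factor has argument $1$ is $2x_1$.
\item The only minimal-length coset representative making $2x_1$ negative is the one attached to $I=\emptyset$. It sends $x_k\mapsto -x_{n+1-k}$, not $x_k\mapsto -x_k$.
\item So only that term has a pole, and the "residue is constant" argument is standard.
\end{itemize}
With trivial inducing character, every long-root ($U(1,1)$) factor is a ratio of $\zeta_F$'s. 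The character $\varepsilon_{E/F}$ enters only through $\zeta_E=\zeta_F\,L(\cdot,\varepsilon_{E/F})$ for the short roots, and the pattern $\varepsilon_{E/F}^{k-1}$ appears only after telescoping.

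The gap is the constant, which is the entire content of the statement.

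\textbf{The factor $\tfrac12$.} In the paper's normalization, $\alpha_{\wPe}^s(\m(h))=\alpha_E^s(\det h)$ with $\alpha_E=\alpha_F\circ N_{E/F}$. Hence every $F$-argument is doubled; compare $b_v(\cdot\,;2s)$ in Lemma~\ref{lem:whfunc1}. The pole-producing factor is therefore $\zeta_F(2s-2n+1)$, not $\zeta_F(s-n+1)$, and $\Res_{s=n}$ yields $\tfrac12\Res_{\sigma=1}\zeta_F(\sigma)$.

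Test the case $n=1$, $F=\mb{Q}$:
\begin{itemize}
\item the finite places give $\zeta(2s-1)/\zeta(2s)$;
\item the archimedean place gives $\int_{\mb{R}}(1+x^2)^{-s}dx=\Gamma_{\mb{R}}(2s-1)/\Gamma_{\mb{R}}(2s)$;
\item so $M(s)\varphi^{(s)}(1)=\Lmd(2s-1,1)/\Lmd(2s,1)$.
\end{itemize}
The residue at $s=1$ is then $1/(2\Lmd(2,1))=3/\pi$, the classical residue of $\sum_{\Gamma_\infty\lsla\SL_2(\mb{Z})}\mathrm{Im}(\gamma z)^s$. By contrast, $R_1=1/\Lmd(2,1)=6/\pi$. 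So your Step 2 reaches $R_n$ only through the slip. Carried out correctly, your method gives half the stated $L$-value part, and you would have to find a compensating factor (there is none for $n=1$) or conclude that the constant needs correcting.

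\textbf{Step 3 is not carried out.} The shortcut you propose, the case $B=0$ of Lemma~\ref{lem:whfunc1}, is not available verbatim. Its proof uses the Iwasawa decomposition for $\GL_n(\hana{O}_{E_v})$ and implicitly $J_n\in\wK_v$. But $J_n\notin\Gamma_v[\maf{d}_{F_v}^{-1},\maf{d}_{F_v}]$ whenever $\maf{d}_{F_v}\neq\hana{O}_v$. Write $\wK_v=\bm{d}_n(\delta_v)\,\Gamma_v[\hana{O}_{E_v},\hana{O}_{E_v}]\,\bm{d}_n(\delta_v)^{-1}$, with $\delta_v$ a generator of $\maf{d}_{F_v}$. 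Then the local intertwining integral acquires an extra factor $|\delta_v|_v^{2ns-n^2}$, which at $s=n$ contributes an additional $|D_F|^{-n^2}$ globally. The Tamagawa volume of $\prod_{v\in\bmf{f}}\Her_n(F_v;\hana{O}_{E_v})$ alone produces $|D_F|^{-n/2}|D_E|^{-n(n-1)/4}$ only if the section is spherical for $\Gamma_v[\hana{O}_{E_v},\hana{O}_{E_v}]$. You must settle which compact subgroup the section is spherical for before matching $R_n$.

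As it stands, your proposal determines the shape of the residue but does not establish $\Res_{s=n}E_{\wPe}(g,s)=R_n$.
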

\begin{proof}
   It follows from \cite{S1}[Theorem 19.7] by identifying $E(g\bm{d}_n(\nu_n(g)^{-1}),s)$ the Eisenstein series of $\Ge_n(F)$.
\end{proof}

By the above proposition, we obtain the relation between the residue of the Rankin-Selberg integrals and the period.
\begin{cor}
Let $\phi_1$, $\phi_2$ be automorphic cusp forms on $\wGe_n(F)\lsla\wGe_n(\A)$ with the same central unitary character.
Then, $\R(s;\phi_1,\phi_2)$ has a meromorphic continuation to the whole $s$-plane and has a simple pole at $s=n$ with residue $R_n\perd{\phi_1, \phi_2}$ where $R_n$ is defined in \eqref{residue}.
\end{cor}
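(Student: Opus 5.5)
The corollary is the standard Rankin--Selberg unfolding-free argument: the meromorphic continuation and pole of $\R(s;\phi_1,\phi_2)$ are inherited from those of the Eisenstein series $E_{\wPe}(g,s)$. The plan is to use the theory of Eisenstein series (Langlands, or Shimura \cite{S1} for the Siegel-type series on $\Ge_n$) to get three facts about $E_{\wPe}(g,s)$. First, it continues meromorphically in $s$. Second, its poles in any compact region are finite in number, with locally uniform moderate growth in $g$ away from them. Third, by Proposition~\ref{prop:period1}, at $s=n$ it has a simple pole whose residue is the constant function $R_n$. To pass from $\Ge_n$ to $\wGe_n$, I use that $E_{\wPe}(g,s)$ is left $\Z(\A)$-invariant (since $\varphi^{(s)}$ is trivial on the center up to the normalization built into $\alpha_{\wPe}^s$). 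I also use the identification $E_{\wPe}(g,s)=E(g\bm{d}_n(\nu_n(g)^{-1}),s)$ already used in the proof of Proposition~\ref{prop:period1}, so all analytic properties transfer.

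Next I will control convergence. Because $\phi_1,\phi_2$ are cusp forms, $\phi_1\overline{\phi_2}$ is rapidly decreasing on a Siegel domain for $\Z(\A)\wGe_n(F)\lsla\wGe_n(\A)$. Since $E_{\wPe}(g,s)$ has moderate growth, uniformly for $s$ in compact sets avoiding its poles, the integral defining $\R(s;\phi_1,\phi_2)$ converges absolutely and locally uniformly there. Hence $\R$ is holomorphic wherever $E_{\wPe}$ is, and meromorphic on the whole plane. This gives the meromorphic continuation.

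For the residue, I write $E_{\wPe}(g,s)=\frac{R_n}{s-n}+E^{*}(g,s)$ near $s=n$, where $E^{*}$ is holomorphic at $s=n$. The key point is that $E^{*}$ has moderate growth in $g$ uniformly for $s$ on a small circle around $n$. This is the main obstacle: a merely pointwise statement does not let one interchange residue and integral. I would handle it with the Cauchy integral $E^{*}(g,s)=\frac{1}{2\pi i}\oint E^{*}(g,z)(z-s)^{-1}dz$ over a circle on which $E_{\wPe}$ is holomorphic, which carries the uniform growth bound from the circle to its interior. Then
\[
\R(s;\phi_1,\phi_2)=\frac{R_n}{s-n}\int\phi_1\overline{\phi_2}\,dg+\int\phi_1\overline{\phi_2}E^{*}(g,s)\,dg.
\]
The second term is holomorphic at $s=n$ by dominated convergence. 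So the pole at $s=n$ is at most simple, with residue $R_n\perd{\phi_1,\phi_2}$, the inner product being taken over $\Z(\A)\wGe_n(F)\lsla\wGe_n(\A)$ with the same measure as in $\R$.
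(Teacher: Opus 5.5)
Your proposal is correct and follows the same route as the paper, which deduces the corollary directly from Proposition~\ref{prop:period1} (the residue of $E_{\wPe}(g,s)$ at $s=n$ is the constant $R_n$) by integrating that residue against $\phi_1\overline{\phi_2}$. You supply the standard analytic justification that the paper leaves implicit: rapid decay of cusp forms against locally uniform moderate growth of $E_{\wPe}$, with a Cauchy-integral bound to interchange residue and integral. Your phrasing ``at most simple'' is in fact the accurate form, since the pole disappears when $\perd{\phi_1,\phi_2}=0$.
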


On the other hand, if $\phi_1=\phi_2=\I_n(f)$ for a Hecke eigenform $f$ in $S(\Gamma)$, $\R(s, F)$ has an Euler product when $n$ is odd and we need to reduce the problem to local computation to give explicit formulas for $\R(s;\phi)$.

Let $\phi(g)$ be a holomorphic automorphic cusp form and put 
\begin{equation*}
    a_\phi(g,B)=\int_{U(F)\lsla U(\A)}\phi(ug)\psi_B^{-1}(u)du.
\end{equation*}
Since $\phi$ is a cusp form, $a_{\phi}(g,B)=0$ unless $B$ is positive definite for any $v\in\bmf{a}$.
Then, we have the Fourier expansion
\begin{equation*}
    \phi(g)=\sum_{B\in\Her_n(F)^{+}}a_\phi(g;B)
\end{equation*}
We can compute $\R(s;\phi)$ by the Fourier expansion of $F$.
By unfolding the Eisenstein series, we have
\begin{align*}
\R(s;\phi)=&\int_{\Z(\A)\wGe_n(F)\lsla\wGe_n(\A)}|\phi(g)|^2E_{\wPe}(g,s)\dot{d}g\\
=&\int_{\Z(\A)\wPe_n(F)\lsla\wGe_n(\A)}|\phi(g)|^2\varphi^{(s)}(g)dz\lsla dg.
\end{align*}
Since $\wPe_n(F)\lsla\wGe_n(\A)\cong \wPe_n^+(F)\lsla\wGe_n^+(\A)$, we have
\begin{align*}
\R(s;\phi)
=&\int_{\Z(\A)\wPe_n^+(F)\lsla\wGe_n^+(\A)}|\phi(g)|^2\varphi^{(s)}(g)dz\lsla dg.
\end{align*}

 Then we have
\begin{align*}
\R(s;\phi)
=&\int_{\Z(\A)\wPe_n^+(F)\lsla\wGe_n^+(\A)}\sum_{A,B\in\Her_n(F)}a_{\phi}(g;A)\overline{a_{\phi}(g;B)}\varphi^{(s)}(g)dz\lsla dg\\
=&\sum_{A,B\in\Her_n(F)^{nd}}\int_{\Z(\A)U_{\Pe}(\A)\wM^+(F)\lsla \wGe^+(\A)}\\&\times\left(\int_{U_\Pe(F)\lsla U_\Pe(\A)}a_{\phi}(ug;A)\overline{a_{\phi}(ug;B)}du\right)\varphi^{(s)}(g)dzdu\lsla dg.
\end{align*}

Since $a_{\phi}(ug;B)=\psi_B(u)a_{\phi}(g;B)$, we have
\begin{equation*}
\int_{U_\Pe(F)\lsla U_\Pe(\A)}a_{\phi}(ug;A)\overline{a_{\phi}(ug;B)}du=\begin{cases}
|a_{\phi}(g;A)|^2&\text{if $A=B$},\\
0&\text{otherwise}.
\end{cases}
\end{equation*}
Thus, we have
\begin{align*}
\R(s;\phi)=&\sum_{B\in\Her_n(F)^{nd}}\int_{\Z(\A)U_{\Pe}(\A)\wM^+(F)\lsla \wGe^+(\A)}\left|a_{\phi}\left(g;B\right)\right|^2\varphi^{(s)}(g)dzdu\lsla dg.
\end{align*}
 
We also put $B[m]=t^{-1}B[h]$ for $m=\m(h)\bm{d}(t)\in\wM$. 
Put $\wM_B=\{m\in\wM|B[m]=B\}$ for $B\in\Her_n^{nd}$ and $\wM_B^+(R)=\wM^+(R)\cap \wM_B(R)$ for $R=F$, $F_v$ ($v\in\bmf{a}$) or $\A$.

Then, we have
\begin{align*}
\R(s;\phi)=&\sum_{B\in\Her_n(F)^{nd}/\wM^+(F)}\sum_{\gamma\in \wM_B^+(F)\lsla \wM^+(F),}\\
&\int_{\Z(\A)U_{\Pe}(\A)\wM^+(F)\lsla \wGe^+(\A)}\left|a_{\phi}\left(
g;B[\gamma]\right)\right|^2\varphi^{(s)}(g)dzdu\lsla dg.
\end{align*}
Then, since $\varphi^{(s)}$ is left $\wPe(F)$-invariant and $a_{\phi}(g;B[\gamma])=a_{\phi}(\m(\gamma)g;B)$ for any $\gamma\in \wM^+(F)$, we have
\begin{align*}
&\R(s;\phi)\\
=&\sum_{B\in\Her_n(F)^{nd}/\wM^+(F)}\,\sum_{\gamma\in \wM_B^+(F)\lsla \wM^+(F)}\\
&\int_{\Z(\A)U_{\Pe}(\A)\wM^+(F)\lsla \wGe_n^+(\A)}\left|a_{\phi}\left(
\m(\gamma)g;B\right)\right|^2\varphi^{(s)}(\m(\gamma)g) dzdu\lsla dg.
\end{align*}
By unfolding, we have 
\begin{align*}
\R(s;\phi)=&\sum_{B\in\Her_n(F)/\wM^+(F)}\int_{\Z(\A)U_{\Pe}(\A)\wM_{B}^+(F)\lsla \wGe_n^+(\A)}\left|a_{\phi}\left(
g;B\right)\right|^2\varphi^{(s)}(g)dzdu\lsla dg.
\end{align*}

Let $dm_B$ be the Tamagawa measure on $\wM_B$. If $\gamma\in \wM_B(\A)$, we have $N_{E/F}(\det \gamma)=1$ and $a_{\phi}(\m_n(\gamma h);B)=a_{\phi}(\m_n(h);B)$ for any $h\in \wM(\A)$.

Therefore, we have
\begin{align*}
\R(s;\phi)&=\sum_{B\in\Her_n(F)/\wM(F)}\mr{vol}(\Z(\A)\wM_{B}^+(F)\lsla\wM_{B}^+(\A))\\
&\times\int_{U_{\Pe}(\A)\wM_{B}^+(\A)\lsla \wGe_n^+(\A)}\left|a_{\phi}\left(
g;B\right)\right|^2\varphi^{(s)}(g) (dudm_B\lsla dg).
\end{align*}

Put $\wK=\wK_{n,\bmf{f}}\times\K_{n,\infty}$. By Iwasawa decomposition, $\Ge(\A_F)=\Pe(\A_F)(\Ge(\A_F)\cap\wK)$.
Thus, for any $g\in\wGe_n(\A)$, there exists $p\in\Pe(\A_F),k\in \wK$ such that $\bm{d}(\nu(g))^{-1}g=pk$
Since $\bm{d}_n(\nu_n(g))\in\wPe(\A)$ for any $g\in\wGe_n(\A)$, we have $\wGe(\A)=\wPe(\A)\wK$.

Let $dk$ be the measure on $\wK_{n,\bmf{f}}\times\K_{n,\infty}$ such that $\int_{\wK_{n,\bmf{f}}\times\K_{n,\infty}}dk=1$.

Then, there exists $c_0$ such that $dg=c_0dpdk$ where $dg,dp$ are the Tamagawa measures. 

By Appendix \ref{ap:Tamagawa}, we have
\begin{equation*}    c_0=2^{nd_F}|D_F|^{-n^2/2}\Res_{s=1}L(s,1)\prod_{i=2}^nL(i,\varepsilon_{E/F}^{i-1})\prod_{i=n+1}^{2n} L(i,\varepsilon_{E/F}^i)^{-1}\times\prod_{j=1}^n{\Gamma_{\mb{C}}(j)^{-d_F}}.
\end{equation*} 

In addition, we compute the Tamagawa number of $\Z\lsla\wM_B$.
Put 
\begin{align*}
SU_B=&\{g\in\Res_{E/F}\SL_n|B[g]=B\}, \\
Z_B=&\{z\cdot1_n|z\in\Res_{E/F}\GL_1,z^n=1\}
\end{align*}
for $B\in\Her_n(F)^{nd}$. Since $n$ is odd, we have the exact sequence.
\begin{equation*}
    1\to Z_B\to SU_B\to \Z\lsla\wM_B\to1.
\end{equation*}

Since $\Hom(Z_B,\mb{C}^{\times})^{\Gal(E/F)}=\{1\}$, we have
\begin{equation*}
    \tau(\Z\lsla\wM_B)=1\cdot\dfrac{1}{1}=1
\end{equation*}
by\cite{SanJ} where $\tau(\Z\lsla\wM_B)$ is the Tamagawa number of $\Z\lsla\wM_B$.

By the definition of the Ikeda-Yamana lift for Hilbert hermitian modular forms, we have
\begin{align*}
a_{\I_n(f)}(g;B)=&C_{n}(f)W_{\maf{s}_v}^{\kappa+n-1,(\kappa+n-1)/2}(g;B)\\
=&\prod_{v\in\bmf{f}}C_{n,v}(f)W_{v,\maf{s}_v}(g_v;B)W_{\infty}^{\kappa+n-1,(\kappa+n-1)/2}(g_{\infty};B).
\end{align*}
By Iwasawa decomposition, we obtain
\begin{align*}
&\R(s;\I_n(f))\\
=&c_0\sum_{B\in\Her_n(F)^+/\wM^+(F)}\int_{U_{\Pe}(\A)\wM_B^+(\A)\lsla\wPe_n^+(\A)}\left|a_{\I_n(f)}\left(
p;B\right)\right|^2\varphi^{(s)}(p) (dudm_B\lsla dp).
\end{align*}
Since $dp$ is a left-invariant measure on $\wPe(F)\lsla\wPe_n(\A)$, we have 
\begin{equation*}
dudm_B\lsla dp=\delta_{\wPe}(m)^{-1}dm_B\lsla dm=\alpha_{\A}^{-n}(\det B[m])dm_B\lsla dm,
\end{equation*}
and
\begin{align*}
&\R(s;\I_n(f))\\
=&c_0\sum_{B\in \Her_n(F)^+/\wM^+(F)}
\int_{\wM_B^+(\A)\lsla \wM^+(\A)}\alpha_{\A}^{s-n}(\det(B[m]))\\
&\times
\prod_{v\in\bmf{f}}\left|C_{n,v}(f)W_{v,\maf{s}_v}(m_v;B)\right|^2\times|W_{\infty}^{\kappa+n-1,(\kappa+n-1)/2}(\m(m_{\infty});B)|^2 \dot{dm}\\
=&c_0\sum_{B\in \Her_n(F)^+/\wM^+(F)}
\int_{\wM_B^+(\A)\lsla \wM^+(\A)}\alpha_{\A}^{s}(\det(B[m]))\\
&\times
\prod_{v\in\bmf{f}}\left|\wF_{v}(B[m_v],q_v^{-\maf{s}_v})\right|^2\times\prod_{v\in\bmf{a}}\left|\exp(-2\pi\tr(B[m_{v}]))\right|^2(\alpha_{F_v}^{\kappa_v-1})(\det B[m_v])\dot{dm}
\end{align*}
where $\dot{dm}$ is the quotient measure of the Tamagawa measure $dm$ on $\wM$ by the Tamagawa measure $dm_B$ on $\wM_B$.

Put $c=c_02^{nd_F}$ and by Appendix \ref{ap:locden}, we have
\begin{align*}
&\R(s;\I_n(f))\\
=&
c\prod_{v\in\bmf{f}}\int_{\Her_n(F_v)^{nd}}\left|\wF_{v}(Z_v,q_v^{-\maf{s}_v})\right|^2\alpha_{F_v}^{s}(\det Z_v)d^*Z_v\\
&\prod_{v\in\bmf{a}}\int_{Q}\left|\exp(-2\pi\tr(h^*h))\right|^2\alpha_{F_v}^{s+\kappa_v-1}(\det(hh^*))\delta_{B_{\GL_n}}(h)dh
\end{align*}
where $Q=\{b:\text{upper triangular matrix in $\GL_n(\mb{C})$}|b_{ii}\in \mb{R}_{>0}\}$ and $\delta_{B_{\GL_n}}$ is the modulus character on the Borel subgroup of $\GL_n$ consisting of upper triangle matrices.

For $v\in\bmf{a}$, we have
\begin{align*}
&\int_{Q}\left|\exp(-2\pi\tr(h^*h))\right|^2|\det(h^*h)|^{s+\kappa_v-1}\delta_B(h)dh\\
=&\prod_{i=1}^n\int_{\mb{R}_{>0}}\exp(-4\pi t_i^2)|t_i^2|^{s+\kappa_v-1-i+1}d^{\times}t_i\times\left(\int_{\mb{R}^2}\exp(-4\pi (x^2+y^2))dxdy\right)^{n(n-1)/2}.\\
=&\prod_{i=1}^n\dfrac{1}{2^{s+\kappa_v-i}\cdot2(2\pi)^{s+\kappa_v-i}}\Gamma(s+\kappa_v-i)\times2^{-n(n-1)}.
\end{align*}
\begin{defns}[the local Rankin-Selberg integrals]

    For $X,Y\in\mb{C}$ and for $s\in\mb{C}$ such that $\Re s\gg 0$, we define the local Rankin-Selberg integrals by
    \begin{equation*}        
    \hat{\R}_v(X,Y,q_v^{n-s})=\int_{\Her_n(F_v)^{nd}}\wF_v(Z_v;X)\wF_v(Z_v;Y)|\det Z_v|^{s}d^*Z_v.
    \end{equation*}
\end{defns}
\begin{Rem}
The notation $R(X, Y; q_v^{n-s})$ is chosen to anticipate the fact that this integral is later shown to be a rational function of $X$, $Y$, and $q_v^{-s}$ through its connection to zeta functions.
\end{Rem}
Then for $\kappa=\sum_{v\in \bmf{a}}\kappa_v$, we have
\begin{equation*}
\R(s,\I_n(f))=c\prod_{v\in\bmf{f}}\hat{\R}_v(q_v^{-\maf{s}_v},q_v^{-\bar{\maf{s}_v}},q_v^{n-s})\times\left(2^{-nd_Fs+n(n+1)d_F/2-n\kappa}\prod_{v\in \bmf{a}}\prod_{i=1}^n\Gamma_{\mb{C}}(s+\kappa_v-i)\right)
\end{equation*} 
using the local Rankin-Selberg integrals.

%--- Section ---%
\section{Local Computation}\label{sec5}

\subsection{Local Density and Siegel series}
For the local computation in the main theorem, we review the relation between the local density and the local Siegel series, and define a series corresponding to the primitive local density.
Since we fix a finite place $v\in\bmf{f}$ in this section, we omit $v$.
First, we define the local density $\alpha$, the primitive local density $\beta$, and the $\K_n^0$-orbit sets $\Omega$ and $\Omega'$, and review the properties of these objects.
For $n\leqq m$ and $X\in \Mat_{m,n}(\hana{O}_{E})$, we call $X$ a primitive matrix if there exists $Y\in \Mat_{m,(m-n)}(\hana{O}_{E})$ such that $(XY)\in\K_{m}^0$.

\begin{defprops}\label{def:locden1}
Let $A\in\dHer_{m}, B\in\dHer_{n}$ and $m\geqq n$. For an integer $e$ and a compact open subset $S\subset \Mat_{m,n}(E)$, put
\begin{align*}
\hana{A}_e(A,B;S)=&\left\{X\in S\middle|A[X]-B\in\maf{p}^e\dHer_{n}\right\},\\
\hana{A}_\infty(A,B;S)=&\left\{X\in S\middle|A[X]=B\right\}\\
\bar{\hana{A}}_e(A,B;S)=&\left\{X\in \Mat_{m,n}(\hana{O}_{E})/\maf{p}^e\Mat_{m,n}(\hana{O}_{E})|X+\maf{p}^e\Mat_{m,n}(\hana{O}_{E})\subset \hana{A}_e(A,B;S)\right\}.
\end{align*}
Then, $q^{e(-2mn+n^2)}\#\bar{\hana{A}}_e(A,B;S)$ converges if $A$ is non-degenerate. We put 
\begin{equation*}
\alpha(A,B;S)=\lim_{e\to\infty}q^{e(-2mn+n^2)}\#\bar{\hana{A}}_e(A,B;S)
\end{equation*}
If $S=\Mat_{m,n}(\hana{O}_{E})$, then we write $\hana{A}_e(A,B)=\hana{A}_e(A,B;S)$, $\bar{\hana{A}_e(A,B)}=\bar{\hana{A}_e(A,B;S)}$ and $\alpha(A,B)=\alpha(A,B;S)$ and we call $\alpha(A,B)$ the local density representing $B$ by $A$.
If $B=A$ we write $\alpha(A)=\alpha(A,A)$.\\
If $S=\Mat_{m,n}(\hana{O}_{E})^{\prm}$ where $\Mat_{m,n}(\hana{O}_{E})^{\prm}$ is the subset of primitive matrices in $\Mat_{m,n}(\hana{O}_{E})$, we write $\hana{B}_e(A,B)=\hana{A}_e(A,B;S)$, $\bar{\hana{B}}_e(A,B)=\bar{\hana{A}}_e(A,B;S)$ and $\beta(A,B)=\alpha(A,B;S)$ and we call $\beta(A,B)$ the primitive local density representing $B$ by $A$.
If $B=A$, we write $\beta(A)=\beta(A,B)$.
\end{defprops}
\begin{proof}
Let $S$ be a compact open subset of $\Mat_{m,n}(E)$.
Then there exists a sufficiently large integer $e_S$ such that for any $s\in S$, $s+\maf{p}^{e_S}\Mat_{m,n}(\hana{O}_{E})$ is contained in $S$.
Let $A\in\dHer_{m}^{nd}$ and $B\in\dHer_{n}$. Let $a$ be a sufficiently large integer such that $S\subset \maf{p}^{-a}\Mat_{m,n}(\hana{O}_{E})$.
Then there exists an integer $b$ such that $\{A[X]-B|X\in S\}\subset \maf{p}^{b}\dHer_{n}$.
This implies that $\int_{\maf{p}^{-e}\Her_{n}(\hana{O}_F)}\psi(\tr(Y(A[X]-B)))dY$ is independent of $e$ if $e$ is a sufficiently large integer and $X\in S$.
Since $1_{\dHer_n(\hana{O}_F)}(Z)=q^{en^2}\int_{\maf{q}^{-e}\Her_n(\hana{O}_{F})}\psi_{Y}(Z)dY$, for any sufficiently large integer $e$, we have
\begin{align*}
q^{-e(2mn-n^2)}\#\bar{\hana{A}}_e(A,B;S)=\int_{\Mat_{m,n}(E)}\bm{1}_S(X)\int_{\maf{p}^{-e}\Her_{n}}\psi_{Y}(A[X]-B)dYdX
\end{align*}
where $\bm{1}_T$ is the characteristic function of $T$ for the subset $T\subset \Mat_{m,n}(E)$.
Since there exists an integer $b$ such that $\{A[X]-B|X\in S\}\subset \maf{p}^{b}\dHer_{n}$, $q^{e(-2mn+n^2)}\#\bar{\hana{A}}_e(A,B;S)$ is independent of $e$ if $e$ is sufficiently large.
Thus, $q^{e(-2mn+n^2)}\#\bar{\hana{A}}_e(A,B;S)$ converges and $\alpha(A,B;S)=q^{e_0(-2mn+n^2)}\#\bar{\hana{A}}_{e_0}(A,B;S)$ for some integer $e_0$.
\end{proof}

\begin{defns}\label{def:locden2}
For $A, B\in\dHer_m$ and a compact subset $S$ of $\Mat_{m}(E)$, we put
\begin{align*}
\Omega(A,B;S)=&\{W\in S|A[W]\in B[\K_{m}^0]\}\\
\Omega'(A,B;S)=&\{W\in S|B[W^{-1}]\in A[\K_{m}^0]\}
\end{align*}
and if $S=\Mat_{m}(\hana{O}_{E})$, we write $\Omega(A,B;S)=\Omega(A,B)$ and $\Omega'(A,B;S)=\Omega'(A,B)$.
\end{defns}

First, we give the relation among the local density $\alpha$, the primitive local density $\beta$, the number $\#\Omega/\K_{n}^0$, and $\K_{n}^0\lsla\Omega'$.
We put $S^{\prm}=S\cap \Mat_{m,n}(\hana{O}_{E})^\prm$ for $S\subset \Mat_{m,n}(\hana{O}_{E})$.
For $1\leqq i\leqq r$, we put 
\begin{eqnarray*}
\hana{D}_{r,i}=\K_{r}^0(1_{r-i}\perp\varpi1_i)\K_{r}^0.
\end{eqnarray*}
and the function $\mt{m}:\Mat_r(\hana{O}_{E})\to\mb{C}$ such that
\begin{equation*}
\mt{m}(W)=\begin{cases}
(-1)^iq_E^{i(i-1)/2}&\text{($W\in\D_{r,i},1\leqq i\leqq r$)}\\
0&\mr{otherwise}
\end{cases}.
\end{equation*}

\begin{lems}\label{lem:locden1}
Let $S$ be a subset of $\Mat_{m,n}(E)$ that satisfies $S\K_{n}^0=S$.
Let $A\in\dHer_{m}^{nd}$ and $B\in\dHer_{n}$. Then we have 
\begin{align*}
\alpha(A,B;S)=&\sum_{W\in\K_{n}^0\lsla \Mat_{n}(\hana{O}_{E})^{nd}}\alpha(A,B[W^{-1}];(SW^{-1})^\prm)q^{-(m-n)\ord(N_{E/F}(\det W))}
\end{align*}
and
\begin{align*}
&\alpha(A,B;S^{\prm})\\
=&\sum_{i=0}^r\mt{m}(W)\sum_{W\in\K_{n}^0\lsla \D_{n,i}}\alpha(A,B[W^{-1}];(SW^{-1})\cap \Mat_{m,n}(\hana{O}_{E}))q^{-(m-n)\ord(N_{E/F}(\det W))}.
\end{align*}
In particular,
\begin{align*}
\alpha(A,B)=&\sum_{W\in\K_{n}^0\lsla \Mat_{n}(\hana{O}_{E})^{nd}}\beta(A,B[W^{-1}])q^{-(m-n)\ord(N_{E/F}(\det W))}
\intertext{and}
\beta(A,B)=&\sum_{i=0}^r(-1)^iq^{i(i-1)/2}\sum_{W\in\K_{n}^0\lsla \D_{n,i}}\alpha(A,B[W^{-1}])q^{-(m-n)\ord(N_{E/F}(\det W))}.
\end{align*}
\end{lems}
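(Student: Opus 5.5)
The plan is to decompose matrices $X\in S$ by the "content" of their columns, in the style of the classical proof of the analogous identities for quadratic forms (Kitaoka's book). Since $A$ is non-degenerate, the set of $X\in S$ with $A[X]$ close to $B$ consists, up to a null set, of matrices of rank $n$. Every $X\in\Mat_{m,n}(E)$ of rank $n$ whose image lattice $X\hana{O}_E^n$ lies in $\hana{O}_E^m$ can be written as $X=YW$ with $Y$ primitive and $W\in\Mat_n(\hana{O}_E)^{nd}$. Here $W$ is unique up to left multiplication by $\K_n^0$: take the saturated lattice $(X E^n)\cap\hana{O}_E^m$ and a basis of it. Given $W$, $Y=XW^{-1}$ is determined.

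For a general $S$ with $S\K_n^0=S$, I would use the same decomposition, restricting $Y$ to $(SW^{-1})^{\prm}$, so that $X=YW\in S$. Thus
\[
S\cap\{\text{rank } n\}=\bigsqcup_{W\in\K_n^0\lsla\Mat_n(\hana{O}_E)^{nd}}(SW^{-1})^{\prm}W,
\]
which is well defined because $S\K_n^0=S$.

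The first identity should then follow by change of variables in the integral representation of $\alpha$ from the proof of Definition and Proposition \ref{def:locden1}:
\[
\alpha(A,B;S)=\int_{\Mat_{m,n}(E)}\bm{1}_S(X)\int_{\maf{p}^{-e}\Her_n}\psi_Y(A[X]-B)\,dY\,dX .
\]
Substituting $X=X'W$, the condition $A[X]-B\in\maf{p}^e\dHer_n$ becomes $A[X']-B[W^{-1}]\in\maf{p}^e\dHer_n[W^{-1}]$. Rescaling $Y\mapsto WYW^*$ gives the Jacobian $|N(\det W)|^{n}$ on $\Her_n$, while $dX=|N(\det W)|^{m}dX'$. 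The net factor is $q^{-(m-n)\ord N(\det W)}$, which produces the summand $\alpha(A,B[W^{-1}];(SW^{-1})^{\prm})$. The sum is locally finite for fixed $B$, since $B[W^{-1}]$ must remain semi-integral whenever the density is nonzero. Taking $S=\Mat_{m,n}(\hana{O}_E)$ gives the $\alpha$–$\beta$ relation, because $SW^{-1}\supset\Mat_{m,n}(\hana{O}_E)$ and intersecting with primitive matrices gives exactly the primitive set.

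For the second identity I would apply Möbius inversion on the Hecke algebra of $(\GL_n(E),\K_n^0)$ restricted to integral matrices. Written formally, the first identity says $\alpha(\cdot;S)=\sum_W \mathbf{1}\ast(\text{primitive part})$. The inverse of the characteristic function of $\Mat_n(\hana{O}_E)^{nd}$ in this Hecke algebra is supported on $\bigcup_i\D_{n,i}$, with value $(-1)^iq_E^{i(i-1)/2}$ on $\D_{n,i}$. This is the standard inversion $\sum_{i}(-1)^iq_E^{i(i-1)/2}\#\{\text{sublattices } L'\subset L \text{ with } L/L'\cong(\hana{O}_E/\maf{q})^i \text{ containing a given } M\}=\delta_{M,L}$, i.e. the $q$-binomial identity $\sum_i(-1)^iq^{i(i-1)/2}\binom{k}{i}_q=\delta_{k,0}$ applied to the subspace $L/(M+\maf{q}L)$ of dimension $k$.

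Concretely, I would substitute the first identity (with $S$ replaced by $SW_1^{-1}\cap\Mat_{m,n}(\hana{O}_E)$) into the right-hand side of the second. Then I would regroup the double sum over pairs $(W_1,W_2)$ by the product $W=W_2W_1$ and use the $q$-binomial identity to collapse the sum to the single term $W=1$. The case $v$ split, where $\hana{O}_E=\hana{O}\oplus\hana{O}$, is handled by the same argument componentwise, with $q_E$ interpreted accordingly.

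The main obstacle is this inversion step: I need to check that the counting of $\K_n^0$-cosets in $\D_{n,i}W$ matches the $q$-binomial count, and that the compatibility $(SW_1^{-1}\cap\Mat)W_2^{-1}$ versus $SW^{-1}$ holds. That compatibility uses primitivity to ensure the regrouping is a bijection.
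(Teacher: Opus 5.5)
Your proof of the first identity is the paper's: decompose integral rank-$n$ matrices as $X=YW$ with $Y$ primitive, then change variables in the integral representation. Like the paper's, it tacitly needs $S\subset\Mat_{m,n}(\hana{O}_E)$.

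For the second identity you take a different route. The paper inverts \emph{before} integrating. It uses the pointwise identity $\bm{1}^{\prm}_{m,n}(X)=\sum_{W}\mt{m}(W)\,\bm{1}_{m,n}(XW^{-1})$, inserts it into the integral for $\alpha(A,B;S^{\prm})$, and does one change of variables per $W$. You invert \emph{after} integrating: you feed the first identity back into the right-hand side and collapse the double sum. This is valid, and the points you flagged check out:
\begin{itemize}
\item The first identity never uses $S\K_n^0=S$, since $(Y,W)$ is determined up to $(Yk^{-1},kW)$ for any integral $S$. So applying it to $SW_1^{-1}\cap\Mat_{m,n}(\hana{O}_E)$, which is usually not right $\K_n^0$-invariant, is legitimate.
\item $\bigl((SW_1^{-1}\cap\Mat_{m,n}(\hana{O}_E))W_2^{-1}\bigr)^{\prm}=(S(W_2W_1)^{-1})^{\prm}$, because $\Mat_{m,n}(\hana{O}_E)^{\prm}W_2\subset\Mat_{m,n}(\hana{O}_E)$.
\item For a fixed representative $W_1$, the map $W_2\mapsto W_2W_1$ is a bijection onto the cosets $\K_n^0W$ with $\hana{O}_E^nW\subset\hana{O}_E^nW_1$. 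Hence the coefficient of $\K_n^0W$ is $\prod_{j=0}^{K-1}(1-q_E^j)$ with $K=\dim\hana{O}_E^n/(\hana{O}_E^nW+\maf{q}\hana{O}_E^n)$. By Nakayama this vanishes unless $W\in\K_n^0$.
\item The rearrangement is justified for nondegenerate $B$, where only finitely many $W$ contribute.
\end{itemize}
The paper's route is shorter and avoids this bookkeeping. Yours makes explicit the lattice count hidden in ``by the $q$-binomial theorem''.

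One caution about your last sentence. In the split case, ``componentwise'' does \emph{not} give the statement as written. There $\Mat_{m,n}(\hana{O}_E)^{\prm}$ consists of pairs of primitive matrices, so the inverting function is a product of two $\GL_n(F_v)$ M\"obius functions. It is supported on all cosets $\K_n^0(W',W'')$ with $W',W''$ having elementary divisors in $\{1,\varpi\}$, not only on $\D_{n,i}$ with $\varpi_E=(\varpi,\varpi)$, and its values are not $(-1)^iq_E^{i(i-1)/2}$.

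Take $n=1$, $A=1_m$, $B=\varpi$. The cosets of $(\varpi,1)$ and $(1,\varpi)$, where $B[W^{-1}]=1$, give the correct formula $\beta(A,B)=\alpha(A,B)-2q^{-(m-1)}\alpha(A,1)$. The stated formula instead gives $\beta(A,B)=\alpha(A,B)$. So the second identity as stated requires $\hana{O}_E/\maf{q}$ to be a field, i.e.\ $v$ inert or ramified. The paper's proof has the same blind spot, and your componentwise argument proves the corrected split version rather than the stated one.
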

\begin{proof}
We put $\bm{1}_{m,n}=\bm{1}_{\Mat_{m,n}(\hana{O}_{E})}$ , $\bm{1}_{m,n}^\prm=\bm{1}_{\Mat_{m,n}(\hana{O}_{E})^\prm}$ and let $\varphi_S$ be $(\bm{1}_S\cdot\bm{1}_{m,n}^\prm)(X)$.
Since $\Mat_{m,n}(\hana{O}_{E})\setminus(\bigsqcup_{W\in\K_{n}^0\lsla \Mat_n(\hana{O}_{E})^{nd}}\Mat_{m,n}(\hana{O}_{E})^{\prm}W)$ is a null set with respect to $dX$, we have
\begin{align*}
&\alpha(A,B;S)\\
=&\lim_{e\to\infty}\int_{\maf{p}^{-e}\Her_n(\hana{O}_{E})}\sum_{W\in\K_{n}^0\lsla \Mat_n(\hana{O}_{E})^{nd}}\int_{\Mat_{m,n}(E)}\bm{1}_{S}(X)\cdot\bm{1}_{m,n}^{\prm}(XW^{-1})\psi_{Y}(A[X]-B)dXdY\\
=&\lim_{e\to\infty}\sum_{W\in\K_{n}^0\lsla \Mat_n(\hana{O}_{E})^{nd}}\int_{\maf{p}^{-e}\Her_n(\hana{O}_{E})[(W^*)^{-1}]}\int_{\Mat_{m,n}(E)}\varphi_{SW^{-1}}(Z)\\
&\times\psi_{Y'}(A[Z]-B[W^{-1}])dZdY'|\det W|_{E}^{m-n}\\
=&\sum_{W\in\K_{n}^0\lsla \Mat_{n}(\hana{O}_{E})^{nd}}\alpha(A,B[W^{-1}];(SW^{-1})^{\prm})q^{-(m-n)\ord(N_{E/F}(\det W))}.
\end{align*}
On the other hand, when $\varphi$ is a right $\K_{n}$-invariant function on $\Mat_n(K)$ whose support is compact and $\widetilde{\varphi}(X)=\sum_{W\in\hana{K}_{n}\lsla \Mat_{n}(\hana{O}_{E})^{nd}}\varphi(XW^{-1})$, we have
\begin{equation*}
\sum_{W\in\K_{n}^0\lsla \Mat_n(\hana{O}_{E})^{nd}}\mt{m}(W)\widetilde{\varphi}(XW^{-1})=\varphi(X)
\end{equation*}
by the $q$-binomial theorem.

When $\varphi=\bm{1}_{m,n}^\prm$, $\widetilde{\varphi}$ is the characteristic function of $\Mat_{n}(\hana{O}_E)^{nd}$. Thus, if $e$ is sufficiently large, we have
\begin{align*}
&\alpha(A,B;S^{\prm})\\
=&\lim_{e\to\infty}\int_{\maf{p}^{-e}\Her_n(\hana{O}_{E})}\int_{\Mat_{m,n}(E)}\bm{1}_S(X)\bm{1}_{m,n}^\prm(X)\psi_{Y}(A[X]-B)dXdY\\
=&\lim_{e\to\infty}\sum_{W\in\K_{n}^0\lsla \Mat_n(\hana{O}_{E})^{nd}}\mt{m}(W)\int_{\maf{p}^{-e}\Her_n(\hana{O}_{E})}\int_{\Mat_{m,n}(E)}\bm{1}_{SW^{-1}}(Z)\bm{1}_{m,n}(Z)\\
&\times\psi_{Y}(A[ZW]-B)|\det W|_{E}^{m}dZdY\\
=&\sum_{W\in\K_{n}^0\lsla \Mat_n(\hana{O}_{E})^{nd}}\mt{m}(W)\lim_{e\to\infty}\int_{\maf{p}^{-e}\Her_n(\hana{O}_{E})[(W^*)^{-1}]}\int_{\Mat_{m,n}(E)}\bm{1}_{SW^{-1}}(Z)\\
&\times\bm{1}_{m,n}(Z)\psi_{Y'}(A[Z]-B[W^{-1}])dZdY'|\det W|_{E}^{m-n}.
\end{align*}
Therefore, we have
\begin{align*}
&\alpha(A,B;S^{\prm})\\
=&\sum_{W\in\K_{n}^0\lsla \Mat_{n}(\hana{O}_{E})^{nd}}\mt{m}(W)\alpha(A,B[W^{-1}];(SW^{-1})\cap \Mat_{m,n}(\hana{O}_{E}))q^{-(m-n)\ord(N_{E/F}(\det W))}.
\end{align*}
In particular, if $S=\Mat_{m,n}(\hana{O}_{E})$, 
\begin{equation*}
(SW^{-1})^\prm=\Mat_{m,n}(\hana{O}_{E})^\prm\text{ and }\Mat_{m,n}(\hana{O}_{E})=(SW^{-1})\cap \Mat_{m,n}(\hana{O}_{E}).
\end{equation*} 
Thus, we obtain the above equations.
\end{proof}

\begin{lems}\label{lem:locden2}
Assume $S=\hana{K}_{m}^0 S\hana{K}_{m}^0$ and $S\cap\hana{K}_{m}^0\ne\emptyset$. Let $A,B$ be elements of $\dHer_{n}^{nd}$.
Then we have that
\begin{itemize}
\item[(1)]$\dfrac{\alpha(A,B;S)}{\alpha(A)}=\#(\Omega(A,B;S)/\K_{m}^0)q^{-m(\ord(\det B)-\ord(\det A))}$,
\item[(2)]$\dfrac{\alpha(A,B;S)}{\alpha(A)}=\#(\K_{m}^0\lsla\Omega'(A,B;S))$.
\end{itemize}
\end{lems}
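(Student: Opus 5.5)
Both formulas are instances of the standard fibration argument: the set of solutions $X\in S$ of $A[X]=B$ is compared with the $\K_{m}^0$-torsor $\{X: A[X]=A\}$, whose local density is $\alpha(A)$. Here $m=n$ (both $A,B\in\dHer_n^{nd}$), so $S\subset\Mat_n(E)$ is a bi-$\K_m^0$-invariant compact open set.

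\textbf{Integral expression.} By Definition and Proposition~\ref{def:locden1}, for $e$ large,
\[
\alpha(A,B;S)=q^{-e m^2}\#\bar{\hana{A}}_e(A,B;S).
\]
Equivalently, $\alpha(A,B;S)$ is the volume of $\hana{A}_e(A,B;S)$ divided by the volume of $\maf{p}^e\dHer_m$; the latter is normalized so that $\vol(\maf{p}^e\dHer_m)=q^{-em^2}$ up to the fixed normalization.

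\textbf{Proof of (1).} Since $A$ and $B$ are nondegenerate and $e$ is large, every $X\in\hana{A}_e(A,B;S)$ is invertible. Moreover $A[X]\equiv B \pmod{\maf{p}^e\dHer_m}$ forces $A[X]\in B[\K_m^0]$, by the standard Hensel-type lemma: Hermitian forms congruent modulo a high power of $\maf{p}$ relative to $\det B$ are $\K_m^0$-equivalent. Hence $X$ lies in $\Omega(A,B;S)$ up to the open condition, and conversely. Decompose
\[
\hana{A}_e(A,B;S)=\bigsqcup_{W\in\Omega(A,B;S)/\K_m^0}\{X\in W\K_m^0 : A[X]\equiv B\}.
\]
For a fixed representative $W$ with $A[W]=B[k_0]$, we may replace $B$ by $A[W]$ (using $B[\K_m^0]$-invariance, i.e.\ change variables $X\mapsto Xk_0^{-1}$, which preserves $S$). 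The substitution $X=WY$ with $Y\in\K_m^0$ turns the condition $A[X]\equiv A[W]$ into $B'[Y]\equiv B'$ with $B'=A[W]$. The counting condition on $X$ modulo $\maf{p}^e$ then becomes a condition on $Y$, and it is here that the Jacobian factors must be tracked. On the source side, $|\det W|_E^{m}=q^{-m\,\ord N(\det W)}$ arises from $dX=|\det W|_E^m\,dY$. On the target side, the map $Y\mapsto B'[Y]$ versus $X\mapsto A[X]$ rescales the congruence lattice by $W^*(\cdot)W$, whose volume factor is $|\det W|_E^{m}$ on $\Her_m$. Using $\ord N(\det W)=\ord\det B-\ord\det A$, the net factor is $q^{-m(\ord\det B-\ord\det A)}$, and each fibre contributes $\alpha(A)$, since $\alpha(B',B';\K_m^0)=\alpha(A,A;\K_m^0)=\alpha(A)$ by conjugation through $W$. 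Note that $\alpha(A,A;S)=\alpha(A)$ only if solutions stay in $\K_m^0$; when $m=n$, any $X$ with $A[X]=A$ is automatically in $\K_m^0$ up to the torsor. Summing over the orbits gives (1).

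\textbf{Proof of (2).} Repeat the same argument, but parametrize $X=YW'$ with $W'$ running over $\K_m^0\backslash\Omega'(A,B;S)$, i.e.\ change variables on the right. Then the Jacobian factor from $dX$ exactly cancels the rescaling of the target lattice, $B\mapsto B[W'^{-1}]\in A[\K_m^0]$, so no power of $q$ appears. Alternatively, (2) follows from (1) through the bijection
\[
\Omega(A,B;S)/\K_m^0\;\longleftrightarrow\;\K_m^0\backslash\Omega'(A,B;S),\qquad W\mapsto W^{-1}\ \text{(suitably transposed)},
\]
once the volume factor $q^{m(\ord\det B-\ord\det A)}$ is matched against the ratio of the orbit sizes.

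\textbf{Main obstacle.} The delicate step is the Hensel-type lifting: showing that for $e$ large, $A[X]\equiv B\pmod{\maf{p}^e}$ implies $A[X]=B[k]$ with $k\equiv1$, uniformly over $X\in S$, so that the fibres are exactly $\K_m^0$-torsors of volume $\alpha(A)$. I would handle this with the Fourier-integral formula from the proof of Definition and Proposition~\ref{def:locden1}, which makes the stabilization in $e$ explicit.
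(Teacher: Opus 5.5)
The step that fails is in (1): ``each fibre contributes $\alpha(A)$, since $\alpha(B',B';\K_m^0)=\alpha(A,A;\K_m^0)$ by conjugation through $W$''. Local densities are invariant only under $\K_m^0$-equivalence, and in general $W\notin\K_m^0$. After the substitution $X=WY$, the part of $\hana{A}_e(A,B;S)$ lying in $W\K_m^0$ becomes $\{Y\in\K_m^0 : B'[Y]\equiv B' \bmod \maf{p}^e\dHer_m\}$ with $B'=A[W]\in B[\K_m^0]$. Its density is $\alpha(B')=\alpha(B)$, not $\alpha(A)$.

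Your Jacobian bookkeeping is also off. Since $A[WY]=B'[Y]$ identically, the target congruence lattice is not rescaled at all. The only factor is $dX=|\det W|_E^m\,dY$. A second factor $|\det W|_E^m$ on the target side would cancel this one rather than produce $q^{-m(\ord(\det B)-\ord(\det A))}$. Carried out correctly, your fibration argument gives
\[
\alpha(A,B;S)=\#\bigl(\Omega(A,B;S)/\K_m^0\bigr)\,q^{-m(\ord(\det B)-\ord(\det A))}\,\alpha(B).
\]

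Hence (1) as printed, with $\alpha(A)$ in the denominator, cannot be proved: it is false. Take $m=1$, $E/F$ inert, $A=1$, $B=\varpi^2$, $S=\hana{O}_E$.
\begin{itemize}
\item The solutions of $N_{E/F}(x)\equiv\varpi^2\bmod\maf{p}^e$ are $x=\varpi u$ with $N_{E/F}(u)\equiv1\bmod\maf{p}^{e-2}$. Counting gives $\alpha(1,\varpi^2)=1+q^{-1}=\alpha(1)$, the latter agreeing with Proposition~\ref{prop:locden1}(1).
\item On the other hand, $\Omega(1,\varpi^2)=\varpi\hana{O}_E^\times$ is a single $\K_1^0$-coset, so the printed right-hand side is $q^{-2}$.
\item Since $\alpha(\varpi^2)=q^2(1+q^{-1})$, the corrected identity above checks out.
\end{itemize}

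Your argument for (2) is sound, granted the Hensel/fibre-volume lemma you flag. Right translation $X=YW'$ gives source and target Jacobians both equal to $|\det W'|_E^m$. The exact fibre inside $\K_m^0W'$ is a right translate of $\{k\in\K_m^0: A[k]=A\}$, so its fibre volume is $\alpha(A)$. The paper itself only cites Katsurada here, and (2) is the form that feeds Proposition~\ref{prop:locden3}.

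Drop the suggested alternative via a bijection $W\mapsto W^{-1}$ between $\Omega/\K_m^0$ and $\K_m^0\lsla\Omega'$. First, $W^{-1}$ is not integral. Second, the two cardinalities differ in general, by the factor $q^{-m(\ord(\det B)-\ord(\det A))}\alpha(B)/\alpha(A)$. For example, with $A=1_2$, $B=1\perp\varpi^2$ and $E/F$ inert, they are $q^2-q$ and $1$ respectively.
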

\begin{proof}
We can prove this in a similar way as in \cite[Lemma 4.1.3]{K1} and \cite[Lemma 5.1.1]{K2}.
\end{proof}

Besides, to find the explicit formula for the local density and primitive local density, we put 
\begin{align*}
\Theta_{2k}=&
\begin{cases}
1_{2k}&(\text{$E$ is inert over $F$ or $E=F\oplus F$}),\\
\begin{pmatrix}
0&\varpi_E^{-\maf{f}}\\
\bar{\varpi_E}^{-\maf{f}}&0
\end{pmatrix}
^{\perp k}
&(\text{$E$ is ramified over $F$})
\end{cases}
\end{align*}
Then $\Theta_{2k}$ is called a regular element in \cite{S1}.
We put $\Theta_{2k+1}=1_{2k+1}$ if $E$ is not ramified over $F$.
We compute the local density and the primitive local density concerning direct sum Hermitian matrices.
\begin{lems}\label{lem:locden3}
Let $m_1$, $m$, $n_1$, $n$ be integers such that $n_1\leqq m_1\leqq m$, $n_1\leqq n\leqq m$ and $m_1, n_1$ are even if $E$ is ramified over $F$.
Then for $A\in\dHer_{m-m_1}^{nd}$ and $B\in\dHer_{n-n_1}$, we have
\begin{align*}
\beta(\Theta_{m_1}\perp A,\Theta_{n_1}\perp B)=&\beta(\Theta_{m_1}\perp A,\Theta_{n_1})\beta(\Theta_{m_1-n_1}\perp A,B).
\intertext{Moreover, if $B$ is non-degenerate,}
\alpha(\Theta_{m_1}\perp A,\Theta_{n_1})=&\beta(\Theta_{m_1}\perp A,\Theta_{n_1})\\
\alpha(\Theta_{m_1}\perp A,\Theta_{n_1}\perp B)=&\alpha(\Theta_{m_1}\perp A,\Theta_{n_1})\alpha(\Theta_{m_1-n_1}\perp A,B)
\end{align*}
\end{lems}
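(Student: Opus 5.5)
We first prove the statement about primitive densities and then deduce the two identities for $\alpha$ from it. The approach is the one used for the analogous orthogonal and Hermitian statements in \cite{K1,K2}: decompose a primitive representation $X$ of $\Theta_{n_1}\perp B$ by $\Theta_{m_1}\perp A$ into its first $n_1$ columns $X_1$ and its last $n-n_1$ columns $X_2$. The basic structural input is that $\Theta_{n_1}$ is \emph{regular} in the sense of \cite{S1}: it is unimodular, so its dual lattice equals $\hana{O}_E^{n_1}$ (after the $\varpi_E^{-\maf{f}}$ normalization in the ramified case). Consequently, any primitive $X_1$ with $(\Theta_{m_1}\perp A)[X_1]\equiv\Theta_{n_1}$ modulo a high power of $\maf{p}$ spans a sublattice $L_1$ on which the form is unimodular. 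Such an $L_1$ splits off orthogonally:
\begin{equation*}
\hana{O}_E^m=L_1\perp L_1^{\perp}.
\end{equation*}

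The key steps are as follows.
\begin{itemize}
\item[(i)] Use the Fourier-integral expression for densities from the proof of Definition and Proposition~\ref{def:locden1}, or equivalently count $\bar{\hana{B}}_e$ for large $e$. This lets us work with solutions modulo $\maf{p}^e$.
\item[(ii)] For a fixed solution $X_1$, lift $L_1\perp L_1^\perp$ by Hensel's lemma. The unimodularity of $\Theta_{n_1}$ makes the Jacobian of $X_1\mapsto (\Theta_{m_1}\perp A)[X_1]$ surjective onto $\dHer_{n_1}$ modulo $\maf{p}^e$.
\item[(iii)] Show that $L_1^\perp\cong \Theta_{m_1-n_1}\perp A$. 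This is a Witt-cancellation statement for the unimodular part. The assumption that $m_1,n_1$ are even in the ramified case is exactly what makes $\Theta_{m_1}\cong\Theta_{n_1}\perp\Theta_{m_1-n_1}$ and lets cancellation hold there.
\item[(iv)] Observe that the condition on $X_2$ splits. The requirement that $X_2$ be orthogonal to $L_1$ modulo $\maf{p}^e$ forces $X_2\equiv X_1 Y+X_2'$ with $X_2'\in L_1^\perp$. The equation $(\Theta_{m_1}\perp A)[X]\equiv\Theta_{n_1}\perp B$ then fixes $Y$ uniquely modulo $\maf{p}^e$ by unimodularity, and leaves the condition $(\Theta_{m_1-n_1}\perp A)[X_2']\equiv B$.
\item[(v)] Check that primitivity of $X$ is equivalent to primitivity of $X_2'$ in $L_1^\perp$. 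This holds because $L_1$ is a direct summand.
\end{itemize}
Counting the solutions and tracking the normalizing powers $q^{e(-2mn+n^2)}$ then yields the product formula for $\beta$.

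For the statements about $\alpha$: every representation of the unimodular $\Theta_{n_1}$ is automatically primitive. If $X_1=X_1'W$ with $W$ non-unit, then $\Theta_{n_1}[W^{-1}]$ would have to be semi-integral, which is impossible since $\det$ would drop valuation below that of a unimodular form. Alternatively, Lemma~\ref{lem:locden1} gives the same conclusion, since all terms with $W\notin\K_{n_1}^0$ vanish. This yields $\alpha(\Theta_{m_1}\perp A,\Theta_{n_1})=\beta(\Theta_{m_1}\perp A,\Theta_{n_1})$.

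The last identity follows by repeating the splitting argument without imposing primitivity on $X_2'$, since only $X_1$ needed to be primitive. Equivalently, apply the first formula of Lemma~\ref{lem:locden1} to both sides, using $(\Theta_{n_1}\perp B)[ (1\perp W)^{-1}]=\Theta_{n_1}\perp B[W^{-1}]$, and match the sums over $W$. The determinant power $q^{-(m-n)\ord N(\det W)}$ agrees on both sides because $m-n=(m-n_1)-(n-n_1)$.

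I expect the main obstacle to be the ramified case. There the normalization $\varpi_E^{-\maf{f}}$ and the definition of $\dHer$ (semi-integrality with respect to the different) make both the Hensel step and the cancellation $L_1^\perp\cong\Theta_{m_1-n_1}\perp A$ delicate. I would handle it by reducing to the hyperbolic-plane structure of $\Theta_2$, explicitly splitting one hyperbolic plane at a time and inducting on $n_1/2$.
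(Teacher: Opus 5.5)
Your proposal is correct in outline and follows the paper's route. The paper gives no argument of its own but defers to \cite[Lemma 4.1.6]{K1}, which proceeds just as you do: split off the unimodular lattice spanned by the first $n_1$ columns, identify its orthogonal complement with $\Theta_{m_1-n_1}\perp A$, count the remaining columns, and obtain the $\alpha$-identities from the automatic primitivity of representations of $\Theta_{n_1}$.

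The one delicate input is exactly the cancellation you flag in the ramified dyadic case, which the paper takes from \cite[Lemma 8.1]{F}. Your induction on $n_1/2$ only reduces this to transitivity of the unitary group of $\Theta_{m_1}\perp A$ on pairs spanning a copy of $\Theta_2$, and that transitivity still has to be proved. One way is via integral Eichler-type transformations; the semi-integral normalization of $\Theta_2$ and of $A$ is what makes these exist.
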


\begin{lems}\label{lem:locden4}
\begin{itemize}
\item[(a)]Assume that $E$ is not ramified over $F$.
Let $m_1$, $m$, $n_1$, $n_2$ be integers such that $2n_1\leqq m_1\leqq m$.
Then for $A\in\dHer_{m-m_1}$, $B_1\in\dHer_{n_1}$ and $B_2\in\dHer_{n_2}$, we have
\begin{align*}
\beta(1_{m_1}\perp A,B_1\perp B_2)=&\beta(1_{m_1}\perp A,B_1)\beta((-B_1)\perp1_{m_1-2n_1}\perp A,B_2).
\end{align*}
\item[(b)]Assume that $E$ is ramified over $F$.
Let $m_1$, $m$, $n_1$, $n_2$ be integers such that $2n_1\leqq2m_1\leqq m$.
Then for $A\in\dHer_{m-2m_1}$, $B_1\in\dHer_{n_1}$ and $B_2\in\dHer_{n_2}$, we have
\begin{align*}
\beta(\Theta_{2m_1}\perp A,B_1\perp B_2)=&\beta(\Theta_{2m_1}\perp A,B_1)\beta((-B_1)\perp\Theta_{2m_1-2n_1}\perp A,B_2).
\end{align*}
\end{itemize}
\end{lems}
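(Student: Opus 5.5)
Both identities will come from a direct orbit decomposition of the set of primitive representations. This is the unitary analogue of Kitaoka's reduction formula, which Katsurada uses in \cite{K1, K2}. I treat (a) in detail; (b) is identical with $1_{m_1}$ replaced by the regular element $\Theta_{2m_1}$.

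First, write $X=(X_1\,X_2)$ with $X_1\in\Mat_{m,n_1}(\hana{O}_E)$ and $X_2\in\Mat_{m,n_2}(\hana{O}_E)$. The condition $(1_{m_1}\perp A)[X]\equiv B_1\perp B_2 \pmod{\maf{p}^e}$ splits into three congruences:
\[
(1_{m_1}\perp A)[X_1]\equiv B_1,\qquad X_1^*(1_{m_1}\perp A)X_2\equiv 0,\qquad (1_{m_1}\perp A)[X_2]\equiv B_2 .
\]
Primitivity of $X$ forces $X_1$ to be primitive. I will count pairs by fibering over $X_1$, so that $\beta(1_{m_1}\perp A,B_1\perp B_2)$ is the sum over classes of primitive $X_1$ representing $B_1$ of the normalized number of $X_2$ satisfying the last two conditions, with $(X_1X_2)$ primitive.

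Second, I reduce every fibre to a standard one. By a unitary Witt-type extension over $\hana{O}_E$, valid for $e$ large and using that $1_{m_1}$ contains a hyperbolic part of rank $2n_1\le m_1$, every primitive $X_1$ with $(1_{m_1}\perp A)[X_1]\equiv B_1$ can be moved by an element of $U(1_{m_1}\perp A)(\hana{O}_E)$ (modulo $\maf{p}^e$) to a fixed standard $X_1^0$. Concretely, the lattice $L=\hana{O}_E^m$ decomposes as $L=M\perp M^{\perp}$, where $M\supset X_1^0\hana{O}_E^{n_1}$ is a unimodular lattice of rank $2n_1$ carrying the Hermitian form $\begin{pmatrix}B_1&*\\ *&*\end{pmatrix}$. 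Inside $M$ one finds $X_1^0\hana{O}_E^{n_1}\oplus Y\hana{O}_E^{n_1}$ with Gram matrix $\begin{pmatrix}B_1&1\\1&0\end{pmatrix}$, and this is isometric to $B_1\perp(-B_1)$ after a unimodular change of basis. Hence $M^{\perp}\cong(-B_1)\perp 1_{m_1-2n_1}\perp A$.

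Since the group action preserves the measure, the fibre count does not depend on $X_1$. The orthogonality condition $X_1^{0*}(1_{m_1}\perp A)X_2\equiv0$ then says that $X_2$ lies, modulo the $X_1^0$-direction, in $M^\perp$ (plus a component along $X_1^0$ that is killed in the quotient). Primitivity of $(X_1^0X_2)$ becomes primitivity of the projection of $X_2$ to $M^\perp$. The resulting normalized fibre count is therefore exactly $\beta((-B_1)\perp1_{m_1-2n_1}\perp A,B_2)$. Summing over the $X_1$ gives $\beta(1_{m_1}\perp A,B_1)$, which proves the product formula.

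The main obstacle is the normalization bookkeeping, together with justifying the splitting $L=M\perp M^\perp$ modulo $\maf{p}^e$ when $B_1$ is degenerate or non-integral-unimodular. I will handle this by first choosing $Y$ via the dual basis, which primitivity of $X_1^0$ makes possible, and then performing the Gram–Schmidt-type correction that replaces $M$'s Gram matrix $\begin{pmatrix}B_1&1\\1&c\end{pmatrix}$ by $\begin{pmatrix}B_1&1\\1&0\end{pmatrix}$. In case (b), $2$ is not a unit in the ramified extension, so this step needs the trace-surjectivity property of $\Theta$ from \cite{S1}. I will also check that the powers $q^{e(-2mn+n^2)}$ factor correctly: $2mn-n^2$ must split as $(2mn_1-n_1^2)$ plus $(2(m-n_1)n_2-n_2^2)$, and the remaining $2n_1n_2$ is absorbed by the $n_1n_2$ linear orthogonality congruences, each counted twice over $F$. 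This is the same computation as in Definition and Proposition \ref{def:locden1}, via the Fourier-integral expression.
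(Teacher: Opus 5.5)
Your skeleton (fibre over $X_1$, normalize, identify each fibre with a primitive density of a complementary lattice) is the standard argument behind the paper's citation of \cite{K1}. However, the step that is supposed to produce $\beta((-B_1)\perp 1_{m_1-2n_1}\perp A,B_2)$ fails in three concrete places.

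(1) $M^\perp$ has rank $m-2n_1$, so it cannot be $(-B_1)\perp1_{m_1-2n_1}\perp A$, which has rank $m-n_1$. Also, $\left(\begin{smallmatrix}B_1&1\\1&0\end{smallmatrix}\right)$ is unimodular, so it is isometric to $B_1\perp(-B_1)$ only if $B_1$ is. What is true is $X_1^\perp=(X_1-YB_1)\hana{O}_E^{n_1}\perp M^\perp$, where the first summand has Gram matrix $-B_1$ once the lower right block of the Gram matrix of $M$ has been made $0$.

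(2) Primitivity does not transfer. Write $X_2\equiv(X_1-YB_1)b+W$ with $W$ in $M^\perp$. The image of $X_2$ in $L/X_1\hana{O}_E^{n_1}\cong Y\hana{O}_E^{n_1}\oplus M^\perp$ is $(-B_1b,W)$. So $(X_1\,X_2)$ is primitive iff $\left(\begin{smallmatrix}B_1b\\W\end{smallmatrix}\right)$ is, whereas ``$X_2$ primitive in $X_1^\perp$'' means $\left(\begin{smallmatrix}b\\W\end{smallmatrix}\right)$ is primitive. The two agree only for unimodular $B_1$. For $B_1\equiv0\bmod\maf{p}$ your own sentence (primitivity of the projection to $M^\perp$) is the correct condition, but it yields a primitive density of $M^\perp$, not of $(-B_1)\perp M^\perp$.

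(3) For non-unimodular $A$, neither the dual $Y$ nor the transitivity is available. In $1_2\perp(\varpi)$ with $B_1=\varpi$, the vector $X_1=e_3$ is primitive, but $e_3^*(1_2\perp(\varpi))Y$ is divisible by $\varpi$ for every integral $Y$. Hence $e_3$ is not in the orbit of $(1,y,0)^t$ with $1+N_{E/F}(y)=\varpi$. For $B_2\in\hana{O}^\times$ the fibre over $e_3$ is nonempty, while the fibre over $(1,y,0)^t$ is empty.

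These gaps cannot be closed, because the identity is false as stated once $B_1$ is not unimodular. Take $E/F$ inert, $A$ empty, $m=m_1=2$, $n_1=n_2=1$, $B_1=\varpi$, $B_2=-\varpi$.

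\emph{Left side.} A primitive $2\times2$ matrix lies in $\GL_2(\hana{O}_E)$, so $\det(X^*X)\in\hana{O}^\times$. This is incompatible with $X^*X\equiv\mathrm{diag}(\varpi,-\varpi)\bmod\maf{p}^e$, so the left side is $0$. Lemma~\ref{lem:locden5} with $k=1$, $n=2$ also gives $0$.

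\emph{Right side.} $\beta(1_2,\varpi)=(1-q^{-2})(1+q^{-1})$ by Proposition~\ref{prop:locden1}, and $\beta((-\varpi),-\varpi)=q+1>0$ ($X=1$ is a primitive solution). So the right side is nonzero.

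What your fibering argument does prove is the following.
(i) For unimodular $B_1$ and arbitrary $A$, the stated formula holds; this is Lemma~\ref{lem:locden3}.
(ii) For $A$ empty and $B_1\in\Her_{n_1,*}$, a dual $Y$ exists for every primitive $X_1$ because $1_{m_1}$ is unimodular. Then $B_1b$ and $B_1[b]$ vanish modulo $\maf{p}$, and the primitive density of the unimodular lattice $M^\perp\cong1_{m_1-2n_1}$ depends only on $B_2\bmod\maf{p}$. This gives
$\beta(1_{m_1},B_1\perp B_2)=\beta(1_{m_1},B_1)\,\beta(1_{m_1-2n_1},B_2)$.
The $\Theta_{2m_1}$-analogue needs the same argument with more care at ramified places.

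Version (ii), with no $(-B_1)$, is exactly what the induction in Lemma~\ref{lem:locden5} uses. The lemma should be restated in that form before you prove it.
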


\begin{Rem}
    The proofs of the Lemma \ref{lem:locden3} and \ref{lem:locden4} are similar to the proof of \cite[Lemma 4.1.6, Lemma 4.1.7]{K1} by \cite[Lemma 8.1]{F} not only for $p\ne2$ but also $p=2$.
\end{Rem}

To compute the special value of the Siegel series, we compute the primitive density representing the Hermitian matrix $B$ by $\Theta_{2k}$.
We put 
\begin{equation*}
\Her_{m,*}=\begin{cases}
\maf{p}\dHer_n&\text{if $E$ is not ramified over $F$,}\\
\left\{(a_{ij})_{1\leqq i,j,\leqq m}\in\Her_m(F)\middle|
\begin{matrix}
\text{$a_{ij}\in\maf{q}\maf{d}_{E/F}^{-1}$ if $i\ne j$},\\
\text{$a_{ii}\in\hana{O}$}
\end{matrix}
\right\}&\text{if $E$ is ramified over $F$.}
\end{cases}\label{eq:qsemiint}
\end{equation*}
Then we have the following properties.
\begin{props}\label{prop:locden1}
\begin{itemize}
\item[(1)]Suppose that $E$ is inert over $F$. If $b\in\hana{O}_{E}$ we have
\begin{equation*}
\beta(1_{m},\varpi b)=(1-(-q)^{-m})(1-(-q)^{-m+1}).
\end{equation*}
If $b\in\hana{O}_{E}^\times$, we have
\begin{align*}
\alpha(1_{2m},b)=\beta(1_{2m},b)=&1-q^{-2m},\\
\alpha(1_{2m-1},b)=\beta(1_{2m-1},b)=&1+q^{-2m+1}.
\end{align*}
\item[(2)]Assume that $E$ is ramified over $F$. If $B\in\Her_{l,*}^{nd}$ with $l\leqq2$, we have
\begin{align*}
\beta(\Theta_{2m},B)=\prod_{i=0}^{l-1}(1-q^{-2m+2i}).
\intertext{If $B=\Theta_{2}
$, we have}
\beta(\Theta_{2m},B)=1-q^{-2m}.
\end{align*}
\end{itemize}
\end{props}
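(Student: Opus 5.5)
The plan is to compute each density as the volume of a finite solution set, using the limit formula of Definition and Proposition~\ref{def:locden1}: $\beta(A,B)=q^{e(-2mn+n^2)}\#\bar{\hana{B}}_e(A,B)$ for $e$ large. The primitivity condition lets us take $e=1$ in the unimodular cases, which reduces everything to counting over the residue field. Smoothness (Hensel's lemma) is what justifies stabilization at the first step. At a primitive vector the differential $X\mapsto A[X]$ is surjective modulo $\maf{p}$ when $A$ is unimodular (or $\Theta$-regular). Hence every primitive solution modulo $\maf{p}$ lifts with fiber count exactly $q^{2mn-n^2}$ at each further step.

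For (1), $E$ is inert, $\hana{O}_E/\maf{q}=\mb{F}_{q^2}$, and $n=1$, so the normalizing factor is $q^{-(2m-1)}$ for a vector $x\in\hana{O}_E^m$.
\begin{itemize}
\item If $b\in\hana{O}_E^\times$ (so $b\in\hana{O}^\times$, being Hermitian), primitive and all solutions coincide, since $x^*x\equiv b\not\equiv0$ forces $x\not\equiv0$. Thus $\alpha=\beta$.
\item The number of $x\in\mb{F}_{q^2}^m$ with $N(x)=\bar b$ for the nondegenerate Hermitian form is the classical $q^{2m-1}-(-1)^mq^{m-1}$. Multiplying by $q^{-(2m-1)}$ gives $1-(-q)^{-m}$, which is $1-q^{-2m}$ or $1+q^{-2m+1}$ in the even and odd cases $1_{2m}$, $1_{2m-1}$, after the re-indexing in the statement.
\item For $\varpi b$ we count nonzero isotropic vectors of $\mb{F}_{q^2}^m$: there are $q^{2m-1}+(-1)^m(q^m-q^{m-1})-1$ of them, i.e.\ $(q^m-(-1)^m)(q^{m-1}+(-1)^m)$.
\item Because the target is now $\varpi b$ rather than a unit, a lift modulo $\maf{p}^2$ is cut out by one extra linear condition. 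I will verify that this contributes exactly one more factor of $q^{-1}$ in the normalization, i.e.\ that one works at $e=2$ with fiber count $q^{2m-1}$ per primitive residue class. The result is then $(1-(-q)^{-m})(1-(-q)^{-m+1})$.
\end{itemize}

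For (2), $E$ is ramified. I would change basis so that $\Theta_{2m}$ becomes the hyperbolic form over $\hana{O}_E$ with the different absorbed. Reduction modulo $\maf{q}$ then yields a symplectic or orthogonal-type quadratic space of dimension $2m$ over $\mb{F}_q$, where the residue field of $E$ is $\mb{F}_q$.
\begin{itemize}
\item The condition $B\in\Her_{l,*}$ means the off-diagonal entries vanish to first order and the diagonal entries lie in $\hana{O}$. Modulo $\maf{q}$, the equation $\Theta[X]\equiv B$ becomes the statement that the $l$ columns form a totally isotropic, linearly independent system in a $2m$-dimensional symplectic space over $\mb{F}_q$.
\item The second-order diagonal terms are absorbed by the lifting.
\item Counting ordered linearly independent isotropic $l$-frames in symplectic $\mb{F}_q^{2m}$ gives $\prod_{i=0}^{l-1}(q^{2m-2i}-1)q^{2m-2i-1}\cdots$ appropriately. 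After the normalization $q^{-e(4ml-l^2)}$ with the correct $e$, this produces $\prod_{i=0}^{l-1}(1-q^{-2m+2i})$.
\item For $B=\Theta_2$ the columns must form a hyperbolic pair. The count is $(q^{2m}-1)q^{2m-1}$, normalized to $1-q^{-2m}$.
\end{itemize}

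The main obstacle is the ramified and dyadic bookkeeping: showing that the reduction of $\Theta[X]$ genuinely becomes an alternating/symplectic count, and fixing the correct level $e$ and Jacobian normalization so that Hensel lifting has uniform fibers. For $p=2$ the diagonal terms are traces $\tr(x_i\bar x_j\varpi_E^{-\maf f})$, and surjectivity onto $\hana{O}$ needs care. I would handle this as in \cite[Lemma 8.1]{F}, which the paper already invokes for $p=2$, and compare with Katsurada's unramified computation \cite[Lemma 4.1.6]{K1} for the normalization.
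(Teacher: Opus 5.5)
Your route differs from the paper's. The paper does not compute part (1), the case $l=1$, or $B=\Theta_2$ at all: it quotes \cite[Theorem 8.2.1]{F} together with Katsurada's dictionary between modular lattices and local densities.

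The only case the paper argues is $B\in\Her_{2,*}^{nd}$, and there it inducts on columns:
\begin{itemize}
\item it replaces $\Theta_{2m}$ by the equivalent $\Theta_{2m}(b_{11})$ and fibres the primitive solutions over the first column $W_1$;
\item it moves $W_1$ to $(1,0,\dots,0)^t$ by a completion $Z_{W_1}$;
\item it notes that the second coordinate of the second column is then forced and lies in $\maf{q}$ (this is where $B\in\Her_{2,*}$ is used), so the remaining block $X$ is a primitive representation of a scalar by $\Theta_{2m-2}$;
\item the case $l=1$ then supplies the factor $(1-q^{-2m+2})$.
\end{itemize}

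Your approach, smoothness at primitive points plus a residue-field count, is self-contained and uniform. It also disposes of the dyadic worry you raise. Since $\tr_{E/F}(\maf{q}^k)=\maf{p}^{[(k+\maf{f})/2]}$, you get $\tr_{E/F}(\maf{d}_{E/F}^{-1})=\hana{O}$. Hence the differential $Y\mapsto X^*\Theta Y+Y^*\Theta X$ at a primitive $X$ maps onto $\dHer_l$ for every $p$, and level $e=1$ always suffices. Moreover $\bar{\varpi}_E+\varpi_E\in\tr_{E/F}(\maf{q})\subseteq\maf{p}=\maf{q}^2$. So $\varpi_E^{\maf{f}}x^*\Theta_{2m}y \bmod \maf{q}$ equals $\sum_i (x_{2i-1}y_{2i}+(-1)^{\maf{f}}x_{2i}y_{2i-1})$. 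This form is alternating in all cases, including $\maf{f}$ even, which forces $p=2$. Done correctly, your count proves the ramified case of Lemma \ref{lem:locden5} for all $l$ at once, whereas the paper needs the induction through Lemmas \ref{lem:locden3} and \ref{lem:locden4}.

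Two of your computations are wrong as written.

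In (1) there is no extra factor $q^{-1}$ for the target $\varpi b$. Your own Hensel principle gives fibres of size $q^{2m-1}$ over every primitive point, whatever the target. So $\beta(1_m,\varpi b)=q^{-(2m-1)}(q^m-(-1)^m)(q^{m-1}+(-1)^m)$, which already equals $(1-(-q)^{-m})(1-(-q)^{-m+1})$. The factor you propose to verify is false and would spoil the answer.

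In (2) the displayed product $\prod_i(q^{2m-2i}-1)q^{2m-2i-1}$ is not the number of independent isotropic frames. You also skip a layer: since $\maf{p}\hana{O}_E=\maf{q}^2$, level $e=1$ means $\hana{O}_E/\maf{q}^2$, not the residue field of $E$. The correct bookkeeping is as follows.
\begin{itemize}
\item Modulo $\maf{q}$ only the off-diagonal entry is visible. It forces $\langle\bar v_1,\bar v_2\rangle=0$ for $B\in\Her_{2,*}$, and $=1$ for $B=\Theta_2$.
\item The number of independent residue pairs is $(q^{2m}-1)(q^{2m-1}-q)$, respectively $(q^{2m}-1)q^{2m-1}$.
\item Each pair has $q^{4m}$ lifts to $\hana{O}_E/\maf{q}^2$. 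Exactly $q^{4m-3}$ of them satisfy the two diagonal conditions mod $\maf{p}$ and the off-diagonal condition mod $\maf{p}\maf{d}_{E/F}^{-1}$. These conditions are affine in the lift, with linear part onto $\mb{F}_q^3$ because $\tr_{E/F}(\maf{q}\maf{d}_{E/F}^{-1})=\hana{O}$.
\item Multiplying by $q^{-(8m-4)}$ gives $(1-q^{-2m})(1-q^{-2m+2})$, respectively $1-q^{-2m}$.
\item For $l=1$ the same count gives $(q^{2m}-1)q^{2m-1}\cdot q^{-(4m-1)}=1-q^{-2m}$ for every $b\in\hana{O}$.
\end{itemize}
With these corrections your argument is complete.
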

\begin{proof}
We can prove (1) and (2) when $B\in\Her_{1,*}^{nd}=\maf{q}_E\maf{d}_{E/F}^{-1}\cap F^{\times}$ or $B=\Theta_2$ by \cite[Theorem 8.2.1]{F} and the explicit formula between the modular lattices and local density as in \cite{K1}.
Thus, we should prove the equation of $\hana{B}_e(\Theta_{2m},B)$ for $B\in\Her_{2,*}^{nd}$.
We put $B=
\begin{pmatrix}
b_{11}&b_{12}\\
\bar{b}_{12}&b_{22}
\end{pmatrix}
$ and for $b\in\hana{O}$ we put $\Theta(b)=\begin{pmatrix}
b&\varpi_E^{-\maf{f}}\\
\bar{\varpi}_E^{-\maf{f}}&0
\end{pmatrix}$, and $\Theta_{2m}(b)=\Theta(b)\perp\Theta_{2(m-1)}$. Then there exists $k_b\in \GL_{2m}(\hana{O}_{E})$ such that $\Theta_{2m}[k_b]=\Theta_{2m}(b)$.
Therefore, we have $\beta(\Theta_{2m},A)=\beta(\Theta_{2m}(b),A)$ for any $b\in\hana{O}$ and any Hermitian matrices $A$.\\
Considering the decomposition of $\hana{B}_2(\Theta_{2m}(b_{11}),B)$, we obtain
\begin{align*}
\hana{B}_e(\Theta_{2m}(b_{11}),B)=
\bigsqcup_{W_1\in\bar{\hana{B}}_e(\Theta_{2m}(b_{11}),b_{11})}\hana{A}_e(\Theta_{2m}(b_{11}),B;S_{W_1}^{\prm})
\end{align*}
where 
\begin{equation*}
S_{W_1}=\{W\in \Mat_{2m,2}(\hana{O}_{E})|W\equiv\begin{pmatrix}
W_1&W_2
\end{pmatrix}\mod\maf{p}^e\Mat_{2m,2}(\hana{O}_{E})\text{ for some matrix $W_2$}\}.
\end{equation*}
For $W_1\in\bar{\hana{B}}_e(\Theta_{2m}(b_{11}),b_{11})$, we fix the primitive matrix $Z$ such that 
\begin{align*}
&\text{$\begin{pmatrix}
W_1&Z
\end{pmatrix}$ is primitive}\\
&W_1^*\Theta_{2m}(b_{11})Z\equiv
\begin{pmatrix}
\varpi_E^{-\maf{f}}&0
\end{pmatrix}
\mod\maf{p}^e\maf{q}^{-\maf{f}}\\
&\Theta_{2m}(b_{11})[Z]\equiv 0\perp\Theta_{2m-2}\mod\maf{p}^e\dHer_{2m-1}.
\end{align*}
We put $e=\begin{pmatrix}
1\\
0
\end{pmatrix}\in \Mat_{2m,1}(\hana{O}_{E})$ and $Z_{W_1}=\begin{pmatrix}
W_1&Z
\end{pmatrix}$.
Then we have 
\begin{equation*}
Z_{W_1}\hana{A}_e(\Theta_{2m}(b_{11}),B;S_{e}^{\prm})=\hana{A}_e(\Theta_{2m}(b_{11}),B;S_{W_1}^{\prm}).
\end{equation*}
This induces $\#\bar{\hana{A}}_e(\Theta_{2m}(b_{11}),B;S_{e}^{\prm})=\#\bar{\hana{A}}_e(\Theta_{2m}(b_{11}),B;S_{W_1}^{\prm})$
Therefore, it suffices to compute $\#\bar{\hana{A}}_e(\Theta_{2m}(b_{11}),B;S_{e}^{\prm})$.
Let 
\begin{equation*}
\begin{pmatrix}
1&a\\
0&b\\
0&X
\end{pmatrix}
\in\hana{A}_e(\Theta_{2m}(b_{11}),B;S_{e}^{\prm}).
\end{equation*} Then we have
\begin{align*}
b\equiv&\varpi_E^{\maf{f}}b_{12}-\varpi_E^{\maf{f}}ab_{11}\mod\maf{p}^e\maf{q}^{\maf{f}}\\
b_{22}\equiv&\bar{a}b_{12}+a\bar{b}_{12}-\bar{a}b_{11}a+\Theta_{2m-2}[X]\mod\maf{p}^e.
\end{align*}
This means that $b$ is completely determined by $a$ and $B$ modulo $\maf{p}^e\hana{O}_{E}$.\\
Assume $B\in\Her_{2,*}^{nd}$.
Then we have $b_{12}\varpi_E^{\maf{f}}-\varpi_E^{\maf{f}}ab_{11}\in\maf{q}$ and this implies that $X$ must be a primitive matrix.
\\
Therefore, there exists a bijection between $\bar{\hana{A}}_e(\Theta_{2m}(b_{11}),B;S_{e}^{\prm})$ and the multi-set 
\begin{equation*}
\bigsqcup_{a\in\hana{O}_{E}/\maf{p}^e\hana{O}_{E}}\bar{\hana{B}}_e(\Theta_{2m-2},b_{22}-b_{12}\bar{a}-\bar{b}_{12}a+b_{11}N_{E/F}(a)).
\end{equation*}
and this induces
\begin{equation*}
\#\bar{\hana{A}}_e(\Theta_{2m}(b_{11}),B;S_{e}^{\prm})=\sum_{a\in\hana{O}_{E}/\maf{p}^e\hana{O}_{E}}\#\bar{\hana{B}}_e(\Theta_{2m-2},b_{22}-b_{12}\bar{a}-\bar{b}_{12}a+b_{11}N_{E/F}(a)).
\end{equation*}
By the case $l=1$, for sufficiently large $e$, we have
\begin{equation*}
q^{-(4m-5)e}\#\bar{\hana{A}}_e(\Theta_{2m}(b_{11}),B;S_{e}^{\prm})=q^{2e}\cdot (1-q^{-2m})
\end{equation*}
and
\begin{align*}
\#\bar{\hana{B}}_e(\Theta_{2m}(b_{11}),B)=&q^{(4m-1)e}(1-q^{-2m})q^{2e}\cdot q^{(4m-5)e}(1-q^{-2m+2})\\
=&q^{(8m-4)e}(1-q^{-2m})(1-q^{-2m+2}).
\end{align*}
\end{proof}
\begin{lems}\label{lem:locden5}
Let $B\in\Her_{n,*}^{nd}$ and let $k$ be an integer such that $2k\geqq n$.
Then we have the following equations:
\begin{equation*}
\beta(\Theta_{2k},B)=\begin{cases}\prod_{i=0}^{2n-1}(1-(-1)^iq^{-2k+i})&\text{if $E$ is inert over $F$,}\\
\prod_{i=0}^{n-1}(1-q^{-2k+2i})&\text{if $E$ is ramified over $F$,}\\
\prod_{i=0}^{2n-1}(1-q^{-2k+i})&\text{if $E=F\oplus F$}
\end{cases}
\end{equation*}
\end{lems}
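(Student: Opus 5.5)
We will prove the formula by reducing the primitive density to a count of primitive isotropic frames over the residue field, and then evaluate that count by induction on $n$, peeling off one vector at a time.

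\emph{Step 1: reduction to level one.} By Definition and Proposition~\ref{def:locden1}, $\beta(\Theta_{2k},B)=q^{e(-4kn+n^2)}\#\bar{\hana{B}}_e(\Theta_{2k},B)$ for every sufficiently large $e$. The idea is to show that this normalized count is already attained at the smallest level compatible with $\Her_{n,*}$, namely $e=1$ in the unramified cases and the corresponding $\maf{q}$-adic level in the ramified case.

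The mechanism is Hensel's lemma. $\Theta_{2k}$ is regular (unimodular up to the different), and $X$ is primitive. Hence the differential $Y\mapsto X^*\Theta_{2k}Y+Y^*\Theta_{2k}X$, which maps $\Mat_{2k,n}$ to $\Her_n$, is surjective modulo $\maf{p}$ onto the appropriate lattice. Consequently every primitive solution modulo $\maf{p}^e$ lifts to exactly $q^{4kn-n^2}$ solutions modulo $\maf{p}^{e+1}$. This is the standard fibration argument already used implicitly in the proof of Proposition~\ref{prop:locden1}.

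Since $B\in\Her_{n,*}$, the congruence $\Theta_{2k}[X]\equiv B$ at level one becomes $\Theta_{2k}[X]\equiv 0$. So $\beta(\Theta_{2k},B)$ is independent of $B\in\Her^{nd}_{n,*}$. It equals a fixed power of $q$ times the number of primitive $X$ modulo $\maf{p}$ (resp.\ $\maf{q}$) whose columns span a totally isotropic $n$-dimensional subspace of the reduced form $\bar\Theta_{2k}$. The hypothesis $2k\geqq n$ guarantees that such subspaces exist. This independence also removes any need to diagonalize $B$, which would be a problem in the ramified case.

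\emph{Step 2: counting frames by induction.} We count ordered bases $(x_1,\dots,x_n)$ of totally isotropic subspaces. The reduced forms in the three cases are as follows: if $E$ is inert, a nondegenerate Hermitian form on $\mb{F}_{q^2}^{2k}$; if $E$ is ramified, the reduction of $\Theta_{2k}$ gives a symplectic form on $\mb{F}_q^{2k}$; if $E=F\oplus F$, we get $\mb{F}_q^{2k}\oplus\mb{F}_q^{2k}$ with the dual pairing.

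The first vector $x_1$ is a nonzero isotropic vector. Its normalized count is the case $n=1$, which is Proposition~\ref{prop:locden1}:
\begin{itemize}
\item[(i)] inert: $(1-q^{-2k})(1+q^{-2k+1})$;
\item[(ii)] ramified: $1-q^{-2k}$;
\item[(iii)] split: $(1-q^{-2k})(1-q^{-2k+1})$.
\end{itemize}
The ramified count is trivial, because every vector is isotropic for an alternating form. The split count counts pairs $(x,y)$ with $x\neq 0$ and $\langle x,y\rangle=0$ modulo the obvious symmetry.

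The remaining vectors $x_2,\dots,x_n$ must lie in $x_1^\perp$ and be independent of $x_1$. Choosing a hyperbolic partner of $x_1$ identifies $x_1^\perp/\langle x_1\rangle$ with the reduction of $\Theta_{2k-2}$. The fibre over each frame there has size $q^{(\text{const})}$, and this constant is exactly absorbed by the normalization in Step 1. This is the residue-field shadow of Lemma~\ref{lem:locden4}: the factor $\beta((-B_1)\perp\Theta_{2k-2},B_2)$ reduces to $\beta(\Theta_{2k-2},B_2)$, because $-B_1\equiv 0$.

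By induction, $\beta(\Theta_{2k},B)=\beta(\Theta_{2k},b)\,\beta(\Theta_{2k-2},B')$ with $B'\in\Her^{nd}_{n-1,*}$. Unwinding this recursion gives the products $\prod_{i=0}^{2n-1}(1-(-1)^iq^{-2k+i})$, $\prod_{i=0}^{n-1}(1-q^{-2k+2i})$ and $\prod_{i=0}^{2n-1}(1-q^{-2k+i})$. In each case the $j$-th step contributes the factors with $i=2j,2j+1$ (or $i=j$ in the ramified case). As a check, the case $n=2$ in the ramified case agrees with Proposition~\ref{prop:locden1}(2).

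\emph{Main obstacle.} The delicate point is Step 1 in the ramified case and at places above $2$. There the semi-integral lattice $\dHer_n$, the level $\Her_{n,*}$ (with off-diagonal entries in $\maf{q}\maf{d}_{E/F}^{-1}$), and the surjectivity of the differential must be matched carefully. The diagonal entries satisfy only a trace-type condition, so the residual form is alternating rather than Hermitian. I would handle this first by mimicking the explicit lifting computation in the proof of Proposition~\ref{prop:locden1}(2), which already treats $l=2$ via the matrices $\Theta_{2m}(b)$. I would then verify that the same change of variables $Z_{W_1}$ works for general $n$, citing \cite[Lemma 8.1]{F} for $p=2$ as in the Remark after Lemma~\ref{lem:locden4}.
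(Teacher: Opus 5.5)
Your proof is correct, but it follows a different route from the paper. The paper argues as in \cite[Lemma 4.10]{K1}. It peels off blocks of $B$ of size one (or two in the ramified case, where elements of $\Her_{n,*}$ need not diagonalize) using the multiplicativity Lemmas~\ref{lem:locden3} and~\ref{lem:locden4}. It then inducts on $n$ with Proposition~\ref{prop:locden1} as base case, never leaving the level of local densities. You instead use Hensel lifting to show that $\beta(\Theta_{2k},B)$ is constant on $\Her_{n,*}^{nd}$ and equals a normalized count of totally isotropic $n$-frames for the residual Hermitian, split or alternating form. You then count frames by the usual Witt-type recursion. Your route is self-contained and uniform in $p$ (including $p=2$). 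It needs no decomposition of $B$, hence no separate rank-two computation as in Proposition~\ref{prop:locden1}(2), and it makes the independence of $B$ transparent. The paper's version is shorter on the page but rests entirely on the cited multiplicativity lemmas.

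A few points to tighten.

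\emph{The ramified step you flag as the main obstacle closes directly.} Let $X$ be primitive with $\Theta_{2k}[X]-B\in\Her_{n,*}$. Replacing $X$ by $X+\varpi_EY$ changes $\Theta_{2k}[X]$ modulo $\maf p\dHer_n$ by $W+W^*$, where $W=\varpi_EX^*\Theta_{2k}Y$ runs over $\Mat_n(\maf q\maf d_{E/F}^{-1})$. Since $\tr_{E/F}(\maf q\maf d_{E/F}^{-1})=\hana O$, these matrices fill $\Her_{n,*}/\maf p\dHer_n$, also for $p=2$. The residual form $\varpi_E^{\maf f}\Theta_{2k}\bmod\maf q$ is alternating, because the diagonal entries of $\Theta_{2k}[x]$ lie in $\hana O$. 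The powers of $q$ then cancel exactly: $q^{-(4kn-n^2)}q^{2kn-n(n+1)/2}\prod_{j=1}^{n}q^{j-1}(q^{2k-2j+2}-1)=\prod_{i=0}^{n-1}(1-q^{-2k+2i})$. So no appeal to \cite{F} is needed.

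\emph{Your aside on Lemma~\ref{lem:locden4} is false.} It is not true that $\beta((-B_1)\perp\Theta_{2k-2},B_2)$ reduces to $\beta(\Theta_{2k-2},B_2)$. The left side also counts primitive solutions whose $\Theta_{2k-2}$-component is imprimitive. For example, in the inert case $\beta((-\varpi)\perp 1_2,\varpi)-\beta(1_2,\varpi)=q^{-3}(1+q^{-1})$. Consequently, Lemma~\ref{lem:locden4} as printed (with the summand $-B_1$ and ordinary primitivity) cannot be applied with $B_1\equiv 0$, since it would contradict the statement you are proving. This is a further point in favour of your direct count. What your count actually gives is $\beta(\Theta_{2k},B)=\beta(\Theta_{2k},b)\,\beta(\Theta_{2k-2},B')$, and that is all you use, so simply drop the aside.

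\emph{The hypothesis $2k\geqq n$ does not guarantee isotropic $n$-subspaces.} The residual forms have Witt index $k$. For $k<n\leqq 2k$ both sides vanish (via the factor $i=2k$, resp.\ $i=k$ in the ramified case), so the formula survives. However, you should run the induction on frame counts, or first reduce to $n\leqq k$, rather than on densities $\beta(\Theta_{2k-2},\cdot)$, which can leave the admissible range.

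\emph{Split-case slip.} In the split case both components of the vector must be nonzero, and there is no symmetry to divide out. Your value (iii) is nonetheless right.
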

\begin{proof}
By Lemma \ref{lem:locden3}, Lemma \ref{lem:locden4} and Proposition \ref{prop:locden1}, we can prove this by induction in a similar way as in \cite[Lemma 4.10]{K1}.
\end{proof}

\begin{lems}\label{lem:locden6}
Let $m$ and $n$ be integers such that $m\geqq n$ and $m$, $n$ are even if $E$ is ramified over $F$.
Then we have
\begin{equation*}
\alpha(\Theta_m,\Theta_n)=\beta(\Theta_m,\Theta_n)=
\begin{cases}
\prod_{i=0}^{n-1}(1-(-q)^{-m+i})&\text{if $E$ is inert over $F$},\\
\prod_{i=0}^{n-1}(1-q^{-m+i})&\text{if $E=F\oplus F$},\\
\prod_{i=0}^{n/2-1}(1-q^{-m+2i})&\text{if $E$ is ramified over $F$}.
\end{cases}
\end{equation*}
\end{lems}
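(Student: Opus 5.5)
The plan is to argue in two steps: first show that the density equals the primitive one, then compute the primitive density by induction on $n$ using the splitting lemmas.

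\textbf{Step 1: $\alpha=\beta$.} I will use Lemma \ref{lem:locden1}:
\[
\alpha(A,B)=\sum_{W}\beta(A,B[W^{-1}])\,q^{-(m-n)\ord(N_{E/F}(\det W))},
\]
with $W$ running over $\K_n^0\lsla\Mat_n(\hana{O}_E)^{nd}$. Take $A=\Theta_m$ and $B=\Theta_n$. If $W\notin\K_n^0$, then $\Theta_n[W^{-1}]$ is not semi-integral. Indeed, $\Theta_n$ is unimodular (inert or split case) or $\maf{f}$-modular in the regular sense (ramified case). So $\Theta_n[W^{-1}]$ has a strictly larger dual lattice and cannot lie in $\dHer_n$, and its density vanishes. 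Only $W\in\K_n^0$ contributes, which gives $\alpha(\Theta_m,\Theta_n)=\beta(\Theta_m,\Theta_n)$. This is also the statement $\alpha(\Theta_{m_1}\perp A,\Theta_{n_1})=\beta(\Theta_{m_1}\perp A,\Theta_{n_1})$ of Lemma \ref{lem:locden3} with $A$ empty, so I may simply cite it.

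\textbf{Step 2: induction for $\beta$.} I will peel off one block at a time with Lemma \ref{lem:locden3}, taking $A$ empty:
\[
\beta(\Theta_m,\Theta_{n_1}\perp\Theta_{n-n_1})=\beta(\Theta_m,\Theta_{n_1})\,\beta(\Theta_{m-n_1},\Theta_{n-n_1}).
\]
In the unramified and split cases I take $n_1=1$. In the ramified case I take $n_1=2$, since there $m$ and $n$ are even. Iterating gives
\[
\beta(\Theta_m,\Theta_n)=\prod_{i}\beta(\Theta_{m-i},\Theta_1)
\]
in the first two cases, and
\[
\beta(\Theta_m,\Theta_n)=\prod_{j=0}^{n/2-1}\beta(\Theta_{m-2j},\Theta_2)
\]
in the ramified case. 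The base cases come from Proposition \ref{prop:locden1}.

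\textbf{Base cases.} In the inert case, $\beta(1_{k},1)=1-(-q)^{-k}$, since $1-q^{-2m}$ and $1+q^{-2m+1}$ are exactly $1-(-q)^{-k}$ for $k$ even and odd. The factor indexed by $i$ is $1-(-q)^{-m+i}$, matching the claim. In the ramified case, $\beta(\Theta_{2k},\Theta_2)=1-q^{-2k}$, so the factor for block $j$ is $1-q^{-m+2j}$. In the split case $E_v=F\oplus F$, the unit case is not written in Proposition \ref{prop:locden1}. It is the count of pairs $(x,y)\in\mathcal{O}^k\times\mathcal{O}^k$ with ${}^txy=1$, which is the standard computation giving $1-q^{-k}$. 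I would verify it directly modulo $\maf{p}$ by Hensel's lemma: count nonzero $x\bmod\maf p$, then solutions $y$ of an affine hyperplane, and normalize by $q^{-(2k-1)}$.

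\textbf{Main obstacle.} The delicate point is the ramified case, including $p=2$. There I must check that the $\Theta_2$ block genuinely splits off, i.e.\ that the orthogonal complement of a primitive representation of $\Theta_2$ in $\Theta_m$ is again $\Theta_{m-2}$. That is the content of Lemma \ref{lem:locden3}, so I rely on it as stated. I also need the base value $\beta(\Theta_{2k},\Theta_2)=1-q^{-2k}$ from Proposition \ref{prop:locden1}(2), which was stated and justified there.
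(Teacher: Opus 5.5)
Your argument is correct and is essentially the intended one. The paper only cites \cite[Lemma 4.1.11]{K1}, and your scheme is the same induction the paper uses for Lemma \ref{lem:locden5}: get $\alpha=\beta$ and the block splitting from Lemma \ref{lem:locden3} with $A$ empty, then take the base values $\beta(1_k,1)$ and $\beta(\Theta_{2k},\Theta_2)$ from Proposition \ref{prop:locden1}. Your direct count $1-q^{-k}$ correctly supplies the split base case, which Proposition \ref{prop:locden1} omits.

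The only slip is wording in Step 1. For $W\notin\K_n^0$, it is the lattice $W^{-1}\hana{O}_E^n$ that is strictly larger than the self-dual lattice $\hana{O}_E^n$, so its dual is strictly smaller; that is why $\Theta_n[W^{-1}]\notin\dHer_n$.
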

\begin{proof}
We can prove this similarly in \cite[Lemma 4.1.11]{K1}.
\end{proof}

\begin{lems}\label{lem:locden7}
Let $m,n$ be positive integers with $n\leqq m$.
If $E$ is ramified, we assume $n$ is even. Then for any $A\in\Her_{m-n,*}^{nd}$, we have
\begin{align*}
\dfrac{\alpha(\Theta_{n}\perp A)}{\alpha(A)}=&\begin{cases}\prod_{i=1}^{n}(1-(-q)^{-i})&\text{if $E$ is inert over $F$}\\
\prod_{i=1}^{n}(1-q^{-i})&\text{if $E=F\oplus F$}\\
\prod_{i=1}^{n/2}(1-q^{-2i})&\text{if $E$ is ramified over $F$}
\end{cases}
\end{align*}
\end{lems}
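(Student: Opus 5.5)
The plan is to reduce the ratio $\alpha(\Theta_n\perp A)/\alpha(A)$ to the local density $\alpha(\Theta_m,\Theta_n)$ computed in Lemma \ref{lem:locden6}. The tools are the product formula of Lemma \ref{lem:locden3} and the fact that $A\in\Her_{m-n,*}^{nd}$ is "small", in the sense that it behaves like a zero form modulo $\maf{p}$.

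First apply Lemma \ref{lem:locden3} with $m_1=n_1=n$, the ambient form $\Theta_n\perp A$, and represented form $\Theta_n\perp A$. The lemma requires $n$ even in the ramified case, which is our hypothesis. It gives
\begin{equation*}
\alpha(\Theta_n\perp A,\Theta_n\perp A)=\alpha(\Theta_n\perp A,\Theta_n)\,\alpha(\Theta_0\perp A,A)=\alpha(\Theta_n\perp A,\Theta_n)\,\alpha(A).
\end{equation*}
Hence it suffices to compute $\alpha(\Theta_n\perp A,\Theta_n)=\beta(\Theta_n\perp A,\Theta_n)$, the second equality again by Lemma \ref{lem:locden3}.

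For the primitive density, I would argue directly from Definition and Proposition \ref{def:locden1}, working modulo $\maf{p}$ (or modulo a suitable power $\maf{p}^e$ with Hensel lifting). Write $X={}^t(X_1,X_2)$ with $X_1\in\Mat_n(\hana{O}_E)$ and $X_2\in\Mat_{m-n,n}(\hana{O}_E)$. The equation to solve is $\Theta_n[X_1]+A[X_2]\equiv\Theta_n$. Since $A\in\Her_{m-n,*}$, the term $A[X_2]$ lies in $\Her_{n,*}$, which is zero modulo the relevant ideal. Thus $\Theta_n[X_1]\equiv\Theta_n$ modulo this ideal, so $X_1$ reduces to an element of the finite unitary group of $\Theta_n$, and $X_2$ is unconstrained modulo $\maf{p}$. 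Counting solutions with the normalization $q^{e(-2mn+n^2)}$ and lifting by Hensel's lemma, the $(m-n)n$ free entries of $X_2$ cancel exactly against the difference between $m$ and $n$ in the exponent. We are left with $\beta(\Theta_n\perp A,\Theta_n)=\beta(\Theta_n,\Theta_n)=\alpha(\Theta_n,\Theta_n)$.

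This reduces the ratio to the case $m=n$ of Lemma \ref{lem:locden6}. For $E$ inert, that lemma gives $\prod_{i=0}^{n-1}(1-(-q)^{-n+i})=\prod_{i=1}^n(1-(-q)^{-i})$. For $E=F\oplus F$ it gives $\prod_{i=1}^n(1-q^{-i})$. For $E$ ramified, it gives $\prod_{i=0}^{n/2-1}(1-q^{-n+2i})=\prod_{i=1}^{n/2}(1-q^{-2i})$. These are exactly the claimed formulas.

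The main obstacle is the Hensel step in the ramified and dyadic cases. There, $\Theta_n$ is only unimodular with respect to $\maf{d}_{E/F}^{-1}$, and the definition of $\Her_{n,*}$ involves $\maf{q}\maf{d}_{E/F}^{-1}$ off the diagonal, so one must check that $A[X_2]$ really does not perturb the count. I would first try to avoid Hensel's lemma altogether. The idea is to use Lemma \ref{lem:locden4}(b), or the inductive structure of Proposition \ref{prop:locden1}(2), to peel off one hyperbolic plane $\Theta_2$ at a time. At each step this should show that $\beta(\Theta_{2k}\perp A,\Theta_2)=1-q^{-2k}$, exactly as in Proposition \ref{prop:locden1}(2), where the smallness of $A$ forces the complementary vector to be primitive in the $\Theta$-part. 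Iterating over $k=1,\dots,n/2$ then produces the product directly.
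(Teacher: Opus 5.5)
Your reduction is the natural route behind the paper's one-line reference to \cite[Lemma 4.2.1]{K1}. Lemma~\ref{lem:locden3} with $m_1=n_1=n$ gives $\alpha(\Theta_n\perp A)=\alpha(\Theta_n\perp A,\Theta_n)\,\alpha(A)$. One then shows $\alpha(\Theta_n\perp A,\Theta_n)=\alpha(\Theta_n,\Theta_n)$, and Lemma~\ref{lem:locden6} with $m=n$ gives the three products; your reindexing of them is correct. In the inert and split cases your count is sound. There $\Her_{n,*}=\maf{p}\dHer_n$, so $A[X_2]$ really vanishes at level $\maf{p}$, and the lifting from $\maf{p}^e$ to $\maf{p}^{e+1}$ is uniform once $X_1$ is invertible.

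The gap is the ramified case, and it is more than a Hensel technicality. There $\Her_{m-n,*}$ only requires the diagonal entries to lie in $\hana{O}$, so for example $A=1_{m-n}$ is allowed. Hence $A[X_2]\notin\maf{p}\dHer_n$ in general. At the level where the density is normalized, the congruence on $X_1$ genuinely depends on $X_2$, and the picture ``$X_1$ in a finite unitary group, $X_2$ free'' is false. Peeling off one $\Theta_2$ at a time does not cure this. It only reduces to the same unproved claim $\beta(\Theta_{2k}\perp A,\Theta_2)=\beta(\Theta_{2k},\Theta_2)$ in rank two.

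The missing idea is a fibrewise count in $X_2$ that is exact at every level $e$. Fix $X_2$ and put $C=\Theta_n-A[X_2]$. Then $C\in\Theta_n+\Her_{n,*}$ is semi-integral and modular of the same scale as $\Theta_n$; to see this, multiply by $\varpi_E^{\maf{f}}$ and reduce modulo $\maf{q}$. The paper already uses the relevant structure theory: the isometry $\Theta_{2m}(b)\cong\Theta_{2m}$ in Proposition~\ref{prop:locden1}, and the representative sets in Proposition~\ref{prop:RS2}. By it, $C=\Theta_n[g]$ for some $g\in\GL_n(\hana{O}_E)$. Since $\dHer_n[g]=\dHer_n$, the map $X_1\mapsto X_1g^{-1}$ is a bijection from $\{X_1 \bmod \maf{p}^e : \Theta_n[X_1]-C\in\maf{p}^e\dHer_n\}$ onto $\bar{\hana{A}}_e(\Theta_n,\Theta_n)$. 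Hence $\#\bar{\hana{A}}_e(\Theta_n\perp A,\Theta_n)=q^{2(m-n)ne}\,\#\bar{\hana{A}}_e(\Theta_n,\Theta_n)$ for every $e$. The normalizing factor $q^{-e(2mn-n^2)}$ turns this into $\alpha(\Theta_n\perp A,\Theta_n)=\alpha(\Theta_n,\Theta_n)$. The same argument covers the unramified cases too, where $C\equiv 1_n \bmod \maf{p}$ is unimodular and hence $\cong 1_n$. This makes the Hensel step unnecessary throughout.
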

\begin{proof}
We can prove this similarly in \cite[Lemma 4.2.1]{K1}.
\end{proof}
\begin{lems}\label{lem:locden8}
Let $k,m,r$ be non-negative integers such that $r\leqq m\leqq2k$. If $E$ is ramified over $F$, we assume that $m-r$ is even.
Then, for any $A\in\Her_{r,*}^{nd}$, we have
\begin{align*}
\beta(\Theta_{2k},\Theta_{m-r}\perp A)=&\begin{cases}
\prod_{i=0}^{m+r-1}(1-(-q)^{-2k+i})&\text{if $E$ is inert over $F$,}\\
\prod_{i=0}^{m+r-1}(1-q^{-2k+i})&\text{if $E=F\oplus F$,}\\
\prod_{i=0}^{(m+r)/2-1}(1-q^{-2k+2i})&\text{if $E$ is ramified over $F$.}
\end{cases}
\end{align*}
\end{lems}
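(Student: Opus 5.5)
We split the primitive density along the decomposition $\Theta_{m-r}\perp A$ and reduce each factor to results already established. By the first identity of Lemma \ref{lem:locden3}, applied with $m_1=2k$, $A$ there empty, $n_1=m-r$ and $B=A$ (which requires $m-r$ even in the ramified case, as assumed), we get
\begin{equation*}
\beta(\Theta_{2k},\Theta_{m-r}\perp A)=\beta(\Theta_{2k},\Theta_{m-r})\,\beta(\Theta_{2k-m+r},A).
\end{equation*}
Since $r\leqq m\leqq 2k$, the matrix $\Theta_{2k-m+r}$ has size at least $r$, so both factors are of the shapes treated before.

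The first factor is given by Lemma \ref{lem:locden6} with $(m,n)$ there equal to $(2k,m-r)$. In the inert case it is $\prod_{i=0}^{m-r-1}(1-(-q)^{-2k+i})$; in the split case it is the same product without the sign; in the ramified case it is $\prod_{i=0}^{(m-r)/2-1}(1-q^{-2k+2i})$.

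For the second factor we use Lemma \ref{lem:locden5} with $2k$ replaced by $2k'=2k-m+r$. In the ramified case this requires $2k-m+r$ even, which holds since $m-r$ is even. In the unramified case $2k-m+r$ may be odd; then $\Theta_{2k-m+r}=1_{2k-m+r}$ and the induction proof of Lemma \ref{lem:locden5}, through Proposition \ref{prop:locden1}(1) and Lemma \ref{lem:locden4}(a), works verbatim for odd size. We therefore plan to note that Lemma \ref{lem:locden5} holds for any size $N\geqq 2r$ with $-2k$ replaced by $-N$. For $A\in\Her_{r,*}^{nd}$ this gives, in the inert case,
\begin{equation*}
\beta(\Theta_{2k-m+r},A)=\prod_{i=0}^{2r-1}\bigl(1-(-1)^iq^{-(2k-m+r)+i}\bigr)=\prod_{j=m-r}^{m+r-1}\bigl(1-(-q)^{-2k+j}\bigr),
\end{equation*}
after substituting $j=i+m-r$. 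This substitution works because $(-1)^iq^{-2k+m-r+i}=(-q)^{-2k+j}$, using that $(-1)^{-2k+m-r}=(-1)^{m-r}$ and $(-1)^{j}=(-1)^{i+m-r}$. The split case is identical without signs. In the ramified case the factor is $\prod_{i=0}^{r-1}(1-q^{-2k+m-r+2i})=\prod_{i=(m-r)/2}^{(m+r)/2-1}(1-q^{-2k+2i})$.

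Multiplying the two factors, the index ranges $[0,m-r-1]$ and $[m-r,m+r-1]$ concatenate, and in the ramified case $[0,(m-r)/2-1]$ and $[(m-r)/2,(m+r)/2-1]$ concatenate. This yields exactly the stated products.

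The main obstacle is the size hypothesis. Lemma \ref{lem:locden5} as stated needs $2k'\geqq n$, whereas the natural requirement for the induction is size at least $2r$ (unramified) or $2k'\geqq 2r$ (ramified). If $2k-m+r<2r$, i.e.\ $m+r>2k$, the formula should still hold, since a factor with exponent $0$ makes the product vanish, matching the vanishing of $\beta$. We would verify this edge case separately, either by a direct check or by appealing to the induction in \cite[Lemma 4.1.10]{K1}. The other point to watch is the parity bookkeeping of the signs $(-1)^i$ in the inert case, handled by the substitution above.
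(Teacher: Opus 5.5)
Your factorization is the natural argument behind the paper's one-line proof, which only cites Katsurada's Lemma 4.2.2. You split off $\Theta_{m-r}$ with the first identity of Lemma \ref{lem:locden3}, evaluate $\beta(\Theta_{2k},\Theta_{m-r})$ by Lemma \ref{lem:locden6}, and evaluate $\beta(\Theta_{2k-m+r},A)$ by Lemma \ref{lem:locden5}. Your split and ramified computations are correct.

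In the inert case with $N=2k-m+r$ odd, however, the argument as written is wrong. The sign $(-1)^i$ in Lemma \ref{lem:locden5} is really $(-1)^{-2k+i}$, so replacing $-2k$ by $-N$ verbatim gives a false formula for odd $N$. Already for $r=1$, $N=3$, Proposition \ref{prop:locden1}(1) gives $\beta(1_3,\varpi b)=(1+q^{-3})(1-q^{-2})$, whereas your extension gives $\prod_{i=0}^{1}(1-(-1)^iq^{-3+i})=(1-q^{-3})(1+q^{-2})$. Your reindexing identity $(-1)^iq^{-2k+m-r+i}=(-q)^{-2k+j}$ is also false when $m-r$ is odd, because $(-q)^{-2k+j}=(-1)^{j}q^{-2k+j}=(-1)^{i+m-r}q^{-2k+j}$. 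Since $N\equiv m-r\pmod 2$, these two sign errors cancel. That cancellation is the only reason your final product comes out right.

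The correct statement for arbitrary size is
\[
\beta(1_N,A)=\prod_{i=0}^{2r-1}\bigl(1-(-q)^{-N+i}\bigr)\qquad(A\in\Her_{r,*}^{nd}).
\]
This agrees with Lemma \ref{lem:locden5} for even $N$ and with Proposition \ref{prop:locden1}(1) for $r=1$. It follows for every $N$ from counting primitive totally isotropic $r$-frames in the residual hermitian space of dimension $N$. The same count gives $\prod_{i=0}^{2r-1}(1-q^{-N+i})$ in the split case, where Proposition \ref{prop:locden1} supplies no base case. With this form, $-N+i=-2k+j$ for $j=i+m-r$, and the concatenation of index ranges is immediate, with no parity bookkeeping.

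The same count shows that $\beta(1_N,A)=0$ when $N<2r$, matching the factor $1-(-q)^0=0$ (resp.\ $1-q^0=0$) in the product. So your edge case needs no separate treatment. For even $N$, in particular in the whole ramified case, Lemma \ref{lem:locden5} applies as stated, since $N\geqq r$ is equivalent to $m\leqq 2k$.
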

\begin{proof}
We can prove this similarly in \cite[Lemma 4.2.2]{K1}.
\end{proof}

These properties induce the important decomposition of $F$ since the value of $F(A, X)$ at $X=q^{-2k}$ for $A\in\dHer_{m}$ and integer $k$ more than $m$ is given by the local density by the following lemma proved by \cite[Lemma 14.8 (1)]{S1}.
\begin{lems}\cite[Lemma14.8(1)]{S2}\label{lem:locden9}
For any $A\in\dHer_{n}^{nd}$ and any integer $k\geqq n$, we have
$b(A,2k)=\alpha(\Theta_{2k},A)$
\end{lems}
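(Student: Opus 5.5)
The plan is the classical one: write the Siegel series as an exponential-sum integral over a large lattice, and compare it with the Fourier-analytic expression for the local density obtained in the proof of Definition and Proposition \ref{def:locden1}. Fix $A\in\dHer_n^{nd}$ and an integer $k\geqq n$, and take $m=2k$, $S=\Mat_{2k,n}(\hana{O}_E)$. The argument of Definition and Proposition \ref{def:locden1} gives, for $e$ sufficiently large,
\begin{equation*}
\alpha(\Theta_{2k},A)=\int_{\maf{p}^{-e}\Her_n}\int_{\Mat_{2k,n}(\hana{O}_E)}\psi(\tr(Y(\Theta_{2k}[X]-A)))\,dX\,dY .
\end{equation*}
Exchanging the two integrals is harmless because both domains are compact. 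The problem then reduces to evaluating, for fixed $Y\in\Her_n(F)$, the Gauss-type integral
\begin{equation*}
G(Y)=\int_{\Mat_{2k,n}(\hana{O}_E)}\psi(\tr(Y\Theta_{2k}[X]))\,dX .
\end{equation*}

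\textbf{Claim.} $G(Y)=\mu(Y)^{-2k}$, where $\mu(Y)=[Y\hana{O}_E^n+\hana{O}_E^n:\hana{O}_E^n]^{1/2}$.

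To prove the claim, use the elementary divisor decomposition $Y=U\,\mathrm{diag}(\varpi^{-a_i})\,V$ with $U,V\in\K_n^0$, and use the invariance of $\Theta_{2k}[X]$ under $X\mapsto XU$. This reduces $G$ to a product of one-variable integrals. For $E$ unramified or split, $\Theta_{2k}=1_{2k}$, so each factor is
\begin{equation*}
\int_{\hana{O}_E^{2k}}\psi(\varpi^{-a}N(x))\,dx .
\end{equation*}
This equals $q^{-2ka}$ for $a\geqq0$ (orthogonality of $\psi$ on the quadratic form $N$) and $1$ for $a\leqq0$. For the ramified case one uses the hyperbolic form $\Theta_{2k}$ instead, and self-duality of $\Theta_{2k}$ with respect to $\maf{d}_{E/F}$ gives the same value. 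Multiplying the factors yields $\mu(Y)^{-2k}$.

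Next, $G(Y)$ depends only on $Y$ modulo $\Her_n(\hana{O}_E)$ (more precisely modulo the dual lattice fixed by the normalisation of $\psi$). Therefore
\begin{equation*}
\int_{\maf{p}^{-e}\Her_n}\psi(-\tr(YA))\,G(Y)\,dY
\end{equation*}
becomes a finite sum over $\maf{p}^{-e}\Her_n/\Her_n$. This sum is the truncation of
\begin{equation*}
b(A;2k)=\sum_{Y\in\Her_n(F)/\Her_n}\psi(\tr(AY))\,\mu(Y)^{-2k}
\end{equation*}
(replace $Y$ by $-Y$ to fix the sign). The truncated sums stabilise for $e\gg0$, since the local density stabilises by Definition and Proposition \ref{def:locden1}. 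Moreover $b(A;s)$ converges absolutely for $\Re s>2n$, so for $k>n$ the limit is exactly $b(A;2k)$. The boundary case $k=n$ then follows because both sides are rational in $q^{-2k}$ and $b(A;s)$ is a rational function of $q^{-s}$ by the polynomiality result of \cite{S1}; alternatively one can argue directly from the stabilisation.

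\textbf{Main obstacle.} I expect the hardest step to be the Gauss-sum evaluation of $G(Y)$ in the ramified and dyadic cases. There one must check carefully that $\Theta_{2k}$, with its entries $\varpi_E^{-\maf{f}}$, is exactly unimodular for the duality induced by $\psi\circ\tr_{E/F}$. This is what forces every factor to equal $q^{-2ka}$ with no correcting root-number or discriminant factor.
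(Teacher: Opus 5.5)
The paper gives no proof of this lemma; it is quoted from \cite[Lemma~14.8(1)]{S2}. Your overall strategy is the standard argument behind that citation, and the set-up is correct. That strategy is: write $\alpha(\Theta_{2k},A)=\int_{\maf{p}^{-e}\Her_n(\hana{O}_E)}\psi(-\tr(YA))\,G(Y)\,dY$, use $\Her_n(\hana{O}_E)$-periodicity of $G$ (valid because $\Theta_{2k}[X]\in\dHer_n$ for integral $X$), and let $e\to\infty$. However, the two steps that carry the content are not right as written.

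\emph{The evaluation of $G(Y)$.}
\begin{itemize}
\item The substitution $X\mapsto XW$ with $W\in\K_n^0$ only gives $G(Y)=G(WYW^*)$. An elementary-divisor form $Y=U\,\mathrm{diag}(\varpi^{-a_i})V$ with $V\neq U^*$ therefore does not reduce $G$ to a product. What you need is a splitting of $Y$ under Hermitian congruence.
\item Such a diagonal splitting exists in the split and inert cases, but not in the ramified case. For instance, $\varpi^{-a}\Theta_2$ is not $\K_2^0$-congruent to a diagonal matrix: its scale $\varpi_E^{-2a-\maf{f}}\hana{O}_E$ is not generated by an element of $F$, although its norm ideal is $\varpi^{-a}\hana{O}_F$.
\item So the reduction fails exactly in the case you flag, and ``self-duality gives the same value'' is an assertion, not a proof.
\end{itemize}
The clean repair is not to touch $Y$ at all.
\begin{itemize}
\item In every case $\Theta_{2k}$ is $\K_{2k}^0$-congruent to an orthogonal sum of $k$ planes $\begin{pmatrix}0&\delta^{-1}\\ \bar{\delta}^{-1}&0\end{pmatrix}$ with $\delta\hana{O}_E=\maf{d}_{E/F}$. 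In the unramified and split cases take $\delta=1$; there $1_{2k}$ is congruent to $k$ hyperbolic planes.
\item If $u,w\in\hana{O}_E^n$ are the two rows of $X$ attached to one plane, that plane contributes $\tr_{E/F}(\bar{\delta}^{-1}uYw^*)$ to $\tr(Y\Theta_{2k}[X])$.
\item Integrating over $u$ gives the indicator of $Yw^*\in\hana{O}_E^n$, since the $\tr_{E/F}$-dual of $\hana{O}_E$ is $\maf{d}_{E/F}^{-1}$.
\item Integrating over $w$ then gives $\vol\{v\in\hana{O}_E^n : Yv\in\hana{O}_E^n\}=[Y\hana{O}_E^n+\hana{O}_E^n:\hana{O}_E^n]^{-1}=\mu(Y)^{-2}$.
\end{itemize}
Hence $G(Y)=\mu(Y)^{-2k}$ in all cases, dyadic places included, with no Gauss-sum signs to control. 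Note also that in your inert computation a single factor $\int_{\hana{O}_E}\psi(\varpi^{-a}N(x))\,dx$ equals $(-q)^{-a}$. The value $q^{-2ka}$ holds only because the rank $2k$ is even, not simply by orthogonality.

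\emph{The case $k=n$.} The abscissa you use is not sharp. $b(A;s)$ is dominated termwise by $\sum_z\mu(z)^{-\Re s}$, which converges for $\Re s>2n-1$. This is the convergence range of the Siegel-parabolic intertwining integral for $U(n,n)$; for $n=1$ the sum equals $(1-q^{-s})/(1-q^{1-s})$. So $s=2k$ with $k\geqq n$ already lies in the region of absolute convergence, and the truncated sums converge to $b(A;2k)$ for every $k\geqq n$. No separate treatment of $k=n$ is needed.

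This matters because your fallback is circular. That $k\mapsto\alpha(\Theta_{2k},A)$ is a rational function of $q^{-2k}$ is exactly what the lemma, together with polynomiality of $F(A;X)$, provides. It cannot be used to extend the identity from $k>n$ to $k=n$.
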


On the other hand, for $A\in\dHer_{n}$ we put 
\begin{eqnarray*}
G(A;X)&=&\disp\sum_{i=0}^n\sum_{W\in \K_{n}^0\lsla \Mat_n(\hana{O}_{E})^{nd}}(Xp^n)^{\nu(\det W)}\mt{m}(W) F(A[W^{-1}];X)\\
\wG(A;X,t)&=&\disp\sum_{i=0}^n\sum_{W\in\K_{n}^0\lsla \Mat_n(\hana{O}_{E})^{nd}}t^{\nu(\det W)}\mt{m}(W)\wF(A[W^{-1}];X)
\end{eqnarray*}
Then, the polynomial $G(A;X)$ has explicit values at $X=q^{-2k}$ for $A\in\dHer_{n}$ and an integer $k\geqq m$ and give the important decomposition of $F(A;X)$ by lemma and lemma as follows: 
\begin{lems}\label{lem:locden10}
For any $A\in\dHer_{n}^{nd}$ and any integer $k\geqq n$, we have
\begin{equation*}
\beta(\Theta_{2k},A)=G(A;q^{-2k})\prod_{i=0}^{[(n-1)/2]}(1-q^{2i-s})\prod_{i=1}^{[n/2]}(1-\xi q^{2i-1-2k})
\end{equation*}
\end{lems}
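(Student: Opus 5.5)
The plan is to combine the Möbius-type inversion of Lemma \ref{lem:locden1} with the identification of Siegel series values as local densities (Lemma \ref{lem:locden9}). The first step is to specialize Lemma \ref{lem:locden1} to $m=2k$ (even, as the ramified case requires) and the regular element $A=\Theta_{2k}$. Its second formula then reads
\begin{equation*}
\beta(\Theta_{2k},A)=\sum_{i=0}^n\sum_{W\in\K_n^0\lsla\D_{n,i}}\mt{m}(W)\,\alpha(\Theta_{2k},A[W^{-1}])\,q^{-(2k-n)\ord(N_{E/F}(\det W))}.
\end{equation*}
Since $\mt{m}$ vanishes off $\bigcup_i\D_{n,i}$, this sum is the same as a sum over all of $\K_n^0\lsla\Mat_n(\hana{O}_E)^{nd}$, which is the index set appearing in the definition of $G$.

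Next I would evaluate each density. For $k\geqq n$, Lemma \ref{lem:locden9} gives $\alpha(\Theta_{2k},A[W^{-1}])=b(A[W^{-1}],2k)$. This holds whenever $A[W^{-1}]\in\dHer_n^{nd}$. When $A[W^{-1}]\notin\dHer_n$, both sides vanish: the left because $\Mat_{2k,n}(\hana{O}_E)$ cannot represent a non-semi-integral matrix by $\Theta_{2k}$, and the right by the convention $b=0$ there. Substituting the defining factorization of $b$ into a polynomial times a product gives
\begin{equation*}
b(A[W^{-1}],2k)=F(A[W^{-1}];q^{-2k})\prod_{i=0}^{[(n-1)/2]}(1-q^{2i-2k})^{-1}\prod_{i=1}^{[n/2]}(1-\xi q^{2i-1-2k})^{-1}.
\end{equation*}
The product here is independent of $W$, so it factors out of the sum.

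It then remains to match the weight of each term. At $X=q^{-2k}$ we have
\begin{equation*}
q^{-(2k-n)\ord(N_{E/F}\det W)}=(Xq^n)^{\ord(N_{E/F}\det W)}.
\end{equation*}
This is exactly the factor $(Xp^n)^{\nu(\det W)}$ in the definition of $G$, once we read $p$ as $q$ and $\nu$ as $\ord\circ N_{E/F}$; I would state this convention explicitly. The sum therefore becomes $G(A;q^{-2k})$ divided by the two products. Moving the products to the other side gives the claimed identity, where the stated factor $(1-q^{2i-s})$ is understood with $s=2k$.

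I expect the main obstacle to be bookkeeping rather than conceptual. The first issue is making sure the normalizations of $\ord$ and $q$ versus $q_E$ agree between Lemma \ref{lem:locden1} and the definition of $G$, since in the inert case $q_E=q^2$ and $\ord(N_{E/F}\det W)$ is counted in $F$-units. The second is confirming that the exponent $\mt{m}(W)=(-1)^iq_E^{i(i-1)/2}$ matches the one used in the inversion formula. A further point to check is the non-semi-integral terms, to ensure they drop out consistently on both sides. Finally, the hypothesis $2k\geqq n$ needed to apply Lemma \ref{lem:locden1} with $m=2k\geqq n$ is guaranteed by $k\geqq n$.
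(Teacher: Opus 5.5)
Your strategy is the paper's: apply Lemma~\ref{lem:locden1} with $m=2k$ and $\Theta_{2k}$, use Lemma~\ref{lem:locden9} term by term, and then the definition of $G$. Your handling of the weight $(q^nX)^{\ord(N_{E/F}\det W)}$ at $X=q^{-2k}$ and of the non-semi-integral terms is fine. The last step, however, is wrong. Put
\[
P=\prod_{i=0}^{[(n-1)/2]}(1-q^{2i-2k})\prod_{i=1}^{[n/2]}(1-\xi q^{2i-1-2k}).
\]
You substituted $b(B;2k)=F(B;q^{-2k})P^{-1}$, so your sum yields $\beta(\Theta_{2k},A)=G(A;q^{-2k})P^{-1}$, i.e.\ $G(A;q^{-2k})=\beta(\Theta_{2k},A)P$. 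The claim is $\beta(\Theta_{2k},A)=G(A;q^{-2k})P$, and no rearrangement converts one into the other, since $P^2\neq 1$. As written, your argument proves a different identity, and that identity is false.

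The source of the trouble is that the display in Section~2.3 introducing $F_v$ has the exponents inverted. The standard (Shimura) normalization is $b(B;s)=C(s)F(B;q^{-s})$ with $C(s)=\prod_{i=0}^{[(n-1)/2]}(1-q^{2i-s})\prod_{i=1}^{[n/2]}(1-\xi q^{2i-1-s})$. The paper itself uses this form in the proofs of Lemma~\ref{lem:locden12} and Proposition~\ref{prop:locden4}, where it writes ``$b(B;2s)=C(2s)F(B;q^{-2s})$''. You can confirm it with $n=1$ and $u\in\hana{O}^\times$:
\begin{itemize}
\item Directly from the definition, $b(u;s)=1-q^{-s}$ and $F(u;X)=1$.
\item $G(u;X)=1$, because $u\varpi_E^{-1}\bar{\varpi}_E^{-1}$ is not semi-integral.
\item In the inert case, Proposition~\ref{prop:locden1}(1) gives $\beta(1_{2k},u)=1-q^{-2k}=G(u;q^{-2k})P$.
\end{itemize}
Your version would instead predict $(1-q^{-2k})^{-1}$. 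You should flag the typo and use $b=C\cdot F$. Then $P=C(2k)$ comes out of the sum as a multiplier, and the statement (with $s=2k$) follows at once.
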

\begin{proof}
This lemma immediately follows from Lemma \ref{lem:locden1} and Lemma \ref{lem:locden9}.
\end{proof}

\begin{props}\label{prop:locden2}
\begin{itemize}
\item[(1)] Assume that $E$ is inert over $F$. For $B\in\Her_{r,*}^{nd}$, we have
\begin{equation*}
G(1_{m-r}\perp B;X)=\prod_{j=0}^{r-1}(1-(\xi q)^{m+j}X)
\end{equation*}
\item[(2)] Assume that $E$ is ramified over $F$ and $m-r$ is even.
For $B\in\Her_{r,*}^{nd}$, we have
\begin{equation*}
G(\Theta_{m-r}\perp B;X)=\prod_{j=0}^{[r/2]-1}(1-q^{2j+2[(m+1)/2]}X)
\end{equation*}
\end{itemize}
\end{props}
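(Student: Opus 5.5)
The plan is to compare two evaluations of $\beta(\Theta_{2k},\cdot)$ and then pass from infinitely many values of $X$ to a polynomial identity. Fix $A=1_{m-r}\perp B$ in case (1), or $A=\Theta_{m-r}\perp B$ in case (2), with $B\in\Her_{r,*}^{nd}$. By Lemma \ref{lem:locden1} (Möbius inversion with $\mt{m}$), Lemma \ref{lem:locden9} ($b(A,2k)=\alpha(\Theta_{2k},A)$ for $k\geqq m$) and the factorization of $b$ through $F$, we get Lemma \ref{lem:locden10}. For every integer $k\geqq m$ this gives
\begin{equation*}
\beta(\Theta_{2k},A)=G(A;q^{-2k})\prod_{i=0}^{[(m-1)/2]}(1-q^{2i-2k})\prod_{i=1}^{[m/2]}(1-\xi q^{2i-1-2k}).
\end{equation*}

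On the other side, Lemma \ref{lem:locden8} (with $n$ there replaced by $m$) gives $\beta(\Theta_{2k},A)$ explicitly. In the inert case this is $\prod_{i=0}^{m+r-1}(1-(-q)^{-2k+i})$; in the ramified case it is $\prod_{i=0}^{(m+r)/2-1}(1-q^{-2k+2i})$.

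Next we divide the explicit product by the normalizing factor. In the inert case ($\xi=-1$) the normalizing factor is
\begin{equation*}
\prod_{i=0}^{[(m-1)/2]}(1-q^{2i-2k})\prod_{i=1}^{[m/2]}(1+q^{2i-1-2k})=\prod_{i=0}^{m-1}(1-(-q)^{i-2k}).
\end{equation*}
So the quotient is $\prod_{j=0}^{r-1}(1-(-q)^{m+j}q^{-2k})$. This equals $\prod_{j=0}^{r-1}(1-(\xi q)^{m+j}X)$ at $X=q^{-2k}$, using $(-q)^{-2k}=q^{-2k}$. In the ramified case ($\xi=0$) the normalizing factor is $\prod_{i=0}^{[(m-1)/2]}(1-q^{2i-2k})$, i.e.\ $[(m+1)/2]$ factors. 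The explicit product has $(m+r)/2$ factors, and since $m-r$ is even, $(m+r)/2-[(m+1)/2]=[r/2]$. The surviving factors are $1-q^{2j+2[(m+1)/2]}q^{-2k}$ for $0\leqq j\leqq [r/2]-1$. This requires checking the parity bookkeeping of $m$ and $r$ carefully.

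Both sides are polynomials in $X$: $G(A;X)$ is a finite sum of the polynomials $F(A[W^{-1}];X)$ times powers of $X$. They agree at the infinitely many points $X=q^{-2k}$, $k\geqq m$, hence they are equal identically.

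The main obstacle is justifying the inputs for $A$ that are not in $\Her_{m,*}$. Lemma \ref{lem:locden8} must hold for $\Theta_{m-r}\perp B$ with the stated lower bound $2k\geqq m$, and this is where the inductive splitting of Lemmas \ref{lem:locden3} and \ref{lem:locden4} and Proposition \ref{prop:locden1} enter, including at $p=2$ in the ramified case. I also need to confirm that the sum defining $G$ is finite: $F(A[W^{-1}];X)=0$ unless $A[W^{-1}]$ is semi-integral. Finally, I need to confirm that the normalization of Lemma \ref{lem:locden10} uses $s=2k$ consistently.
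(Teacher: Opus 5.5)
Your proposal is correct and follows the same route as the paper, which simply defers to Katsurada's corollary of Lemma 4.2.2. That argument compares the explicit value of $\beta(\Theta_{2k},\Theta_{m-r}\perp B)$ from Lemma~\ref{lem:locden8} with $\beta(\Theta_{2k},A)=G(A;q^{-2k})\,C(2k)$ from Lemma~\ref{lem:locden10}, cancels the normalizing factor exactly as you did, and concludes by polynomial interpolation at the points $X=q^{-2k}$, $k\geqq m$.

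The concerns in your last paragraph are already settled by the stated lemmas. Lemma~\ref{lem:locden8} is stated precisely for $\Theta_{m-r}\perp B$ with $B\in\Her_{r,*}^{nd}$ and $2k\geqq m$, and $G$ is a finite sum because $\mt{m}$ is supported on finitely many cosets. Note, though, that Lemma~\ref{lem:locden10} presupposes the standard normalization $b(B;s)=C(s)F(B;q^{-s})$; the paper's displayed factorization of $b$ has the product inverted, so do not re-derive Lemma~\ref{lem:locden10} from that display literally.
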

\begin{proof}
We can similarly prove this in \cite[Corollary of Lemma 4.2.2]{K1}.
\end{proof}

\begin{props}\label{prop:locden3}
For $A\in\dHer_{n}^{nd}$, we have 
\begin{equation*}
F(A;q^{-x})=\int_{\GL_n(E)}1_{\Mat_n(\hana{O}_E)}(h)G(A[h^{-1}];q^{-x})|\det h|_E^{n+x}dh
\end{equation*}
for any $\Re x>0$.
Moreover, we have 
\begin{equation*}
\wF(A;X)=X^{-[n/2]\ord(\maf{D}_{E/F})}\sum_{B\in\dHer_{n}/\K_{n}^0}\dfrac{\alpha(B,A)}{\alpha(B)}X^{-\ord(\det B)}G(B;q^{-n}X^2)X^{\ord(\det A)-\ord(\det B)}
\end{equation*}
\end{props}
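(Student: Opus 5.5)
The plan is to obtain both formulas by Möbius inversion over the Hecke-type sum that defines $G$. We first invert the relation between $F$ and $G$, then make the sum over $W$ into an integral over $\GL_n(E)$, and finally pass to $\wF$ by regrouping over $\K_n^0$-classes.

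\textbf{Step 1: inversion identity.} By definition,
\[
G(A;X)=\sum_{W\in\K_n^0\lsla\Mat_n(\hana{O}_E)^{nd}}(Xq^n)^{\ord(N_{E/F}\det W)}\,\mt{m}(W)\,F(A[W^{-1}];X).
\]
The $q$-binomial identity used in the proof of Lemma \ref{lem:locden1} says that $\mt{m}$ is the convolution inverse of the constant function $1$ in the Hecke algebra of $\K_n^0\lsla\Mat_n(\hana{O}_E)^{nd}/\K_n^0$. The weight $(Xq^n)^{\ord\det W}$ is multiplicative, so twisting by it preserves this inverse relation. Inverting therefore gives
\[
F(A;X)=\sum_{W\in\K_n^0\lsla\Mat_n(\hana{O}_E)^{nd}}(Xq^n)^{\ord(N_{E/F}\det W)}\,G(A[W^{-1}];X).
\]
We will verify this first at the special points $X=q^{-2k}$ with $k\geqq n$. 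There it is exactly the first identity of Lemma \ref{lem:locden1} (the relation $\alpha=\sum\beta$), read through Lemma \ref{lem:locden9}, which gives $b(A,2k)=\alpha(\Theta_{2k},A)$, and Lemma \ref{lem:locden10}, which relates $\beta(\Theta_{2k},A)$ to $G$. The common product of $(1-q^{2i-2k})$ and $(1-\xi q^{2i-1-2k})$ factors cancels from both sides. Since both sides are polynomials in $X$ and agree at infinitely many points, the identity holds for all $X$.

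\textbf{Step 2: integral form.} The sum over cosets $\K_n^0 W$ becomes an integral over $\GL_n(E)$. Because the Haar measure gives $\K_n^0$ volume $1$, each coset $\K_n^0 W$ has volume $1$. Taking $X=q^{-x}$, we have $(Xq^n)^{\ord\det W}=|\det W|_E^{\,x-n}$ in the normalization $|\det W|_E=q^{-\ord N(\det W)}$. The integrand $G(A[h^{-1}];q^{-x})$ is left $\K_n^0$-invariant, so we reparametrize $h\mapsto h^{-1}$ in the $A[h^{-1}]$ convention. The exponent $n+x$ in the statement then appears once we track how the coset variable enters $|\det\cdot|$, and part of this bookkeeping is choosing the left versus right coset convention so that the sign of $n$ comes out correctly. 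For $\Re x>0$ the sum converges absolutely, because $G$ is bounded on the finitely many relevant classes: $G(A[h^{-1}])$ vanishes unless $A[h^{-1}]$ is semi-integral, and this bounds $\det h$.

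\textbf{Step 3: the $\wF$ formula.} We substitute $X\mapsto q^{-n}X^2$ in Step 1 and multiply by $X^{-e(A)}$. Then we regroup by $B=A[W^{-1}]$ modulo $\K_n^0$. The number of $W$ producing a given class $B$ equals $\#\bigl(\K_n^0\lsla\Omega'(B,A)\bigr)=\alpha(B,A)/\alpha(B)$ by Lemma \ref{lem:locden2}(2). We also have $\ord(N\det W)=\ord\det A-\ord\det B$. The weight therefore becomes $(X^2)^{\ord\det A-\ord\det B}$. Combining this with $X^{-e(A)}$, where $e(A)=[n/2]\ord\maf{D}_{E/F}+\ord\det A$, yields
\[
X^{-[n/2]\ord\maf{D}_{E/F}}\,X^{-\ord\det B}\,X^{\ord\det A-\ord\det B}.
\]
This is precisely the stated expression.

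\textbf{Main obstacle.} The main risk lies in the inversion of Step 1. One must check that the weighted Möbius function $\mt{m}$ really inverts the sum with the twist $(Xq^n)^{\ord\det W}$, and that the normalizations of $G$ and $\wG$ (including the stray sum over $i$ in the definition of $G$) match. If the twisted inversion is doubtful, we will instead rely on the interpolation argument at $X=q^{-2k}$ via local densities, which avoids the Hecke algebra formalism.
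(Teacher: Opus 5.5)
Your Steps 1 and 3 follow the paper's route, which the paper takes from Katsurada \cite[Lemma 4.2.3 and Corollary]{K1}. First establish $F(A;X)=\sum_{W\in\K_n^0\lsla\Mat_n(\hana{O}_E)^{nd}}(Xq^n)^{\ord N_{E/F}(\det W)}G(A[W^{-1}];X)$ at $X=q^{-2k}$, $k\geqq n$, from the relation $\alpha=\sum\beta$ of Lemma \ref{lem:locden1} together with Lemmas \ref{lem:locden9} and \ref{lem:locden10}. Then extend it to all $X$ by polynomiality. Finally regroup over $\K_n^0$-classes using $\#(\K_n^0\lsla\Omega'(B,A))=\alpha(B,A)/\alpha(B)$ from Lemma \ref{lem:locden2}(2). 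That part is sound, and your exponent count in Step 3 reproduces the second display exactly.

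The problem is Step 2, where you claim that a suitable coset convention turns the weight into $|\det h|_E^{n+x}$. No such bookkeeping exists.
\begin{itemize}
\item With $X=q^{-x}$ the weight is $(q^{n-x})^{\ord N_{E/F}(\det h)}=|\det h|_E^{x-n}$.
\item $|\det h|_E$ is invariant under $h\mapsto kh$ and $h\mapsto hk$ for $k\in\K_n^0$, so choosing left or right cosets changes nothing.
\item The substitution $h\mapsto h^{-1}$ would move the support off $\Mat_n(\hana{O}_E)$ and turn $A[h^{-1}]$ into $A[h]$.
\end{itemize}
The exponent $n+x$ in the statement is a misprint for $x-n$.

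You can check this directly with $n=1$, $E_v/F_v$ inert (say $v\nmid 2$), and $A=\varpi^2$. Then $F(A;X)=1+qX+q^2X^2$, $G(A;X)=1+qX$ and $G(1;X)=1$. Only the cosets of $1$ and $\varpi$ contribute, so $F(A;q^{-x})=G(A;q^{-x})+|\varpi|_E^{e}$ with $|\varpi|_E=q^{-2}$. This forces $e=x-1$, not $x+1$.

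The paper itself uses the corrected form later. When it applies this proposition to $\wF(Z;q^{-y})$, that is at the argument $q^{-n-2y}$, it inserts $|\det h|_E^{2y}=|\det h|_E^{(n+2y)-n}$. You should therefore prove the first identity with $|\det h|_E^{x-n}$ and flag the misprint explicitly, rather than assert that some convention produces $n+x$.
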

\begin{proof}
We can similarly prove this in \cite[Lemma 4.2.3, Corollary]{K1}.
\end{proof}

Besides we define the rational polynomial $\maf{B}(A,t)$ for $A\in\dHer_{n}^{nd}$
\begin{align*}
\maf{B}(A;t)=&\dfrac{\prod_{j=0}^{n-1}(1-\xi(n+j)q^{n+j}t^2)}{G(A,t^2)}
\intertext{where $\xi$ is defined in the former part of this subsection and}
\xi(j)=\begin{cases}
\xi&\text{if $j$ is odd}\\
1&\text{if $j$ is even}
\end{cases}
\end{align*}
for integer $j$.
Then we have the explicit formula as in the following lemma. 
\begin{lems}\label{lem:locden11}
\begin{itemize}
\item[(1)] Assume that $E$ is inert or split over $F$.
For $B\in\Her_{r}^{nd}$, we have
\begin{equation*}
\maf{B}(1_{m-r}\perp\varpi B;X)=\prod_{i=r}^{m-1}(1-(\xi q)^{m+i}X^2)
\end{equation*}
\item[(2)] Assume that $E$ is ramified over $F$ and $m-r$ is even.
For $B\in\Her_{r,*}^{nd}$, we have
\begin{equation*}
\maf{B}(\Theta_{m-r}\perp B;X)=
\prod_{i=[r/2]}^{[(m-2)/2]}(1-q^{2([(m+1)/2]+i)}X^2)
\end{equation*}
\end{itemize}
\end{lems}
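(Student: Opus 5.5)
The plan is to combine the definition of $\maf{B}$ with the explicit formula for $G$ in Proposition~\ref{prop:locden2}. Since
\[
\maf{B}(A;t)=\prod_{j=0}^{n-1}(1-\xi(n+j)q^{n+j}t^2)\Big/ G(A;t^2),
\]
it suffices to compute $G(A;X)$ for $A=1_{m-r}\perp\varpi B$ (resp.\ $\Theta_{m-r}\perp B$), with $n=m$. We then cancel the factors it shares with the numerator; what survives is exactly the indices $i$ from $r$ (resp.\ $[r/2]$) to the top.

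For case (1), first note that $\varpi B\in\Her_{r,*}^{nd}$ whenever $B\in\Her_r^{nd}$ is semi-integral. Here $\Her_{r,*}=\maf{p}\dHer_r$ in the unramified case, and we may assume $B$ is semi-integral, since otherwise both sides are considered on the semi-integral locus only. Proposition~\ref{prop:locden2}(1) then gives
\[
G(1_{m-r}\perp\varpi B;X)=\prod_{j=0}^{r-1}(1-(\xi q)^{m+j}X).
\]
In the inert case $\xi=-1$, so $(\xi q)^{m+j}=\xi(m+j)q^{m+j}$, because $\xi^{m+j}=\xi$ exactly when $m+j$ is odd. In the split case $\xi=1$ and the identity is trivial. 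The numerator is $\prod_{j=0}^{m-1}(1-\xi(m+j)q^{m+j}X^2)$. Substituting $t^2=X^2$ into $G$, the factors $j=0,\dots,r-1$ cancel and leave $\prod_{i=r}^{m-1}(1-(\xi q)^{m+i}X^2)$. The split case $E=F\oplus F$ is not literally covered by Proposition~\ref{prop:locden2}(1). I would handle it by the same argument as in \cite[Corollary of Lemma 4.2.2]{K1}: use Lemma~\ref{lem:locden10} together with Lemma~\ref{lem:locden8} in its split version, and evaluate $G(A;q^{-2k})$ for infinitely many $k$ via $\beta(\Theta_{2k},A)$. Polynomiality in $X$ then gives the identity.

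Case (2) is the same computation with Proposition~\ref{prop:locden2}(2). That result gives $G(\Theta_{m-r}\perp B;X)=\prod_{j=0}^{[r/2]-1}(1-q^{2j+2[(m+1)/2]}X)$. For the numerator in the ramified case $\xi=0$, so only even $m+j$ contribute. Reindexing those $j$ as $m+j=2([(m+1)/2]+i)$ gives $i$ running from $0$ to $[(m-2)/2]$ when $m$ is even, and checking $m$ odd separately is a parity bookkeeping step. After this reindexing the numerator is $\prod_{i=0}^{[(m-2)/2]}(1-q^{2([(m+1)/2]+i)}X^2)$. Cancelling the first $[r/2]$ factors gives the claim.

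The main obstacle is this reindexing in the ramified case when $m$ is odd. There $2[(m+1)/2]=m+1$, and one must verify that the even values $m+j$, $0\le j\le m-1$, are exactly $2([(m+1)/2]+i)$ with $0\le i\le [(m-2)/2]$. The secondary obstacle is justifying the split case of the $G$-formula. If $\Theta_{m-r}$ with odd $m-r$ causes trouble, I would rely on the hypothesis that $m-r$ is even, as stated.
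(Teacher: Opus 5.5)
Your proposal is correct and takes the same route as the paper. The paper proves the lemma by inserting Proposition~\ref{prop:locden2} into the definition of $\maf{B}$ and cancelling (following \cite[Lemma 5.2.3]{K2}), which is exactly your computation, including the parity reindexing in the ramified case. You also handle two points the paper passes over: you derive the split-case $G$-formula from Lemma~\ref{lem:locden10} and Lemma~\ref{lem:locden8} by polynomiality, and you restrict to semi-integral $B$. That restriction is genuinely needed, since otherwise $\varpi B\notin\Her_{r,*}$ and Proposition~\ref{prop:locden2} does not apply.
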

\begin{proof}
As in \cite[Lemma 5.2.3]{K2}, we can prove this by Proposition \ref{prop:locden2}.
\end{proof}
Besides, we have one of the most important formulas for calculating the local factor.
Put 
\begin{equation*}
\maf{L}(X,t)=\begin{cases}
\prod_{i=1}^n(1-q^{-n+2i-1}X^2t^2)^{-1}(1-q^{-n+2i-1}X^{-2}t^2)^{-1}&\text{$E$ is inert over $F$}\\
\prod_{i=1}^n(1-q^{-n/2+i-1/2}Xt)^{-2}(1-q^{-n/2+i-1/2}X^{-1}t)^{-2}&\text{$E$ is split over $F$}\\
\prod_{i=1}^n(1-q^{-n/2+i-1/2}Xt)^{-1}(1-q^{-n/2+i-1/2}X^{-1}t)^{-1}&\text{$E$ is ramified over $F$}.
\end{cases}
\end{equation*}

\begin{lems}\label{lem:locden12}
Let $f$ be a Hecke eigenform in $\mr{ind}_{\Pe}^{\Ge_n}|\det|^k$ and $f(1_{2n})=1$ and $A\in\dHer_n$. 
Put
\begin{align*}
\varphi(g)=&\int_{\GL_n(E)}\bm{1}_{n}(h)|\det h|_E^s\int_{\Ge_n(E)}f(gy^{-1})\bm{1}_{\K_n\m_n(h^{-1})\K_n}(y)dydh\\
\varphi_A(\theta)=&\int_{\Her_n}\varphi(J_n\n_n(z)\m_n(\theta))\psi(-\tr(Az))dz\\
C(s)=&\prod_{i=0}^{[(n-1)/2]}(1-q^{2i-s})\prod_{i=1}^{[n/2]}(1-\xi q^{2i-1-s}),\\
C_A(k,s)=&|\det A|^{k-n/2}q^{-(k-n/2)\maf{f}}C(2k)C(2s)G(A,q^{-2s}).
\end{align*}
Then we have
\begin{align*}
&\int_{\GL_n(E)}\mt{m}(h)|\det h|^s\varphi_A(h^{-1})d\theta\\
=&C_A(k,s)\maf{L}(q^{-k+n/2},q^{-s+n-1/2})\maf{B}(A,q^{-s})\wG_v(A;q^{-k+n/2};q^{-s+n/2})
\end{align*}
\end{lems}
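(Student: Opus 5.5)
The natural route is a Hecke-algebra computation that reduces the twisted integral to the explicit formulas already available for $G$, $\maf{B}$ and $\wF$. This mirrors \cite[Lemma 5.2.3]{K2} and the Hermitian analogue in \cite{K1}.

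First I would unfold $\varphi$. Since $f$ is a spherical vector in $\mr{ind}_{\Pe}^{\Ge_n}|\det|^k$ with $f(1_{2n})=1$, convolution with $\bm{1}_{\K_n\m_n(h^{-1})\K_n}$ acts on $f$ by the spherical eigenvalue $\lambda_k(h)$. This eigenvalue is given by the Satake transform evaluated at the parameter $q^{-k+n/2}$. Hence $\varphi=\Phi_s(k)\,f$, where
\begin{equation*}
\Phi_s(k)=\int_{\GL_n(E)}\bm{1}_n(h)|\det h|_E^s\lambda_k(h)\,dh .
\end{equation*}
By the standard rationality theorem for the spherical series of $\Ge_n$ (Shimura's computation, as used in \cite{K1}), $\Phi_s(k)$ equals $C(2s)^{-1}$-type normalizing factors times $\maf{L}(q^{-k+n/2},q^{-s+n-1/2})$, with the three cases (inert, split, ramified) giving the three shapes of $\maf{L}$. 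This is exactly where $\maf{L}$ enters.

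Next I would compute $\varphi_A(\theta)$. By Lemma~\ref{lem:whfunc1}, with $\varphi^{(s)}$ replaced by $f$ at exponent $k$, we get
\begin{equation*}
\varphi_A(\theta)=\Phi_s(k)\,|\det\theta|_E^{-k+n}\,b(A[\theta];2k),
\end{equation*}
and $b(A[\theta];2k)=F(A[\theta];q^{-2k})\,C(2k)^{-1}$. Substituting into $\int\mt{m}(h)|\det h|^s\varphi_A(h^{-1})\,dh$ turns it into a finite alternating sum over $W\in\K_n^0\lsla\D_{n,i}$ of $F(A[W^{-1}];q^{-2k})$ weighted by powers of $q^{\ord\det W}$. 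After normalizing $F$ to $\wF$ through $e(B)$, this sum is, by the definition of $\wG$, precisely $\wG(A;q^{-k+n/2},q^{-s+n/2})$. The factor $|\det A|^{k-n/2}q^{-(k-n/2)\maf{f}}$ comes from this renormalization, and the factors $C(2k)$ come from converting between $b$ and $F$.

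The remaining factors $C(2s)G(A,q^{-2s})\maf{B}(A,q^{-s})$ should combine to $C(2s)\prod_{j}(1-\xi(n+j)q^{n+j}q^{-2s})$, by the definition of $\maf{B}$. I expect this to arise from the $h$-integral against $\bm{1}_n(h)|\det h|^s$, via Proposition~\ref{prop:locden3}, which expresses $F$ as an integral of $G$ over $\Mat_n(\hana{O}_E)\cap\GL_n(E)$.

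The main obstacle is bookkeeping: keeping $\mt{m}$, which inverts summation over $\K_n^0\lsla\Mat_n(\hana{O}_E)^{nd}$, separate from the $\bm{1}_n$-integral that produces $\maf{L}$, and doing so uniformly in the ramified case, where $\maf{f}$ and $\Theta_{2k}$ shift the exponents. I would first verify the identity for $A=\Theta_{m-r}\perp B$ with $B\in\Her_{r,*}^{nd}$, using Proposition~\ref{prop:locden2} and Lemma~\ref{lem:locden11}. I would then extend it to all $A$ via the decomposition of $\wF$ in Proposition~\ref{prop:locden3} and linearity in the local densities $\alpha(B,A)/\alpha(B)$.
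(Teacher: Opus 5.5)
Your skeleton is the paper's. Sphericity of $f$ gives $\varphi=\Phi_s(k)f$, Lemma~\ref{lem:whfunc1} gives $\varphi_A(\theta)=\Phi_s(k)|\det\theta|_E^{-k+n}b(A[\theta];2k)$, and after rewriting $b$ through $\wF$ the $\mt{m}$-weighted integral is $\wG(A;q^{-k+n/2},q^{-s+n/2})$ by definition. The gap is where you get the factor $C(2s)G(A,q^{-2s})\maf{B}(A,q^{-s})$. After your first step, $\Phi_s(k)$ is the only integral against $\bm{1}_n(h)|\det h|_E^s$ left; the other integral is against $\mt{m}(h)$, and it produces $\wG$. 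Shimura's formula for this spherical Hecke series, \cite[Lemma 19.8]{S2}, evaluates $\Phi_s(k)$ exactly:
\begin{equation*}
\Phi_s(k)=\maf{L}(q^{-k+n/2},q^{-s+n-1/2})\prod_{j=0}^{2n-1}(1-\xi(j)q^{j-2s}).
\end{equation*}
This product is a numerator factor, not a ``$C(2s)^{-1}$-type'' one, and it is exactly what you are missing. The terms $0\leqq j\leqq n-1$ give $C(2s)$. The terms $n\leqq j\leqq 2n-1$ give $\prod_{j=0}^{n-1}(1-\xi(n+j)q^{n+j-2s})$, which equals $G(A,q^{-2s})\maf{B}(A,q^{-s})$ by the definition of $\maf{B}$. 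Your plan uses $\Phi_s(k)$ only for $\maf{L}$ and then tries to extract this product a second time from ``the $h$-integral against $\bm{1}_n(h)|\det h|_E^s$ via Proposition~\ref{prop:locden3}''. There is no second such integral, and Proposition~\ref{prop:locden3}, which expresses $F$ through $G$, has no role here. As written, that step cannot produce the factor; with the exact Hecke series it is immediate.

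The same point makes your last step unnecessary: the reduction to $A=\Theta_{m-r}\perp B$ via Proposition~\ref{prop:locden2} and Lemma~\ref{lem:locden11}, and the extension ``by linearity'' in $\alpha(B,A)/\alpha(B)$. Once $\Phi_s(k)$ is known exactly, the computation is uniform in $A$. The only place $G(A,q^{-2s})$ enters is $C_A(k,s)$, where it cancels identically against the denominator of $\maf{B}(A,q^{-s})$.

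Watch also the normalization between $b$ and $F$. To get the factor $C(2k)$ in $C_A(k,s)$ you need $b(B;2k)=C(2k)F(B;q^{-2k})$, that is, $b(B;2k)=C(2k)|\det B|^{k-n/2}q^{-(k-n/2)\maf{f}[n/2]}\wF(B;q^{-k+n/2})$. This is what the paper uses in the proof of this lemma and of Proposition~\ref{prop:locden4}. For $n=1$ and $B$ a unit, one checks directly that $b(B;s)=1-q^{-s}=C(s)$. The relation $b=F\,C(2k)^{-1}$ you wrote follows the display in Section~\ref{sec2}, and it would give $C(2k)^{-1}$ instead.
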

\begin{proof}
Put 
\begin{align*}
f_A(\theta)=&\int_{\Her_n}f(J_n\n_n(z)\m_n(\theta))\psi(-\tr(Az))dz
\end{align*}
for $g\in\Ge_n,\theta\in\GL_n(E)$.
Then by Lemma \ref{lem:whfunc1}, we have
\begin{equation*}
\int_{\Her_n(F)}f(J_n\n_n(z))\psi(-\tr(Az))dz=b(A;2k).
\end{equation*}
Let $X=q^{-k+n/2}$.
Since $f$ is a Hecke eigenvector
\begin{align*}
\varphi(g)
=&\maf{L}(X,q^{-s+n-1/2})\prod_{j=1}^{2n}(1-\xi(j-1)q^{j-1}q^{-2s})f(g)
\end{align*}
by \cite[Lemma 19.8]{S2}.
Therefore we have
\begin{align*}
\varphi_A(\theta)=&\int_{\Her_n}\varphi(J_n\m_n(\theta)\n_n(z))\psi(-\tr(Az[\theta^*]))|\det\theta|_E^ndz\\
=&\maf{L}(q^{-k+n/2},q^{-s+n/2-1/2})\prod_{j=1}^{2n}(1-\xi(j-1)q^{j-1}q^{-2s})|\det\theta|_E^{-k+n}b(A[\theta];2k)
\end{align*}
Since 
$b(B;2\sigma)=C(2\sigma)|\det B|^{\sigma-n/2}q^{-(\sigma-n/2)\maf{f}[n/2]}\wF(B;q^{-\sigma+n/2})$, we have
\begin{align*}
&\int_{\GL_n(E)}\mt{m}(\theta)|\det\theta|_E^{s+k-n}b(A[\theta^{-1}];2k)d\theta\\
=&C(2k)q^{-(k-n/2)\maf{f}[n/2]}\int_{\GL_n(E)}\mt{m}(\theta)|\det\theta|_E^{s+k-n}|\det A[\theta^{-1}]|^{k-n/2}\wF(A[\theta^{-1}];q^{-k+n/2})d\theta\\
=&C(2k)q^{-(k-n/2)\maf{f}[n/2]}|\det A|^{k-n/2}\int_{\GL_n(E)}\mt{m}(\theta)|\det\theta|_E^{s-n/2}\wF(A[\theta^{-1}];q^{-k+n/2})d\theta\\
=&\dfrac{C_A(k,s)}{C(2s)G(A;q^{-2s})}\wG(A;q^{-k+n/2},q^{-s+n/2}).
\end{align*}
Therefore we have
\begin{align*}
&\int_{\GL_n(E)}\mt{m}(\theta)|\det\theta|^s\varphi_A(\theta^{-1})d\theta\\
=&\dfrac{C_A(k,s)}{C(2s)G(A,q^{-2s})}\maf{L}(q^{-k+n/2},q^{-s+n-1/2})\prod_{j=1}^{2n}(1-\xi(j-1)q^{j-1}q^{-2s})\wG_v(A;q^{-k+n/2};q^{-s+n/2})\\
=&C_A(k,s)\maf{L}(q^{-k+n/2},q^{-s+n-1/2})\maf{B}(A,q^{-s})\wG_v(A;q^{-k+n/2};q^{-s+n/2}).
\end{align*}
\end{proof}

\begin{props}\label{prop:locden4}
For $A\in\Her_n(F)$, we have
\begin{equation*}
\sum_{W\in \Mat_{n}(\hana{O}_{E})^{nd}/\K_{n}^0}\wF(A[W];X) t^{\ord(N_{E/F}(\det W))}=\maf{B}(A,q^{-n/2}t)\wG(A;X,t)\maf{L}_{v}(X,q^{n/2-1/2}t).
\end{equation*}
\end{props}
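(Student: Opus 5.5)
The plan is to follow the approach of \cite[Lemma 5.2.4]{K2} and \cite[Lemma 4.3]{K1}, transplanted to the Hermitian setting. The key input is Proposition~\ref{prop:locden3}. It expresses $\wF(A;X)$ as a sum over lattices,
\[
\wF(A;X)=X^{-[n/2]\ord(\maf{D}_{E/F})}\sum_{B\in\dHer_n/\K_n^0}\frac{\alpha(B,A)}{\alpha(B)}X^{-\ord(\det B)}G(B;q^{-n}X^2)X^{\ord(\det A)-\ord(\det B)}.
\]
By Lemma~\ref{lem:locden2}(2), the ratio $\alpha(B,A)/\alpha(B)$ is the number $\#(\K_n^0\lsla\Omega'(B,A))$. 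This turns $\wF(A;X)$ into a sum over matrices $W\in\Mat_n(\hana{O}_E)^{nd}$ (modulo $\K_n^0$) of a "primitive" quantity evaluated at $A[W^{-1}]$, weighted by powers of $X$ determined by $\ord N_{E/F}(\det W)$.

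First, substitute this expansion for $\wF(A[W];X)$ into the left-hand side $\sum_{W}\wF(A[W];X)t^{\ord(N_{E/F}(\det W))}$. Then reorganise the resulting double sum as a sum over pairs $(W,W')$. Using the Möbius-type inversion in Lemma~\ref{lem:locden1}, built from the function $\mt{m}$ and the $q$-binomial identity $\sum_W\mt{m}(W)\widetilde\varphi(XW^{-1})=\varphi(X)$, the sum should split into two factors:
\begin{itemize}
\item the "primitive" generating series $\wG(A;X,t)$, which is exactly $\sum_W t^{\nu(\det W)}\mt{m}(W)\wF(A[W^{-1}];X)$;
\item a purely combinatorial Hecke-type series $\sum_{W\in\K_n^0\lsla\Mat_n(\hana{O}_E)^{nd}/\K_n^0}(\cdots)$.
\end{itemize}

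The cleanest way to organise this is through the integral formulation of Lemma~\ref{lem:locden12}. Take $f=\varphi^{(k)}$ (spherical, normalised), and interpret $\sum_W\wF(A[W];X)t^{\ord N(\det W)}$ as the $A$-th Whittaker coefficient of $f$ convolved with the Hecke operator $\int_{\GL_n(E)}\bm{1}_n(h)|\det h|^s\bm{1}_{\K_n\m_n(h^{-1})\K_n}$. This is the function $\varphi$ of Lemma~\ref{lem:locden12}, with $X=q^{-k+n/2}$ and $t=q^{-s+n/2}$. The Satake computation \cite[Lemma 19.8]{S2} gives the eigenvalue as
\[
\maf{L}(X,q^{n/2-1/2}t)\prod_{j=1}^{2n}\bigl(1-\xi(j-1)q^{j-1-2s}\bigr).
\]
Lemma~\ref{lem:locden12} then identifies the $\mt{m}$-twisted version of the same integral with
\[
C_A(k,s)\,\maf{L}\,\maf{B}(A,q^{-s})\,\wG(A;X,t).
\]
Comparing the two expressions, and inverting the $\mt{m}$-twist by Lemma~\ref{lem:locden1}, yields the claimed identity for every integer $k\geqq n$ and for $\Re s\gg0$. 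The factors $C(2s)G(A,q^{-2s})$ cancel against the numerator of $\maf{B}$; this is precisely how $\maf{B}$ was defined. Since both sides are rational functions in $X$ and $t$, and the identity holds for infinitely many values $X=q^{-k+n/2}$, it holds identically. This also covers the generic Satake parameter $X=q^{-\maf{s}_v}$ needed later.

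The main obstacle is bookkeeping in the comparison step, namely matching the normalisations exactly. This includes:
\begin{itemize}
\item the shift $q^{-n/2}t$ versus $q^{-s}$ in $\maf{B}$;
\item the power $X^{-[n/2]\ord\maf{D}_{E/F}}$ and $q^{-(k-n/2)\maf{f}}$ coming from $e_v(B)$;
\item the Jacobian factors $|\det W|_E^{n}$ that appear when changing $z\mapsto z[W^*]$.
\end{itemize}
These must be checked separately in the inert, split and ramified cases, since $\maf{L}$ differs in each. For the ramified case, the regular element $\Theta_{2k}$ and Lemma~\ref{lem:locden11}(2) should be used to confirm that $\maf{B}$ is indeed a polynomial, so that the extension from $X=q^{-k+n/2}$ to general $X$ is legitimate.
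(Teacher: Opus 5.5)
There is a genuine gap at the central step. You identify $\sum_W\wF(A[W];X)t^{\ord(N_{E/F}(\det W))}$ with the $A$-th Whittaker coefficient of $\varphi$, the convolution of $f$ with the Hecke operator built from $\K_n\m_n(h^{-1})\K_n$. That identification is false. Because $f$ is an eigenfunction, $\varphi_A(\theta)$ is just the eigenvalue $\maf{L}(X,q^{n/2-1/2}t)\prod_{j=1}^{2n}(1-\xi(j-1)q^{j-1-2s})$ times $f_A(\theta)$. In turn, $f_A(\theta)$ is a power of $|\det\theta|_E$ times the single value $\wF(A[\theta];X)$, so no sum over $W$ appears. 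Your ``two expressions'' are therefore really one. Lemma \ref{lem:locden12} is already the eigenvalue computation, and its output $C_A(k,s)\maf{L}\maf{B}(A,q^{-s})\wG(A;X,t)$ is $C_A(k,s)$ times the right-hand side of the proposition. What must actually be proved is that the same twisted integral $\int_{\GL_n(E)}\mt{m}(\theta)|\det\theta|^s\varphi_A(\theta^{-1})d\theta$ equals $C_A(k,s)$ times the left-hand side. Your proposal gives no argument for this. Nor is any ``inversion of the $\mt{m}$-twist'' involved: the twist stays on both sides of the comparison.

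The missing idea is to unfold the Hecke operator itself rather than use its eigenvalue. This goes in four steps:
\begin{enumerate}
\item Decompose $\K_n\m_n(h^{-1})\K_n$ into single cosets with representatives $\m_n(h_1^{-1})\n_n(z_1)\m_n(h_2)$. Here $(h_1,h_2)$ runs over coprime integral pairs and $z_1\in\Her_n/\Her_n[h_1^*]$ (\cite[Lemma 19.2]{S2}), with $|\det h|_E=|\det h_1|_E|\det h_2|_E\mu(z_1)^{-2}$.
\item After shifting $z$, the $z_1$-sum is a character sum equal to the Siegel series $|\det h_1|_E^{-n}b(A[\theta h_2^{-1}];2s)$ (\cite[Lemma 19.6]{S2}). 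Hence $\varphi_A(\theta)$ is a double integral over coprime pairs of $b(A[\theta h_2^{-1}];2s)\,f_A(\theta h_2^{-1}h_1)$.
\item Only now does the $\mt{m}$-twist do its work. The M\"obius identity $\int\mt{m}(\theta)\bm{1}_{\widetilde{\hana{S}}}(x,y\theta^{-1})d\theta=\bm{1}_n(y^{-1}x)\mt{m}(y)$ (\cite[Lemma 19.14]{S2}) removes the coprimality condition. The twisted integral then factors as $\int\bm{1}_n(h)|\det h|_E^{s-n}f_A(h)\,dh$ times $\int\mt{m}(h)|\det h|_E^{2s-n}b(A[h^{-1}];2s)\,dh=C(2s)G(A;q^{-2s})$.
\item The first factor is $q^{-(k-n/2)\maf{f}}|\det A|^{k-n/2}C(2k)$ times your left-hand side. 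So the twisted integral equals $C_A(k,s)$ times the left-hand side, and comparing with Lemma \ref{lem:locden12} gives the proposition.
\end{enumerate}
Your closing step (reduce to $X=q^{-k+n/2}$ with $k\geqq n$ and $\Re s\gg0$, then extend by rationality) matches the paper, but the factorization above is the substance of the proof. Your first sketch via Proposition \ref{prop:locden3} and Lemma \ref{lem:locden2} does not supply it either, since it never explains where the Hecke-eigenvalue factor $\maf{L}$ would come from.
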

\begin{proof}
Since $\wF(A; X)$ is a rational polynomial for any $A$ and $\wF(A;X)=0$ if $A\notin\dHer_n$, we can show the assertion by proving when $A\in\dHer_n$, $X=q^{-k+n/2}$, $t=q^{-s+n/2}$ for any positive integer $k\geqq n$ and any complex number $s$ whose real part of $s$ is sufficiently large.
\\
Put $f$, $f_A$, $\varphi$, $\varphi_A$, $C(s)$, $C_A(k,s)$ as in Lemma \ref{lem:locden12}.\\
Let $\hana{S}$ be the representative set of $\sqcup_{h\in\K_n^0\lsla \Mat_n(\hana{O}_E)^{nd}/\K_{n}^0}\K_n\m_n(h^{-1})\K_{n}$ such that
\begin{equation*}
\hana{S}=\left\{\begin{pmatrix}
h_1^{-1}h_2&h_1^{-1}z_1(h_2^*)^{-1}\\
0&h_1^*(h_2^*)^{-1}
\end{pmatrix}
\middle|\begin{matrix}
h_1\in\K_n^0\lsla (\Mat_n(\hana{O}_E)^{nd}\times \Mat_n(\hana{O}_E)^{nd}/(\K_n^0\times1_n),\\
h_1\hana{O}_E^n+h_2\hana{O}_E^n=\hana{O}_E^n\text{ and }z_1\in\Her_n/(\Her_n[h_1^*])
\end{matrix}\right\}
\end{equation*}
by \cite[Lemma 19.2]{S2}. 
Let $\hana{S}_h=\K_n\m_n(h^{-1})\K_{n}\cap\hana{S}$. 
Then we have
\begin{align*}
\varphi(g)=&\int_{\GL_n(E)}\bm{1}_{n}(h)|\det h|_E^s\sum_{y'\in\hana{S}_h}\int_{\Ge_n(E)}f(gy^{-1})\bm{1}_{\K_n}(yy'^{-1})dydh\\
=&\int_{\GL_n(E)}\bm{1}_{n}(h)|\det h|_E^s\sum_{y'\in \hana{S}_h}\int_{\Ge_n(E)}f(gy'^{-1}y^{-1})\bm{1}_{\K_n}(y)dydh.
\end{align*}
Let $\hana{S}_h^1=\{(h_1,h_2)\in\GL_n(E)\times\GL_n(E)|\m_n(h_1^{-1})\n_n(z_1)\m_n(h_2)\in\hana{S}_h\text{ for some $z_1$}\}$ and $X_h=\Her_n/(\Her_n[h^*])$.
Then we have
\begin{align*}
\varphi(g)=&\int_{\GL_n(E)}\bm{1}_{n}(h)|\det h|_E^s\sum_{(h_1,h_2)\in\hana{S}_h^1}\sum_{z_1\in X_{h_1}}f(g\m_n(h_2^{-1})\n_n(-z_1)\m_n(h_1))dh.
\end{align*}
If $\begin{pmatrix}
h_1^{-1}h_2&h_1^{-1}\sigma (h_2^*)^{-1}\\
0&h_1^*(h_2^*)^{-1}
\end{pmatrix}
\in \K_n\m_n(h^{-1})\K_n$, by \cite[Proposition 3.9]{S1} we have $|\det h|_E=|\det h_1|_E|\det h_2^*|_E\mu(\sigma)^{-2}$.
Put 
\begin{equation*}
\widetilde{\hana{S}}=\{(h_1,h_2)\in \Mat_n(\hana{O}_E)^{nd}\times \Mat_n(\hana{O}_E)^{nd}|h_1\hana{O}_E^n+h_2\hana{O}_E^n=\hana{O}_{E}^n\}.
\end{equation*}
Then we have
\begin{align*}
&\varphi(g)\\
=&\int_{\GL_n(E)}\bm{1}_{n}(h)\sum_{(h_1,h_2)\in\hana{S}_h^1}\sum_{z_1\in X_{h_1}}|\det h_1|_E^s|\det h_2^*|_E^s\mu(z_1)^{-2s}f(g\m_n(h_2^{-1})\n_n(-z_1)\m_n(h_1))dh\\
=&\int_{\GL_n(E)}\int_{\GL_n(E)}\bm{1}_{\widetilde{\hana{S}}}(h_1,h_2)|\det h_1|_E^s|\det h_2^*|_E^s\\
&\times\sum_{z_1\in X_{h_1}}\mu(z_1)^{-2s}f(g\m_n(h_2^{-1})\n_n(-z_1)\m_n(h_1))dh_1dh_2.
\end{align*}
Therefore we have
\begin{align*}
\varphi_A(\theta)=&\int_{\GL_n(E)}\int_{\GL_n(E)}\bm{1}_{\widetilde{\hana{S}}}(h_1,h_2)|\det h_1|_E^s|\det h_2^*|_E^s\sum_{z_1\in X_{h_1}}\mu(z_1)^{-2s}\\
&\times\int_{\Her_n(F)}f(J_n\n_n(z)\m_n(\theta)\m_n(h_2^{-1})\n_n(-z_1)\m_n(h_1))\psi(-\tr(Az))dzdh_1dh_2.
\end{align*}
Replacing $z$ by $z+z_1[(h_2^*)^{-1}\theta^*]$, we have
\begin{align*}
\varphi_A(\theta)
=&\int_{\GL_n(E)}\int_{\GL_n(E)}\bm{1}_{\widetilde{\hana{S}}}(h_1,h_2)|\det h_1|_E^s|\det h_2^*|_E^s\sum_{z_1\in X_{h_1}}\mu(z_1)^{-2s}\\
&\int_{\Her_n(F)}f(J_n\n_n(z)\m_n(\theta h_2^{-1}h_1))\psi(-\tr(A(z+z_1[(h_2^*)^{-1}\theta^*]))dzdh_1dh_2\\
=&\int_{\GL_n(E)}\int_{\GL_n(E)}\bm{1}_{\widetilde{\hana{S}}}(h_1,h_2)|\det h_1|_E^s|\det h_2|_E^{s}\\
&\times\sum_{z_1\in X_{h_1}}\mu(z_1)^{-2s}\psi(-\tr(A[\theta h_2^{-1}]z_1))f_A(\theta h_2^{-1}h_1)dh_1dh_2.
\end{align*}
As \cite[Lemma 19.6]{S2}, if $\zeta\in\Her_n(F)$ and $\zeta[h]\in\dHer_n$
we have
\begin{equation*}
\sum_{z_1\in X_{h}}\psi(-\tr(\zeta z_1))\mu(z_1)^{-2s}=
|\det h|_E^{-n}b(\zeta;2s).
\end{equation*}
Therefore, we have
\begin{align*}
&\varphi_A(\theta)\\
=&\int_{\GL_n(E)}\int_{\GL_n(E)}\bm{1}_{\widetilde{\hana{S}}}(h_1,h_2)|\det h_1|_E^{s-n}|\det h_2|_E^{s}b(A[\theta h_2^{-1}];2s)f_A(\theta h_2^{-1}h_1)dh_1dh_2.
\end{align*}
By replacing $h_1$ by $h_2h_3$, we have
\begin{align*}
\varphi_A(\theta)=&\int_{\GL_n(E)}\int_{\GL_n(E)}\bm{1}_{\widetilde{\hana{S}}}(h_2h_3,h_2)|\det h_3|_E^{s-n}|\det h_2|_E^{2s-n}b(A[\theta h_2^{-1}];2s)f_A(\theta h_3)dh_3dh_2.
\end{align*}
Therefore we have
\begin{align*}
&\int_{\GL_n(E)}\mt{m}(\theta)|\det\theta|^{s}\varphi_A(\theta^{-1})d\theta\\
=&\int_{\GL_n(E)}\mt{m}(\theta)|\det\theta|^{s}\int_{\GL_n(E)}\int_{\GL_n(E)}\bm{1}_{\widetilde{\hana{S}}}(h_2h_3,h_2)|\det h_3|_E^{s-n}\\
&\times|\det h_2|_E^{2s-n}b(A[\theta^{-1} h_2^{-1}];2s)f_A(\theta^{-1} h_3)dh_3dh_2d\theta.
\intertext{Replacing $\theta^{-1} h_3$ by $h_4$ and $h_2\theta$ by $h_5$, we have}
&\int_{\GL_n(E)}\mt{m}(\theta)|\det\theta|^{s}\varphi_A(\theta^{-1})d\theta\\
=&\int_{\GL_n(E)}\mt{m}(\theta)\int_{\GL_n(E)}\int_{\GL_n(E)}\bm{1}_{\widetilde{\hana{S}}}(h_5h_4,h_5\theta^{-1})|\det h_4|_E^{s-n}\\
&\times|\det h_5|_E^{2s-n}b(A[h_5^{-1}];2s)f_A(h_4)dh_5dh_4d\theta.
\end{align*}
As the proof of \cite[Lemma 19.14]{S2}, we have
\begin{equation*}
\int_{\GL_n(E)}\mt{m}(\theta)\bm{1}_{\widetilde{\hana{S}}}(x,y\theta^{-1})d\theta=1_{n}(y^{-1}x)\mt{m}(y)
\end{equation*}
Thus, we have
\begin{align*}
&\int_{\GL_n(E)}\mt{m}(\theta)|\det\theta|^{s}\varphi_A(\theta^{-1})d\theta\\
=&\int_{\GL_n(E)}\bm{1}_n(h_4)|\det h_4|_E^{s-n}f_A(h_4)dh_4\times\int_{\GL_n(E)}\mt{m}(h_5)|\det h_5|_E^{2s-n}b(A[h_5^{-1}];2s)dh_5.
\end{align*}
Since 
\begin{align*}
b(B;2s)=&C(2s)F(B;q^{-2s})\\
f_A(\theta)
=&q^{-(k-n/2)\maf{f}}|\det A|^{k-n/2}\wF(A[\theta];q^{-k+n/2})C(2k)|\det\theta|_E^{n/2},
\end{align*}
we have 
\begin{align*}
&\int_{\GL_n(E)}\mt{m}(\theta)|\det\theta|^{s}\varphi_A(\theta^{-1})d\theta\\
=&
\int_{\GL_n(E)}\bm{1}_{n}(h_4)|\det h_4|_E^{s-n/2}q^{-(k-n/2)\maf{f}}|\det A|^{k-n/2}\wF(A[h_4];q^{-k+n/2})C(2k)dh_4\\
&\times\int_{\GL_n(E)}\mt{m}(h_5)|\det h_5|_E^{2s-n}C(2s)F(A[h_5^{-1}];q^{-2s})dh_5\\
=&q^{-(k-n/2)\maf{f}}|\det A|^{k-n/2}C(2k)C(2s)G(A;q^{-2s})\\
&\times\int_{\GL_n(E)}\bm{1}_{n}(h_4)|\det h_4|_E^{s-n/2}\wF(A[h_4];q^{-k+n/2})dh_4.
\end{align*}
Thus we have
\begin{align*}
&\int_{\GL_n(E)}\mt{m}(\theta)|\det\theta|^{s}\varphi_A(\theta^{-1})d\theta\\
=&C_A(k,s)\int_{\GL_n(E)}\bm{1}_{n}(h_4)|\det h_4|_E^{s-n/2}\wF(A[h_4];q^{-k+n/2})dh_4.
\end{align*}
By Lemma \ref{lem:locden12}, we have
\begin{align*}
&\maf{L}(q^{-k+n/2},q^{-s+n-1/2})\maf{B}(A,q^{-s})\wG_v(A;q^{-k+n/2};q^{-s-n/2})\\
=&\int_{\GL_n(E)}\bm{1}_{n}(h_4)|\det h_4|_E^{s-n/2}\wF(A[h_4];q^{-k+n/2})dh_4.
\end{align*}
Therefore, the assertion holds. 
\end{proof}

\subsection{The Local Rankin-Selberg Integral}
To compute $\hat{\hana{R}}(X,Y,t)$, we give a different form of $\hat{\hana{R}}(X,Y,t)$. We assume that $n$ is odd in the remainder of this section.
By Appendix  \ref{ap:locden}, we have $\mr{vol}(B[\K_{n}^0])=\alpha(B)^{-1}\prod_{i=1}^n(1-q^{-i})$. Therefore, we have
\begin{align*}
\hat{\hana{R}}(q^{-x},q^{-y},q^{-s})=&\prod_{i=1}^n(1-q_E^{-i})^{-1}\int_{\Her_n(F)^{nd}}\wF(Z;q^{-x})\wF(Z;q^{-y})|\det Z|^{s}d^*Z\\
=&\sum_{Z\in\Her_n(F)^{nd}/\K_{n}^0}\dfrac{\wF(Z;q^{-x})\wF(Z;q^{-y})}{\alpha(Z)}|\det Z|^{s-n}
\end{align*}
Let $X=q^{-x}$, $Y=q^{-y}$, and $t=q^{-s}$.
For $\bm{X}\subset \Her_n(F)$ and for $S\subset E$, put 
\begin{equation*}
\bm{X}(S)=\{x\in\bm{X}|\det x \in S\}
\end{equation*}
and if $S=\cup_{i=-\infty}^\infty d\varpi^{i}N_{E/F}(\hana{O}_{E}^\times)$, we write $\bm{X}(S)=\bm{X}_d$ for $d\in F^\times$.
Put 
\begin{align*}
\hana{R}(d;q^{-x},q^{-y},q^{-s})=\prod_{i=1}^n(1-q_E^{-i})^{-1}\int_{\Her_n(F)_d^{nd}}\wF(Z;q^{-x})\wF(Z;q^{-y})|\det Z|^{s}d^*Z
\end{align*}
Then, we can find the relation between $\hat{\hana{R}}$ and $\hana{R}$ as the following proposition.
\begin{props}\label{prop:RS1}
If $E$ is inert over $F$ or $E=F\oplus F$, we have 
\begin{equation*}
\hat{\hana{R}}(X,Y,t)=\hana{R}(d;X,Y,t)
\end{equation*}
for any $d\in E^\times$ such that $\ord(d)=0$.\\
If $E$ is ramified over $F$, we have
\begin{equation*}
\hat{\hana{R}}(X,Y,t)=\sum_{d\in \hana{O}_{F}^\times/N_{E/F}(\hana{O}_{E}^\times)}\hana{R}(d;X,Y,t)
\end{equation*}
\end{props}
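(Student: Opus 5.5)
The plan is to split the domain of integration according to the class of $\det Z$ modulo norms. The integrand only depends on $\det Z$ through $|\det Z|^s$ and through $\wF(Z;\cdot)$, and the same holds for the Haar measure $d^*Z$. Since $\ord_v\circ N_{E/F}$ is surjective onto the relevant lattice, $F^\times$ is covered by finitely many sets of the form $\cup_{i}d\varpi^iN_{E/F}(\hana{O}_E^\times)$. Concretely, $\Her_n(F)^{nd}$ is the disjoint union of the sets $\Her_n(F)_d^{nd}$, with $d$ running over $\hana{O}_F^\times/N_{E/F}(\hana{O}_E^\times)$. Here we use that each element of $F^\times$ is $u\varpi^i$ with $u\in\hana{O}_F^\times$, so the class of $\det Z$ is determined by the unit part $u$ modulo norms of units. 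Hence in all cases
\begin{equation*}
\hat{\hana{R}}(X,Y,t)=\sum_{d\in\hana{O}_F^\times/N_{E/F}(\hana{O}_E^\times)}\hana{R}(d;X,Y,t),
\end{equation*}
once we account for the normalizing factor $\prod_i(1-q_E^{-i})^{-1}$, which appears identically on both sides in the rewritten form of $\hat{\hana{R}}$ given just before the statement. This already gives the ramified case.

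In the unramified cases we show that the index set has one element. If $E=F\oplus F$, then $N_{E/F}(\hana{O}_E^\times)=\hana{O}_F^\times$ because $(a,b)\mapsto ab$ is surjective. If $E/F$ is unramified inert, local class field theory (or Hensel's lemma together with surjectivity of the norm on finite fields) gives $N_{E/F}(\hana{O}_E^\times)=\hana{O}_F^\times$. In both cases $\hana{O}_F^\times/N(\hana{O}_E^\times)$ is trivial, and $\Her_n(F)_d^{nd}=\Her_n(F)^{nd}$ for any $d$ with $\ord(d)=0$. Therefore $\hat{\hana{R}}=\hana{R}(d;\cdot)$ for all such $d$.

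Two points need care. First, the convention relating the normalization constant in $\hana{R}(d;\cdot)$ to the displayed rewriting of $\hat{\hana{R}}$ must be consistent; we take the rewritten expression as the definition being compared. Second, we must check that the sets $\Her_n(F)^{nd}_d$ are measurable and disjoint and that absolute convergence for $\Re s\gg0$ justifies splitting the integral. Convergence follows from the polynomial nature of $\wF$ together with the sum-over-orbits expression involving $\alpha(Z)^{-1}$. This bookkeeping, rather than any arithmetic input, is the main step; the only arithmetic fact used is the norm index $[\hana{O}_F^\times:N(\hana{O}_E^\times)]$, which is $1$ in the unramified cases and $2$ in the ramified case.
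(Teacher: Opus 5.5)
Your proposal is correct and follows the paper's own proof. Both partition $\Her_n(F)^{nd}$ into the open pieces $\Her_n(F)^{nd}_d$ indexed by $\hana{O}_F^\times/N_{E/F}(\hana{O}_E^\times)$, and both observe that this quotient is trivial in the inert and split cases. Your decision to compare against the rewritten form of $\hat{\hana{R}}$ (with the factor $\prod_{i=1}^n(1-q_E^{-i})^{-1}$ and $t=q^{-s}$) sensibly fixes a normalization mismatch with the Section~4 definition, which the paper passes over silently. As you tacitly assume, $d$ should be read as an element of $\hana{O}_F^\times$ rather than $\hana{O}_E^\times$ for the statement to make sense.
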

\begin{proof}
If $E$ is inert over $F$, $N_{E/F}(\hana{O}_{E}^\times)=\hana{O}_{F}^\times$.
Thus, we have $\cup_{i=-\infty}^{\infty} d\varpi^{i}N_{E/F}(\hana{O}_{E}^\times)=F^\times$ for any $d\in\hana{O}_{E}^\times$, and we have
\begin{align*}
\hat{\hana{R}}(X,Y,t)=&\hana{R}(d;X,Y,t)
\end{align*}
If $E=F\oplus F$, $N_{E/F}(E^\times)=F^\times$ so we have
\begin{align*}
\hat{\hana{R}}(X,Y,t)=&\hana{R}(d;X,Y,t)
\end{align*}
for any $d\in E^\times$.\\
If $E$ is ramified over $F$, $\Her_n(F)^{nd}=\bigsqcup_{d\in \hana{O}_F^\times/N_{E/F}(\hana{O}_E^\times)}\Her_n(F)_d^{nd}$.
Therefore, we have
\begin{align*}
\hat{\hana{R}}(X,Y,t)=&\sum_{d\in\hana{N}}\hana{R}(d;X,Y,t)
\end{align*}
\end{proof}
Therefore, by computing $\hana{R}$, we can calculate the local factor of the Rankin-Selberg series.
By applying proposition \ref{prop:locden3} to $\wF(A,Y)$, we get the formula 
\begin{align*}
&\hana{R}(d;X,Y,t)=\int_{\Her_n(F)_d^{nd}}\wF(Z;q^{-x})\wF(Z;q^{-y})|\det Z|^{s}dZ\\
=&\int_{\Her_n(F)_d^{nd}}\wF(Z;q^{-x})
\int_{\GL_n(E)}\bm{1}_n(h)G(Z[h^{-1}];q^{-y})q^{\maf{f}(n-1)y/2}|\det Z|^{s-y}|\det h|_E^{2y}dhdZ.
\end{align*}
Replacing $Z$ by $Z_1[h]$,
\begin{align*}
&q^{-\maf{f}(n-1)y/2}\hana{R}(d;X,Y,t)\\
=&\int_{\Her_n(F)_d^{nd}}G(Z_1;q^{-n-2y})|\det Z_1|^{s-y}\int_{\GL_n(E)}\bm{1}_n(h)\wF(Z_1[h];q^{-x})|\det h|_E^{s+y+n}dhdZ_1.
\end{align*}
By Proposition \ref{prop:locden4}, we have
\begin{align*}
&q^{-\maf{f}(n-1)y/2}\maf{L}(q^{-x},q^{-s-y-n/2-1/2})^{-1}\hana{R}(d;X,Y,t)\\
=&\int_{\Her_n(F)_d^{nd}}G(Z_1;q^{-n-2y})|\det Z_1|^{s-y}\maf{B}(Z_1;q^{-3n/2-y-s})\wG(Z_1;q^{-x},q^{-n-y-s})dZ_1.
\end{align*}

Put 
\begin{align*}
&\hana{R}^0(d;X,Y,t)\\
=&\int_{\Her_n(F)_d^{nd}}G(Z;q^{-n-2y})|\det Z|^{s-y}\maf{B}(Z;q^{-3n/2-y-s})\wG(Z;q^{-x},q^{-n-y-s})dZ.
\end{align*}
Then we have
\begin{equation*}
\hana{R}(d;X,Y,t)=q^{(n-1)y\maf{f}/2}\hana{R}^0(d;X,Y,t)\maf{L}(X,q^{-n/2-1/2}Yt)
\end{equation*}
We put $\wHer_{m,d}=\dHer_{m,d}\cap\Her_{m,*}$ where $\Her_{m,*}$ is defined in \ref{eq:qsemiint}.
Put $P_r(d;X,t)$, $\wP_r(d;X,Y,t)$, $Q_{r}(d;X,Y,t)$ for $d\in\hana{O}^{nd}$ as follows;
\begin{align*}
P_r(d;q^{-x},q^{-s})=&\prod_{i=1}^r\dfrac{1}{(1-q_E^{-i})}\int_{\dHer_{r,d}}\wF(Z;q^{-x})|\det Z|^sdZ,\\
\wP_r(d;q^{-x},q^{-y},q^{-s})=&\prod_{i=1}^r\dfrac{1}{(1-q_E^{-i})}\int_{\dHer_{r,d}}\wG(Z;q^{-x},q^{-s-y})|\det Z|^{s-y}dZ,\\
Q_{r}(d;q^{-x},q^{-y},q^{-s})=&\prod_{i=1}^r\dfrac{1}{(1-q_E^{-i})}\int_{\wHer_{r,d}}\wG(Z;q^{-x},q^{-s-y})|\det Z|^{s-y}dZ.
\end{align*}

We will compare the relation among $\hana{R}^0$, $P_r$, $\wP_r$ and $Q_{r}$. 
We put $\phi_m(t):=\prod_{i=1}^m(1-t^i)$.
\begin{props}
Put 
\begin{equation*}
\maf{l}(K;t,r)=
\begin{cases}
\prod_{i=1}^r(1-t^4q^{-2r-2+2i})&(\text{$E$ is inert over $F$})\\
\prod_{i=1}^r(1-t^2q^{-r-1+i})^2&(K=E\oplus E)\\
\prod_{i=1}^r(1-t^2q^{-r-1+i})&(\text{$E$ is ramified over $F$})\\
\end{cases}
\end{equation*}
Then, we have
\begin{equation*}
\wP_r(d;X,Y,t)=P_r(d,X,tY^{-1})\maf{l}(K;t,r)
\end{equation*}
\end{props}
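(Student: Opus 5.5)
The approach is to unfold the definition of $\wG$ inside $\wP_r$ and to move the Möbius-type sum over $W$ onto the variable of integration. By definition,
\begin{equation*}
\wG(Z;X,t')=\sum_{W\in\K_r^0\lsla \Mat_r(\hana{O}_E)^{nd}}t'^{\ord(N_{E/F}(\det W))}\mt{m}(W)\wF(Z[W^{-1}];X)
\end{equation*}
with $t'=q^{-s-y}$. I would insert this into the integral defining $\wP_r(d;X,Y,t)$, interchange sum and integral (absolute convergence for $\Re s\gg0$), and in the $W$-term substitute $Z=Z'[W]$. The measure changes by $|\det W|_E^{r}$, and $|\det Z|^{s-y}$ becomes $|\det Z'|^{s-y}|\det W|_E^{s-y}$. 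Since $N_{E/F}(\det W)$ is a norm, the class $d$ of $\det Z$ modulo norms is unchanged, so $Z'$ still has determinant class $d$. The sum then runs over $Z'$ with $Z'[W]\in\dHer_{r}$, i.e.\ over $Z'\in\dHer_r[W^{-1}]$. After this step each term is $\wF(Z';X)|\det Z'|^{s-y}$ weighted by $\mt{m}(W)$ and a power of $|\det W|_E$.

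Next I would exchange the order again, writing the result as $\int_{\Her_r(F)_d}\wF(Z';X)|\det Z'|^{s-y}\Phi(Z')\,dZ'$. Here
\begin{equation*}
\Phi(Z')=\sum_{W}\mt{m}(W)\,q_E^{-\ord(\det W)\,\cdot(\text{explicit exponent in } s,y,r)}\,\bm{1}_{\dHer_r}(Z'[W]).
\end{equation*}
On the support of $\wF(Z';X)$ we have $Z'\in\dHer_r$, so $Z'[W]\in\dHer_r$ holds automatically for integral $W$, and the indicator is identically $1$. The factor $\Phi$ is therefore the constant
\begin{equation*}
\sum_{i=0}^r\mt{m}(\D_{r,i})\,\#(\K_r^0\lsla\D_{r,i})\,T^{i},
\end{equation*}
where $T$ is the relevant power of $q$ times $t^2Y^{-2}$-type quantities. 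I would evaluate this by the $q$-binomial theorem, already used in the proof of Lemma~\ref{lem:locden1}, since $\#(\K_r^0\lsla\D_{r,i})$ is a Gaussian binomial in $q_E$. This gives $\prod_{i}(1-q_E^{\,i-1}T)$.

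Two points need care here. First, the substitution shifts the exponent to $|\det Z'|^{s-y}$, whereas $P_r(d;X,tY^{-1})$ carries $|\det Z|^{s'}$ with $q^{-s'}=tY^{-1}=q^{-s+y}$, i.e.\ $s'=s-y$. These agree. Second, I must check that the weight on $W$, namely $t'^{\ord N(\det W)}|\det W|_E^{r+s-y}$, is independent of $y$ in the right way. I expect the $y$-dependence to cancel, leaving a power of $t$ only, consistent with $\maf{l}(K;t,r)$ depending only on $t$.

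The main obstacle is the bookkeeping of $\ord(N_{E/F}(\det W))$ versus $\ord_E(\det W)$ and $q_E$ in the three cases. This is what should produce, respectively, $t^4q^{-2r-2+2i}$ in the inert case ($q_E=q^2$, $\ord N=2\ord_E$), the squared factors in the split case (two independent $\GL_r$-components, each giving a $q$-binomial product), and $t^2q^{-r-1+i}$ in the ramified case ($q_E=q$). I would first verify the inert case, where everything is a single Gaussian binomial in $q^2$. I would then treat the split case as a product over the two factors of $E=F\oplus F$, and handle the ramified case using the normalization $\varpi=N(\varpi_E)$, while also checking that the index shift $i\mapsto r+1-i$ from the $q$-binomial identity matches the stated ranges.
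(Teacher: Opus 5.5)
Your proposal is correct and is essentially the argument the paper defers to (Katsurada's Proposition 5.3.1). You unfold $\wG$, substitute $Z=Z'[W]$, and evaluate the finite M\"obius sum by the $q$-binomial theorem; the Jacobian $|\det W|_E^{r}$, the factor $|\det W|_E^{s-y}$ and $(q^{-s-y})^{\ord(N_{E/F}(\det W))}=|\det W|_E^{s+y}$ combine into the $y$-free weight $|\det W|_E^{r+2s}$, so no power of $Y$ enters $T$, contrary to your tentative ``$t^2Y^{-2}$'' guess. The one convention to state explicitly is that for $E=F\oplus F$ the function $\mt{m}$ must be the product of the M\"obius functions of the two $\GL_r(\hana{O})$-components, as you tacitly assume, since that is what produces the squared factors in $\maf{l}(K;t,r)$.
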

\begin{proof}
We can similarly prove this in \cite[Proposition 5.3.1]{K2}
\end{proof}

Put 
\begin{equation}
\hat{\xi}=\begin{cases}
\sqrt{-1}&\text{if $E$ is inert,}\\
1&\text{otherwise}.
\end{cases} \label{xihat}
\end{equation}
To compute the relation among $\{\wP_r\}_{r=1}^m$, $\{Q_r\}_{r=1}^m$ and $\hat{\hana{R}}^0$ we prove the following lemma:

\begin{lems}\label{lem:RS1} 
Let $m$ be a positive integer.
For every positive integer $r\leqq m$ and $B'\in\Her_{r,*}^{nd}$, the following identity holds whenever either $E/F$ is ramified and $m-r$ is even, or $E/F$ is not ramified.
\begin{equation*}
\wG(\Theta_{m-r}\perp B';X,t)=\wG(B';\hat{\xi}^{m-r}X,\hat{\xi}^{(m-r)}t)\hat{\xi}^{(m-r)\ord(\det B')}.
\end{equation*}
\end{lems}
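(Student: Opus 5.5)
We prove the identity by comparing both sides term by term in the sum defining $\wG$. The plan has three steps: identify the admissible $W$ on each side, compare the $\wF$-values through Proposition \ref{prop:locden3} and the local density lemmas, and check that the powers of $\hat{\xi}$ match. By definition,
\begin{equation*}
\wG(\Theta_{m-r}\perp B';X,t)=\sum_{W\in\K_m^0\lsla\Mat_m(\hana{O}_E)^{nd}}t^{\ord N_{E/F}(\det W)}\mt{m}(W)\wF((\Theta_{m-r}\perp B')[W^{-1}];X).
\end{equation*}
Only $W\in\D_{m,i}$ contribute, and $\wF$ vanishes unless $(\Theta_{m-r}\perp B')[W^{-1}]$ is semi-integral.

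\textbf{Step 1: reduce the sum to $W$ acting on $B'$.} Write $W^{-1}=\varpi^{-1}W'$ with $W'$ integral and of elementary-divisor type. Since $B'\in\Her_{r,*}$ and $\Theta_{m-r}$ is unimodular (regular), semi-integrality of $(\Theta_{m-r}\perp B')[W^{-1}]$ should force $\ker(W'\bmod\maf{q})$ to lie in the $B'$-part modulo the radical. This is the same mechanism as in Lemma \ref{lem:locden4}. After multiplying $W$ on the left by an element of $\K_m^0$, we may then take $W=1_{m-r}\perp W_1$ with $W_1\in\D_{r,i}$. The resulting matrix is $\Theta_{m-r}\perp B'[W_1^{-1}]$, and $\mt{m}(W)=\mt{m}(W_1)$.

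\textbf{Step 2: compare the $\wF$-values.} The claim to prove is
\begin{equation*}
\wF(\Theta_{m-r}\perp C;X)=\hat{\xi}^{(m-r)\ord(\det C)}\wF(C;\hat{\xi}^{m-r}X)
\end{equation*}
for semi-integral $C$ of size $r$. To prove it, use Proposition \ref{prop:locden3} to expand $\wF$ as a sum over lattices $B\supset$ with weights $\alpha(B,A)/\alpha(B)$ and factors $G(B;q^{-m}X^2)$. Then apply Proposition \ref{prop:locden2}, which gives $G(1_{m-r}\perp B;X)$ and $G(\Theta_{m-r}\perp B;X)$ explicitly in terms of the size-$r$ data. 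The factor $\xi^{m+j}$ appearing there is exactly $\hat{\xi}^{2(m-r)}$ times the corresponding size-$r$ factor. This explains the substitution $X\mapsto\hat{\xi}^{m-r}X$, since $\hat{\xi}^2=\xi=-1$ in the inert case and $\hat{\xi}=1$ otherwise. The exponents $X^{\ord\det}$ then produce $\hat{\xi}^{(m-r)\ord(\det C)}$. Alternatively, one can argue directly from the functional-equation normalization $\wF(B;X)=X^{-e(B)}F(B;q^{-n}X^2)$, using the inert-case relation $F(1\perp C;X)=F(C;-qX)$, the unitary analogue of Kitaoka's reduction. In the ramified case $\hat{\xi}=1$ and the statement is just invariance under adding $\Theta_{m-r}$, obtained the same way.

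\textbf{Step 3: assemble and identify the main obstacle.} Substitute $C=B'[W_1^{-1}]$ into the sum from Step 1. Note that $\ord\det C=\ord\det B'-\ord N(\det W_1)$. The factor $\hat{\xi}^{-(m-r)\ord N(\det W_1)}$ combines with $t^{\ord N(\det W_1)}$ to give $(\hat{\xi}^{m-r}t)^{\ord N(\det W_1)}$, because $\hat{\xi}^{-1}=\pm\hat{\xi}$ and signs are absorbed by parity when $\xi=-1$. This yields exactly the right-hand side. The main obstacle is Step 1, the claim that no other $W$ contributes. If direct support arguments fail, the fallback is to run the whole argument at $X=q^{-k+m/2}$ for infinitely many $k$. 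There Lemma \ref{lem:locden10} and Lemma \ref{lem:locden8} express $\wG$ through primitive densities $\beta(\Theta_{2k},\cdot)$, and Lemma \ref{lem:locden4} (a) or (b) splits off $\Theta_{m-r}$. The identity then holds as an identity of rational functions.
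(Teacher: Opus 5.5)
Your proposal is correct and follows essentially the paper's own route. The shared steps are: the identity $\wF(\Theta_{m-r}\perp C;X)=\hat{\xi}^{(m-r)\ord(\det C)}\wF(C;\hat{\xi}^{m-r}X)$ derived from Propositions~\ref{prop:locden2} and~\ref{prop:locden3} (using that $\ord N_{E/F}$ is even in the inert case); the reduction of the sum to $W=1_{m-r}\perp W_1$; and the same parity argument to turn $\hat{\xi}^{-(m-r)}t$ into $\hat{\xi}^{m-r}t$. Your Step 1 reduction, which the paper also takes for granted, is easier than you fear: a semi-integral overlattice $M$ of $L=L_{\Theta}\oplus L_{B'}$ satisfies $M\subset M^{\#}\subset L^{\#}=L_{\Theta}\oplus L_{B'}^{\#}$, hence $M=L_{\Theta}\oplus M'$ with $M'$ an overlattice of $L_{B'}$; note only that the subspace forced into the $B'$-part is the image of $\varpi W^{-1}$ modulo $\varpi$ (equivalently $\ker(W\bmod\varpi)$), not $\ker(W'\bmod\maf{q})$.
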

\begin{proof}
This is a more detailed proof of Katsurada's result \cite[Lemma 5.3.3]{K2}
Let $\xi'=\hat{\xi}^2$.
Then $\xi'=\xi$ if $E$ is inert over $F$ and $\xi'=1$ otherwise.
By comparing Proposition \ref{prop:locden2} in the case of $G(1_{m-r}\perp B';X)$ and Proposition \ref{prop:locden2} in the case of $G(B';X)$, we have
\begin{equation*}
G(\Theta_{m-r}\perp B';X)=G(B';(\xi'q)^{m-r}X).
\end{equation*}
By Proposition \ref{prop:locden3} and the above equation, we have
\begin{align*}
&F(\Theta_{m-r}\perp B';X)\\
=&\sum_{W\in \Mat_m(\hana{O}_{E})^{nd}/\K_{m}^0}G((\Theta_{m-r}\perp B')[W];X)(q^mX)^{\ord(N_{E/F}(\det W))}\\
=&\sum_{W\in \Mat_r(\hana{O}_{E})^{nd}/\K_{r}^0}G(\Theta_{m-r}\perp (B'[W]);X)(q^mX)^{\ord(N_{E/F}(\det W))}.
\end{align*}

Since $\ord(N_{E/F}(x))$ is even for any $x\in E^\times$ if $E$ is inert over $F$ and $\xi'=1$ otherwise, we have
\begin{align*}
&F(\Theta_{m-r}\perp B';X)\\
=&\sum_{W\in \Mat_r(\hana{O}_{E})^{nd}/\K_{r}^0}G(B'[W];(\xi'q)^{(m-r)}X)(q^r(\xi'q)^{(m-r)}X)^{\ord(N_{E/F}(\det W))}.
\end{align*}
Therefore, we have
\begin{equation*}
F(\Theta_{m-r}\perp B';X)=F(B';(\xi'q)^{m-r}X).
\end{equation*}
Since $\wF(B;X)=X^{-e_v(B)}F(B;X)$ for $B\in\Her_n(F)$, we have
\begin{align*}
\wF(\Theta_{m-r}\perp B';X)=&X^{-\ord(\det\hat{\gamma}(\Theta_{m-r}\perp B'))}F(\Theta_{m-r}\perp B';q^{-m}X^2)\\
=&X^{-[m/2]\maf{f}+(m-r)\maf{f}/2-\det B'}F(B';q^{-r}(\hat{\xi}^{m-r}X)^2)\\
=&\wF(B';\hat{\xi}^{m-r}X)\hat{\xi}^{(m-r)\ord(\det B')}.
\end{align*}
By definition of $\wG$, we have
\begin{align*}
&\wG(\Theta_{m-r}\perp B';X,t)\\
=&\sum_{i=0}^m\sum_{W\in\K_{n}^0\lsla\D_{m,i}}\mt{m}(W)t^{\ord(N_{E/F}(\det W))}\wF((\Theta_{m-r}\perp B')[W^{-1}];X)\\
=&\sum_{i=0}^r\sum_{W\in\K_{r}^0\lsla\D_{r,i}}\mt{m}(W)t^{\ord(N_{E/F}(\det W))}\wF(\Theta_{m-r}\perp B'[W^{-1}];X)\\
=&\sum_{i=0}^r\sum_{W\in\K_{r}^0\lsla\D_{r,i}}\mt{m}(W)t^{\ord(N_{E/F}(\det W))}\wF(B'[W^{-1}];\hat{\xi}^{m-r}X)\hat{\xi}^{(m-r)\ord(\det(B'[W^{-1}]))}\\
=&\hat{\xi}^{(m-r)\ord(\det(B'))}\sum_{i=0}^r\sum_{W\in\K_{r}^0\lsla\D_{r,i}}\mt{m}(W)(\hat{\xi}^{-(m-r)}t)^{\ord(N_{E/F}(\det W))}\wF(B'[W^{-1}];\hat{\xi}^{m-r}X)\\
=&\hat{\xi}^{(m-r)\ord(\det(B'))}\wG(B';\hat{\xi}^{m-r}X,\hat{\xi}^{-(m-r)}t).
\end{align*}
If $E$ is inert, $\ord(N_{E/F}(\hana{O}_{E}^{nd}))\subset 2\mb{Z}$.
This induces 
\begin{equation*}
\wG(B';\hat{\xi}^{m-r}X,\hat{\xi}^{-(m-r)}t)=\wG(B';\hat{\xi}^{m-r}X,\hat{\xi}^{m-r}t).
\end{equation*}
Therefore, the assertion holds.
\end{proof}

\begin{props}\label{prop:RS2}
Let $\hat{\xi}$ be as in \ref{xihat} and $d\in E^\times$.
\begin{itemize}
\item[(1)]Assume that $E$ is not ramified over $F$. Then
\begin{equation*}
\wP_m(d;\hat{\xi}^{-m} X, Y,\hat{\xi}^{-m}t)=\sum_{r=0}^m\dfrac{1}{\phi_{m-r}(\hat{\xi}q^{-1})}Q_{r}(d;\hat{\xi}^{-r}X,Y,\hat{\xi}^{-r}t).
\end{equation*}
\item[(2)]Assume that $E$ is ramified over $F$.
Then we have
\begin{align*}
&(tY^{-1})^{-\maf{f}\cdot[m/2]}\wP_m((-\varpi^{\maf{f}})^{[m/2]}d;X,Y,t)\\
=&
\begin{cases}
\disp\sum_{r=0}^{(m-1)/2}\dfrac{1}{\phi_{(m-2r-1)/2}(q^{-2})}(tY^{-1})^{-r\maf{f}}Q_{v,2r+1}((-\varpi^{\maf{f}})^rd;X,Y,t)&(\text{$m$:odd})\\
\disp\sum_{r=0}^{m/2}\dfrac{1}{\phi_{(m-2r)/2}(q^{-2})}(tY^{-1})^{-r\maf{f}}Q_{v,2r}((-\varpi^{\maf{f}})^rd;X,Y,t)&(\text{$m$:even})
\end{cases}
\end{align*}
\end{itemize}
\end{props}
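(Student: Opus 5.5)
The plan is to stratify the domain of integration $\dHer_{m,d}$ defining $\wP_m$ according to the ``unimodular part'' of $Z$. This is modelled on \cite[Proposition 5.3.4]{K2}. By the Jordan splitting of semi-integral Hermitian lattices, every $Z\in\dHer_{m,d}^{nd}$ is $\K_m^0$-equivalent to $\Theta_{m-r}\perp B'$ with $B'\in\Her_{r,*}^{nd}$. Here $m-r$ is the rank of $Z$ modulo $\maf{p}$ (resp.\ modulo the appropriate ideal in the ramified case, where $m-r$ is even). The first step is to write the integral over $\dHer_{m,d}$ as a sum over $r$ of the contributions of the strata
\[
\hana{S}_r=\{Z\in\dHer_{m,d}^{nd}: Z\in(\Theta_{m-r}\perp\wHer_{r,d'})[\K_m^0]\},
\]
where $d'$ is $d$ (unramified case) or $d$ shifted by $(-\varpi^{\maf{f}})^{\ast}$ to account for $\det\Theta_{m-r}$ (ramified case).

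Next, on each stratum I would apply Lemma \ref{lem:RS1} to replace $\wG(\Theta_{m-r}\perp B';X,t)$ by $\hat{\xi}^{(m-r)\ord(\det B')}\wG(B';\hat{\xi}^{m-r}X,\hat{\xi}^{m-r}t)$. Substituting $X\mapsto\hat\xi^{-m}X$ and $t\mapsto\hat\xi^{-m}t$ then turns the argument into $(\hat\xi^{-r}X,\hat\xi^{-r}t)$. The power $|\det Z|^{s-y}=|\det B'|^{s-y}$ also acquires $\hat\xi^{-m}$ factors through $t=q^{-s}$. Those factors should combine with $\hat{\xi}^{(m-r)\ord\det B'}$ exactly into the twist $t\mapsto\hat\xi^{-r}t$ appearing in $Q_r$. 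This bookkeeping with $\hat\xi$ (using $\hat\xi^4=1$ and, in the inert case, the parity of $\ord N_{E/F}$) is what produces the clean left-hand side.

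The third step is the measure computation. I would express the volume of the stratum as an integral over $B'\in\wHer_{r,d'}$ times a constant, via the fibration $Z\mapsto B'$. Concretely, one uses Lemma \ref{lem:locden2}, Lemma \ref{lem:locden7}, and $\mr{vol}(B[\K^0])=\alpha(B)^{-1}\prod(1-q_E^{-i})$ from the appendix. This gives
\[
\frac{\alpha(\Theta_{m-r}\perp B')}{\alpha(B')}=\prod_{i=1}^{m-r}(1-(\xi q)^{-i})
\]
(resp.\ $\prod_{i=1}^{(m-r)/2}(1-q^{-2i})$). The normalizing factors $\prod_{i=1}^m(1-q_E^{-i})^{-1}$ and $\prod_{i=1}^r(1-q_E^{-i})^{-1}$ in $\wP_m$ and $Q_r$ then leave precisely $1/\phi_{m-r}(\hat\xi q^{-1})$, respectively $1/\phi_{(m-r)/2}(q^{-2})$. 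In the unramified inert case I must check that $\phi_{m-r}(\hat\xi q^{-1})$ is the right expression; this needs care since $q_E=q^2$. It arises from combining $\prod(1-(-q)^{-i})$ with the ratio $\prod(1-q^{-2i})$ of the normalizations.

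In the ramified case the extra factor $(tY^{-1})^{-\maf{f}[m/2]}$ on the left and $(tY^{-1})^{-r\maf{f}}$ on the right come from $|\det\Theta_{m-r}|^{s-y}$, since $\det\Theta_2=-\varpi^{-\maf{f}}$ up to norms. This also explains the shift $d\mapsto(-\varpi^{\maf{f}})^{r}d$. The parity restriction ($m-r$ even) splits the sum into the odd and even cases stated.

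I expect the main obstacle to be the third step: getting the exact volume constant of each stratum relative to $\wHer_{r}$. The two ingredients are the local density ratio of Lemma \ref{lem:locden7} and the correct normalization of the measure on $\Her_{m}$ restricted to the stratum, which has to be handled uniformly for $p=2$.
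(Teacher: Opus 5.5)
Your plan is the paper's proof. You write $\wP_m$ as a sum over the $\K_m^0$-classes $\Theta_{m-r}\perp B'$ with $B'\in\wHer_{r}/\K_r^0$ (with $m-r$ even and $d$ shifted in the ramified case). You then apply Lemma \ref{lem:RS1} class by class and pass back to $Q_r$ through the density ratio of Lemma \ref{lem:locden7}. That is the right lemma here; the paper cites Lemma \ref{lem:locden8} at this point. Your $\hat\xi$ bookkeeping also works as you describe.

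The one step you describe wrongly is the constant. Nothing combines with the normalizations. In each dimension $k$ one has $\vol(B[\K_k^0])=\alpha(B)^{-1}\prod_{i=1}^k(1-q_E^{-i})$. So the factors $\prod_{i=1}^m(1-q_E^{-i})^{-1}$ and $\prod_{i=1}^r(1-q_E^{-i})^{-1}$ simply turn $\wP_m$ and $Q_r$ into sums over $\K^0$-classes weighted by $1/\alpha$. No fibration $Z\mapsto B'$ is needed, since the integrands are $\K_m^0$-invariant. The only surviving constant is therefore $\alpha(B')/\alpha(\Theta_{m-r}\perp B')=1/\phi_{m-r}(\xi q^{-1})$, with $\xi=-1$ in the inert case.

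Your proposed mechanism cannot produce $\phi_{m-r}(\hat\xi q^{-1})$. A leftover ratio $\prod_{i=r+1}^{m}(1-q^{-2i})$ would depend on $m$ and $r$ separately, not only on $m-r$. Moreover $\phi_{m-r}(\hat\xi q^{-1})$ is not even real; for $m-r=1$ it equals $1-\sqrt{-1}\,q^{-1}$. The paper's proof in fact ends with $\phi_{m-r}(\xi q^{-1})$, and that is what the subsequent Corollary uses. So read the $\hat\xi$ inside $\phi_{m-r}$ in the statement as $\xi$.

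The same care is needed for the signs in the ramified case. From $\det\Theta_2=-\varpi^{-\maf f}$ you get $|\det\Theta_{m-r}|^{s-y}=(tY^{-1})^{-\maf f(m-r)/2}$. The computation therefore yields the identity with $(tY^{-1})^{\maf f[m/2]}$ on the left and $(tY^{-1})^{r\maf f}$ on the right, as in the last line of the paper's proof. Do not try to force the negative exponents of the display.
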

\begin{proof}
\begin{itemize}
\item[(1)]Suppose that $E$ is not ramified.
Let $S_{r,d}$ be a representative set of $\wHer_{r,d}/\K_{r}$.
Then, by the structure theory of Hermitian lattices, $\dHer_{m,d}/\K_{m}^0$ has a representative set $\bigcup_{r=0}^m\{1_{m-r}\perp B|B\in S_r\}$.
Then since $\wG(Z[h];X,tY)=\wG(Z;X,tY)$ for any $k\in\K_m^0$, we have
\begin{align*}
\wP_m(d;X,Y,t)=&\prod_{i=1}^n\dfrac{1}{(1-q_E^{-i})}\int_{\dHer_{m,d}}\wG(Z;X,tY)|\det Z|^{s-y}dZ\\
=&\sum_{Z_1\in\dHer_{m,d}/\K_m^0}\int_{Z_1[\K_m^0]}\wG(Z;X,tY)|\det Z_1|^{s-y}dZ\\
=&\sum_{r=0}^m\sum_{Z_1\in S_{r,d}}|\det Z_1|^{s-y}\int_{(1_{m-r}\perp Z_1)[\K_m^0]}\wG(Z;X,tY)dZ.
\end{align*}
By Lemma \ref{lem:locden8} and Lemma \ref{lem:RS1}, we have
\begin{align*}
&\wP_m(d;X,Y,t)\\
=&\sum_{r=0}^m\sum_{Z\in S_{r,d}}\dfrac{\hat{\xi}^{(m-r)\ord(\det Z)}\wG(Z;\hat{\xi}^{m-r}X, \hat{\xi}^{m-r}tY)}{\phi_{m-r}(\xi q^{-1})\alpha(Z)}|\det Z|^{s-y}\\
=&\sum_{r=0}^m\phi_{m-r}(\xi q^{-1})^{-1}\sum_{Z\in S_{r,d}}\int_{Z[\K_r^0]}\wG(Z_1;\hat{\xi}^{m-r}X, \hat{\xi}^{m-r}tY)\hat{\xi}^{(m-r)\ord(\det Z_1)}|\det Z_1|^{s-y}dZ\\
=&\sum_{r=0}^m\dfrac{1}{\phi_{m-r}(\xi q^{-1})}Q_{r}(d; \hat{\xi}^{m-r}X, Y, \hat{\xi}^{m-r}t).
\end{align*}
Therefore, we have 
\begin{equation*}
\wP_m(d;\hat{\xi}^{-m}X,Y,\hat{\xi}^{-m}t)=\sum_{r=0}^m\dfrac{1}{\phi_{m-r}(\xi q_v^{-1})}Q_{r}(d; \hat{\xi}^{-r}X, Y, \hat{\xi}^{-r}t).
\end{equation*}
\item[(2)]Suppose that $E$ is ramified over $F$.
Let $d_a=(-\varpi)^{-a}d$ for $d\in E^\times$ and integer $a$ and $S_{r,d}$ as above.
Then, by structure theory of $\dHer_n$, $\dHer_{m,d}/\K_{m}^0$ has a representative set $\bigcup_{r=0,\text{$m-r$: even}}^m\{\Theta_{m-r}\perp B|B\in S_r\}$.
Then since $\wG(Z[h];X,tY)=\wG(Z;X,tY)$ for any $k\in\K_m^0$, we have
\begin{align*}
\wP_m(d;X,Y,t)=&\sum_{Z\in\dHer_{m,d}/\K_{m}^0}\int_{Z[\K_m^0]}\wG(Z_1;X,tY)|\det Z_1|^{s-y}dZ_1\\
=&\sum_{r=0,\text{$m-r$: even}}^m\sum_{Z\in S_{r,d_{(m-r)\maf{f}/2}}}\dfrac{|\det (\Theta_{m-r}\perp Z)|^{s-y}}{\alpha_v(\Theta_{m-r}\perp Z)}\wG((\Theta_{m-r}\perp Z);X,tY).\end{align*}
By Lemma \ref{lem:locden8} and Lemma \ref{lem:RS1}, we have
\begin{align*}
&\wP_m(d;X,Y,t)\\
=&\sum_{r=0,\text{$m-r$: even}}^m\sum_{Z\in\wHer_{r,d'(r)}/\K_{r}^0}\dfrac{\wG(Z; X, tY)}{\phi_{(m-r)/2}(q^{-2})\alpha(Z)}|\det\Theta_{m-r}|^{s-y}|\det Z|^{s-y}\\
=&\sum_{r=0,\text{$m-r$: even}}^m\dfrac{(tY^{-1})^{-(m-r)/2\cdot\maf{f}}}{\phi_{(m-r)/2}(q^{-2})}\sum_{Z\in\wHer_{r,d'(r)}/\K_{r}^0}\int_{Z[\K_r^0]}\dfrac{\wG(Z_1; X, tY)}{\alpha(Z_1)}|\det Z_1|^{s-y}dZ_1\\
=&\sum_{r=0,\text{$m-r$: even}}^m\dfrac{(tY^{-1})^{-(m-r)/2\cdot\maf{f}}}{\phi_{(m-r)/2}(q^{-2})}Q_{r}(d'(r); X, Y, t)
\end{align*}
where $d'(r)=d_{(m-r)\maf{f}/2}$.
Therefore we have 
\begin{align*}
&(tY^{-1})^{[m/2]\cdot\maf{f}}\wP_m((-\varpi^{\maf{f}})^{[m/2]}d;X,Y,t)\\
=&\sum_{r=0,\text{$m-r$ is even}}^m\dfrac{(tY^{-1})^{[r/2]\cdot\maf{f}}}{\phi_{(m-r)/2}(q^{-2})}Q_{r}((-\varpi^{\maf{f}})^{[r/2]}d; X, Y, t).
\end{align*}
\end{itemize}
\end{proof}
\begin{Cor}\label{cor:propofRS2}
Let $d\in E^\times$.
\begin{itemize}
\item[(1)]Assume that $E$ is inert over $F$.
Then we have
\begin{equation*}
Q_{r}(d,\hat{\xi}^{-r}X,Y,\hat{\xi}^{-r}t)=\sum_{m=0}^r\dfrac{(-1)^{r-m}(\xi q)^{(r-m)(r-m-1)/2}}{\phi_{r-m}(\xi q^{-1})}\wP_{m}(d,\hat{\xi}^{-m}X,Y,\hat{\xi}^{-m}t)
\end{equation*}
for any positive integer $r$.
\item[(2)]Assume that $E$ is ramified over $F$.
Then we have 
\begin{align*}
(tY^{-1})^{-r\maf{f}}Q_{2r+1}((-1)^rd,X,Y,t)=& \sum_{m=0}^r\dfrac{(-1)^m(q)^{m^2-m}}{\phi_m(q^{-2})}\wP_{2r+1-2m}((-1)^{r-m}d,X,Y,t)
\intertext{for any non negative integer $r$. We also have}
(tY^{-1})^{-r\maf{f}}Q_{2r}((-1)^{r}d,X,Y,t)=& \sum_{m=0}^r\dfrac{(-1)^m(q)^{m^2-m}}{\phi_m(q^{-2})}\wP_{2r-2m}((-1)^{r-m}d,X,Y,t)
\end{align*}
for any positive integer $r$.
\end{itemize}
\end{Cor}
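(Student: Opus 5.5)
The corollary is a Möbius-type inversion of Proposition \ref{prop:RS2}, so I would prove it by inverting the triangular system given there. I would first normalize the unknowns. In the inert case put $a_m=\wP_m(d;\hat{\xi}^{-m}X,Y,\hat{\xi}^{-m}t)$ and $b_r=Q_r(d;\hat{\xi}^{-r}X,Y,\hat{\xi}^{-r}t)$. Proposition \ref{prop:RS2}(1) then reads
\begin{equation*}
a_m=\sum_{r=0}^m\phi_{m-r}(u)^{-1}b_r,\qquad u=\xi q^{-1}.
\end{equation*}
In the proof of (1) the factor is $\phi_{m-r}(\xi q^{-1})$, which I take as the correct one. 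Here $d$ is fixed throughout, since the determinant class is not shifted in the unramified case.

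This is a unitriangular system, because the diagonal coefficient is $\phi_0=1$, so it has a unique inverse. The inverse of the kernel $\phi_k(u)^{-1}$ is determined by a $q$-binomial identity. In generating-function form, $\sum_k z^k/\phi_k(u)=\prod_{i\geq0}(1-u^iz)^{-1}$ by Euler's identity. Its reciprocal is $\prod_{i\geq0}(1-u^iz)=\sum_k(-1)^ku^{k(k-1)/2}z^k/\phi_k(u)$. Hence
\begin{equation*}
b_r=\sum_{m=0}^r(-1)^{r-m}u^{(r-m)(r-m-1)/2}\phi_{r-m}(u)^{-1}a_m.
\end{equation*}
It remains to compare this coefficient with the stated one, which is $(\xi q)^{(r-m)(r-m-1)/2}/\phi_{r-m}(\xi q^{-1})$. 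The two agree after rewriting $\phi_k(u^{-1})$ in terms of $\phi_k(u)$, with $\xi=\pm1$ taking care of the signs. I expect this sign/power bookkeeping, in particular checking $(\xi q)^{\cdot}$ against $(\xi q^{-1})^{\cdot}$, to be the main obstacle. I would first verify it directly for $r-m=1,2$, and then fix the convention via the identity $\sum_{j=0}^k(-1)^ju^{j(j-1)/2}/(\phi_j(u)\phi_{k-j}(u))=\delta_{k0}$. Substituting the inverse back into the original system and using this identity gives the claimed formula.

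In the ramified case the same argument applies with $u=q^{-2}$ and indices halved. Proposition \ref{prop:RS2}(2) relates $(tY^{-1})^{-[m/2]\maf{f}}\wP_m$ at the shifted argument $(-\varpi^{\maf{f}})^{[m/2]}d$ to $(tY^{-1})^{-r\maf{f}}Q_{2r+\epsilon}$ at the argument $(-\varpi^{\maf{f}})^rd$, where $\epsilon\in\{0,1\}$ is the parity of $m$. I would define $A_j$ as the normalized $\wP_{2j+\epsilon}$ and $B_j$ as the normalized $Q_{2j+\epsilon}$. Adjusting $d$ by the appropriate powers of $-\varpi^{\maf{f}}$ makes all terms carry the same base determinant class; this absorbs the $(-1)^{r-m}$ twists in the statement. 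The system becomes $A_j=\sum_{i\leq j}\phi_{j-i}(q^{-2})^{-1}B_i$. Inverting with the same $q$-binomial identity gives the coefficients $(-1)^m u^{\pm m(m-1)/2}/\phi_m(q^{-2})$, and the factor $q^{m^2-m}$ in the statement corresponds to $u^{-m(m-1)/2}$ with $u=q^{-2}$. As in the inert case, the sign of that exponent has to be checked against the convention in the identity above.
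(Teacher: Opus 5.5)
Your inversion of the triangular system is correct, but the final step is false: your coefficient does not ``agree'' with the printed one after rewriting $\phi_k(u^{-1})$. This is a genuine gap.

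With $u=\xi q^{-1}$ and $k=r-m$ you obtain $(-1)^k u^{k(k-1)/2}/\phi_k(u)$. The statement has $(-1)^k(\xi q)^{k(k-1)/2}/\phi_k(\xi q^{-1})=(-1)^k u^{-k(k-1)/2}/\phi_k(u)$, using $\xi^{-1}=\xi$. The denominators are literally the same, so there is no $\phi_k(u^{-1})$ to rewrite. The numerators differ by $u^{k(k-1)}=q^{-k(k-1)}\neq1$ for $k\geq2$.

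The $r-m=2$ check you planned makes this explicit. From $a_0=b_0$, $a_1=b_1+b_0/(1-u)$, $a_2=b_2+b_1/(1-u)+b_0/\phi_2(u)$ one gets
\[
b_2=a_2-\frac{a_1}{1-u}+\frac{u\,a_0}{\phi_2(u)},
\]
whereas the printed formula has $u^{-1}a_0/\phi_2(u)$ in the last term. The system is unitriangular, so its inverse is unique. The printed (1) would therefore force $(u^{-1}-u)\wP_0(d;X,Y,t)=0$, which fails because $\wP_0$ (the empty-matrix contribution) is not identically zero.

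The same problem occurs in (2). With $u=q^{-2}$ the inverse kernel is $(-1)^m u^{m(m-1)/2}/\phi_m(u)=(-1)^mq^{-(m^2-m)}/\phi_m(q^{-2})$, not $(-1)^mq^{m^2-m}/\phi_m(q^{-2})$. In addition, Proposition \ref{prop:RS2}(2) attaches powers of $tY^{-1}$ to the $\wP$-terms. Your normalization correctly carries these through the inversion, but the printed (2) omits them on the right-hand side.

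The paper's proof follows essentially your route in finite form: it substitutes Proposition \ref{prop:RS2} into the right-hand side, interchanges the sums, and kills the inner sum with a $q$-binomial identity. It breaks at exactly the same point. The inner sum pairs Gaussian coefficients in base $\xi q^{-1}$ (resp.\ $q^{-2}$) with a quadratic power in base $\xi q$ (resp.\ $q^{2}$). The $q$-binomial theorem gives $\prod_{i=0}^{N-1}(1-w^i)=0$ only when both are in the same base $w$. For $N=2$ the paper's inner sum equals $u^{-1}-u\neq0$. In the ramified case the paper even writes the result as $\prod_{m=1}^{N}(1-q^{2m+2})$, which is visibly nonzero.

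So do not try to force agreement. What your argument actually proves is the corrected corollary: $(\xi q)^{-(r-m)(r-m-1)/2}$ in (1), and $q^{-(m^2-m)}$ together with the $tY^{-1}$-normalization of the $\wP$-terms in (2). That corrected version is what should then be fed into the proof of Theorem \ref{thm:RS1}.
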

\begin{proof}
\begin{itemize}
\item[(1)] By Proposition \ref{prop:RS2}, we have
\begin{align*}
&\sum_{m=0}^r\dfrac{(-1)^{r-m}(\xi q)^{(r-m)(r-m-1)/2}}{\phi_{r-m}(\xi q^{-1})}\wP_m(d,\hat{\xi}^{-m}X,Y,\hat{\xi}^{-m}t)\\
=&\sum_{m=0}^r\dfrac{(-1)^{r-m}(\xi q)^{(r-m)(r-m-1)/2}}{\phi_{r-m}(\xi q^{-1})}\sum_{l=0}^{m}\dfrac{1}{\phi_{m-l}(\xi q^{-1})}Q_{v,l}(d;\hat{\xi}^{-l}X,Y,\hat{\xi}^{-l}t)\\
=&\sum_{l=0}^r\dfrac{Q_{v,l}(d;\hat{\xi}^{-l}X,Y,\hat{\xi}^{-l}t)}{\phi_{r-l}(\xi q^{-1})}\sum_{m=l}^{r}\dfrac{(-1)^{r-m}(\xi q)^{(r-m)(r-m-1)/2}\phi_{r-l}(\xi q^{-1})}{\phi_{r-m}(\xi q^{-1})\phi_{m-l}(\xi q^{-1})}\\
=&\sum_{l=0}^r\dfrac{Q_{v,l}(d;\hat{\xi}^{-l}X,Y,\hat{\xi}^{-l}t)}{\phi_{r-l}(\xi q^{-1})}\sum_{k=0}^{r-l}\dfrac{(-\xi q)^{-k}(\xi q)^{k(k+1)/2}\phi_{r-l}(\xi q^{-1})}{\phi_k(\xi q^{-1})\phi_{r-l-k}(\xi q^{-1})}\\
=&Q_{r}(d;\hat{\xi}^{-r}X,Y,\hat{\xi}^{-r}t)+\sum_{l=0}^{r-1}\dfrac{Q_{v,l}(d;\hat{\xi}^{-l}X,Y,\hat{\xi}^{-l}t)}{\phi_{r-l}(\xi q^{-1})}\prod_{k=1}^{r-l}(1-(\xi q)^{-1}\cdot(\xi q)^k)\\
=&Q_{r}(d;\hat{\xi}^{-r}X,Y,\hat{\xi}^{-r}t).
\end{align*}
\item[(2)]Let $r'$ be $2r$ or $2r+1$. Then $r=[r'/2]$. By Proposition \ref{prop:RS2}, we have
\begin{align*}
&\sum_{m=0}^{r}\dfrac{(-1)^m(q)^{m^2-m}}{\phi_m(q^{-2})}(tY^{-1})^{(r-m)\maf{f}}\wP_{r'-2m}((-1)^{r-m}d,X,Y,t)\\
=&\sum_{m=0}^r\dfrac{(-1)^m(q)^{m^2-m}}{\phi_m(q^{-2})}\sum_{l=0,\text{$r'-2m-l$:even}}^{r'-2m}\dfrac{(tY^{-1})^{[l/2]\cdot\maf{f}}}{\phi_{(r'-2m-l)/2}(q^{-2})}Q_{v,l}((-1)^{[l/2]}d; X, Y, t)\\
=&\sum_{l=0,\text{$r'-l$:even}}^{r'}\dfrac{Q_{v,l}((-1)^{[l/2]}d; X, Y, t)}{\phi_{(r'-l)/2}(q^{-2})}(tY^{-1})^{[l/2]\cdot\maf{f}}\sum_{m=0}^{(r'-l)/2}\dfrac{(-q^{-2})^m(q)^{m^2+m}\phi_{(r'-l)/2}(q^{-2})}{\phi_m(q^{-2})\phi_{(r'-2m-l)/2}(q^{-2})}\\
=&Q_{v,r'}((-1)^rd; X, Y, t)+\sum_{l=0,\text{$r'-l$:even}}^{r'-2}\dfrac{Q_{v,l}((-1)^{[l/2]}d; X, Y, t)}{\phi_{(r'-l)/2}(q^{-2})}(tY^{-1})^{[l/2]\cdot\maf{f}}\prod_{m=1}^{(r'-l)/2}(1-q^2\cdot q^{2m})\\
=&Q_{v,r'}((-1)^rd; X, Y, t).
\end{align*}
\end{itemize}
\end{proof}

\begin{thms}\label{thm:RS1}
Assume that $n$ is odd. 
\begin{itemize}
\item[(1)]Suppose that $E$ is inert over $F$. Then we have
\begin{align*}
\hana{R}^0(d;X,Y,t)=&\sum_{r=0}^n(q^r\xi Y^2)^{n-r}\wP_r(d; \hat{\xi}^{n-r}X,q^{-n/2}Y,\hat{\xi}^{n-r}q^{-n/2}t)\\
&\times\dfrac{\prod_{i=1}^{n-r}(1-(\hat{\xi}q)^{-r-n-i}t^2)\prod_{i=0}^{r-1}(1-\hat{\xi}^n(\hat{\xi}q)^iY^2)}{\phi_{n-r}(\hat{\xi}q^{-1})}.
\end{align*}
\item[(2)]Assume that $E=F\oplus F$.
Then we have
\begin{align*}
\hana{R}^0(d;X,Y,t)=&\sum_{r=0}^n(q^rY^2)^{n-r}\wP_r(d; X,q^{-n/2}Y,q^{-n/2}t)\\
&\times\dfrac{\prod_{i=1}^{n-r}(1-(\hat{\xi}q)^{-r-n-i}t^2)\prod_{i=0}^{r-1}(1-\hat{\xi}^n(\hat{\xi}q)^iY^2)}{\phi_{n-r}(\hat{\xi}q^{-1})}.
\end{align*}
\item[(3)]
Suppose that $E$ is ramified over $F$. Then we have
\begin{align*}
\hana{R}^0(d;X,Y,t)=&\sum_{r=0}^{(n-1)/2}(tY^{-1})^{(n-2r-1)\maf{f}/2}\wP_r((-1)^{(n-2r-1)/2}d; X,q^{-n/2}Y,q^{-n/2}t)\\
&\times\dfrac{(q^{2r+1}Y^2)^{(n-2r-1)/2}\prod_{i=1}^{(n-2r)/2}(1-q^{-2r-n-2i-1}t^2)\prod_{i=0}^{r-1}(1-q^{2i+1}Y^2)}{\phi_{(n-2r-1)/2}(q^{-2})}.
\end{align*}
\end{itemize}
\end{thms}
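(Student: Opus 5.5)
The plan is to expand $\hana{R}^0(d;X,Y,t)$ along the $\K_n^0$-orbit decomposition of $\dHer_{n,d}$, in the same way as in the proof of Proposition \ref{prop:RS2}. After that we convert the resulting $Q_r$'s back into $\wP_r$'s by Corollary \ref{cor:propofRS2}, and finally resum. This follows Katsurada's argument \cite[Theorem 5.3.4]{K2}.

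\textbf{Step 1: orbit decomposition.} In the definition of $\hana{R}^0$ the integrand is
\[
G(Z;q^{-n-2y})\,\maf{B}(Z;q^{-3n/2-y-s})\,\wG(Z;q^{-x},q^{-n-y-s})\,|\det Z|^{s-y}.
\]
It is $\K_n^0$-invariant, and $G(Z;\cdot)$ vanishes unless $Z$ is semi-integral. So the integral runs over $\dHer_{n,d}/\K_n^0$, and by the structure theory of hermitian lattices we may take representatives $\Theta_{n-r}\perp B$ with $B\in\wHer_{r,d'}$. In the ramified case $n-r$ must be even and $d'$ is shifted by $(-\varpi^{\maf f})^{(n-r)/2}$.

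\textbf{Step 2: evaluate each orbit.} On the orbit of $\Theta_{n-r}\perp B$:
\begin{itemize}
\item Proposition \ref{prop:locden2} gives $G(\Theta_{n-r}\perp B;X)$ as an explicit product. With $X=q^{-n}Y^2$ this produces the factor $\prod_{i=0}^{r-1}(1-\hat\xi^n(\hat\xi q)^iY^2)$ (only $[r/2]$ factors in the ramified case).
\item Lemma \ref{lem:locden11} gives $\maf{B}(\Theta_{n-r}\perp B;\,q^{-3n/2}Y^{-1}t)$ as the product over $i=r,\dots,n-1$. After re-indexing this is the factor $\prod_{i=1}^{n-r}(1-(\hat\xi q)^{-r-n-i}t^2)$.
\item Lemma \ref{lem:RS1} replaces $\wG(\Theta_{n-r}\perp B;X,t)$ by $\hat\xi^{(n-r)\ord\det B}\,\wG(B;\hat\xi^{n-r}X,\hat\xi^{n-r}t)$.
\item Lemma \ref{lem:locden7} turns $\alpha(\Theta_{n-r}\perp B)^{-1}$ into $\phi_{n-r}(\xi q^{-1})^{-1}\alpha(B)^{-1}$, which gives the orbit volume.
\end{itemize}
This expresses $\hana{R}^0$ as a sum over $r$ of explicit products times $Q_r(d';\hat\xi^{n-r}X,q^{-n/2}Y,\hat\xi^{n-r}q^{-n/2}t)$. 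Here the $q^{-n/2}$ shifts come from matching the variables $q^{-n-y-s}$ and $|\det Z|^{s-y}$ with the normalization in the definition of $Q_r$.

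\textbf{Step 3: convert $Q_r$ to $\wP_r$ and resum.} Substitute Corollary \ref{cor:propofRS2}, which is the inverse of Proposition \ref{prop:RS2}, to write each $Q_r$ as a combination of the $\wP_m$ with $m\le r$. Then swap the order of summation, so that for fixed $m$ one gets a finite sum over $r=m,\dots,n$ of products of the $G$-factor, the $\maf B$-factor and the Möbius-type coefficients $(-1)^{r-m}(\xi q)^{\binom{r-m}{2}}/\phi_{r-m}$. This inner sum should collapse, by the $q$-binomial theorem, to the single coefficient
\[
(q^m\xi Y^2)^{n-m}\,\frac{\prod_{i=1}^{n-m}(1-(\hat\xi q)^{-m-n-i}t^2)\prod_{i=0}^{m-1}(1-\hat\xi^n(\hat\xi q)^iY^2)}{\phi_{n-m}(\hat\xi q^{-1})}
\]
asserted in the statement. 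The split case is the same computation with $\xi=\hat\xi=1$. The ramified case uses part (2) of the corollary, where only the parity classes $n-r$ even contribute and $d$ is twisted by $(-1)^{(n-2r-1)/2}$. This twist accounts for the powers $(tY^{-1})^{(n-2r-1)\maf f/2}$.

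\textbf{Main obstacle.} The hardest part is the collapse in Step 3. One must check that the $r$-dependent products from $G$ and $\maf B$, combined with the inversion coefficients, satisfy a $q$-Chu–Vandermonde type identity. The bookkeeping of $\hat\xi$-powers under the substitutions $X\mapsto\hat\xi^{\pm r}X$ is delicate, and in the inert case one uses that $\ord N_{E/F}$ is even to discard signs. My first attempt would be to prove the identity as a polynomial identity in $Y^2$ and $t^2$ by induction on $n-m$, using $\phi_k(a)=\phi_{k-1}(a)(1-a^k)$. If that fails, I would verify it by specializing $Y^2=(\hat\xi q)^{-j}$, where most terms vanish, and interpolate.
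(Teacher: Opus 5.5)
Your route is the paper's route. The paper writes $\hana{R}^0(d;X,Y,t)$ as $\sum_r C_r\,Q_r(d;\hat\xi^{n-r}X,q^{-n/2}Y,\hat\xi^{n-r}q^{-n/2}t)$ using the orbit decomposition, Proposition~\ref{prop:locden2}, Lemma~\ref{lem:locden11} and Lemma~\ref{lem:RS1}. It then substitutes the Corollary to Proposition~\ref{prop:RS2}, swaps the sums, and collapses the inner sum by quoting \cite[Lemma 3.4]{IK}.

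The problem is that your Step 2 feeds the wrong data into Step 3, and with your data the identity you want is false. There are three errors.

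\textbf{The $\maf{B}$-factor.} It is evaluated at $q^{-3n/2-y-s}=q^{-3n/2}Yt$, not at $q^{-3n/2}Y^{-1}t$. On the orbit of $1_{n-r}\perp B$ (unramified cases) it equals $\prod_{i=r}^{n-1}(1-\xi^{n+i}q^{-2n+i}Y^2t^2)$. This depends on $Y^2t^2$ and is not a re-indexing of $\prod_{i=1}^{n-r}(1-(\hat\xi q)^{-r-n-i}t^2)$. That $t^2$-product, together with the prefactor $(q^r\xi Y^2)^{n-r}$, is an output of the resummation, not an input. Already in the split case with $n-m=1$, after removing the common factor $\prod_{i=0}^{m-1}(1-q^iY^2)$, your inputs give $(q^{n-1}Y^2-q^{-2n}t^2)/\phi_1(q^{-1})$. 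The correct value is $q^{n-1}Y^2(1-q^{-2n}t^2)/\phi_1(q^{-1})$.

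\textbf{The $G$-factor.} It is $\prod_{i=0}^{r-1}(1-\xi^{n+i}q^iY^2)$, with $\xi$ and not $\hat\xi$.

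\textbf{The inversion coefficients.} They must be $(-1)^k(\xi q^{-1})^{k(k-1)/2}/\phi_k(\xi q^{-1})$, i.e.\ the $q$-binomial theorem in the same base as the $\phi$'s. With the exponent $(\xi q)^{k(k-1)/2}$ you copied from the Corollary, the inner sum for $n-m=2$ has $Y$-free term $\xi(q-q^{-1})/\phi_2(\xi q^{-1})\neq 0$. So it cannot collapse to a multiple of $Y^{2(n-m)}$.

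\textbf{The correct collapse.} Put $N=n-m$, $Q=\xi q$, $A=\xi^{n+m}q^mY^2$ and $\tau=q^{-2n}t^2$. After pulling out $\prod_{i=0}^{m-1}(1-\xi^{n+i}q^iY^2)$, the inner sum is
\[
\sum_{l=0}^{N}\frac{(-1)^lQ^{-l(l-1)/2}}{\phi_l(Q^{-1})\phi_{N-l}(Q^{-1})}\prod_{k=0}^{l-1}(1-AQ^k)\prod_{k=l}^{N-1}(1-AQ^k\tau)=\frac{A^N\prod_{j=0}^{N-1}(1-Q^j\tau)}{\phi_N(Q^{-1})}.
\]
This is the terminating $q$-Chu--Vandermonde summation in base $Q$, presumably what \cite[Lemma 3.4]{IK} supplies, so you need neither induction nor interpolation. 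Since $n^2-m^2\equiv n-m \pmod 2$, we have $A^N=(q^m\xi Y^2)^{n-m}$. Also $Q^{N-i}\tau=(\xi q)^{-m-n-i}t^2$. The ramified case should reduce to the same sum in base $q^2$.

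\textbf{Consequence for the target in (1).} In the inert case the result has $\xi$, not $\hat\xi$, in every factor of the coefficient, including $\phi_{n-r}(\xi q^{-1})$. For $n=1$, for instance, the coefficient of $\wP_1$ is just the $G$-factor $1-\xi Y^2$, not $1-\hat\xi Y^2$. So the $\hat\xi^{n-r}$-twist belongs only in the arguments of $\wP_r$. The Step 3 target you copied from the displayed coefficient in (1) is therefore not what the computation produces in the inert case.
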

\begin{proof}
\begin{itemize}
\item[(1)]Assume that $E$ is inert over $F$.
Then as in Lemma \ref{lem:RS1}, we have
\begin{align*}
&\sum_{r=0}^n\dfrac{\prod_{i=0}^{r-1}(1-\xi^{n+i}q^{i}Y^2)\prod_{i=r}^{n-1}(1-\xi^{-n+i}q^{-2n+i}Y^2t^2)}{\phi_{n-r}(\xi q^{-1})}Q_{r}(d_0;\hat{\xi}^{n-r}X,q^{-n/2}Y,\hat{\xi}^{n-r}q^{-n/2}t)\\
=&\sum_{r=0}^n\dfrac{\prod_{i=0}^{r-1}(1-\xi^{n+i}q^{i}Y^2)\prod_{i=r}^{n-1}(1-\xi^{-n+i}q^{-2n+i}Y^2t^2)}{\phi_{n-r}(\xi q^{-1})}\\
&\times\int_{\wHer_{r,d}}\wG(Z; \hat{\xi}^{n-r}X, \hat{\xi}^{n-r}q^{-n}tY)(\hat{\xi}^{n-r})^{\ord(\det Z)}|\det Z|^{s-y}dZ\\
=&\sum_{r=0}^n\prod_{i=0}^{r-1}(1-(\xi q)^{n+i}(q^{-n/2}Y)^2)\prod_{i=r}^{n-1}(1-(\xi q)^{n+i}(q^{-3n/2}Yt)^2)\\
&\times\int_{\wHer_{r,d}}\wG(1_{n-r}\perp Z; X, q^{-n}tY)|\det Z|^{s-y}dZ.
\intertext{By Proposition \ref{prop:locden2} and Lemma \ref{lem:locden11}, we have}
&\sum_{r=0}^n\dfrac{\prod_{i=0}^{r-1}(1-\xi^{n+i}q^{i}Y^2)\prod_{i=r}^{n-1}(1-\xi^{-n+i}q^{-2n+i}Y^2t^2)}{\phi_{n-r}(\xi q^{-1})}Q_{r}(d_0;\hat{\xi}^{n-r}X,q^{-n/2}Y,\hat{\xi}^{n-r}q^{-n/2}t)\\
=&\sum_{r=0}^n\sum_{Z\in\wHer_{r,d}/\K_r^0}\int_{\K_n^0}\wG((1_{n-r}\perp Z)[h]; X, q^{-n}tY)|\det (1_{n-r}\perp Z)[h]|^{s-y}\\
&\times G((1_{n-r}\perp B)[h],(q^{-n/2}Y)^2)\maf{B}((1_{n-r}\perp B)[h]; q^{-3n/2}Yt)dh.
\end{align*}
By the structure theory of $\dHer_m$, $\dHer_{m,d}/\K_{m}^0$ has a representative set $\bigcup_{r=0}^m\{1_{m-r}\perp B|B\in\wHer_{r,d}/\K_r^0\}$. 
\begin{align*}
&\sum_{r=0}^n\dfrac{\prod_{i=0}^{r-1}(1-\xi^{n+i}q^{i}Y^2)\prod_{i=r}^{n-1}(1-\xi^{-n+i}q^{-2n+i}Y^2t^2)}{\phi_{n-r}(\xi q^{-1})}Q_{r}(d_0;\hat{\xi}^{n-r}X,q^{-n/2}Y,\hat{\xi}^{n-r}q^{-n/2}t)\\
=&\int_{\wHer_{n,d}}\wG(Z; X, q^{-n}tY)|\det Z|^{s-y}G(Z,(q^{-n/2}Y)^2)\maf{B}(Z; q^{-3n/2}Yt)dZ\\
=&\hana{R}^0(d;X,Y,t).
\end{align*}
On the other hand, by Proposition \ref{prop:RS1} and the Corollary to Proposition \ref{prop:RS2}, the assertion holds.
\begin{align*}
&\sum_{r=0}^n\dfrac{\prod_{i=0}^{r-1}(1-\xi^{n+i}q^{i}Y^2)\prod_{i=r}^{n-1}(1-\xi^{-n+i}q^{-2n+i}Y^2t^2)}{\phi_{n-r}(\xi q^{-1})}Q_{r}(d_0;\hat{\xi}^{n-r}X,q^{-n/2}Y,\hat{\xi}^{n-r}q^{-n/2}t)\\
=&\sum_{r=0}^n\dfrac{\prod_{i=0}^{r-1}(1-\xi^{n+i}q^{i}Y^2)\prod_{i=r}^{n-1}(1-\xi^{-n+i}q^{-2n+i}Y^2t^2)}{\phi_{n-r}(\xi q^{-1})}\\
&\times \sum_{m=0}^r\dfrac{(-1)^{r-m}(\xi q)^{(r-m)(r-m-1)/2}}{\phi_{r-m}(\xi q^{-1})}\wP_{m}(d,\hat{\xi}^{n-m}X,q^{-n/2}Y,\hat{\xi}^{n-m}q^{-n/2}t)\\
=&\sum_{m=0}^n\wP_{m}(d,\hat{\xi}^{n-m}X,q^{-n/2}Y,\hat{\xi}^{n-m}q^{-n/2}t)\\
&\times\sum_{l=0}^{n-m}\dfrac{\prod_{i=0}^{l+m-1}(1-\xi^{n+i}q^{i}Y^2)\prod_{i=l+m}^{n-1}(1-\xi^{-n+i}q^{-2n+i}Y^2t^2)}{\phi_{n-l-m}(\xi q^{-1})}\dfrac{(-1)^{l}(\xi q)^{l(l-1)/2}}{\phi_{l}(\xi q^{-1})}
\end{align*}
By \cite[Lemma 3.4]{IK}, the above equation equals
\begin{align*}
&\sum_{m=0}^n(q^m\xi Y^2)^{n-m}\wP_{m}(d,\hat{\xi}^{n-m}X,q^{-n/2}Y,\hat{\xi}^{n-m}q^{-n/2}t)\\
&\times
\dfrac{\prod_{i=0}^{m-1}(1-\xi^{m}(\xi q)^{i}Y^2)\prod_{i=m}^{n-1}(1-(\xi q)^{-n-m-i}Y^2t^2)}{\phi_{n-m}(\xi q^{-1})}.
\end{align*}
\item[(2)]Assume that $E=F\oplus F$. Then we have
\begin{align*}
&\sum_{r=0}^n\dfrac{\prod_{i=0}^{r-1}(1-q^{i}Y^2)\prod_{i=r}^{n-1}(1-q^{-2n+i}Y^2t^2)}{\phi_{n-r}(q^{-1})}Q_{r}(d_0;X,q^{-n/2}Y,q^{-n/2}t)\\
=&\sum_{r=0}^n\dfrac{\prod_{i=0}^{r-1}(1-q^{i}Y^2)\prod_{i=r}^{n-1}(1-q^{-2n+i}Y^2t^2)}{\phi_{n-r}(q^{-1})}\int_{\wHer_{r,d}}\wG(Z; X, q^{-n}tY)|\det Z|^{s-y}dZ\\
=&\sum_{r=0}^n\prod_{i=0}^{r-1}(1-q^{n+i}(q^{-n/2}Y)^2)\prod_{i=r}^{n-1}(1-q^{n+i}(q^{-3n/2}Yt)^2)\\
&\times\sum_{B\in\wHer_{r,d}/\K_r^0}\int_{(1_{n-r}\perp B)[\K_n^0]}\wG(Z; X, q^{-n}tY)|\det Z|^{s-y}.
\intertext{By Proposition \ref{prop:locden2} and Lemma \ref{lem:locden11}, we have}
&\sum_{r=0}^n\dfrac{\prod_{i=0}^{r-1}(1-q^{i}Y^2)\prod_{i=r}^{n-1}(1-q^{-2n+i}Y^2t^2)^2}{\phi_{n-r}(q^{-1})}Q_{r}(d_0;X,q^{-n/2}Y,q^{-n/2}t)\\
=&\sum_{r=0}^n\sum_{B\in\wHer_{r,d}/\K_r^0}\int_{(1_{n-r}\perp B)[\K_n^0]}\wG(Z; X, q^{-n}tY)|\det Z|^{s-y}\\
&\times G(Z,(q^{-n/2}Y)^2)\maf{B}(Z; q^{-3n/2}Yt)dZ\\
=&\int_{\wHer_{n,d}}\wG(Z; X, q^{-n}tY)|\det Z|^{s-y}G(Z,q^{-n}Y^2)\maf{B}(Z; q^{-3n/2}Yt)dZ\\
=&\hana{R}^0(d;X,Y,t).
\end{align*}
On the other hand, by Proposition \ref{prop:RS1} and Corollary to Proposition \ref{prop:RS2} we have
\begin{align*}
&\sum_{r=0}^n\dfrac{\prod_{i=0}^{r-1}(1-q^{i}Y^2)\prod_{i=r}^{n-1}(1-q^{-2n+i}Y^2t^2)}{\phi_{n-r}(q^{-1})}Q_{r}(d_0;X,q^{-n/2}Y,q^{-n/2}t)\\
=&\sum_{m=0}^n\wP_{m}(d,X,q^{-n/2}Y,q^{-n/2}t)\\
&\times\sum_{l=0}^{n-m}\dfrac{\prod_{i=0}^{l+m-1}(1-q^{i}Y^2)\prod_{i=l+m}^{n-1}(1-\xi^{-n+i}q^{-2n+i}Y^2t^2)}{\phi_{n-l-m}(\xi q^{-1})}\dfrac{(-1)^{l}(\xi q)^{l(l-1)/2}}{\phi_{l}(\xi q^{-1})}
\end{align*}
By \cite[Lemma 3.4]{IK}, the above equation equals
\begin{equation*}
\sum_{m=0}^n(q^m Y^2)^{n-m}\wP_{m}(d,X,q^{-n/2}Y,q^{-n/2}t)
\dfrac{\prod_{i=0}^{m-1}(1-q^{i}Y^2)\prod_{i=m}^{n-1}(1-q^{-n-m-i}Y^2t^2)}{\phi_{n-m}( q^{-1})}
\end{equation*}
\item[(3)]
Suppose that $E$ is ramified over $F$.
Then we have
\begin{align*}
&\sum_{r=0,\,n-r\in2\mb{Z}}^n\dfrac{\prod_{i=0}^{[r/2]-1}(1-q^{2i+1}Y^2)\prod_{i=[r/2]}^{(n-3)/2}(1-q^{-2n+2i+1}Y^2t^2)}{\phi_{(n-r)/2}(q^{-2})}Q_{r}(d_0;X,q^{-n/2}Y,q^{-n/2}t)\\
=&\sum_{r=0,\,n-r\in2\mb{Z}}^n\dfrac{\prod_{i=0}^{[r/2]-1}(1-q^{2i+1}Y^2)\prod_{i=[r/2]}^{(n-3)/2}(1-q^{-2n+2i+1}Y^2t^2)}{\phi_{(n-r)/2}(q^{-2})}\\
&\times\int_{\wHer_{r,d}}\wG(Z; X, q^{-n}tY)\det Z|^{s-y}dZ\\
=&\sum_{r=0,\,n-r\in2\mb{Z}}^n\prod_{i=0}^{[r/2]-1}(1-q^{2i+n+1}(q^{-n/2}Y)^2)\prod_{i=[r/2]}^{(n-3)/2}(1-q^{2i+n+1}(q^{-3n/2}Yt)^2)\\
&\times\sum_{B\in\wHer_{r,d}/\K_r^0}\int_{(\Theta_{n-r}\perp B)[\K_n^0]}\wG(Z; X, q^{-n}tY)|\det Z|^{s-y}.
\intertext{By Proposition \ref{prop:locden2} and Lemma \ref{lem:locden11}, we have}
&\sum_{r=0,\,n-r\in2\mb{Z}}^n\dfrac{\prod_{i=0}^{[r/2]-1}(1-q^{2i+1}Y^2)\prod_{i=[r/2]}^{(n-3)/2}(1-q^{-2n+2i+1}Y^2t^2)}{\phi_{(n-r)/2}(q^{-2})}Q_{r}(d_0;X,q^{-n/2}Y,q^{-n/2}t)\\
=&\sum_{r=0}^n
\sum_{B\in\wHer_{r,d}/\K_r^0}\int_{(\Theta_{n-r}\perp B)[\K_n^0]}\wG(Z; X, q^{-n}tY)|\det Z|^{s-y}\\
&\times G(Z,(q^{-n/2}Y)^2)\maf{B}(Z;q^{-3n/2}Yt)dZ\\
=&\int_{\wHer_{n,d}}\wG(Z; X, q^{-n}tY)|\det Z|^{s-y}G(Z,(q^{-n/2}Y)^2)\maf{B}(Z; X, q^{-3n/2}Yt)dZ\\
=&\hana{R}^0(d;X,Y,t).
\end{align*}
On the other hand, by Proposition \ref{prop:RS1} and Corollary to Proposition \ref{prop:RS2} we have
\begin{align*}
&\sum_{r=0,\,n-r\in2\mb{Z}}^n\dfrac{\prod_{i=0}^{[r/2]-1}(1-q^{2i+1}Y^2)\prod_{i=[r/2]}^{(n-3)/2}(1-q^{-2n+2i+1}Y^2t^2)}{\phi_{(n-r)/2}(q^{-2})}Q_{r}(d_0;X,q^{-n/2}Y,q^{-n/2}t)\\
&\sum_{r=0,\,n-r\in2\mb{Z}}^n\dfrac{\prod_{i=0}^{[r/2]-1}(1-q^{2i+1}Y^2)\prod_{i=[r/2]}^{(n-3)/2}(1-q^{-2n+2i+1}Y^2t^2)}{\phi_{(n-r)/2}(q^{-2})}\\
&\times\sum_{m=0}^{(r-1)/2}\dfrac{(-1)^mq^{m^2-m}}{\phi_m(q^{-2})}\wP_{r-2m}(d_0;X,q^{-n/2}Y,q^{-n/2}t)\\
=&\sum_{m=0,\text{ $n-m$ is even}}^n\wP_{m}(d,X,q^{-n/2}Y,q^{-n/2}t)\\
&\times\sum_{l=0}^{(n-m)/2}\dfrac{\prod_{i=0}^{l+(m-1)/2-1}(1-q^{2i+1}Y^2)\prod_{i=l+(m-1)/2}^{(n-1)/2}(1-q^{-2n+2i+1}Y^2t^2)}{\phi_{(n-m-2l)/2}(q^{-2})}\dfrac{(-1)^{l}q^{l(l-1)/2}}{\phi_{l}(q^{-2})}.
\end{align*}
By \cite[Lemma 3.4]{IK}, the above equation equals
\begin{align*}
&\sum_{m=0,\text{ $n-m$ is even}}^n(q^mY^2)^{n-m}\wP_{m}(d,X,q^{-n/2}Y,q^{-n/2}t)\\
&\times\dfrac{\prod_{i=0}^{(m-1)/2-1}(1-q^{2i+1}Y^2)\prod_{i=(m-1)/2}^{(n-1)/2}(1-q^{-2n+2i+1}Y^2t^2)}{\phi_{(n-m)/2}(q^{-2})}.
\end{align*}
Therefore, the assertion holds.
\end{itemize}
\end{proof}
Then we have the following corollary by Proposition \ref{prop:RS1}.
\begin{Cor}\label{CorofthmRS1}
\begin{itemize}
\item[(1)]Suppose that $E$ is inert over $F$.
Then we have
\begin{align*}
\hana{R}^0(d;X,Y,t)=&\sum_{r=0}^n(q^r\xi Y^2)^{n-r}
P_r(d,\hat{\xi}^{n-r}X,\hat{\xi}^{n-r}tY^{-1})\prod_{i=1}^r(1-t^4q^{-2r-2-2n+2i})\\
&\times\dfrac{\prod_{i=1}^{n-r}(1-(\hat{\xi}q)^{-r-n-i}t^2)\prod_{i=0}^{r-1}(1-\hat{\xi}^n(\hat{\xi}q)^iY^2)}{\phi_{n-r}(\hat{\xi}q^{-1})}
\end{align*}
\item[(2)]Assume that $E=F\oplus F$. Then we have
\begin{align*}
\hana{R}^0(d;X,Y,t)=&\sum_{r=0}^n(q^rY^2)^{n-r}P_r(d,X,tY^{-1})\prod_{i=1}^r(1-t^2q^{-r-1-n+i})^2\\
&\times\dfrac{\prod_{i=1}^{n-r}(1-q^{-r-n-i}t^2)\prod_{i=0}^{r-1}(1-q^iY^2)}{\phi_{n-r}(q^{-1})}
\end{align*}
\item[(3)]
Suppose that $E$ is ramified over $F$ and $n$ is odd.
Then we have
\begin{align*}
&\hana{R}^0(d;X,Y,t)\\
=&\sum_{r=0}^{(n-1)/2}(tY^{-1})^{(n-2r-1)\maf{f}/2}P_{v,2r+1}((-1)^{(n-2r-1)/2}d,X,tY^{-1})\prod_{i=1}^r(1-t^2q^{-r-1-n+i})\\
&\times\dfrac{(q^{2r+1}Y^2)^{(n-2r-1)/2}\prod_{i=1}^{(n-2r)/2}(1-q^{-2r-n-2i-1}t^2)\prod_{i=0}^{r-1}(1-q^{2i+1}Y^2)}{\phi_{(n-2r-1)/2}(q^{-2})}.
\end{align*}
\end{itemize}
\end{Cor}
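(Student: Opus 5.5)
The corollary should follow by direct substitution: feed the proposition relating $\wP_r$ to $P_r$, namely $\wP_r(d;X,Y,t)=P_r(d,X,tY^{-1})\maf{l}(K;t,r)$, into each of the three cases of Theorem \ref{thm:RS1}. In each case the theorem expresses $\hana{R}^0(d;X,Y,t)$ as a finite sum over $r$ of explicit rational factors times $\wP_r$ evaluated at shifted arguments. The substitution replaces each $\wP_r$ by a $P_r$ times a product $\maf{l}(K;\cdot,r)$, and the remaining task is to track the shifted arguments.

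In the inert case (1), the argument of $\wP_r$ is $(d;\hat{\xi}^{n-r}X,\,q^{-n/2}Y,\,\hat{\xi}^{n-r}q^{-n/2}t)$. The third slot divided by the second is $\hat{\xi}^{n-r}tY^{-1}$, which matches the $P_r$ argument in the statement. The factor $\maf{l}(K;t',r)$ with $t'=\hat{\xi}^{n-r}q^{-n/2}t$ gives
\[
\prod_{i=1}^r\bigl(1-t'^4q^{-2r-2+2i}\bigr)=\prod_{i=1}^r\bigl(1-\hat{\xi}^{4(n-r)}t^4q^{-2n-2r-2+2i}\bigr).
\]
Since $\hat{\xi}^4=1$, this is the displayed product $\prod_{i=1}^r(1-t^4q^{-2r-2-2n+2i})$.

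In the split case (2), $t'=q^{-n/2}t$, so
\[
\maf{l}(K;t',r)=\prod_{i=1}^r\bigl(1-t^2q^{-n-r-1+i}\bigr)^2 .
\]
The $P_r$ argument is $tY^{-1}$ because the powers $q^{-n/2}$ cancel.

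The ramified case (3) is handled the same way using the ramified form of $\maf{l}$, $\prod_{i=1}^r(1-t^2q^{-r-1+i})$ with $t\mapsto q^{-n/2}t$. Here the index of $\wP$ in Theorem \ref{thm:RS1}(3) should be read as $2r+1$; the proof of that theorem sums over $m$ with $n-m$ even, so only odd $m=2r+1$ occur. The $\maf{l}$ product therefore runs up to $2r+1$, and I would check that this agrees with the exponent range stated in the corollary. Finally, the twisted $d$-argument $(-1)^{(n-2r-1)/2}d$ carries over unchanged.

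The only real obstacle is bookkeeping. In case (3) the product range and the index of $\maf{l}$ must match, and in case (1) the powers of $\hat{\xi}$ must be verified to cancel via $\hat{\xi}^4=1$. All the remaining rational factors are carried over verbatim from Theorem \ref{thm:RS1}.
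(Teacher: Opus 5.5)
Your route is the paper's: the corollary is Theorem \ref{thm:RS1} with $\wP_r(d;X,Y,t)=P_r(d,X,tY^{-1})\,\maf{l}(K;t,r)$ substituted in. Your bookkeeping in (1), including the use of $\hat{\xi}^4=1$, and in (2) is correct.

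The problem is case (3), at exactly the check you postponed: that check fails. You correctly read the index in Theorem \ref{thm:RS1}(3) as $2r+1$, so you must use $\maf{l}(K;\cdot,2r+1)$. With $t'=q^{-n/2}t$ this gives
\[
\maf{l}(K;q^{-n/2}t,2r+1)=\prod_{i=1}^{2r+1}\bigl(1-t^2q^{-n-2r-2+i}\bigr),
\]
whereas the statement displays
\[
\prod_{i=1}^{r}\bigl(1-t^2q^{-r-1-n+i}\bigr)=\maf{l}(K;q^{-n/2}t,r).
\]
Already for $r=0$ these are $1-t^2q^{-n-1}$ and $1$, so the displayed (3) does not follow.

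The index of $\maf{l}$ cannot be adjusted, because it is the size of the matrices over which $\wP$ and $P$ integrate. In the ramified case the factor arises from the substitution $Z\mapsto Z[W]$ as the $q$-binomial sum
\[
\sum_{i=0}^{k}\sum_{W\in\K_{k}^0\lsla\D_{k,i}}\mt{m}(W)(t^2q^{-k})^{i}=\prod_{j=0}^{k-1}\bigl(1-q^{j-k}t^2\bigr)
\]
for $k\times k$ matrices. The displayed (3) therefore pairs $P_{v,2r+1}$ with the factor that belongs to $P_{v,r}$; it is what you get by plugging the proposition literally into the misprinted $\wP_r$.

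What your argument actually proves is (3) with the product replaced by $\prod_{i=1}^{2r+1}(1-t^2q^{-n-2r-2+i})$. State that corrected form and flag the discrepancy, rather than asserting agreement. Similarly, carrying the other factors over verbatim imports the upper limit $(n-2r)/2$, which is not an integer for odd $n$. That is a misprint inherited from Theorem \ref{thm:RS1}(3), not something your substitution verifies.
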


\subsection{The Local Koecher-Maass Integrals}
In this section, we compute the series $P_m$.

Put $\zeta_m(d;s)=\int_{\wHer_{m,d}}|\det Z|^{s}dZ$ and $\phi_m(X,Y)=\prod_{i=1}^m(1-XY^{i-1})$.
\begin{lems}\label{lem:KS1}
\begin{itemize}
\item[(1)]Assume that $E$ is inert over $F$.
For $d\in\hana{O}^\times$, we have
\begin{equation*}
\zeta_{m}(d;s-m)=\dfrac{\phi_{[(m+1)/2]}(q^{-1},q^{-2})\phi_{[m/2]}(-q^{-2},q^{-2})q^{-ms}}{\prod_{i=1}^m(1-(-1)^{-m+i}q^{-s+i-1})}
\end{equation*}
\item[(2)]Assume that $E$ is split over $F$. For $d\in\hana{O}^\times$, we have
\begin{equation*}
\zeta_m(d;s-m)=\dfrac{\phi_m(q^{-1})q^{-ms}}{\prod_{i=1}^m(1-q^{i-1-s})}
\end{equation*}
\item[(3)]Assume that $E$ is ramified over $F$ and $m$ is odd.
For $d\in\hana{O}^\times$, we have
\begin{align*}
\zeta_m(d;s-m)
=&\begin{cases}
\dfrac{q^{[\maf{f}/2](m-1)s}}{\prod_{i=1}^{(m+1)/2}(1-q^{2i-2-s})}&\text{if $\maf{f}$ is odd},\\
\dfrac{q^{((m-1)/2+[\maf{f}/2](m-1))s}}{\prod_{i=1}^{(m+1)/2}(1-q^{2i-2-s})}
&\text{if $\maf{f}$ is even}.
\end{cases}
\end{align*}
\item[(4)]Assume that $E$ is ramified over $F$ and $m$ is even.
For $d\in\hana{O}^\times$, we have
\begin{align*}
&\zeta_m(d;s-m)\\
=&\begin{cases}
\dfrac{1}{2}\phi_{m/2}(q^{-1},q^{-2})q^{-[\maf{f}/2]m(s-1)}&\\
\times\left(\dfrac{1}{\prod_{i=1}^{m/2}(1-q^{2i-1-s})}+\dfrac{\varepsilon_{E/F}((-1)^{m/2}d)q^{-m/2}}{\prod_{i=1}^{m/2}(1-q^{2i-2-s})}\right)&\text{if $\maf{f}$ is odd}\\
\dfrac{1}{2}\phi_{m/2}(q^{-1},q^{-2})q^{-[\maf{f}/2]m/2(s-1)}&\\
\times\left(\dfrac{1}{\prod_{i=1}^{_{\wPe}/2}(1-q^{2i-1-s})}+\dfrac{\varepsilon_{E/F}((-1)^{m/2}d)q^{-m/2}}{\prod_{i=1}^{m/2}(1-q^{2i-2-s})}\right)
&\text{if $\maf{f}$ is even}.
\end{cases}
\end{align*}
\end{itemize}
\end{lems}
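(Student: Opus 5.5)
Since the integrand depends only on $\ord(\det Z)$, the plan is to reduce $\zeta_m(d;s)=\int_{\wHer_{m,d}}|\det Z|^{s}dZ$ to a count weighted by volumes of orbits. I decompose $\wHer_{m,d}$ into $\K_m^0$-orbits $B[\K_m^0]$ and use $\vol(B[\K_m^0])=\alpha(B)^{-1}\prod_{i=1}^m(1-q_E^{-i})$, from Appendix~\ref{ap:locden}, exactly as was done for $\hat{\R}$. Then
\begin{equation*}
\zeta_m(d;s)=\prod_{i=1}^m(1-q_E^{-i})\sum_{B\in\wHer_{m,d}/\K_m^0}\frac{|\det B|^{s}}{\alpha(B)},
\end{equation*}
and the task becomes evaluating the Siegel-type mass series $\sum_B \alpha(B)^{-1}|\det B|^s$ over the $\Her_{m,*}$-part with prescribed determinant class. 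In the inert and split cases, Jordan splitting lets me write every $B\in\wHer_{m,d}$ as $\perp_j\varpi^{j}U_j$ with $U_j$ unimodular. Since $N_{E/F}(\hana{O}_E^\times)=\hana{O}^\times$ in the inert case, and the split case has no determinant constraint, each unimodular block is unique up to $\K^0$. The class of $B$ is therefore determined by the multiplicities $(m_j)_{j\ge1}$ with $\sum m_j=m$, because the condition $B\in\Her_{m,*}$ forces $m_0=0$. This is why the factor $q^{-ms}$ appears after the shift $s\mapsto s-m$.

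Next I compute $\alpha(B)$ for such a Jordan form. Using the reduction formula $\alpha(\Theta_n\perp A)/\alpha(A)$ of Lemma~\ref{lem:locden7}, together with the standard scaling of $\alpha$ under $B\mapsto\varpi B$, I get $\alpha(B)$ as a product of the orders of the finite unitary groups $\prod_{j}\prod_{i=1}^{m_j}(1-(\xi q)^{-i})^{\pm}$ times a power of $q$ built from $\sum_{j<k} j\,m_jm_k$ and $\sum_j j m_j^2$. I then sum over all compositions $(m_j)$ with $q^{-s\sum j m_j}$. Via the $q$-binomial theorem, or equivalently the induction $m\to m-1$ obtained by peeling off the minimal Jordan block in the manner of Katsurada's \cite[Lemma 5.3.4]{K2}, this should collapse to $1/\prod_{i=1}^m(1-(\pm)q^{-s+i-1})$. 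The remaining Euler-type numerators $\phi_{[(m+1)/2]}(q^{-1},q^{-2})\phi_{[m/2]}(-q^{-2},q^{-2})$ in the inert case, and $\phi_m(q^{-1})$ in the split case, come from combining $\prod(1-q_E^{-i})$ with the unitary group orders. For the inert case I would also check consistency by comparing with the known local factor of the Siegel–Hermitian zeta function, that is, the degree-$m$ unitary analogue of Saito's explicit formula.

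The ramified case is the main obstacle. There the Jordan components of even rank are hyperbolic $\Theta$-blocks up to the class of $\varepsilon_{E/F}$, odd-rank components are diagonal, and $N_{E/F}(\hana{O}_E^\times)$ has index $2$ in $\hana{O}^\times$, so the determinant class $d$ genuinely constrains the sum. I would use the representatives from Proposition~\ref{prop:RS2}(2), namely $\Theta_{m-r}\perp B$ with $\Theta$ normalized by $\varpi_E^{-\maf{f}}$, which is the source of the $q^{[\maf{f}/2]\cdots}$ factors and the parity split on $\maf{f}$. I would split the sum into odd and even total rank. For $m$ even, the two summands with the character $\varepsilon_{E/F}((-1)^{m/2}d)$ should come from averaging over $d$ modulo norms: half the sum is insensitive to $d$, and the other half is detected by the Hasse-type invariant $\varepsilon_{E/F}((-1)^{m/2}\det)$. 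This explains the factor $\tfrac12$. I expect the bookkeeping of the $\maf{f}$-powers and of this character twist, especially at dyadic places where the Jordan decomposition is less clean, to be the hardest step. To make it manageable I would rely on the uniform statements of Lemmas~\ref{lem:locden3}--\ref{lem:locden8}, which the paper asserts hold also for $p=2$, rather than on explicit lattice classification.
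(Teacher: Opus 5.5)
For (1) and (2) your route is sound and genuinely different from the paper's. The paper does not compute anything here; it quotes H.~Saito's explicit formula \cite[Theorem~4.2]{SH1}, as in \cite{K1}.

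In these cases $\Her_{m,*}=\maf{p}\dHer_m$ and the determinant condition is vacuous, so $\zeta_m(d;s-m)=q^{-ms}\int_{\Her_m(\hana{O}_E)}|\det Z|^{s-m}dZ$. Lemma~\ref{lem:locden7} together with $\alpha(\varpi B)=q^{m^2}\alpha(B)$ gives $\alpha(\perp_{j\ge1}\varpi^j1_{m_j})=q^{\sum_{k\ge1}M_k^2}\prod_j\prod_{i=1}^{m_j}(1-(\xi q)^{-i})$ with $M_k=\sum_{j\ge k}m_j$. What is left is a standard $q$-multinomial identity. This makes (1) and (2) self-contained, whereas the citation is shorter and covers all cases at once.

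One correction: in the split case the prefactor is $\vol(\K_m^0)=\prod_{i=1}^m(1-q^{-i})^2$, not $\prod_{i=1}^m(1-q_E^{-i})$ with $q_E=q^2$. With the latter, even $m=1$ is off by $(1+q^{-1})/(1-q^{-1})$.

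The ramified cases (3) and (4) contain a genuine gap. Your unramified computation closes up only because $\Her_{m,*}$ is a scalar multiple of $\dHer_m$, so splitting off unit blocks and rescaling reaches every orbit. In the ramified case this fails:
\begin{itemize}
\item $\Her_{m,*}$ is not a multiple of $\dHer_m$. It is $\K_m^0$-stable and contains no $\Theta$-block, so in the decomposition $\Theta_{m-r}\perp B$ of Proposition~\ref{prop:RS2}(2) only the term $r=m$ meets $\wHer_{m,d}$. Those representatives are how $\zeta_r$ is \emph{used} in Theorem~\ref{thm:KS}, not how it is computed.
\item Lemmas~\ref{lem:locden3}--\ref{lem:locden8} only concern representations by $\Theta$-forms, or splitting off $\Theta$-blocks from an element of $\Her_{*}$. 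None of them describes the $\K_m^0$-orbits in $\wHer_{m,d}$ or evaluates $\alpha(B)$ for $B\in\Her_{m,*}$, which is all your orbit sum needs.
\end{itemize}

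You would have to supply that yourself, and even for $v\nmid 2$ it is substantive. The dichotomy is by parity of the modular exponent, not of the rank: $\varpi_E^{2j+1}$-modular components are hyperbolic, while $\varpi^j$-modular ones are $\varpi^j(u_1\perp\cdots\perp u_k)$ of any rank, classified by $\varepsilon_{E/F}$ of the determinant. Their densities involve orders of orthogonal groups over $\mb{F}_q$. The term $\varepsilon_{E/F}((-1)^{m/2}d)$ comes from the $\pm$-type of the even-rank ones, i.e.\ from $\varepsilon_{E/F}((-1)^{k/2}\det)$, combined with tracking the determinant class across blocks (each hyperbolic plane contributes $-1$ modulo norms). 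Averaging over $d$ predicts the shape of the answer but computes nothing.

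For $v\mid 2$ (where $\maf{f}\ge 2$) it is worse. $\Her_{m,*}$ contains $\varpi_E^{1-\maf{f}}$-modular planes, which have $|\det|>1$, and non-diagonalizable unimodular components. The orbit structure is then Jacobowitz's classification via norm ideals, which an orbit sum cannot bypass.

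This is why the paper rescales to $\varpi^{[\maf{f}/2]}\wHer_{m,d}$ and applies Saito's formula, adapting \cite[Theorem~4.2(3)]{SH1} when $\maf{f}$ is even. That rescaling, not any $\Theta$-normalization, is the real source of the $q^{[\maf{f}/2]\cdots}$ factors: $dZ$ and $|\det Z|$ change by $q^{-[\maf{f}/2]m^2}$ and $q^{-[\maf{f}/2]m}$ respectively.

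Finally, your method at $m=1$ gives $\zeta_1(d;s-1)=\frac{1-q^{-1}}{2(1-q^{-s})}$, while (3) as printed gives $\frac{1}{1-q^{-s}}$. The printed formula has dropped a normalizing factor (compare (4) and the $\tfrac12$ in Theorem~\ref{thm:KS}(3)), so do not expect to land on (3) verbatim.
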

\begin{proof}
Since we can use the result in \cite[Theorem 4.2]{SH1} when $k=[\maf{f}/2]$ for $\varpi^{[\maf{f}/2]}\wHer_{m,d}$, we can compute $\zeta_m$ as in \cite{K1} unless $E$ is ramified and $\maf{f}$ is even.
This application is similar to \cite[Proposition 4.3.4, Corollary]{K1}. 
Thus, we assume that $E$ is ramified and $\maf{f}$ is even.
Then, $v$ divides $2$ and $\maf{f}=2l$ for some $1\leqq l\leqq\ord_v(2)$.
Put
\begin{align*}
\bm{X}=&\varpi^{[\maf{f}/2]}\wHer_{m,d}
=\left\{z=(z_{ij})\in\Her_n(F)\middle|\begin{matrix}z_{ij}\in\maf{q}\\
z_{ii}\in\maf{p}^{l}\\
\varepsilon_{E/F}(\det z)=\varepsilon_{E/F}(d)
\end{matrix}\right\}
\end{align*}
Then, we can use the same argument for $\bm{X}$ in \cite[Theorem 4.2(3)]{SH1}.\\
If $m$ is odd, we have
\begin{align*}
\zeta_m(d;s-m)=&
\dfrac{u^{((m-1)/2-[\maf{f}/2](m-1))}}{\prod_{i=1}^{(m+1)/2}(1-q^{2i-2}u)}
\end{align*}
If $m$ is even, we have
\begin{align*}
\zeta_m(d;s-m)
=&
\dfrac{1}{2}{\phi_{m/2}(q^{-1},q^2)}(qu)^{-[\maf{f}/2]m/2}\left(\dfrac{1}{\prod_{i=1}^{m/2}(1-q^{2i-1}u)}+\dfrac{\varepsilon_{E/F}((-1)^{m/2}d)q^{-m/2}}{\prod_{i=1}^{m/2}(1-q^{2i-2}u)}\right)
\end{align*}
\end{proof}

\begin{thms}\label{thm:KS}
\begin{itemize}
\item[(1)]Assume that $E$ is inert over $F$. Then for $d\in\hana{O}^\times$, we have
\begin{align*}
P_m(d;X,t)=&\dfrac{1}{\prod_{i=1}^m(1+(-q)^{-i}tX^{-1})\prod_{i=1}^m(1+(-q)^{-i}tX)}
\end{align*}
\item[(2)]Assume that $E$ is split over $F$.
Then for $d\in\hana{O}^\times$, we have
\begin{align*}
P_m(d;X,t)&=\dfrac{1}{\prod_{i=1}^m(1-q^{-i}tX^{-1})\prod_{i=1}^m(1-q^{-i}tX)}
\end{align*}
\item[(3)]Assume that $E$ is ramified over $F$. Then for $d\in\hana{O}^\times$, if $m$ is even, we have
\begin{align*}
&P_m(d;X,t)\\
=&
\dfrac{t^{-\maf{f}m/2}}{2
\prod_{i=1}^{m/2}(1-q^{-2i+1}tX^{-1})\prod_{i=1}^{m/2}(1-tXq^{-2i})}\\
&+\dfrac{\varepsilon_{E/F}((-1)^{m/2}d)t^{-\maf{f}m/2}}{
2\prod_{i=1}^{m/2}(1-q^{-2i}tX^{-1})\prod_{i=1}^{m/2}(1-tXq^{-2i+1})}.
\end{align*}
If $m$ is odd, we have
\begin{align*}
P_m(d;X,t)=&\dfrac{t^{-\maf{f}(m-1)/2}}{2
\prod_{i=1}^{(m+1)/2}(1-q^{-2i+1}tX^{-1})\prod_{i=1}^{(m-1)/2}(1-tXq^{-2i+1})}.
\end{align*}
\end{itemize}
\end{thms}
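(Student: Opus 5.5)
We will follow the strategy of \cite[Theorem 4.3.5]{K1} and \cite[Theorem 5.3.5]{K2}. The idea is to compute $P_m(d;X,t)$ as a Koecher--Maass type sum and then reduce it to the local zeta integrals $\zeta_r(d;s)$ of Lemma \ref{lem:KS1}. First we expand $\wF(Z;X)$ in $P_m$ by the second formula of Proposition \ref{prop:locden3}. This writes $\wF(Z;X)$ as a sum over $B\in\dHer_m/\K_m^0$ of $\alpha(B,Z)/\alpha(B)$, times $G(B;q^{-m}X^2)$, times powers of $X$. Interchanging summation and integration, Lemma \ref{lem:locden2}(2) turns $\alpha(B,Z)/\alpha(B)$ into the count $\#\K_m^0\lsla\Omega'(B,Z)$. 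Writing $Z=B[W]$ with $W\in \Mat_m(\hana{O}_E)^{nd}/\K_m^0$, the integral over $\dHer_{m,d}$ factors as an integral of $G(B;q^{-m}X^2)X^{-\ord\det B}|\det B|^s$ over $B$, times the Hecke-type series $\sum_W |\det W|_E^{s}X^{\ord N(\det W)}$. The latter is $\prod_{i=1}^m(1-\xi^{i-1}q^{i-1-2s}X^{\pm})^{-1}$, an elementary product.

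Next we remove $G$ using Proposition \ref{prop:locden2}. As in the proof of Proposition \ref{prop:RS2}, we decompose the $B$-integral along the representatives $\Theta_{m-r}\perp B'$ with $B'\in\wHer_{r}$. On such a representative $G$ is the explicit product $\prod_j(1-(\xi q)^{m+j}X')$, which is constant on each stratum. The volumes of the strata $(\Theta_{m-r}\perp B')[\K_m^0]$ are controlled by Lemma \ref{lem:locden7}, which gives $\alpha(\Theta_{m-r}\perp B')/\alpha(B')$. After this step each stratum contributes a constant times $\zeta_r(d';s')$ for the appropriate shifted $d'$. We then substitute the formulas of Lemma \ref{lem:KS1}.

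This leaves a finite sum over $r$ of rational functions in $X$, $t$ and $q$. We expect to collapse it into the stated product by a $q$-binomial identity. Concretely, we would use the same identity \cite[Lemma 3.4]{IK} used in Theorem \ref{thm:RS1}, or the inversion already exploited in the Corollary to Proposition \ref{prop:RS2}. In the inert and split cases this should give $\prod_i(1\mp(\xi q)^{-i}tX^{\pm1})^{-1}$ directly. The sign changes come from $\hat\xi$ and Lemma \ref{lem:RS1}, and the parity of $\ord N_{E/F}$ in the inert case disposes of odd powers.

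The main obstacle is the ramified case. There the strata only exist for $m-r$ even, and the factor $\varepsilon_{E/F}((-1)^{m/2}d)$ coming from Lemma \ref{lem:KS1}(4) must be tracked. The powers $t^{-\maf f m/2}$ come from $\det\Theta_{m-r}$, and the even-$\maf f$ dyadic cases have the extra normalization of Lemma \ref{lem:KS1}(3),(4). We would first treat $m$ odd, where the character term is absent, and verify the identity by comparing both sides as rational functions in $t$ for small $m$. We would then handle $m$ even by splitting the sum into its $\varepsilon$-independent part and its $\varepsilon$-dependent part, and matching each with one of the two terms in (3). A possible fallback is to prove the identity only for $X=q^{-k+m/2}$ with $k$ large, where Lemma \ref{lem:locden9} gives $\wF$ as a local density. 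The claim would then follow by polynomiality in $X$, as in the proof of Proposition \ref{prop:locden4}.
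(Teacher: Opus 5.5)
Your reduction matches the paper's. Expanding $\wF$ through $G$ (Proposition \ref{prop:locden3}) splits off a Hecke series over $\Mat_m(\hana{O}_E)^{nd}/\K_m^0$. Stratifying $\hat{P}_m(d;X,t)=\int_{\dHer_{m,d}}G(Z;q^{-m}X^2)|\det Z|^{s-x}dZ$ along $\Theta_{m-r}\perp B'$ with Proposition \ref{prop:locden2} then leaves a finite sum of the $\zeta_r$ of Lemma \ref{lem:KS1}. Your form of the Hecke series is off: it involves only $tX$, with no $\xi$ and no $X^{-1}$. The paper gets $\prod_{i=1}^m(1-t^2X^2q^{2i-2-2m})^{-1}$, $\prod_{i=1}^m(1-tXq^{i-1-m})^{-2}$, $\prod_{i=1}^m(1-tXq^{i-1-m})^{-1}$ in the inert, split and ramified cases.

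The gap is the last step: you never evaluate that sum. Expecting a $q$-binomial identity and checking small $m$ is not a proof. The identities you cite (\cite[Lemma 3.4]{IK}, the inversion in the Corollary to Proposition \ref{prop:RS2}) have a different shape, and you do not show they apply. The fallback through Lemma \ref{lem:locden9} only trades $\wF(Z;X)$ for $\alpha(\Theta_{2k},Z)$, leaving an integral you give no way to compute.

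The missing idea is the functional equation of $\wF$, which is how the paper avoids evaluating the sum at all.
\begin{itemize}
\item By Lemma \ref{lem:KS1}, $\hat{P}_m$ is a polynomial $T$ of degree at most $m$ in $t$ divided by explicit linear factors in $tX^{-1}$; in the inert case these are $\prod_{i=1}^m(1+(-q)^{-i}tX^{-1})$. So every pole of $P_m$ in $tX$ comes from the Hecke series.
\item For odd size, $\wF(B;X^{-1})=\wF(B;X)$ gives $P_m(d;X^{-1},t)=P_m(d;X,t)$. Comparing denominators forces $T$ to cancel the surplus $tX$-factors, so $P_m$ is a constant over the symmetric product.
\item The constant comes from the lowest power of $t$, where only $Z$ with $e(Z)=0$ contribute, and there $\wF=1$.
\item For $m$ even in the ramified case, $\wF(B;X^{-1})=\varepsilon_{E/F}(\gamma_0(B))\wF(B;X)$ interchanges the two $\varepsilon$-weighted averages $P_m^0,P_m^1$ of $P_m(d;\cdot)$ over $d\in\hana{O}^\times/N_{E/F}(\hana{O}_E^\times)$. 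This exchange is what produces the two terms with $\varepsilon_{E/F}((-1)^{m/2}d)$.
\end{itemize}

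Keep the sign in that equation in mind. For even size in the inert case it is $(-1)^{\ord\det B}$; for example $\wF(1\perp\varpi;X)=X^{-1}-X$ by Lemma \ref{lem:RS1}. Hence $P_m(d;X^{-1},t)=P_m(d;X,-t)$ for even $m$.
\begin{itemize}
\item The right side of (1) is symmetric in $X$, so equality at $m=2$ would force it to be even in $t$. Its $t$-coefficient is $(q^{-1}-q^{-2})(X+X^{-1})\neq0$, so (1) fails at $m=2$.
\item Run correctly, the argument gives the factors $1-(-q)^{-i}tX$ for even $m$. The paper's proof misses this because it uses the unsigned equation for every $m$.
\item Similarly, the odd-$m$ formula in (3) should have $(m+1)/2$ factors $1-q^{-2i+1}tX$. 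At $m=1$ the displayed formula $\frac12(1-q^{-1}tX^{-1})^{-1}$ is visibly not symmetric in $X$.
\end{itemize}
So your proposed small-$m$ check would expose these discrepancies rather than confirm the statement as printed.
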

\begin{proof}
Put 
\begin{equation*}
\hat{P}_{m}(d;X,t)=\int_{\dHer_{m,d}}G(Z;q^{-m}X^2)|\det Z|^{s-x}dZ.
\end{equation*}
Then, by Proposition \ref{prop:locden3}, we have
\begin{align*}
P_m(d;X,t)=&X^{-[m/2]\maf{f}}\int_{\dHer_{m,d}}\int_{\GL_m(E)}|\det Z|^{-x}G(Z[h^{-1}];q^{-m-2x})|\det h|^{2x}dh|\det Z|^sdZ\\
=&X^{-[m/2]\maf{f}}\hat{P}_{m}(d;X,t)\times\begin{cases} 
\prod_{i=1}^m(1-t^2X^2q^{2i-2-2m})^{-1}&\text{if $E$ is inert over $F$,}\\
\prod_{i=1}^m(1-tXq^{i-1-m})^{-2}&\text{if $E$ is split over $F$,}\\
\prod_{i=1}^m(1-tXq^{i-1-m})^{-1}&\text{if $E$ is ramified over $F$.}
\end{cases}
\end{align*}
Let $a=2$ if $E$ is ramified over $F$ and $a=1$ otherwise and $d(m-r)=d|\det\Theta_{m-r}|^{-1}$ for $d\in E^\times$.
Then by the structure of $\dHer_n$, we have
\begin{align*}
&\hat{P}_{v,m}(d;X,t)\\
=&\sum_{r=0}^m\sum_{B\in\wHer_{r,d(m-r)}/\K_r^0}\int_{(\Theta_{m-r}\perp B)[\K_m^0]}G(Z;q^{-m}X^2)|\det Z|^{s-x}dZ\\
=&\begin{cases}
\disp\sum_{r=0}^m\dfrac{\prod_{l=0}^{r-1}(1-(\xi q)^{m+l}\cdot q^{-m}X^2)}{\phi_{(m-r)}(\xi q^{-1})}\sum_{B\in\wHer_{r,d}/\K_r^0}\int_{B[\K_r^0]}|\det Z|^{s-x}dZ&\text{$E$ is not ramified over $F$}\\
\disp\sum_{r=0,\text{$m-r$ is even}}^m(tX^{-1})^{-(m-r)/2\cdot\maf{f}}\dfrac{\prod_{i=0}^{[r/2]-1}(1-q^{2i+2[(m+1)/2]}\cdot q^{-m}X^2)}{\phi_{(m-r)/2}(q^{-2})}&\\
\times\disp\sum_{B\in\wHer_{r,d(m-r)}/\K_r^0}\int_{B[\K_r^0]}|\det Z|^{s-x}dZ&\text{$E$ is ramified over $F$}
\end{cases}\\
=&\begin{cases}
\disp\sum_{r=0}^m\dfrac{\prod_{l=0}^{r-1}(1-(\xi q)^{m+l}\cdot q^{-m}X^2)}{\phi_{(m-r)}(\xi q^{-1})}\zeta_r(d;s-x)&\text{$E$ is not ramified over $F$}\\
\disp\sum_{r=0,\text{$m-r$ is even}}^m(tX^{-1})^{-(m-r)/2\cdot\maf{f}}\dfrac{\prod_{i=0}^{[r/2]-1}(1-q^{2i+2[(m+1)/2]}\cdot q^{-m}X^2)}{\phi_{(m-r)/2}(q^{-2})}&\\
\times\zeta_r(d(m-r);s-x)&\text{$E$ is ramified over $F$}.
\end{cases}
\end{align*}
Assume that $E$ is inert over $F$. 
By Lemma \ref{lem:KS1}, there exists a polynomial $T(d;X,t)$ in $\mb{C}(X)[t]$ with degree $m$ such that
\begin{align*}
\hat{P}_{m}(d;X,t)=&\dfrac{T(d;X,t)}{\prod_{i=1}^m(1+(-1)^{-m+i-1}q^{-m+i-1}tX^{-1})},
\end{align*}
and 
\begin{align*}
P_m(d;X,t)=&\dfrac{T(d;X,t)}{\prod_{i=1}^m(1+(-1)^iq^{-i}tX^{-1})\prod_{i=1}^m(1-t^2X^2q^{-2i})},
\end{align*}
Since $\wF(B;X^{-1})=\wF(B;X)$ for any $B\in\Her_n(F)^{nd}$ we have $P_m(d;X^{-1},t)=P_m(d;X,t)$.
By comparing the denominator of the above equation, there exists a constant $C$ such that
\begin{align*}
P_m(d;X,t)=&\dfrac{C}{\prod_{i=1}^m(1+(-q)^{-i}tX^{-1})\prod_{i=1}^m(1+tX(-q)^{-i})}.
\end{align*}
Comparing coefficients, we have
\begin{align*}
P_m(d;X,t)=&\dfrac{1}{\prod_{i=1}^m(1+(-q)^{-i}tX^{-1})\prod_{i=1}^m(1+tX(-q)^{-i})}.
\end{align*}
Assume that $E$ is split over $F$. Similarly, we have
\begin{align*}
P_m(d;X,t)=&\dfrac{1}{\prod_{i=1}^m(1-q^{-i}tX^{-1})\prod_{i=1}^m(1-tXq^{-i})}.
\end{align*}
Assume that $E$ is ramified over $F$ and $m$ is even.
Put
\begin{align*}
P_m^l(X,t)=\dfrac{1}{2}\sum_{d\in \hana{O}_{E}^\times/N_{E/F}(\hana{O}_{E}^\times)}\varepsilon_{E/F}((-1)^{[m/2]}d)P_{m}(d;X,t)
\end{align*}
for $l\in\{0,1\}$.
Then by Lemma \ref{lem:KS1}, there exists a polynomial $T^l(X,t)$ for $l\in\{0,1\}$ such that 
\begin{align*}
P_m^0(X,t)=\dfrac{1}{2}\dfrac{T^0(X,t)}{
\prod_{i=1}^{m/2}(1-q^{-2i+1}tX^{-1})\prod_{i=1}^m(1-tXq^{-m+i-1})}
\end{align*}
and
\begin{align*}
P_m^1(X,t)=
\dfrac{1}{2}\dfrac{T^1(X,t)}{
\prod_{i=1}^{m/2}(1-q^{-2i}tX^{-1})\prod_{i=1}^m(1-tXq^{i-1-m})}
\end{align*}
On the other hand, since $\wF(B;X^{-1})=\varepsilon_{E/F}(\gamma_0(B))\wF(B;X)$ for any $B\in\Her_n(F)^{nd}$ we have $P_m^0(X^{-1},t)=P_m^1(X,t)$.
Therefore, there exists a rational polynomial $C(t)$ such that
\begin{align*}
&P_m^0(X,t)\\
=&\dfrac{C(t)}{
\prod_{i=1}^{m/2}(1-q^{-2i+1}tX^{-1})\prod_{i=1}^{m/2}(1-tXq^{-2i})}. 
\end{align*}
Therefore we have
\begin{align*}
&P_m(d;X,t)\\
=&\dfrac{C(t)}{
\prod_{i=1}^{m/2}(1-q^{-2i+1}tX^{-1})\prod_{i=1}^{m/2}(1-tXq^{-2i})}
+\dfrac{\varepsilon_{E/F}((-1)^{m/2}d)C(t)}{
\prod_{i=1}^{m/2}(1-q^{-2i}tX^{-1})\prod_{i=1}^{m/2}(1-tXq^{-2i+1})}
\end{align*}
By comparing the order of $t$ and the coefficients, we have 
\begin{align*}
&P_m(d;X,t)\\
=&\dfrac{t^{-\maf{f}m/2}}{
\prod_{i=1}^{m/2}(1-q^{-2i+1}tX^{-1})\prod_{i=1}^{m/2}(1-tXq^{-2i})}
+\dfrac{\varepsilon_{E/F}((-1)^{m/2}d)t^{-\maf{f}m/2}}{
\prod_{i=1}^{m/2}(1-q^{-2i}tX^{-1})\prod_{i=1}^{m/2}(1-tXq^{-2i+1})}
\end{align*}
Assume that $\maf{f}$ is odd and $m$ is odd.
Then similarly, we have
\begin{align*}
P_m(d;X,t)=\dfrac{t^{-\maf{f}(m-1)/2}}{
\prod_{i=1}^{(m+1)/2}(1-q^{-2i+1}tX^{-1})\prod_{i=1}^{(m-1)/2}(1-tXq^{-2i})}
\end{align*}
\end{proof}

%--- Section ---%
\section{Proof of the Main Theorem}\label{sec6}
Assume that $n$ is odd and put $m=\dfrac{n-1}{2}$. 
Then, by the Corollary to Theorem \ref{thm:RS1} and Theorem \ref{thm:KS}, we have
\begin{thms}\label{thm:RS2}
\begin{itemize}
\item[(1)]Assume that $E_v$ is inert over $F_v$. Then we have
\begin{align*}
\R_v(d;X,Y,t)=\hat{\R}_v(X,Y,t)=&\prod_{i=1}^{n}(1-q_v^{-n+1}(-q_v)^{-(i-1)}t^2)\\
&\times\dfrac{1}{\prod_{i=1}^{n}(1+(-q_v)^{-i}XYt)(1+(-q_v)^{-i}XY^{-1}t)}\\
&\times\dfrac{1}{\prod_{i=1}^{n}(1+(-q_v)^{-i}X^{-1}Yt)(1+(-q_v)^{-i}X^{-1}Y^{-1}t)}
\end{align*}
\item[(2)]Assume that $E_v$ is split over $F_v$. Then we have
\begin{align*}
\hat{\R}_v(X,Y,t)=\R_v(d;X,Y,t)=&\prod_{i=1}^{n}(1-q_v^{-n+1}q_v^{-(i-1)}t^2)\\
&\times\dfrac{1}{\prod_{i=1}^{n}(1-q_v^{-n+i-1}XYt)(1-q_v^{-n+i-1}XY^{-1}t)}\\
&\times\dfrac{1}{\prod_{i=1}^{n}(1-q_v^{-n+i-1}X^{-1}Yt)(1-q_v^{-n+i-1}X^{-1}Y^{-1}t)}
\end{align*}
\item[(3)]Assume that $E_v$ is ramified over $F_v$. Then we have
\begin{align*}
\hat{\R}_v(X,Y,t)=&t^{\maf{f}_v(n-1)/2}\prod_{i=1}^{m+1}(1-q_v^{-2(n+2i-2)}t^2)\\
&\times\dfrac{1}{\prod_{i=1}^{m+1}(1-q_v^{-n+2i-2}XYt)(1-q_v^{-n+2i-2}X^{-1}Y^{-1}t)}\\
&\times\dfrac{1}{\prod_{i=1}^{m}(1-q_v^{-n+2i-2}X^{-1}Yt)(1-q_v^{-n+2i-2}XY^{-1}t)}
\end{align*}
\end{itemize}
\end{thms}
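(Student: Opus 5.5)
The plan is to substitute the explicit formulas of Theorem \ref{thm:KS} for $P_r$ into the Corollary to Theorem \ref{thm:RS1}. This gives $\hana{R}^0(d;X,Y,t)$ as a finite sum over $r$ of explicit rational functions in $X,Y,t$. We then multiply by $q^{(n-1)y\maf{f}/2}\maf{L}(X,q^{-n/2-1/2}Yt)$ to pass to $\hana{R}(d;X,Y,t)$, and use Proposition \ref{prop:RS1} to assemble $\hat{\R}_v$. In the ramified case this means summing over the two classes $d\in\hana{O}^\times/N_{E/F}(\hana{O}_E^\times)$; for even $m$ the $\varepsilon_{E/F}((-1)^{m/2}d)$ terms of Theorem \ref{thm:KS}(3) cancel, and the factor $1/2$ disappears. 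In the inert and split cases a single $d$ with $\ord(d)=0$ suffices.

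The core step is to show that the resulting sum
\[
\sum_{r}(q^r\xi Y^2)^{n-r}\frac{\prod_{i=0}^{r-1}(1-\hat{\xi}^n(\hat{\xi}q)^iY^2)\prod_{i=1}^{n-r}(1-(\hat{\xi}q)^{-r-n-i}t^2)\,\maf{l}(K;t,r)}{\phi_{n-r}(\hat{\xi}q^{-1})\prod_{i=1}^r(1\pm q^{-i}tY^{-1}X^{\mp1})(\cdots)}
\]
collapses to the closed product in Theorem \ref{thm:RS2}. I would not attempt a direct telescoping. Instead I would argue as in the proof of Theorem \ref{thm:KS}, by comparing poles and using symmetry:

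(a) Since $\wF(Z;X^{-1})=\wF(Z;X)$ for $n$ odd, the original integral defining $\hat{\R}_v$ is invariant under $X\mapsto X^{-1}$ and under $Y\mapsto Y^{-1}$ separately. So the final rational function has these symmetries.

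(b) From the sum, the possible denominators in $t$ come only from $\maf{L}(X,\cdot)$ (factors in $XYt$ and $X^{-1}Yt$) and from the $P_r$ denominators (factors in $X^{\pm1}Y^{-1}t$). Symmetry then forces the full denominator to be the product of the four families displayed in Theorem \ref{thm:RS2}.

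(c) Clearing this denominator leaves a polynomial in $t$ whose degree in $t$ is bounded by the degrees visible in the sum. Being symmetric, it must be independent of $X,Y$ by a degree count in $X$.

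It then remains to identify this polynomial with $\prod_i(1-q^{-n+1}(\mp q)^{-(i-1)}t^2)$ (resp.\ the ramified analogue with the prefactor $t^{\maf{f}(n-1)/2}$). I would do this by specializing, for instance $Y^2=\hat{\xi}^{-n}$. This kills every term with $r\ge1$ through the factor $\prod_{i=0}^{r-1}(1-\hat{\xi}^n(\hat{\xi}q)^iY^2)$, leaving only the $r=0$ term, where $P_0=1$, whose $t$-dependence is explicit. A second check is the constant term at $t=0$, which should equal $1$ because $\hat{\R}_v$ is normalized by the volume of $\dHer$ with unimodular determinant.

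The main obstacle is step (b)–(c): making sure that no spurious poles survive. In particular, the factors $\maf{l}(K;t,r)$ and $\prod(1-(\hat\xi q)^{-r-n-i}t^2)$ must cancel the extra half of the squared denominators, and in the ramified case the parity bookkeeping with $\maf{f}$ and $[m/2]$ must work out. If the symmetry argument is insufficient to pin down the numerator, I would instead do the $q$-binomial summation directly, in the manner of \cite[Lemma 3.4]{IK}, after partial fractions in $t$.
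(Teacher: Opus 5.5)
\textbf{Case (3) has a genuine gap.} Your step (b) says symmetry forces the denominator to be the displayed product. In (3), however, the displayed right-hand side is \emph{not} invariant under $X\mapsto X^{-1}$ alone. The families in $XYt$ and $X^{-1}Y^{-1}t$ have $m+1$ factors, while those in $X^{-1}Yt$ and $XY^{-1}t$ have only $m$. Your step (a) is valid at ramified $v$ as well: the paper's functional equation gives $\wF_v(Z;X^{-1})=\wF_v(Z;X)$ for all nondegenerate $Z$ when $n$ is odd. So (3) as displayed contradicts (a), and no argument can reach it.

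A direct check confirms this. For $n=1$ and $v\nmid 2$ one has $\wF_v(b;X)=X^{-a}+X^{-a+2}+\cdots+X^{a}$ with $a=\ord_v b$. Hence, in the variable $t=q^{1-s}$, $\hat{\R}_v$ is a constant times $(1-q^{-2}t^2)/\prod_{\pm,\pm}(1-q^{-1}X^{\pm1}Y^{\pm1}t)$. The displayed (3) keeps only the two factors in $XYt$ and $X^{-1}Y^{-1}t$, so it is wrong even on the line $Y=X^{-1}$ used in the application.

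The input you plan to substitute has the same defect. Theorem~\ref{thm:KS}(3) for odd $m$ is not symmetric in $X$. For $m=1$ a one-line computation gives $P_1(d;X,t)=\tfrac12(1-q^{-1}tX)^{-1}(1-q^{-1}tX^{-1})^{-1}$, not $\tfrac12(1-q^{-1}tX^{-1})^{-1}$. So you must first correct that input: by symmetry, the $X$-product must have the same length as the $X^{-1}$-product. Your argument then yields a formula symmetric under $X\mapsto X^{-1}$ and $Y\mapsto Y^{-1}$ separately, with four families of equal length. That is not the displayed (3).

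Three smaller points in (3):
\begin{enumerate}
\item Only odd ranks $2r+1$ occur in the Corollary, so your even-$m$ cancellation is irrelevant. The $\tfrac12$ disappears because $P_{2r+1}(d)$ does not depend on $d$.
\item The term surviving your specialization is $r=0$, and it contains $P_1$, not $P_0=1$.
\item The factor that kills the terms with $r\ge 1$ is $1-qY^2$, so you must specialize at $Y^2=q^{-1}$, not $Y^2=\hat{\xi}^{-n}=1$.
\end{enumerate}

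\textbf{Cases (1) and (2).} Here your route differs from the paper's and does work. The paper gives no argument beyond ``substitute Theorem~\ref{thm:KS} into the Corollary to Theorem~\ref{thm:RS1}''; what it leaves implicit is a direct evaluation of the finite sum, of the same $q$-binomial type as at the end of the proof of Theorem~\ref{thm:RS1}. You instead reuse the pole-comparison technique of the proof of Theorem~\ref{thm:KS}. This avoids the summation identity, but three steps need sharpening.

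In (b), take the inert case. The termwise poles are:
\begin{itemize}
\item $1\pm q^{-k}u$ for $u=XYt$ and $u=X^{-1}Yt$, coming from $\maf{L}$;
\item $1+(-q)^{-i}X^{-1}Y^{-1}t$ and $1\pm(-q)^{-i}XY^{-1}t$, coming from $P_r$, with the sign depending on the parity of $n-r$.
\end{itemize}
The reduced denominator divides the g.c.d.\ of the translates of their l.c.m.\ under the group generated by $X\mapsto X^{-1}$ and $Y\mapsto Y^{-1}$. That g.c.d.\ is exactly the displayed denominator $D$. The split case is the same, except that $\maf{L}$ contributes squared factors.

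In (c), symmetry plus a bound on the $t$-degree does not make $N=D\hat{\R}_v$ independent of $X$. What does is a growth bound: each summand is $O(X^{-2n})$ as $X\to\infty$ because of $\maf{L}$, while $D=O(X^{2n})$. So $N$ has $X$-degree $\le 0$, and the symmetry $X\mapsto X^{-1}$ then forces it to be constant in $X$. Argue the same way in $Y$.

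For the specialization, use the $i=0$ factor as it actually arises in the proof of Theorem~\ref{thm:RS1}, namely $1-\xi^nY^2=1+Y^2$ in the inert case. At $Y_0^2=-1$ one has $D(X,Y_0,t)\,\maf{L}(X,q^{-n/2-1/2}Y_0t)=1$ identically, and this is why the surviving term is independent of $X$. Finally, do not assume the value at $t=0$ is $1$. In the normalization $\sum_Z\wF\wF\,\alpha(Z)^{-1}|\det Z|^{s-n}$ of Section~\ref{sec5} it equals the unimodular contribution, which in the inert case is $\alpha(1_n)^{-1}=\prod_{i=1}^n(1-(-q)^{-i})^{-1}$. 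Let the specialization determine the constant.
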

\begin{proof}
The assertion immediately follows from the Corollary to Theorem \ref{thm:RS1} and Theorem \ref{thm:KS}.
\end{proof}

\begin{thms}\label{thms:period1}
Let $f$ be a Hilbert eigenform of weight $\{\kappa_v\}$ and level $\Gamma[\maf{d},\maf{d}^{-1}]$. The Rankin-Selberg integral of the Ikeda–Yamana lift $\mathcal{I}_n(f)$ admits the following Euler expansion:
\begin{align*}
\R(s,\I_n(f))=&c2^{-n\kappa-nd_Fs+n(n+1)d_F/2}|D_E/D_F^2|^{(n-1)(n-s)/2}\\
&\times\prod_{v\in\bmf{a}}\prod_{i=1}^n\Gamma_{\mb{C}}(s+\kappa_v-i)\prod_{i=1}^n\dfrac{L(s+i,\mr{Ad},f,\varepsilon_{E/F}^{i-1})L(s+i,\varepsilon_{E/F}^{i-1})}{L(2s-n+i,\varepsilon_{E/F}^{i})}
\end{align*}
where $c$ is defined in Section \ref{sec4} and $\kappa=\sum_{v}\kappa_v$.
\end{thms}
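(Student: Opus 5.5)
Starting from the factorization at the end of Section~\ref{sec4},
\begin{equation*}
\R(s,\I_n(f))=c\prod_{v\in\bmf{f}}\hat{\R}_v(q_v^{-\maf{s}_v},q_v^{-\bar{\maf{s}_v}},q_v^{n-s})\times\Big(2^{-nd_Fs+n(n+1)d_F/2-n\kappa}\prod_{v\in \bmf{a}}\prod_{i=1}^n\Gamma_{\mb{C}}(s+\kappa_v-i)\Big),
\end{equation*}
the archimedean part and the constant $c$ already have the required shape. What remains is to evaluate each local factor with Theorem~\ref{thm:RS2}, specialized at $X=q_v^{-\maf{s}_v}$, $Y=q_v^{-\bar{\maf{s}}_v}$, $t=q_v^{n-s}$, and then recognize the resulting Euler product. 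Since $S(f)=\emptyset$ by \cite{BlD}, the parameters $\maf{s}_v$ are purely imaginary, so $Y=X^{-1}$. Hence $XY=1$, $X^{-1}Y^{-1}=1$, $XY^{-1}=X^2$, and $X^{-1}Y=X^{-2}$. The four denominator products in Theorem~\ref{thm:RS2} therefore collapse into the three-term local adjoint factor $(1-\epsilon X^{2}\cdot)(1-\epsilon\,\cdot)(1-\epsilon X^{-2}\cdot)$ together with an extra Dirichlet-type factor.

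I will treat the three cases in turn. In the split case, $\varepsilon_{E/F}(\varpi_v)=1$, and the denominators become $\prod_{i=1}^n(1-q_v^{-s-i})^2(1-q_v^{-s-i}X^{\pm2})$ after the substitution $q_v^{-n+i-1}t=q_v^{-s+i-1}$; reindexing $i\mapsto n+1-i$ turns the exponent into $-s-i$ up to the shift. I then read this as $\prod_i L_v(s+i,\Ad,f,\varepsilon^{i-1})L_v(s+i,\varepsilon^{i-1})$. The numerator $\prod_i(1-q_v^{-n+1-(i-1)}t^2)=\prod_i(1-q_v^{n-2s+1-i})$ should be identified with $\prod_i L_v(2s-n+i,\varepsilon^i)^{-1}$ after the substitution $i\mapsto n+1-i$.

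In the inert case, $(-q_v)^{-i}$ carries the sign $\varepsilon_{E/F}(\varpi_v)^{i}$. This produces exactly the twists $\varepsilon^{i-1}$ and $\varepsilon^{i}$; I will check the parity alignment carefully using that $n$ is odd.

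The ramified case is the main obstacle. Here only $m+1$ and $m$ factors appear, with step $2$ in the exponent, so at ramified $v$ the local $L$-factors of $\varepsilon^{i-1}$ vanish for even $i-1$ shifts. The plan is to check that the products over $i$ odd and $i$ even reproduce $\prod_i L_v(s+i,\Ad,f,\varepsilon^{i-1})L_v(s+i,\varepsilon^{i-1})$ with the convention $\varepsilon_v=0$ at ramified places. This must be combined with Proposition~\ref{prop:RS1}, which sums over $d\in\hana{O}^\times/N(\hana{O}_E^\times)$; the two classes contribute through the sign term of Theorem~\ref{thm:KS}(3), and I expect this sign term to cancel for $n$ odd. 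The leftover factor $t^{\maf{f}_v(n-1)/2}=q_v^{\maf{f}_v(n-1)(n-s)/2}$ is then collected over $v$, and $\prod_v q_v^{\maf{f}_v}=|D_E/D_F^2|$ yields the global factor $|D_E/D_F^2|^{(n-1)(n-s)/2}$. The delicate points are sign and normalization bookkeeping: the factor $\tfrac12$ in the ramified odd formula against the sum over the two norm classes, and dyadic places where $\maf{f}_v$ is even. I will verify these against Katsurada's rational case \cite{K3} as a consistency check.

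Finally, I multiply the local results over all finite $v$ and assemble the statement of Theorem~\ref{thms:period1} with the archimedean Gamma product.
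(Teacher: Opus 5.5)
Your route is the paper's own. You take the factorization at the end of Section~\ref{sec4}, insert Theorem~\ref{thm:RS2} at $X=q_v^{-\maf{s}_v}$, $Y=q_v^{-\bar{\maf{s}}_v}$, $t=q_v^{n-s}$, use \cite{BlD} to get $Y=X^{-1}$, and collect $t^{\maf{f}_v(n-1)/2}$ into $|D_E/D_F^2|^{(n-1)(n-s)/2}$. The collapse of the four denominator products into an adjoint factor times a Hecke factor is right, and so is the discriminant bookkeeping.

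But the matching itself, which is the whole content of this proof, does not go through as you describe it. Your hedges (\emph{up to the shift}, \emph{should be identified}) cover genuine discrepancies. With $t=q_v^{n-s}$ the split factors are $1-q_v^{-s+i-1}(\cdots)$, which become $1-q_v^{-(s-n+i)}(\cdots)$ after $i\mapsto n+1-i$. The inert factors are directly $1+(-q_v)^{-i}q_v^{n-s}(\cdots)=1-\varepsilon_{E/F}^{i-1}(\varpi_v)\,q_v^{-(s-n+i)}(\cdots)$. So you obtain $\prod_{i=1}^nL(s-n+i,\mr{Ad},f,\varepsilon_{E/F}^{i-1})L(s-n+i,\varepsilon_{E/F}^{i-1})$. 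A translation of $s$ by $n$ is not a reindexing, so $L(s+i,\dots)$ cannot be reached. The $s-n+i$ form is exactly what the paper's proof arrives at, and the proof of the main theorem needs it, since $\zeta_F(s-n+1)$ supplies the pole at $s=n$. The $s+i$ in the displayed statement is therefore a misprint, and you should say so rather than absorb it.

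The numerator is likewise unverified. In fact $q_v^{-n+1-(i-1)}t^2=q_v^{n+2-i-2s}$ (you dropped a $1$), and neither this nor your expression reindexes to $\prod_iL_v(2s-n+i,\cdot)^{-1}$. The numerators printed in Theorem~\ref{thm:RS2} do not match the claimed denominator. The paper's proof itself writes first $L(2(s-n)+i,\varepsilon_{E/F}^{i-1})$ and then $L(2s-n+i,\varepsilon_{E/F}^{i-1})$, while the statement has $\varepsilon_{E/F}^{i}$. This factor has to be recomputed. A quick test is $n=1$: there $\wF_v(z;X)=\sum_{k=0}^{e}X^{2k-e}$ with $e=\ord_v z$ for every type of $E_v/F_v$. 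The local integral is then a constant times $(1-q_v^{-2s})/\prod(1-X^{\pm1}Y^{\pm1}q_v^{-s})$, giving the denominator $\zeta_F(2s)$. This is consistent with $L(2s-n+i,\varepsilon_{E/F}^{i-1})$, not with $\varepsilon_{E/F}^{i}$.

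In the ramified case your parity is reversed. The factors that become $1$ are those with $i-1$ odd, where $\varepsilon_{E/F}^{i-1}=\varepsilon_{E/F}$ is ramified. For $i-1$ even the full factors survive; these are the $m+1$ values $i=n+2-2j$, $1\le j\le m+1$, coming from $q_v^{-n+2j-2}t=q_v^{-(s-n+i)}$.

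You also need neither Proposition~\ref{prop:RS1} nor a sign cancellation. Theorem~\ref{thm:RS2}(3) is already the summed $\hat{\R}_v$. Moreover, Theorem~\ref{thm:KS}(3) in odd size, the only case entering through the Corollary to Theorem~\ref{thm:RS1}, has no $\varepsilon_{E/F}(d)$-term at all, only the $\tfrac12$ that the two norm classes absorb.

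What your planned check would actually reveal is a defect in Theorem~\ref{thm:RS2}(3) as printed. Its $X^{-1}Y$ and $XY^{-1}$ products run only over $j\le m$. With $Y=X^{-1}$, the $X^{\pm2}$ factors of $L_v(s-n+1,\mr{Ad},f)$ are therefore missing. The printed formula is also not invariant under $X\mapsto X^{-1}$, although $\wF_v(Z;X^{-1})=\wF_v(Z;X)$ for odd $n$ forces this symmetry. The paper's proof passes over this silently.

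So your strategy is correct, but it is not yet a proof. You must first replace that local formula by one symmetric in $X\mapsto X^{-1}$. Then carry out the matching explicitly, including the numerator, in the $s-n+i$ normalization.
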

\begin{proof}
By Theorem \ref{thm:RS2}, we have
\begin{align*}
\prod_{v\in\bmf{f}}\hat{\R}_v(q_v^{-\maf{s}_v},q_v^{-\bar{\maf{s}_v}},q_v^{n-s})
=&(|D_E|/|D_F|^2)^{(n-1)(n-s)/2}\prod_{i=1}^nL(2(s-n)+i,\varepsilon_{E/F}^{i-1})^{-1}\\
&\times\prod_{i=1}^n\prod_{v\in\bmf{f}}L_v(s-n+i+\maf{s}_v+\bar{\maf{s}}_v,\varepsilon_{E/F}^{i-1})\\
&\times\prod_{i=1}^n\prod_{v\in\bmf{f}}L_v(s-n+i+\maf{s}_v-\bar{\maf{s}}_v,\varepsilon_{E/F}^{i-1})\\
&\times\prod_{i=1}^n\prod_{v\in\bmf{f}}L_v(s-n+i-\maf{s}_v+\bar{\maf{s}}_v,\varepsilon_{E/F}^{i-1})\\
&\times\prod_{i=1}^n\prod_{v\in\bmf{f}}L_v(s-n+i-\maf{s}_v-\bar{\maf{s}}_v,\varepsilon_{E/F}^{i-1})
\end{align*}
Now by \cite{BlD}, $\maf{s}_v+\bar{\maf{s}}_v=0$. Then, we have
\begin{align*}
&\prod_{v\in\bmf{f}}\hat{\R}_v(q_v^{-\maf{s}_v},q_v^{-\bar{\maf{s}_v}},q_v^{n-s})\\
=&(|D_E|/|D_F|^2)^{(n-1)(n-s)/2}\prod_{i=1}^n\dfrac{L(s-n+i,\mr{Ad},f,\varepsilon_{E/F}^{i-1})L(s-n+i,\varepsilon_{E/F}^{i-1})}{L(2s-n+i,\varepsilon_{E/F}^{i-1})}.
\end{align*}
Thus, we have
\begin{align*}
\R(s,\I_n(f))=&c2^{-n\kappa-(ns-(n^2+n)/2)d_F}|D_E/D_F^2|^{(n-1)(n-s)/2}\prod_{v\in\bmf{a}}\prod_{i=1}^n\Gamma_{\mb{C}}(s+\kappa_v-i)\\
&\times\prod_{i=1}^n\dfrac{L(s-n+i,\mr{Ad},f,\varepsilon_{E/F}^{i-1})L(s-n+i,\varepsilon_{E/F}^{i-1})}{L(2s-n+i,\varepsilon_{E/F}^{i-1})}
\end{align*}
\end{proof}

Proof of Theorem \ref{MT}.\\
By Theorem \ref{thms:period1},
\begin{align*}
    &\Res_{s=n}\R(s,\I_n(f))\\
    =&c2^{-n\kappa-(n^2-n)d_F/2}
    \Res_{s=n}\prod_{i=1}^n\dfrac{\wLmd(s-n+i,\Ad,f,\varepsilon_{E/F}^{i-1})\Lmd(s-n+i,\varepsilon_{E/F}^{i-1})}{\Lmd(2s-n+i,\varepsilon_{E/F}^{i-1})}\\
    &\times\prod_{i=1}^n\dfrac{\Gamma_{\mb{R}}(n+i,\varepsilon_{E/F}^{i-1})^{d_F}}{\Gamma_{\mb{R}}(i,\varepsilon_{E/F}^{i-1})^{d_F}\Gamma_{\mb{C}}(i)^{d_F}}\\
    =&c2^{-n\kappa-(n^2-n)d_F/2}
    \Res_{\sigma=1}\dfrac{\wLmd(\sigma,\Ad,f)\zeta_F(\sigma)}{\zeta_F(n+\sigma)}\\
    &\times\prod_{i=2}^n\dfrac{\wLmd(i,\Ad,f,\varepsilon_{E/F}^{i-1})\Lmd(i,\varepsilon_{E/F}^{i-1})}{\Lmd(n+i,\varepsilon_{E/F}^{i-1})}\prod_{i=1}^n\dfrac{\Gamma_{\mb{R}}(n+i,\varepsilon_{E/F}^{i-1})^{d_F}}{\Gamma_{\mb{R}}(i,\varepsilon_{E/F}^{i-1})^{d_F}\Gamma_{\mb{C}}(i)^{d_F}}.
\end{align*}
Now we have 
\begin{equation*}
c=2^{2nd_F}|D_F|^{-n^2/2}\Res_{s=1}L(s,1)\prod_{i=2}^nL(i,\varepsilon_{E/F}^{i-1})\prod_{i=1}^{n} L(n+i,\varepsilon_{E/F}^{i-1})^{-1}\times\prod_{j=1}^n{\Gamma_{\mb{C}}(j)^{-d_F}}
\end{equation*}
and
\begin{align*}
    &\Res_{s=n}\R(s,\I_n(f))\\
    =&2^{-n\kappa-(n^2-5n)d_F/2}|D_F|^{-n^2/2}
    \dfrac{\wLmd(1,\Ad,f)\Res_{\sigma=1}\zeta_F(\sigma)^2}{\zeta_F(n+1)^2}\\
    &\times\prod_{i=2}^n\dfrac{\wLmd(i,\Ad,f,\varepsilon_{E/F}^{i-1})\Lmd(i,\varepsilon_{E/F})^2}{\Lmd(n+i,\varepsilon_{E/F}^{i-1})^2}    
    \prod_{i=1}^n\dfrac{\Gamma_{\mb{R}}(n+i,\varepsilon_{E/F}^{i-1})^{2d_F}}{\Gamma_{\mb{R}}(i,\varepsilon_{E/F}^{i-1})^{2d_F}\Gamma_{\mb{C}}(i)^{2d_F}}.
\end{align*}

By the corollary of Proposition \ref{prop:period1}, we have
\begin{align*}
    &\Res_{s=n}\R(s,\I_n(f))\\
    =&|D_F|^{-n/2}|D_E|^{-n(n-1)/4}\dfrac{\Res_{\sigma=1}\Lmd(\sigma,1)\prod_{i=2}^n\Lmd(i,\varepsilon_{E/F}^{i-1})}{\prod_{i=1}^{n}\Lmd(n+i,\varepsilon_{E/F}^{i-1})}\perd{\I_n(f),\I_n(f)}.
\end{align*}
Therefore, we have
\begin{align*}
&\perd{\I_n(f),\I_n(f)}\\
=&2^{-n\kappa-d_F(n+5)n/2}|D_E|^{n(n-1)/4}|D_F|^{-n(n-1)/2}\dfrac{\wLmd(1,\mr{Ad},f)\Res_{\sigma=1}\Lmd(\sigma,1)}{\zeta_F(n+1)}\\
&\times\prod_{i=2}^{n}\dfrac{\wLmd(i,\mr{Ad},f,\varepsilon_{E/F}^{i-1})\Lmd(i,\varepsilon_{E/F}^{i-1})}{\Lmd(n+i,\varepsilon_{E/F}^{i-1})}\prod_{i=1}^n\dfrac{\Gamma_{\mb{R}}(n+i,\varepsilon_{E/F}^{i-1})^{2d_F}}{\Gamma_{\mb{R}}(i,\varepsilon_{E/F}^{i-1})^{2d_F}\Gamma_{\mb{C}}(i)^{2d_F}}.
\end{align*}

\section{Appendix}\label{ap}
\subsection{Appendix: On Tamagawa measure}\label{ap:Tamagawa}
In Section 4, we need the local factor of the Tamagawa measure on $\Z\lsla\wGe$.  
To compare the Tamagawa measure and the measure determined by the Iwasawa decomposition $\wGe(\A_F)=\wPe(\A)\wK$, we calculate the volume of $\wK_{n,v}$ for $v\in\bmf{f}$ and the volume of $\wGe_n(F\otimes\mb{R})/\wK_{n,\infty}$ as in \cite[Appendix to Section 10]{II}. 

We recall the definition of the Tamagawa measure on $\wGe$.
Let $\mu^{\wGe}$ be a left invariant highest differential form on $\wGe$. For any $v\in\bmf{v}$, $\mu^H$ induces the Haar measure $\mu_v^{\wGe}$ on each $\wGe(F_v)$.
In order to see the convergence factor, we consider $\Hom(\wGe,\GL_1)$ as the $\Gal(E/F)$-module. 
Then $\Hom(\wGe,\GL_1)$ is generated by the similitude character $\nu$ and the determinant $\det$ and 
\begin{equation*}
    \nu^{\rho_{E/F}}=\nu,\,{\det}^{\rho_{E/F}}={\det}^{-1}\nu^{2n}.
\end{equation*}
Then $\{1,\varepsilon_{E/F}\}$ is the set of character corresponding to the irreducible integral representation appearing in $\Hom(H,\GL_1)$ with multiplicity. Put $L_v(s,\chi_{\wGe})=L_v(s,1)L_v(s,\varepsilon_{E/F})$ and $L(s,\chi_{\wGe})=\prod_{v\in\bmf{f}}L_v(s,\chi_{\wGe})$.
Then, the Haar measure $\mu_{\A}^{\wGe}$ on $\wGe(\A)$ is defined by
\begin{equation*}
    \mu_{\A}^{\wGe}=\dfrac{|D_{F}|^{-2n^2-1}}{\lim_{s\to1}(s-1)L(s,\chi_{\wGe})}\prod_{v\in\bmf{a}}\mu_v^{\wGe}\prod_{v\in\bmf{f}}L_v(1,\chi_{\wGe})\mu_v^{\wGe}.
\end{equation*}
Let $\mu_F^{\wGe}$ be the canonical Haar measure on $\wGe_n(F)$. Then the Tamagawa measure $dg$ on $\wGe(F)\lsla\wGe(\A)$ is defined by $\mu_F^{\wGe}\lsla\mu_{\A}^{\wGe}$.

We also recall the definition of the Tamagawa measure on $\wPe$. Let $\mu^{\wPe}$ be a left invariant highest differential form on $\wPe$. For any $v\in\bmf{v}$, $\mu^H$ induces the Haar measure $\mu_v^{\wPe}$ on each $\wPe(F_v)$.
In the same way, we put $L_v(s,\chi_{\wPe})=L_v(s,1)^2L_v(s,\varepsilon_{E/F})$ and $L(s,\chi_{\wPe})=\prod_{v\in\bmf{f}}L_v(s,\chi_{\wPe})$. Then $\{L_v(1,\chi_{\wPe})\}_{v\in\bmf{f}}$ is the convergence factor for $\wPe$.
\begin{equation*}
    \mu_{\A}^{\wPe}=\dfrac{|D_{F}|^{-3n^2/2-1}}{\lim_{s\to1}(s-1)^2L(s,\chi_{\wPe})}\prod_{v\in\bmf{a}}\mu_v^{\wPe}\prod_{v\in\bmf{f}}L_v(1,\chi_{\wPe})\mu_v^{\wPe}.
\end{equation*}
Let $\mu_F^{\wPe}$ be the canonical Haar measure on $\wPe_n(F)$. Then the Tamagawa measure $dp$ on $\wPe(F)\lsla\wPe(\A)$ is defined by $\mu_F^{\wPe}\lsla\mu_{\A}^{\wPe}$.

%Let $\hana{D}_{\infty}$ be a closed subset of $\hana{H}_n^{\bmf{a}}$ such that $\int_{\hana{D}_{\infty}}|\det Y|^{-n}dXdY=1$ and $\sqrt{-1}\cdot1_n\in\hana{D}_{\infty}$. Let $\hana{X}_{n,\infty}^0$ be the closed set of $\wGe_n(\mb{R})$ such that $\hana{X}_{n,\infty}/\wK_{n,\infty}=\hana{D}_{\infty}$.Put $\hana{X}_{n,\infty}=\{x\in\hana{X}_{n,\infty}^0|\nu_n(x)\in[1,e]\}$.Then we want $\vol(\wK_{n}\times\hana{X}_{n,\infty},dg)$.
We define the map $\bm{I}:\wPe(\A)\times\wK\to\wGe(\A)$ by $\bm{I}(p,k)=pk$. Put $\mu_{\wK}$ be the Haar measure on $\wK$ such that $\vol(\wK,\mu_{\wK})=1$. First, we compute the constant $c_0$ such that $\bm{I}^*(\mu_{\A}^{\wGe})=c_0d\mu_{\A}^{\wPe}\times\mu_{\wK}$. 

Let $\omega$ be an integral element in $E$ such that $\bar{\omega}=-\omega $. Let $E_{ij}$ be the  $(i, j)$-elementary matrix of size $n$. We put
\begin{align*} 
S_{ij}=&\begin{cases}
E_{ij}+E_{ji}&\text{if $i\ne j$}\\
E_{ii}&\text{if $i=j$}
\end{cases},&
A_{ij}=&E_{ij}-E_{ji},&D=\begin{pmatrix}
    0&0\\
    0&1_n
\end{pmatrix}\\
X_{ij}=&\begin{pmatrix}
E_{ij}&0\\
0&-E_{ji}
\end{pmatrix},&
Y_{ij}=&\begin{pmatrix}
0&S_{ij}\\
0&0
\end{pmatrix},&
Y'_{ij}=&\begin{pmatrix}
0&0\\
S_{ij}&0
\end{pmatrix},\\
V_{ij}=&\omega\begin{pmatrix}
0&A_{ij}\\
0&0
\end{pmatrix},&
V'_{ij}=&\omega\begin{pmatrix}
0&0\\
A_{ij}&0
\end{pmatrix},&
W_{ij}=&\omega\begin{pmatrix}
E_{ij}&0\\
0&E_{ji}
\end{pmatrix}.
\end{align*}
Put 
\begin{align*}
\hana{B}_{\wM}=\{X_{ij}|1\leqq i,j\leqq n\}\cup\{W_{ij}|1\leqq i,j\leqq n\}\cup\{D\},
\end{align*}
\begin{align*}
\hana{B}_{\wPe}=\hana{B}_{\wM}\cup\{Y_{ij}|1\leqq i\leqq j\leqq n\}\cup\{V_{ij}|1\leqq i<j\leqq n\}
\end{align*}
and
\begin{align*}
\hana{B}_{\wGe}=\hana{B}_{\wPe}\cup\{Y_{ij}'|1\leqq i\leqq j\leqq n\}\cup\{V_{ij}'|1\leqq i<j\leqq n\}.
\end{align*}

Then for $H=\wGe$ and $\wPe$, the elements in $\hana{B}_{H}$ form the basis of $\mr{Lie}\,H$ defined over $F$ and $\{2D+\sum_{i=1}^n X_{ii},\sum_{i=1}^n W_{ii}\}$ form the basis of $\mr{Lie}$ $\Z_n$. Let $\maf{L}_H$ be the $\hana{O}_F$ lattice generated by $\hana{B}_H$ for $H=\wGe$ and $\wPe$. Then $\mu_v^H$ is determined by $\hana{B}_H$ for $H=\wGe$ and $\wPe$. 

We compute the non-archimedean part of the adelic measure defined by the above lattice. We define the map $\bm{I}_v:\wPe(F_v)\times\wK_v\to\wGe(F_v)$ by $\bm{I}_v(p,k)=pk$ and there exists a constant $c_{0,v}>0$ such that ${\bm{I}_v}^*(L_v(1,\chi_{\wGe})\mu_v^{\wGe})=c_{0,v}L_v(1,\chi_{\wPe}) \mu_v^{\wPe}dk_v$ where $dk_v$ is the Haar measure on $\wK_v$ such that $\vol(\wK_v,dk_v)=1$.

Then $c_{0,v}=\dfrac{\vol(\wK_{v},\mu_v^{\wGe})L_v(1,\chi_{\wGe})}{\vol(\wK_{v}\cap\wPe(F_v),\mu_v^{\wPe})L_v(1,\chi_{\wPe})}$.

Put $\maf{M}_v=\{X|(X,c)\in \Mat_{2n}(\hana{O}_{E_v})\times \hana{O}_{F_v}\text{ and }J_nX+X^*J_n=cJ_n\}$ and $\maf{M}_v^{\wPe}=\maf{M}_v\cap\mr{Lie}$ $\wPe(F_v)$ for $v\in\bmf{f}$. Let $\maf{m}_v$ be the left Haar measure determined by $\maf{M}_v$ and Let $\maf{m}_v^{\wPe}$ be the left Haar measure determined by $\maf{M}_v^{\wPe}$

Since we have
\begin{align*}
\left[\maf{M}_v:\maf{L}_{\wGe}\otimes\hana{O}_v\right]
&=\left[\hana{O}_{E_v}:\hana{O}_v[\omega]\right]^{(2n^2-n)}=|2\omega|_{E_v}^{(2n^2-n)}|\maf{d}_{E/F,v}|_{E_v}^{-(2n^2-n)/2}
\end{align*}
and
\begin{align*}
\left[\maf{M}_v^{\wPe}:\maf{L}_{\wPe}\otimes\hana{O}_v\right]
&=\left[\hana{O}_{E_v}:\hana{O}_v[\omega]\right]^{(3n^2-n)/2}=|2\omega|_{E_v}^{(3n^2-n)/2}|\maf{d}_{E/F,v}|_{E_v}^{-(3n^2-n)/4},
\end{align*}
we obtain
\begin{align*}
    \maf{m}_v=&|2\omega|_{E_v}^{-(2n^2-n)}|\maf{d}_{E/F,v}|_{E_v}^{(2n^2-n)/2}\\
    \maf{m}_v^{\wPe}=&|2\omega|_{E_v}^{-(3n^2-n)/2}|\maf{d}_{E/F,v}|_{E_v}^{(3n^2-n)/4}.
\end{align*} 

Put 
\begin{equation*}
\wK_{\maf{p},v}=\left\{\begin{pmatrix}
    x_{11}&x_{12}\\
    x_{21}&x_{22}
\end{pmatrix}\in\wGe(F_v)\middle|\begin{matrix}x_{11},x_{22}\in1_n+\Mat_n(\maf{p}_E),\\
x_{12}\in\Mat_n(\maf{p}_E\maf{d}_{E/F}^{-1}),x_{21}\in\Mat_n(\maf{p}_E\maf{d}_{E/F})
\end{matrix}\right\}.
\end{equation*}
Put $k_E=\hana{O}_{E_v}/\maf{p}_{E_v}$, $k_F=\hana{O}_v/\maf{p}_v$.
Then we have
\begin{align*}
    \wK_v/\wK_{\maf{p},v}\cong&\{(x,t)\in\GL_{2n}(k_E)\times k_F^{\times}|J_n[x]=tJ_n\}\\
    (\wK_v\cap\wPe(F_v))/(\wK_{\maf{p},v}\cap\wPe(F_v))\cong&(\GL_n(k_E)\times k_F^{\times})\ltimes\{X\in\Mat_n(k_E)|X^*=X\}.
\end{align*}

Thus, the order of $\wK_v/\wK_{\maf{p},v}$ is $q_v^{2an^2+1}\prod_{i=1}^{2n}(1-\epsilon_{E/F}^i(\varpi_v)q_v^{-i})\times (1-q_v^{-1})$ and the order of $(\wK_v\cap\wPe(F_v))/(\wK_{\maf{p},v}\cap\wPe(F_v))$
is $q_v^{an^2+1}\prod_{i=1}^{n}(1-q_v^{-i})(1-\epsilon_{E/F}(\varpi_v)q_v^{-i})\times(1-q_v^{-1}) \times q_v^{(n^2+n)/2}q_v^{(a-1)(n^2-n)/2}$ where $a=1$ if $E_v$ is ramified over $F_v$ and $a=2$ otherwise.

In addition, we have
\begin{align*}
    \maf{m}_v(\wK_{\maf{p,v}})=&q_v^{-2an^2+(a-2)n-1}\times|\maf{d}_{E/F,v}|_v^{-(n^2-n)/2}|\maf{d}_{E/F,v}|_v^{(n^2-n)/2}\\
    \maf{m}_v^{\wPe}(\wK_{\maf{p,v}}\cap\wPe(F_v))=&q_v^{-3an^2/2+(a-2)n/2-1}\times|\maf{d}_{E/F,v}|_v^{-(n^2-n)/2}.
\end{align*}

Then we have 
\begin{align*}
\vol(\wK_{n,v};\mu_v^{\wGe})=|2\omega|_{E_v}^{-(2n^2-n)}|\maf{d}_{E/F,v}|_{E_v}^{(2n^2-n)/2}\prod_{i=1}^{2n}L_v(i,\varepsilon_{E/F}^i)^{-1}\times\zeta_v(1)^{-1}
\end{align*}
and
\begin{align*}
&\vol(\wK_{n,v}\cap\wPe_n(F_v);\mu_v^{\wPe})\\
=&|2\omega|_{E_v}^{-(3n^2-n)/2}|\maf{d}_{E/F,v}|_v^{(3n^2-n)/4}|\maf{d}_{E/F,v}|_{E_v}^{-(n^2-n)/2}\prod_{i=1}^{n}\zeta_v(i)^{-1}L_v(i,\varepsilon_{E/F})^{-1}\times\zeta_v(1)^{-1}
\end{align*}

Thus, $c_{0,v}=|2\omega|_{E_v}^{-(n^2-n)/2}\prod_{i=2}^nL_v(i,\varepsilon_{E/F}^{i-1})\prod_{i=1}^{n}L_v(n+i,\varepsilon_{E/F}^{n+i})^{-1}$ considering the convergence factor.

For $v\in\bmf{a}$, we compute the measure by comparing the measure on the hermitian upper half-plane and one on $\K_v$.

By Iwasawa decomposition, we have $\Pe(F_v)\K_{v}=\Ge(F_v)$. Since we have an exact sequence $1\to\Ge(F_v)\to\Ge_n^+(F_v)\to\mb{R}_{>0}\to1$, we obtain $\wGe_n(F_v)=\wPe_n(F_v)\K_{v}$. We define the map $\bm{I}_v:\wPe(F_v)\times\K_v\to\wGe(F_v)$ by $\bm{I}_v(p,k)=pk$. Then, there exists the constant $c_{0,v}>0$ such that ${\bm{I}_v}^*(\mu_v^{\wGe})=c_{0,v}\mu_v^{\wPe}d_{\K_v}k$.

Since we have
\begin{align*}    
\wGe(F_v)/\K_{v}\cong\wPe(F_v)/(\K_v\cap\wPe(F_v))\cong&\hana{H}_n\times\mb{R}^{\times}\cong\Ge_n(F_v)/\K_{n,v}\times\mb{R}^{\times},
\end{align*}
the Haar measure induced by $\maf{L}$ determines the invariant measure. 
For $v\in\bmf{a}$, compared to the coefficients, both the Haar measure $\mu_v^{\wGe}$ and $\mu_v^{\wPe}$ induce the measure 
\begin{equation*}
2^{-2n}|\omega|_{E_v}^{-n(n-1)}|\det Y|^{-2n}dXdY
\end{equation*}
on the Hermitian upper half-plane.

On the other hand, we compute the measure on $\K_{v}$ and $\K_{v}\cap\wPe(F_v)$. 
Let $A$ be $\dfrac{1}{\sqrt{2}}\begin{pmatrix}
1_n&\sqrt{-1}1_n\\
1_n&-\sqrt{-1}1_n
\end{pmatrix}$. 
Put 
\begin{equation*}\hana{B}_{\K_{v}}^{\wPe}=\{\mr{Ad}(A)(X_{ij}-X_{ji})|1\leqq i<j\leqq n\}\cup\{\mr{Ad}(A)(W_{ij}+W_{ji})|1\leqq i\leqq j\leqq n\}
\end{equation*}
and
\begin{equation*}
\hana{B}_{\K_{v}}=\hana{B}_{\K_{v}}^{\wPe}\cup\{\mr{Ad}(A)(Y_{ij}-Y_{ij}')|1\leqq i\leqq j\leqq n\}\cup\{\mr{Ad}(A)(V_{ij}+V_{ij}')|1\leqq i< j\leqq n\}.
\end{equation*}
Then the elements in $\hana{B}_{\K_v}$ form the basis of the Lie algebra of $\K_{v}$ and the elements in $\hana{B}_{\K_v}^{\wPe}$ form the basis of the Lie algebra of $\K_{v}\cap\wPe(F_v)$.
We denote by $\mu_v^{\K_v}$ the measure on $\K_v$ induced by $\hana{B}_{\K_v}$ and by $\mu_v^{(\K_v,\wPe)}$ the measure on $\K_v$ induced by $\hana{B}_{\K_v}^{\wPe}$.

Since $U(n)\times U(n)\cong\K_{v}$, $U(n)\cong\K_{v}\cap\wPe(F_v)$ and the volume of $U(n)$ normalized by the Chevalley basis equals $(2\pi)^{n(n+1)/2}\prod_{i=1}^n\Gamma(i)^{-1}=2^n\prod_{i=1}^n\Gamma_{\mb{C}}(i)^{-1}$, we have 
\begin{align*}
&\vol(\K_{v}\cap\wPe(F_v);\mu_v^{(\K_v,\wPe)})=|2\omega|_{E_v}^{-(n^2+n)/2}2^n\prod_{i=1}^n\Gamma_{\mb{C}}(i)^{-1}\\
&\vol(\K_{v};\mu_v^{\K_v})=|2\omega|_{E_v}^{-n^2}2^{2n}\prod_{i=1}^n\Gamma_{\mb{C}}(i)^{-2}.
\end{align*}

Thus, $c_{0,v}=|2\omega|_{E_v}^{-(n^2-n)/2}2^n\prod_{i=1}^n\Gamma_{\mb{C}}(i)^{-1}$.

Therefore, we have 
\begin{align*}
c_0=&|D_F|^{-n^2/2}\Res_{s=1}L(s,1)\prod_{v\in\bmf{v}}c_{0,v}\\
=&2^{nd_F}|D_F|^{-n^2/2}\Res_{s=1}L(s,1)\prod_{i=2}^nL(i,\varepsilon_{E/F}^{i-1})\prod_{i=n+1}^{2n} L(i,\varepsilon_{E/F}^i)^{-1}\times\prod_{j=1}^n{\Gamma_{\mb{C}}(j)^{-d_F}}.
\end{align*}

\subsection{Appendix: Normalization of the volume}\label{ap:locden}
Now we assume $n$ is odd.

Since for any $v\in\bmf{f}$, the order of $F_v^{\times}/N_{E/F}(E_v^{\times})$ is even unless $v$ is split, there exists $(h,t)\in\GL_n(E)\times F^{\times}$ such that $t$ is a positive element in $F_v$ for any $v\in\bmf{a}$ and $B[h]=t\cdot 1_n$ according to the Hasse principles. 

Let $\hana{B}_{\wM}$ be as in \ref{ap:Tamagawa}.
Put 
\begin{equation*}
\hana{B}_{\wM}^h=\left\{\begin{pmatrix}
    hXh^{-1}&0\\
    0&(h^*)^{-1}(-X+x1_n)h^*
\end{pmatrix}\middle|\begin{pmatrix}
    X&0\\
    0&-X^*+x
\end{pmatrix}\in \hana{B}_{\wM}\right\}.
\end{equation*}
Let $\maf{L}^{\wM,h}$ be the $O_F$-lattice generated by $\hana{B}_{\wM}^h$.

Put $\maf{L}^{\wM_B}=\mr{Lie}$ $\wM_B\cap\maf{L}^{\wM,h}$. We fix the $O_F$ basis $\hana{B}_{\wM_B}$ of $\maf{L}^{\wM_B}$.
For $v\in\bmf{v}$, let $\mu_v^{\wM_B}$ be the Haar measure on $\wM_B(F_v)$ determined by $\hana{B}_{\wM_B}$ and $\mu_v^{\wM}$ be the Haar measure determined by $\hana{B}_{\wM}^h$.
Then $\prod_{v\in\bmf{v}}\mu_v^{\wM_B}$ and $\prod_{v\in\bmf{v}}\mu_v^{\wM}$ are the Tamagawa measures on each group.

Then for $v\in\bmf{f}$, the quotient measure $\mu_v^{\wM_B}\lsla\mu_v^{\wM}$ determines the $\wM(F_v)$-invariant measure on $\Her(F_v)^{nd}$.

We denote by $dZ_v$ the Haar measure on $\Her_n(F_v)$ such that $\int_{\Her_n(F_v;\hana{O}_{E_v})}dZ_v=1$. Then $\alpha_{F_v}^{-n}(\det Z_v)dZ_v$ is the $\wM(F_v)$-invariant measure. Then, there exists a constant $c_v^B>0$ such that $(\mu_v^{\wM_B}\lsla\mu_v^{\wM})(Z_v)=c_v^B\cdot\alpha_{F_v}^{-n}(\det Z_v)dZ_v$.

Put $d^*Z_v=\alpha_F^{-n}(\det Z_v)dZ_v$ for $v\in\bmf{f}$.

In this section, we determine $c_v^B$ by the Lie theory.

Assume that $v\in\bmf{f}$.
Since the $\wM(F_v)$-orbit containing $B$ is $\Her_n(F_v)^{nd}$, $(\mu_v^{\wM_B}\lsla\mu_v^{\wM})$ determines the Haar measure on $\Her_n(F_v)$ by multiplication of $\alpha_{F_v}^n(\det Z)$. 
Thus, by Lie theory, we can determine $c_v^B$ by computing the coordinate changing matrix and its determinant.

For $B\in\Her_n(F_v)^{nd}$, we define $\mr{orb}_B:\wM_B\lsla \wM\to\Her_n^{nd}$ by $\mr{orb}_B(m)=B[m]$.

Identifying $\mr{Lie}$ $\wM(F_v)=\Mat_n(E_v)\times F_v$ and $T_B\Her_n(F_v)^{nd}=\Her_n(F_v)$, we have the lie map $\mr{orb}_B^*:\Mat_n(E_v)\times F_v\to\Her_n(F_v)$ where $\mr{orb}_B^*(X,t)=BX+X^*B-tB$ for $(X,t)\in\Mat_n(E_v)\times F_v$. Put $\maf{L}_v^B=\mr{orb}_B^*(\maf{L}^{\wM,h}\otimes_{\hana{O}_F}\hana{O}_v)$. 
Then for any $1\leqq i,j \leqq n$, we have
\begin{align*}
\mr{orb}_B(hE_{ij}h^{-1},0)=&t\cdot(E_{ij}+E_{ji})[h^{-1}],\\
\mr{orb}_B(h\omega E_{ij}h^{-1},0)=&t\omega\cdot (E_{ij}-E_{ji})[h^{-1}],\\
\mr{orb}_B(0,1)=&t\cdot1_n[h^{-1}].
\end{align*}
Thus, $t^{-1}\hana{L}_v^B[h]=\{Z\in\Her_n(F_v;\hana{O}_{F_v}[\omega])|Z_{ii}\in 2O_{F_v}\}$.

Let $S_{ij}$, $A_{ij}$ as in Appendix \ref{ap:Tamagawa}.
Let $\maf{M}_{\Her_n,v}$ be $O_{F_v}$-lattice generated by $\hana{B}_{\Her_n}=\{S_{ij},\omega A_{ij}|1\leqq i,j\leqq n\}$ and $\mu'_v$ be the Haar measure on $\Her_n(F_v)$ determined by $\hana{B}_{\Her_n}$. Then we have $\mu'_v=\alpha_{E_v}^{-n(n-1)/2}(2\omega)dZ_v$.
Now, we have $[\maf{M}_{\Her_n,v}:\hana{L}_v^B[h]]=|2|_v^{-n}$. 
%Therefore,$\omega=(2\omega)^{-n(n-1)/2}2^{-n}|\det h|_E^{-n}\omega_{GL_n,B}^*$ in $T_B^*\Her_n(F_v)$.
%Therefore,$|\det B|_F^{-n}\omega=(2\omega)^{-n(n-1)/2}2^{-n}|\det h|_E^{-n}|\det B|_F^{-n}\omega_{GL_n,B}^*$ in $T_B^*\Her_n(F_v)$.

Thus, we have $(\mu_v\lsla\mu)=\alpha_{E_v}^{-n(n-1)/2}(2\omega)\alpha_{F_v}^{-n}(2)d^*Z_v$ and $c_v^B=\alpha_{E_v}^{-n(n-1)/2}(2\omega)\alpha_{F_v}^{-n}(2)$.

For $v\in\bmf{a}$, we put
\begin{align*}
\wM_B^+(F_v)=&\{\m(A)\bm{d}_n(t)| A\in hU(n)h^{-1},t\in\mb{R}_{>0}\},\\
\hana{B}_v^{\wM_B}=&\m(h)\hana{B}^{\wPe}_{\K_v}\m(h)^{-1}\cup\{D\}.
\end{align*}
Then $\hana{B}_v^{\wM_B}$ is the set of basis of $\mr{Lie}\wM_B(F_v)$. Let $\mu_v^{\wK_v^+,h}$ be the measure on $\wM_B(F_v)$ determined by $\m(h)\hana{B}^{\wPe}_{\K_v}\m(h)^{-1}\cup\{D\}\subset\mr{Lie}\wM_B$. 
By comparing the basis, we have $\mu_v^{\wM_B}=\alpha_{E_v}^{-(n^2+n)/2}(2\omega)\mu_v^{\wK_v^+,h}$.

Put $X_v^B=\{m\in\wM_B(F_v)|1\leqq\nu_n(m)\leqq2\}$. Then we have
\begin{equation*}
\int_{X_v^B}\mu_v^{\wM_B}=\log2\times\frac{(2\pi)^{n(n+1)/2}}{\prod_{i=1}^n\Gamma(i)}.
\end{equation*}

Put $Q=\{b:\text{upper triangle matrix in $\GL_n(\mb{C})$}|b_{ii}\in \mb{R}_{>0}\}$. Then we fix the set of $\mb{R}$-basis $\hana{B}^Q$ of $\mr{Lie}$ $Q$ defined by
\begin{equation*}
\{E_{ij}|1\leqq i\leqq j\leqq n\}\cup\{\sqrt{-1}E_{ij}|1\leqq i<j\leqq n\}.
\end{equation*}
Let $\mu_v^Q$ be the Haar measure on $Q$ determined by $\hana{B}^Q$.
Then we have 
\begin{align*}
\wM(F_v)=&\wM_B(F_v)\{\m(A)|A\in hQh^{-1}\}
\end{align*}
and
\begin{align*}
    \{1_{2n}\}=&\wM_B(F_v)\cap\{\m(A)|A\in hQh^{-1}\}.
\end{align*}

Thus, we have $\wM_B(F_v)\lsla\wM(F_v)\cong Q$ and 
\begin{equation*}
    (\mu_v^{\wM_B}\lsla\mu_v^{\wM})(\bar{q})=\alpha_{E_v}^{-n(n-1)/2}(2\omega)\mu_v^Q(q)
\end{equation*}
where $\delta_{B_{\GL_n}}$ is the modulus character on the Borel subgroup of $\GL_n$ consisting of the upper triangle matrices and $c_v^B=\alpha_{E_v}^{-n(n-1)/2}(2\omega)$.

Since $2\omega\in E^{\times}$, $\prod_{v} \alpha_{E_v}(2\omega)=1$.
Therefore, we have 
\begin{equation*}    \prod_{v\in\bmf{v}}c_v^B=\prod_{v\in\bmf{f}}\alpha_{F_v}^{-n}(2)\alpha_{E_v}^{-n(n-1)/2}(2\omega)\times\prod_{v\in\bmf{a}}\alpha_{E_v}^{-n(n-1)/2}(2\omega)=2^{nd_F}.
\end{equation*}

%-------------------------------------------
% References
%----------------------------------7---------

\nocite{C}
\nocite{F}
\nocite{II}
\nocite{Ike1}
\nocite{Ike2}
\nocite{IKK}
\nocite{K1}
\nocite{KK1}
\nocite{Lan1}
\nocite{M}
\nocite{MS}
\nocite{S1}
\nocite{S2}

\bibliography{references}
\bibliographystyle{abbrv}

\end{document}